\def\NwithzeroB{\mathbf{N}}
\def\NwithzeroA{\NwithzeroB}
\def\NwithoutzeroA{\mathbf{N}_{> 0}}
\def\PSL{\mathrm{PSL}}
\def\wh{\widetilde{h}}
\def\WM{\widetilde{M}}
\def\ab{\mathrm{ab}}
\def\Gbox{G^{\square}}
\def\Gbox{\mathrm{S}G}
\def\Hcong{H^{1,\mathrm{cong}}}
\def\Gammah{\widehat{\Gamma}}
\def\Zhat{\widehat{\Z}}
\def\Htw{\widetilde{H}}
\def\sss{\mathrm{ss}}
\def\SSS{\S}
\def\SSSS{\S}
\def\SSSSs{\S\S}
\def\TS{\S}
\def\llp{(\kern-0.15em{(}}
\def\rrp{)\kern-0.15em{)}}
\renewcommand\setminus{\smallsetminus}
\def\spec{\mathrm{Spec} \,}
\def\HH{\mathcal{H}}
\def\wGamma{\widetilde{\Gamma}}
\def\wPhi{\widetilde{\Phi}}
\def\wwPhi{\widetilde{\Psi}}
\def\wF{\widetilde{F}}
\def\wpsi{\widetilde{\psi}}
\def\wOm{\widetilde{\Omega}}
\def\PSO{\mathrm{PSO}}
\def\wr{\widetilde{r}}
\def\ws{\widetilde{s}}
\def\wt{\widetilde{t}}
\def\PSU{\mathrm{PSU}}
\def\wH{\widetilde{H}}
\def\wgamma{\widetilde{\gamma}}
\newcommand*\pFq[6][8]{
  \begingroup 
  \pFqmuskip=#1mu\relax
  \mathcode`\,=\string"8000
  \begingroup\lccode`\~=`\,
  \lowercase{\endgroup\let~}\pFqcomma
  {}_{#2}F_{#3}{\left[\genfrac..{0pt}{}{#4}{#5};#6\right]}%
  \endgroup
}
\newcommand{\pFqcomma}{\mskip\pFqmuskip}
\def\T{\mathbf{T}}
\def\Ext{\mathrm{Ext}}
\def\F{\mathbf{F}}
\def\Z{\mathbf{Z}}
\def\Hom{\mathrm{Hom}}
\def\R{\mathbf{R}}
\def\Q{\mathbf{Q}}
\def\GL{\mathrm{GL}}
\def\Qbar{\overline{\Q}}
\def\bi{\mathbf{i}}
\def\bx{\mathbf{x}}
\def\bk{\mathbf{k}}
\newif\iffinalrun
  \newcommand{\need}[1]{}
  \newcommand{\mar}[1]{}
  \newcounter{margcount}
  \newcommand{\need}[1]{{\tiny *** #1}}
  \newcommand{\mar}[1]{\stepcounter{margcount}\marginpar{\raggedright\tiny \themargcount.FIXME #1}}
\renewcommand\mathbb{\mathbf}
\newtheorem{theorem}[subsubsection]{Theorem}
\newtheorem{cor}[subsubsection]{Corollary}
\newtheorem{proposition}[subsubsection]{Proposition}
\newtheorem{question}[subsubsection]{Question}
\newtheorem{example}[subsubsection]{Example}
\newtheorem{thm}[subsubsection]{Theorem}
\newtheorem{lemma}[subsubsection]{Lemma}
\newtheorem{df}[subsubsection]{Definition}
\newtheorem{note}[subsubsection]{Notation}
\newtheorem{corr}[subsubsection]{Corollary}
\theoremstyle{remark}
\newtheorem{remark}[subsubsection]{Remark}
\numberwithin{equation}{subsection}
\def\numequation{\addtocounter{subsubsection}{1}\begin{equation}}
\let\oldfigure\figure
\let\endoldfigure\endfigure
\renewenvironment{figure}
  {\refstepcounter{equation}\oldfigure}
  {\endoldfigure}
\def\C{\mathbf{C}}
\def\Gal{\mathrm{Gal}}
\def\SL{\mathrm{SL}}
\def\H{\mathbf{H}}
\title{The Unbounded Denominators Conjecture}
\author[F. Calegari]{Frank Calegari}
 \email{fcale@math.uchicago.edu} \address{The University of Chicago,
5734 S University Ave,
Chicago, IL 60637, USA}
\author[V. Dimitrov]{Vesselin Dimitrov}
 \email{vesselin.dimitrov@gmail.com} \address{Department of Mathematics, 
 California Institute of Technology,
Pasadena, CA 91125, USA}
\author[Y. Tang]{Yunqing Tang}
\email{yungqing.tang@berkeley.edu} \address{Department of Mathematics, University of California, Berkeley, Evans Hall, Berkeley, CA 94720, USA}
\thanks{F.C. was supported in part by NSF Grant DMS-2001097. Y.T. was supported in part by NSF grant DMS-2231958 and a Sloan Research Fellowship. Some of the work was done when Y.T. was at CNRS and Universit\'e Paris-Saclay from February 2020 to June 2021 and at Princeton University from July 2021 to June 2022.}
\begin{document}

\begin{abstract}
We prove the unbounded denominators conjecture in the theory of noncongruence modular forms for finite index subgroups of $\SL_2(\Z)$. Our result includes also Mason's generalization of the original conjecture to the setting of vector-valued modular forms, thereby supplying a new path to the congruence property in rational conformal field theory. The proof involves a new arithmetic holonomicity bound of a potential-theoretic flavor, together with Nevanlinna second main theorem, the congruence subgroup property of $\SL_2(\Z[1/p])$, and a close description of the Fuchsian uniformization $D(0,1)/\Gamma_N$ of the Riemann surface $\C \setminus \mu_N$. 
\end{abstract}

\maketitle

{\footnotesize
\tableofcontents
}

\section{Introduction}

We prove the following:

\begin{theorem}[Unbounded Denominators Conjecture] \label{theorem:main}
Let~$N$ be any positive integer, and let
$f(\tau) \in \Z  \llbracket q^{1/N} \rrbracket$ for~$q = \exp(\pi i \tau)$ be a holomorphic
function on the upper half plane~$\H$. Suppose there exists an integer~$k$ and a finite index
 subgroup~$\Gamma \subset \SL_2(\Z)$ such that
$$f \left( \frac{a \tau + b}{c \tau + d}\right) = (c \tau + d)^k f(\tau), \quad \quad
\textrm{for all } \left( \begin{matrix} a & b \\ c & d \end{matrix} \right) \in \Gamma,$$
and suppose that~$f(\tau)$ is meromorphic at the cusps,
that is, locally extends to a meromorphic function near every cusp in the compactification
of~$\H/\Gamma$.
Then~$f(\tau)$ is a modular form for a congruence subgroup of~$\SL_2(\Z)$.
\end{theorem}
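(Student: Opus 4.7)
The plan is to argue by contradiction: assume $f\in\Z\llbracket q^{1/N}\rrbracket$ is modular for a finite-index non-congruence subgroup $\Gamma\subset\SL_2(\Z)$, and derive a contradiction. The argument has three inputs: the congruence subgroup property (CSP) for $\SL_2(\Z[1/p])$, a new arithmetic holonomicity bound, and an analytic description of the Fuchsian uniformization $D(0,1)/\Gamma_N\cong \C\setminus\mu_N$ via Nevanlinna's second main theorem.

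First, the group-theoretic input: by the CSP, every finite-index subgroup of $\SL_2(\Z[1/p])$ is congruence, providing a strong rigidity. Translating $f$ by $\SL_2(\Z)$ and enlarging by $p$-power diagonal matrices produces a finite collection $(f_1,\dots,f_m)$ of related $q^{1/N}$-expansions with $\Z[1/p]$-integer coefficients, spanning a finite-dimensional $\Q(j)$-subspace of local functions at the cusp, all satisfying a common linear (Picard--Fuchs) ODE from the cover $X(\Gamma)\to X(1)$. The CSP is what ensures that any nontrivial $\Q(j)$-algebraic relation among such $f_i$ already forces $f$ to be a congruence modular form.

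The central new tool is the \emph{arithmetic holonomicity bound of potential-theoretic flavor}: a holonomic tuple of power series with $\Z[1/p]$-integer coefficients that extends holomorphically to a common region of sufficiently large transfinite diameter (logarithmic capacity) must satisfy a nontrivial algebraic relation over $\Q(j)$. One establishes it \`a la Siegel, constructing auxiliary polynomial combinations of controlled height whose vanishing, analyzed through equilibrium-measure estimates, forces global identities. To apply the bound one needs a quantitative lower bound on the capacity of the joint domain of holomorphy of the $f_i$, and this is supplied by the geometric input. The uniformization $D(0,1)/\Gamma_N\cong\C\setminus\mu_N$ describes the analytic continuation of $f$ through the $N$-th root-of-unity cusps on the boundary of the $q$-disc, and Nevanlinna's second main theorem converts the uniformization's ramification data into the needed capacity estimate.

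Combining the pieces, the capacity estimate triggers the holonomicity bound, producing a nontrivial $\Q(j)$-algebraic relation on the $f_i$; combined with the CSP-rigidity, this forces $f$ into a space of congruence modular forms, the desired contradiction. The \textbf{main obstacle} is the arithmetic holonomicity bound itself: classical Chudnovsky--Andr\'e-type criteria are insufficient by a narrow margin, so one must establish a sharpened version with an optimized capacity constant that matches exactly the geometry of the Fuchsian uniformization of $\C\setminus\mu_N$. A secondary obstacle is making this geometric matching quantitatively explicit, which is the role of Nevanlinna's second main theorem.
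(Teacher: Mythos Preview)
You have identified the correct three ingredients---the arithmetic holonomicity bound, the congruence subgroup property for $\SL_2(\Z[1/p])$, and the Nevanlinna-theoretic analysis of the uniformization of $\C\setminus\mu_N$---but the way you have wired them together is not right, and as written the argument would not close.

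First, the role of the holonomicity bound is not to produce a single algebraic relation among finitely many translates $f_1,\dots,f_m$ at a fixed level. Rather, it yields a \emph{dimension bound}: the $\Q(\lambda)$-vector space $R_N$ of all modular functions with $\Z\llbracket q^{1/N}\rrbracket$-expansions and cusp widths dividing $N$ satisfies $[R_N:\Q(\lambda)]\leq CN^3\log N$. Here the key refinement over Andr\'e's criterion is that the numerator of the bound is the Nevanlinna mean proximity function $\int_{|z|=r}\log^+|F_N^N|\,\mu_{\mathrm{Haar}}$ rather than the supremum; Nevanlinna's lemma on the logarithmic derivative (the analytic core of the second main theorem) is what controls this integrated growth as $O(\log(N/(1-r)))$, not a capacity estimate as you suggest. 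The conformal radius computation gives the denominator $\log|F_N'(0)|\sim \zeta(3)/(2N^3)$, and dividing gives the $N^3\log N$.

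Second, and this is the real gap: even with the sharpened bound, $CN^3\log N$ still exceeds the congruence dimension $[M_N:\Q(\lambda)]\asymp N^3$, so the inequality alone does not force $R_N=M_N$. What is missing from your sketch is the \emph{leveraging} (iteration) step. One does not translate $f$ by $\SL_2(\Z)$ at a fixed level; instead one passes from $f(\tau)\in R_N$ to $f(\tau/p)\in R_{Np}$ for a prime $p\nmid N$, observes that the coefficients remain in $\Z$ (not merely $\Z[1/p]$), and uses the CSP for $\SL_2(\Z[1/p])$ in the form of Serre's amalgam argument to show that if $f(\tau/p)$ already lies in $R_N\cdot M_{Np}$ then $f$ was congruence. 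Consequently $[R_{Np}:M_{Np}]\geq 2[R_N:M_N]$ whenever the latter exceeds $1$. Iterating over the primes $p<X$ coprime to $N$ gives growth $\geq 2^{\pi(X)}$, while the holonomicity bound applied at level $N\prod_{p<X}p$ only permits growth $\ll \log(N\prod p)\ll X$. This is the contradiction; without the iteration over many primes the $\log N$ slack cannot be absorbed.
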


The contrapositive of this statement is equivalent to the following, which explains the name of the conjecture: if~$f(\tau) \in \Q \llbracket q^{1/N} \rrbracket$ is a  
modular form which is \emph{not}
modular for some congruence subgroup, then the coefficients of~$f(\tau)$ have \emph{unbounded denominators}.
The corresponding statement remains true if one replaces~$\Q$ by any number field (see Remark~\ref{remark:numberfield}).

Let~$\lambda(\tau)$ be the modular lambda function (Legendre's parameter):
\numequation \label{Legendre}
\frac{\lambda(\tau)}{16} =
\left( \frac{\eta(\tau/2) \eta(2 \tau)^2}{\eta(\tau)^3} \right)^8 =
q \prod_{n=1}^{\infty} \left( \frac{1 + q^{2n}}{1 + q^{2n-1}} \right)^8 = q -8 q^2 + \cdots 
\end{equation}
with~$q = e^{\pi i \tau}$ and~$\eta(\tau/2) = q^{1/24} \prod_{n=1}^{\infty} (1 - q^n)$. (Historic conventions force one to use~$q$ for both~$e^{\pi i \tau}$ and~$e^{2 \pi i \tau}$ --- we use the first
choice unless we expressly state otherwise.)  On replacing the weight~$k$ form $f$ by the 
weight zero form $f(\tau)(\lambda(\tau)/16\eta(\tau/2)^2)^k$,
we may (and do) assume that~$k=0$. The function~$f$ is then an \emph{algebraic} function of~$\lambda$, with branching only at the three punctures $\lambda = 0, 1, \infty$ of the modular curve $Y(2) \cong \mathbb{P}^1 \setminus \{0, 1, \infty\}$. Thus another reading of our result states that the Bely\u{\i} maps (\'etale coverings) 
$$
\pi  : U \to \mathbb{CP}^1 \setminus \{0, 1, \infty\} := \mathrm{Spec} \, \C[\lambda, 1/\lambda, 1/(1-\lambda)]  = Y(2)
$$
possessing a formal Puiseux branch in $\Z\llbracket  \lambda(\tau/m)/16 \rrbracket \otimes \C$ for some $m \in \NwithoutzeroA$ are exactly the congruence  coverings $Y_{\Gamma} = \H / \Gamma \to \H / \Gamma(2) = Y(2)$, with $\Gamma$ ranging over all \emph{congruence} subgroups of $\Gamma(2)$.
The reverse implication is a theorem of Shimura~\cite{ShimuraEichler} (presented in his book as~\cite[Theorem~3.52]{Shimura}), and reflects the fact that the~$q$-expansions of eigenforms
on congruence subgroups are determined by their Hecke eigenvalues (see also~\cite[\SSS~1.2]{Katz349}). 

We refer the reader to Atkin and Swinnerton-Dyer~\cite{ASD} for the roots of the unbounded denominators conjecture, and to Birch's article~\cite{Birch} as well as to Long's survey~\cite[\SSS~5]{Long} for an introduction to this problem and its history. For the vector-valued generalization, see~\SSSS~\ref{Mason vvmf} and its references below.  The cases of  relevance to the partition and correlation functions of rational
conformal field theories (of which the tip of the iceberg is the example~(\ref{cube root of j}) discussed below)
were resolved in a string of works~\cite{DongRen, DongLinNg, SommerhauserZhu, NgSchauenburg, Xu, Bantay, Zhu, AndersonMoore}, by the modular tensor categories method. Some further sporadic cases of the unbounded denominators conjecture have been settled by mostly {\it ad hoc} means~\cite{FioriFranc, FrancMason, LiLong, KL1, KL2}.

To give some simple examples, the integrality property $\sqrt[8]{1-x} \in \Z\llbracket x/16 \rrbracket$ corresponds to the fact that the modular form $(\lambda/16)^{1/8} = q^{1/8} \prod_{n=1}^{\infty} (1 + q^{2n})(1 + q^{2n-1})^{-1}$ and the affine Fermat curve $x^8 + y^8 = 1$ are  congruence; whereas a simple non-example~\cite[\SSS~5.5]{Long} is the affine Fermat curve $x^n + y^n = 1$ for $n \notin \{1, 2, 4, 8\}$, for which the fact that its Fuchsian group is a noncongruence arithmetic group is detected arithmetically by the calculation $\sqrt[n]{1-x} \notin \Z\llbracket x/16 \rrbracket \otimes \C$.  This recovers a classical theorem of Klein~\cite[page~534]{FrickeKlein}.
To include 
an example related to
two-dimensional rational conformal field theories, consider
the  following function  (with~$q=e^{2 \pi i \tau}$):
\numequation \label{cube root of j}
j(\tau)^{1/3} = q^{-1/3} \, \frac{1 + 240 \sum_{n=1}^{\infty} \sigma_3(n) q^n}{\prod_{n=1}^{\infty} (1-q^n)^8} =  q^{-1/3} (1 + 248q + 4124q^2 + 34752q^3 + \cdots). 
\end{equation}
The resulting Fourier coefficients 
are closely linked to the dimensions of the irreducible representations of the  exceptional
Lie group~$E_8(\C)$, and in particular they are integers. To be more precise: the  modular function $j^{1/3}$ coincides with the graded dimension of the
 level one highest-weight representation
of the affine Kac--Moody algebra $E_8^{(1)}$; see Gannon's book~\cite[\SSS~0.5 and~\SSS~3.2.3]{Gannon} 
for a broad view on this topic and
its relation to mathematical physics.
The unbounded denominators conjecture (Theorem~\ref{theorem:main}) now implies that~$j^{1/3}$ 
must be a modular function on a
\emph{congruence} subgroup.   (Strictly speaking,
since~$j^{1/3}$ is a Laurent series rather than a power series, one applies Theorem~\ref{theorem:main}  to~$j^{1/3} \cdot \Delta$
and then divides through by~$\Delta$.)
One readily confirms that~$j^{1/3}$ is a  Hauptmodul for
the level~$3$ subgroup which is the kernel of the composite~$\PSL_2(\Z) \rightarrow \PSL_2(\F_3) = A_4 \rightarrow \Z/3\Z$.
One final example is the function
\numequation
\label{eqn:DeligneExample}
h:=\frac{\lambda(\tau)(1 - \lambda(\tau))}{16} = \left( \frac{\eta(\tau/2) \eta(2 \tau)}{\eta(\tau)^2} \right)^{24}
\end{equation}
of level~$\Gamma^0(2) \supset \Gamma(2)$; 
 here the complete list of $n$ for which $h^{1/n}$ is either congruence modular or has bounded denominators are the divisors of~$24$. 
 The claim that~$h^{1/n}$ has bounded denominators for~$n|24$ is apparent from the product formula in~\autoref{eqn:DeligneExample},
 and the claim that~$h^{1/n}$ does not have bounded denominators for~$n \nmid 24$ is an elementary exercise. 
  We can directly compute when is~$h^{1/n}$ congruence, as follows. 
 By Kummer theory, the extension~$\C(h^{1/n})$ of the function field~$\C(h)$ of~$\H/\Gamma^0(2)$ is Galois with Galois group~$\Z/n\Z$.
 Since~$h$ is nonvanishing
 on~$\H$, this extension is unramified away from the cusps, and so gives rise to a homomorphism~$\Gamma^0(2) \rightarrow \Z/n\Z$;
 the function~$h^{1/n}$ is modular for the kernel~$\Gamma$ of this homomorphism. Now~$h^{1/n}$
 is congruence if and only if the latter map factors through the congruence completion~$\widehat{\Gamma^0(2)}$ of~$\Gamma^0(2)$; here,~$\Gamma^0(2)$  is considered
 as a subgroup of~$\PSL_2(\Z)$.
 In fact, one may   compute that the abelianization of~$\Gamma^0(2)$ is~$\Z \oplus \Z/2\Z$
 whereas the abelianization of~$\widehat{\Gamma^0(2)}$ is~$\Z/24 \Z \oplus \Z/2\Z$. The other~$\Z/2\Z$ extension corresponds to the congruence modular form~$\sqrt{1-64 h} = 1 - 2 \lambda$.
 
In a similar vein pertaining to the examples from the representation theory of vertex operator algebras, we prove in our closing \SSSS~\ref{vvmf} the natural generalization of 
Theorem~\ref{theorem:main} to components of vector-valued modular forms for $\SL_2(\Z)$, in particular resolving --- in a sharper form, in fact --- Mason's unbounded denominators conjecture~\cite{MasonConjectured, KohnenMason} on generalized modular forms.

\subsection{A sketch of the main ideas} \label{intro sketch}  Our proof of Theorem~\ref{theorem:main} follows a broad Diophantine analysis path known in the literature 
(see~\cite{BostGerms, BostAlg} or~\cite[Chapter~10]{BostBook}) as the \emph{arithmetic algebraization} method. 
\subsubsection{The Diophantine principle}
The most basic antecedent of these ideas is the following easy lemma:

\begin{lemma} \label{easylemma}
A power series~$f(x) = \sum_{n=0}^\infty a_n x^n \in \Z\llbracket x \rrbracket$ which  defines a holomorphic function on~$D(0,R)$ for some~$R > 1$ is a polynomial.
\end{lemma}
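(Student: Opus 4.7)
The plan is to invoke Cauchy's coefficient estimates and then exploit the discreteness of $\Z$. Specifically, I would pick some radius $r$ with $1 < r < R$; since $f$ is holomorphic on the closed disc $\overline{D(0,r)} \subset D(0,R)$, the maximum $M := \max_{|z|=r} |f(z)|$ is finite. The Cauchy integral formula applied to the circle $|z|=r$ then yields the estimate
\[
|a_n| = \left| \frac{1}{2\pi i} \oint_{|z|=r} \frac{f(z)}{z^{n+1}} \, dz \right| \leq \frac{M}{r^n}
\]
for every $n \geq 0$.

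Since $r > 1$, the bound $M/r^n$ tends to $0$ as $n \to \infty$. But $a_n$ is an integer, so the inequality $|a_n| < 1$ for all $n$ sufficiently large forces $a_n = 0$ in that range. Hence only finitely many coefficients of $f$ are nonzero, i.e.\ $f$ is a polynomial.

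There is no real obstacle here: the only ingredient beyond Cauchy's formula is the trivial observation that an integer of absolute value less than $1$ must vanish. This toy statement is, however, the prototype of the arithmetic algebraization principle that drives the rest of the paper, where the disc of holomorphy is replaced by a more delicate potential-theoretic capacity condition and the role of $\Z$ is played by $p$-adic or archimedean integrality constraints on the Taylor coefficients.
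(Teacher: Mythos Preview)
Your proof is correct and is essentially identical to the paper's own argument: the paper likewise combines the Cauchy estimate $|a_n| = o(\eta^n)$ for some $\eta < 1$ (their ``Cauchy upper bound'') with the observation that a nonzero integer has absolute value at least $1$ (their ``Liouville lower bound''). Your closing remark about this being the prototype of the arithmetic algebraization principle is also exactly the role the lemma plays in the paper.
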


Lemma~\ref{easylemma} follows  upon combining the following two observations, fixing some~$1>\eta>R^{-1}$:
\begin{enumerate}
\item
The coefficients $a_n$ are either~$0$ or else~$\geq 1$ in magnitude.
\item 
The Cauchy integral formula gives a uniform upper bound $|a_n| = o(\eta^n)$.
\end{enumerate}
We shall refer to the first inequality as a \emph{Liouville lower bound}, following its use by Liouville in his proof of  the lower bound~$|\alpha - p/q| \gg 1/q^n$ 
for algebraic numbers~$\alpha\neq p/q$ of degree~$n\ge 1$.
We shall refer to the second inequality as a \emph{Cauchy upper bound},
following the example above where it comes from an application of the Cauchy integral formula.
The first nontrivial generalization of Lemma~\ref{easylemma} was
 \'Emile Borel's theorem~\cite{Borel}. Dwork famously used a  $p$-adic generalization of Borel's theorem in his $p$-adic analytic proof of the rationality of the zeta function of an algebraic  variety over a finite field (see Dwork's account in the book~\cite[Chapter~2]{DworkGerottoSullivan}). The simplest nontrivial statement of Borel's theorem is 
 that an integral formal power series $f(x) \in \Z\llbracket x \rrbracket$ must  already be a rational function as soon as it has a \emph{meromorphic}  representation as a quotient of two convergent complex-coefficients power series on some disc $D(0,R)$ of a radius $R > 1$.  
 The subject of arithmetic algebraization blossomed at the hands of many authors, including most prominently Carlson, P\'olya, Robinson, Salem, Cantor, D. $\&$ G. Chudnovsky, 
 Bertrandias, Zaharjuta, Andr\'e, Bost, Chambert-Loir~\cite{ACL,BCL,Amice}, \cite[\SSS~I.5]{Andre}, \cite[\SSS~VIII]{AndreG}. A simple milestone that we further develop in our~\SSSS~\ref{holonomy abstraction} is Andr\'e's algebraicity criterion~\cite[Th\'eor\`eme~5.4.3]{Andre}, stating in a particular case that an integral formal power series $f(x) \in \Z\llbracket x \rrbracket$ is algebraic as soon as the two formal functions $x$ and $f$ admit a simultaneous analytic uniformization --- that means an analytic map $\varphi : (D(0,1),0) \to (\C,0)$ such that the 
 composition
 $f(\varphi(z)) \in \C\llbracket z \rrbracket$ of holomorphic function germs
also converges on the full disc $D(0,1)$, and such that~$\varphi$ is sufficiently large in terms of  \emph{conformal size}, namely: $|\varphi'(0)| > 1$.  For example, for any integer~$m$, the algebraic power series $f = (1 - m^2 x)^{1/m} \in \Z\llbracket x \rrbracket$ admits the simultaneous analytic uniformization~$x = \varphi(z) =   (1-e^{Mz}) m^{-2}$ and~$f = f(\varphi(z)) = e^{Mz/m}$, where the conformal size $|\varphi'(0)| = M/m^2$ can clearly be made
arbitrarily large by making a suitable choice of~$M$.

A common theme of all these generalizations of Lemma~\ref{easylemma} is that they   come down to a tension between
a Liouville lower bound and a Cauchy upper bound. For example, in the  proof of Borel's theorem (\cite[Ch~5.3]{Amice}), the Liouville lower bound is applied
not to the coefficients~$a_n$ themselves but rather to Hankel determinants~$\det | \alpha_{i,j}|$ with~$\alpha_{i,j}=a_{i+j+n}$. 
To consider a more complicated example (much closer in both spirit and in details to our own analysis), to prove Andr\'e's algebraicity criterion~\cite[Th\'eor\`eme~5.4.3]{Andre},
one wants to prove that certain powers of a formal function~$f(\mathbf{x}) \in \mathbf{Z}\llbracket \mathbf{x} \rrbracket$ are linearly dependent over the polynomial ring $\Z[\mathbf{x}]$. (It will be advantageous to consider
functions in several complex variables~$\mathbf{x} = (x_1,\ldots,x_d)$.)  The idea is now to consider a certain~$\Z[\mathbf{x}]$ linear combination~$F(\mathbf{x})$ of  powers 
of~$f(\mathbf{x})$ chosen such that they vanish to high order at~$\mathbf{0}$ but yet the~$\Z[\mathbf{x}]$ coefficients~$p(\mathbf{x})$ are  themselves not too complicated ---
the existence of such a choice follows from the classical Siegel's lemma. Now the Liouville lower bound is applied to a lowest order  non-zero coefficient of~$F(\mathbf{x}) \in \mathbf{Z}\llbracket \mathbf{x}\rrbracket$.
Note that such a coefficient must exist or else the equality~$F(\mathbf{x}) = 0$ realizes~$f(\mathbf{x})$ as algebraic. 
The Cauchy upper bound  in this case once again
follows by an application of the Cauchy integral formula.

In our setting, the Liouville lower bound ultimately  comes down to the integrality (``bounded denominators'') hypothesis on the Fourier coefficients of $f(\tau)$, while the 
Cauchy upper bound
comes down to studying the mean growth behavior $m(r,\varphi) := \int_{|z| = r} \log^+{|\varphi|} \, \mu_{\mathrm{Haar}}$ of the largest (universal covering) analytic map $\varphi : D(0,1) \to \C \setminus \mu_N$ avoiding the $N$-th roots of unity. These are clearly distinguished in our abstract arithmetic algebraization work of~\SSSS~\ref{holonomy abstraction} as the steps~\eqref{LLB} and~\eqref{CUP}, respectively. (We also refer to~\eqref{LB} and~\eqref{CB}, resp.~(\ref{Liouville}) and~(\ref{the upper bound}), in our alternative treatments.)
Our Theorem~\ref{abstract form} is effectively a quantitative refinement of Andr\'e's algebraicity criterion to take into account the degree of algebraicity over $\Q(x)$, and still more precisely a certain holonomy rank over $\Q(x)$. Foreshadowing a key technical point (to be discussed in more detail later in the introduction), our Cauchy upper bound is given in terms
of a mean (integrated) growth term rather than a supremum term, and  this  improvement is essential to our approach.

\subsubsection{Modularity and simultaneous uniformizations of $f$ and $\lambda$}
Let us now explain the relevance of arithmetic holonomy rank bounds to the unbounded denominators conjecture.
After reducing to weight $k = 0$ as above, the functions $f = f(\tau)$ and $x := \lambda(\tau) / 16 \in q + q^2 \Z\llbracket q\rrbracket$ are algebraically dependent and share both (we assume) the property of integral Fourier coefficients at the  cusp $q = 0$. Let us assume for the purpose of this sketch that $f(\tau) \in \Z\llbracket q \rrbracket$ with~$q = e^{\pi i \tau}$, i.e. that the cusp $i\infty$ has width dividing~$2$. Then the formal inverse series expansion 
$$
q 
= x + 8x^2 + 84x^3 + 992 x^4 + \cdots \in x
+ x^2 \Z\llbracket x\rrbracket
$$
	 of (\ref{Legendre}) has integer coefficients, expressing the identity $\Z\llbracket q \rrbracket = \Z\llbracket x \rrbracket$ of formal power series rings, and that formal substitution turns our integral Fourier coefficients hypothesis into an \emph{algebraic} power series with integer coefficients: henceforth in this introductory sketch we switch to writing, by a mild and harmless notational abuse, simply $f(x) \in \Z\llbracket x \rrbracket$ in place of $f(\tau)$ and $\lambda(q)$ in place of~$\lambda(\tau)$. In the general case of arbitrary cusp width, which we need anyhow for the inner workings of our proof even if one is ultimately interested in the $\Z\llbracket q \rrbracket$ case, we will only have $f \in \Z[1/N]\llbracket x \rrbracket$ when we write out $f(\tau)$ as a power series in $x := \sqrt[N]{\lambda/16}$ to accommodate the Puiseux series --- but there is still a hidden integrality property which we can exploit. That leads to some mild technical nuance with the power series~(\ref{normal}) --- think of $t = q^{1/N}$,
$x(t) = \sqrt[N]{\lambda(t^N)/16}$ and $p(x) = x^N$ --- in our refinement~(\ref{abstracted bound}) of Andr\'e's theorem. 

 The complex analysis enters by way of a linear ODE  in the following way. To start with, we have, just {\it by fiat}, the simultaneous analytic uniformization of the two functions $x := \lambda/16$ and $f$ by the complex unit $q$-disc $|q| < 1$.  In this way, 
the tautological  choice $\varphi(z):= \lambda(z)/ 16$ turns our algebraic power series $f(x) \in \Z\llbracket x \rrbracket$
into a boundary case (unit conformal size $\varphi'(0) = 1$) of Andr\'e's criterion. Another boundary case, but this time transcendental and incidentally demonstrating the sharpness of the qualitative Andr\'e algebraicity criterion even in the {\it a priori} holonomic situation (see~\cite[Appendix, A.5]{Andre} for a discussion), 
is provided by the Gauss hypergeometric function
\numequation
\label{whatisFone}
F(x) := \pFq{2}{1}{1/2,1/2}{1}{16x} = \sum_{n=0}^{\infty} \binom{2n}{n}^2 x^n \in \Z\llbracket x \rrbracket,
\end{equation}
whose unit-radius simultaneous analytic uniformization with $x = \lambda /16$  is given again by the analytic $q$ 
coordinate, and the classical Jacobi formula 
\numequation
\label{whatisFtwo}
F(\tau) =  \pFq{2}{1}{1/2,1/2}{1}{\lambda(q)} = \Big( \sum_{n \in \Z} q^{n^2} \Big)^2
\end{equation}
which transforms this hypergeometric series into
a weight one modular form for the congruence group~$\Gamma(2)$.  The existence of such transcendental $\Z\llbracket x \rrbracket$ holonomic functions on $\C \setminus \{0,1/16\}$ recovers---by Andr\'e's algebraicity criterion---a classical ``$1/16$ theorem'' of Carath\'eodory~\cite[(412.8) on page~198]{Caratheodory}. (See also Goluzin~\cite[\SSS~III.1, Theorem~1]{Goluzin}.)

\subsubsection{A finite local monodromy leads to an overconvergence}\label{sec_overconv}

It turns out, and this is the key to our method and already answers Andr\'e's question in~\cite[Appendix, A.5]{Andre}, 
that a different choice of $\varphi(z)$ allows one to arithmetically distinguish between these two cases (algebraic and transcendental), 
and to have the algebraicity of $f(x)$ recognized by Andr\'e's Diophantine criterion by way of an ``overconvergence.''
Suppose that~$f(\tau)$  is a holomorphic modular function and~$F(\tau)$ is a holomorphic modular form ---
concretely, let us take~$F(\tau)$ to be the theta series of  equation~(\ref{whatisFtwo}), ---
and let~$f(x)$ and~$F(x)$ respectively denote these functions as functions of~$x = \lambda/16$, so~$F(x)$
is given by equation~(\ref{whatisFone}).
The common feature of these two functions $f(x)$ and $F(x)$  --- coming respectively out of modular forms
of weights~$0$ and~$1$ --- is that  they both vary 
holonomically in $x \in \C \setminus \{ 0, 1/16 \}$: they satisfy linear ODEs with coefficients in $\Q[x]$ and no singularities\footnote{With nontrivial local monodromy. The precise definition is in~\ref{holonomy ring}.}
apart from the three punctures $x = 0, 1/16, \infty$ of $Y(2) = \H / \Gamma(2)$. The difference feature is that their respective local monodromies around $x = 0$ are finite for the case of $f(x)$ (a quotient of $\Z/N$, with the order $N$ equal to the lowest common multiple of the cusp widths, or Wohlfahrt level~\cite{Wohlfahrt} of $f(x)$); and infinite for the case of $F(x)$ (isomorphic to $\Z$, corresponding  more  particularly to the fact that this particular hypergeometric function acquires a $\log{x}$ term after an analytic continuation around a small circle enclosing $x = 1/16$). 
If now we perform the variable change $x \mapsto x^N$, redefaulting to $x := \sqrt[N]{\lambda(q^N)/16}$, that resolves the $N$-th root ambiguity 
in the formal Puiseux branches of $f(x)$ at $x = 0$, and the resulting algebraic power series $f(x^N) \in \Z\llbracket x^N \rrbracket \subset \Z\llbracket x \rrbracket$ has turned holonomic on $\mathbb{P}^1 \setminus \{16^{-1/N}\mu_N, \infty\}$: singularities only at $16^{-1/N} \mu_N \cup \{\infty\}$ (but not at $x = 0$: this key step of exploiting arithmetic algebraization is the same as in Ihara's arithmetic connectedness theorem~\cite[Theorem~1]{Ihara}, which together with Bost's extension~\cite{Bost} to arithmetic Lefschetz theorems have in equal measure been inspirational for our whole approach to the unbounded denominators conjecture).  Since $\lambda : D(0,1) \to \C \setminus \{1\}$ has 
fiber $\lambda^{-1}(0) = \{0\}$, the function $\varphi(z):= \sqrt[N]{\lambda(z^N)/16} : D(0,1) \to  \C \setminus 16^{-1/N}\mu_N$ is still 
holomorphic on the unit disc $|z| < 1$, and  under this tautological choice, both functions $f(x^N)$ and $F(x^N)$ continue to be at the borderline of Andr\'e's algebraicity criterion: $|\varphi'(0)| = 1$. 

But if instead of the tautological simultaneous uniformization we take 
$$\varphi : D(0,1) \to \C \setminus 16^{-1/N} \mu_N$$ to be the universal covering map (pointed at $\varphi(0) = 0$), then  either by a direct computation with monodromy, or by Cauchy's analyticity theorem on the solutions of linear ODEs with analytic coefficients and no singularities in a disc, we have both function germs $x := \varphi(z)$ and $f(x) := f(\varphi(z))$ holomorphic, hence convergent, on the full unit disc $D(0,1)$. In contrast, now $F(\varphi(z))$ converges only up to the ``first'' nonzero fiber point in $\varphi^{-1}(0) \setminus \{0\}$, giving a certain radius rather smaller than $1$. We must have the strict lower bound $|\varphi'(0)| > 1$, because the preceding unit-radius holomorphic map $\sqrt[N]{\lambda(z^N)/16} : D(0,1) \to \C \setminus 16^{-1/N}\mu_N$ has to factorize \emph{properly} through the universal covering map. Indeed in Theorem~\ref{exactconformal}, using an explicit description by hypergeometric functions of the multivalued inverse of the universal covering map of $\C \setminus \mu_N$ based on Poincar\'e's ODE approach~\cite{Hempel} to the uniformization of Riemann surfaces, we find an exact formula for this uniformization radius in terms of the Euler Gamma function.\footnote{Andr\'e pointed out to us that this explicit formula has  previously been obtained by Kraus and Roth, see~\cite[Remark~5.1]{KrausRoth}. See also~\cite[\SSS~III.1]{Goluzin}.} 
Hence the algebraicity of $f(x)$ gets witnessed by Andr\'e's criterion; and the formal new result that we get already at this opening stage (see Theorem~\ref{ODE form}) is that \emph{any} integral formal power series solution $f(x) \in \Z\llbracket x \rrbracket$ to a linear ODE $L(f) = 0$ without singularities on $\mathbb{P}^1 \setminus \{0,1/16,\infty\}$ is in fact algebraic as soon as the linear differential operator $L$ has a finite local monodromy $\Z/N$ around the singular point $x = 0$. More than this: the quantitative Corollary~\ref{holonomy}  proves that the totality of such $f(x) \in \Z\llbracket x \rrbracket$ at a given $N$ span a finite-dimensional $\Q(x)$-vector space, and gives an upper bound on its dimension as a function of the Wohlfahrt level parameter $N$. Now since a (noncongruence) counterexample $f(\tau) \in \Z\llbracket q \rrbracket$ to Theorem~\ref{theorem:main} would not 
exist on its own
 but spawn a whole sequence $f(p \tau) \in \Z\llbracket q \rrbracket$ of  $\Q(x)$-linearly independent counterexamples at growing Wohlfahrt level~$N \mapsto Np$, 
our idea is to measure up the supply of these putative (fictional) counterexamples alongside the congruence supply at a gradually increasing level until together they break the quantitative bound~(\ref{abstracted bound}) supplied by our arithmetic holonomy Theorem~\ref{abstract form}. 

\subsubsection{The dimension bound can be leveraged with growing level~$N$}
 We have the congruence supply of dimension $[\Gamma(2) : \Gamma(2N)] \gg N^3$, and then 
 as a glance at our shape~(\ref{dimensionbound}) of holonomy rank bound readily reveals, it seems a fortuitous piece 
of luck that the conformal size (Riemann uniformization radius at $0$) of our relevant Riemann surface $\C \setminus 16^{-1/N} \mu_N$ turns out to have the matching asymptotic form $1 + \zeta(3) / (2N^3) + O(N^{-5})$.  
We ``only'' have to prove that the numerator (growth) term in the holonomy rank bound~(\ref{dimensionbound}) inflates at a slower rate than our extrapolating putative counterexamples $f(\tau) \mapsto f(p \tau)$! 

The meaning of the requisite inflation rate is clarified in~\SSSS~\ref{serresection}, with Proposition~\ref{selfcontained}
and Remark~\ref{expansion rate}. It turns out that the logarithmically inflated holonomy rank (dimension) bound 
by $O(N^3\log{N})$ is sufficient for the desired proof by contradiction (but an $O(N^{3 + 1 / \log{\log{N}}})$ or worse form of bound would not suffice); and this is what we ultimately prove. Getting to this degree of precision
creates however some additional challenges. A straightforward elaboration of Andr\'e's original argument in~\cite[Criterium VIII~1.6]{AndreG}, taking the number of variables $d \to \infty$  and  involving the $\sup_{|z| = r}\log{ |\varphi|}$ growth term of  {\it loc.cit.} in place of our mean (integrated) growth term $m(r,\varphi)$ (see~\SSSS~\ref{Nevone} and~\SSSS~\ref{integratedgrowthbound}), leads quite easily to an $O(N^5)$ dimension bound; and 
 by a further work explicitly with the cusps of the Fuchsian uniformization $D(0,1) / \Gamma_N \cong \C \setminus \mu_N$, and an appropriate Riemann map precomposition, it is possible to further reduce that down to an $O(N^4)$. See Remark~\ref{Renggli note}. This does not suffice to conclude the proof. Going further requires an intrinsic improvement into Andr\'e's dimension bound itself: the reduction of the supremum term to the integrated term in the numerator of~(\ref{dimensionbound}). 

We  give three proofs of this improvement, all being based on the same auxiliary construction scheme of~\SSSS~\ref{auxiliary construction}.  Our default treatment~\SSSSs~\ref{auxiliary construction},  \ref{extrapolate}, \ref{canonical decomposition} is   based on Nevanlinna's canonical factorization of meromorphic functions of bounded characteristic. Additionally, we also include in~\SSSS~\ref{initial argument} our original argument  based on equidistribution ideas, and a simplified alternative path~\SSSS~\ref{psh approach} proposed to us by Andr\'e and based on plurisubharmonicity and a lexicographic induction. 
The former variation has  a 
potential-theoretic flavor familiar from the proof of Bilu's  theorem~\cite{Bilu} (see also~\SSSS~\ref{potential sketch}), but it is 
 in the cross-variables $d \to \infty$ asymptotic aspect and hence different than the well-established link (see~\cite{Bost,BCL,BostGerms}) of arithmetic algebraization to adelic potential theory.

\subsubsection{Nevanlinna theory for Fuchsian groups}  \label{Nevone}
Everything is thus reduced to establishing a uniform integrated growth bound of the form
\numequation \label{growth key}
m(r,F_N^N) := \int_{|z| = r} \log^+{|F_N^N|} \, \mu_{\mathrm{Haar}} = O\Big(\log{\frac{N}{1-r}}\Big),
\end{equation}
 where $N \geq 2$ and $F_N : D(0,1) \to \C \setminus \mu_N$ is the universal covering map based at $F_N(0) = 0$. 
Heuristically this is supported by the idea that the renormalized function
$F_N(q^{1/N})^N$ ``converges'' in some sense to the modular lambda function $\lambda(q)$, as $N \to \infty$. 
These functions do indeed converge as~$q$-expansions as~$N \rightarrow \infty$ on any ball around the origin of radius strictly
less than~$1$.
The problem is that this convergence is not in any way uniform as $r \to 1$, but we need to use~(\ref{growth key}) with
a radius as large as $r = 1 - 1/(2N^3)$. The growth of the map $F_N$ is governed by the growth of the cusps of the $(N, \infty, \infty)$ triangle  Fuchsian group $\Gamma_N$, and studying these directly, for instance by comparing them to the cusps of the limit $(\infty, \infty, \infty)$ triangle group $\Gamma(2)$, proves to be difficult. 

Surprisingly perhaps, we are instead able in~\SSSS~\ref{nevanlinna} to prove the requisite mean growth bound~(\ref{growth key}) on the 
abstract grounds of Nevanlinna's value distribution theory for general meromorphic functions.  For any universal covering map $F : D(0,1) \to \C \setminus \{a_1, \ldots, a_N\}$ of a sphere with  $N+1 \geq 3$ punctures, one has the mean growth asymptotic $m(r, F) = \int_{|z| = r} \log^+{|F|} \, \mathrm{\mu}_{\mathrm{Haar}} \sim \frac{1}{N-1} \log{\frac{1}{1-r}}$ under $r \to 1^-$, 
providing extremal examples of Nevanlinna's defect inequality with $N+1$ full deficiencies on the disc~\cite[page~272]{Nevanlinna}. Contrast this with the
qualitatively exponentially larger growth behavior $\sup_{|z| = r} \log{|F|} \asymp \frac{1}{1-r}$ of the crude supremum term.  
In our particular situation of $\{a_1, \ldots, a_N\} = \mu_N$ for the puncture points, we are able to exploit 
the fortuitous relation $ \prod_{i=1}^N (x-a_i) \sum_{i=1}^N \frac{1}{x-a_i} = Nx^{N-1}$ particular
to the partial fractions decomposition~\eqref{choice of p} to get to the uniformity precision of~(\ref{growth key}) 
with the method of the logarithmic derivative in Theorem~\ref{growth term}.

\begin{remark}[Big $O$ and small $o$ notation, $\NwithzeroA$ and $\NwithoutzeroA$ ] \label{rem:notation}
We use big $O$ and small $o$ notation throughout in their usual way.
We also use Vinogradov's $\ll$ notation which is completely synonymous with the big $O$ notation, that is, $f \ll g$ has
the same meaning as $f = O(g)$. Both of these notations mean that, with respect to some implicit variables, 
the inequality~$f \le Cg$ holds for all values of these variables sufficiently close to some implicit limit.
We call (any suitable choice of) $C$ the \emph{implicit constant}, and whenever we want to stress what either the implicit
variables or implicit limits are in the notation, these are included as subscripts on either~$o$, $O$, or~$\ll$.
We shall use~$\NwithzeroB = \{0,1,2, \ldots\}$ to denote the natural numbers with zero,
and~$\NwithoutzeroA$  to denote the positive integers.
\end{remark}

\section{The arithmetic holonomicity theorem} \label{holonomy abstraction}

Our proof relies on the following dimension bound which is an extension of
Andr\'e's arithmetic algebraicity criterion~\cite[Th\'eor\`eme~5.4.3]{Andre}. We state and prove our result here in a
particular case suited to our needs, beginning with the abstract form. We denote by $\mathcal{O}(\overline{D(0,1})) \subset \C \llbracket z \rrbracket$ the ring of holomorphic function germs that converge on some open neighborhood of the closed unit disc $|z| \leq 1$. Throughout our paper, we will use the notation
$$
\T  := \{ e^{2 \pi i \theta} \, : \, \theta \in [0,1) \} \subset \C^{\times}
$$
for the unit circle,  the Cartesian power
$$
\T^d := \{ (e^{2\pi i \theta_1}, \ldots, e^{2\pi i \theta_d}) \, : \, \theta_1, \ldots, \theta_d \in [0,1) \} \subset \mathbb{G}_m^d(\C)
$$
for the unit  $d$-torus, and
$$
\mu_{\mathrm{Haar}} := d\theta_1 \cdots d\theta_d
$$
for the normalized Haar measure of this compact group.

\begin{thm} \label{abstract form}
Consider the following data: 
\begin{itemize} 
\item[(i)] 
a nonconstant rational function $p(x) \in \Q(x) \setminus \Q$ without pole at $x=0$, 
\item[(ii)] a formal power series 
\numequation \label{normal}
x(t) \in t +  t^2 \Q\llbracket t \rrbracket
\end{equation}
 pulling back $p$ into an integral coefficients power series $x^* p := p(x(t)) \in \Z \llbracket t \rrbracket$  in the new variable $t$,
\item[(iii)]  and a holomorphic mapping $\varphi:  \overline{D(0,1)} \to \C$ taking $\varphi(0) = 0$ with $|\varphi'(0)| > 1$, and pulling back $p$ into a holomorphic function $\varphi^* p \in \mathcal{O}(\overline{D(0,1)})$  on some  neighborhood of the closed unit disc.
\end{itemize}
Suppose the formal power series $f_1, \ldots, f_m \in \Q\llbracket x \rrbracket$ are $\Q(p(x))$-linearly independent and satisfy the following integrality and analyticity properties like in~(ii) and~(iii):
$$
x^*f_1, \ldots, x^*f_m \in \Z \llbracket t \rrbracket, \quad \textrm{and} 
\quad \varphi^* f_1, \ldots, \varphi^* f_m \in \mathcal{O}(\overline{D(0,1)}).
$$
Then $f_1, \ldots, f_m \in \Q \llbracket x \rrbracket$ are algebraic (i.e., all $f_i \in \overline{\Q(x)}$), and
\numequation \label{abstracted bound}
 m  \leq  e  \cdot \frac{  \int_{\T} \log^+{|p \circ \varphi|} \, \mu_{\mathrm{Haar}}}{\log{|\varphi'(0)|}},
  \end{equation}
where $e = 2.718\ldots$ is Euler's constant. 
\end{thm}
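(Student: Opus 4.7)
The plan is a multivariable Diophantine argument following the arithmetic algebraization blueprint outlined in the introduction: use Siegel's lemma in $d$ variables to produce a nonzero integral combination of products of the $f_i$'s that vanishes to high order at the origin, confront its lexicographic-minimum Taylor coefficient with a Liouville lower bound against an integrated Cauchy/Jensen upper bound after pullback via $\varphi$, and optimize parameters while sending $d \to \infty$. The constant $e$ emerges from the Stirling asymptotic $\binom{T+d-1}{d} \sim (Te/d)^d$ entering the Siegel constraint.

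Concretely, fix $D, T, d \ge 1$ and introduce unknowns $a_{\mathbf{i},\mathbf{j}} \in \Z$ indexed by $\mathbf{i} \in \{1,\ldots,m\}^d$ and $\mathbf{j} \in \{0,\ldots,D-1\}^d$. The formal series
$$
F(t_1,\ldots,t_d) := \sum_{\mathbf{i},\mathbf{j}} a_{\mathbf{i},\mathbf{j}} \prod_{k=1}^d (x^*p)(t_k)^{j_k}\,(x^*f_{i_k})(t_k) \;\in\; \Z\llbracket t_1,\ldots,t_d\rrbracket
$$
has integer Taylor coefficients by hypothesis~(ii). Siegel's lemma, applied to the $\binom{T+d-1}{d}$ vanishing conditions (coefficients of monomials of total $t$-degree $<T$) on the $(mD)^d$ unknowns, yields a nonzero integer solution with $\max|a_{\mathbf{i},\mathbf{j}}| \le A$ whenever $(mD)^d > \binom{T+d-1}{d}$; a routine estimate using the integrality and the Cauchy bounds on $(x^*p)$ and $(x^*f_i)$ makes $\log A$ subleading in the regime $d \to \infty$. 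The $\Q(p(x))$-linear independence of $f_1,\ldots,f_m$ extends multiplicatively to the $d$-fold product system and forces $F \not\equiv 0$. Let $\mathbf{s}^*$ be the lex-minimum multi-index in the Taylor support of $F$, of total order $T^* \ge T$; then $|c^F_{\mathbf{s}^*}| \ge 1$ (Liouville). Passing to
$$
G(z_1,\ldots,z_d) := \sum_{\mathbf{i},\mathbf{j}} a_{\mathbf{i},\mathbf{j}} \prod_{k=1}^d p(\varphi(z_k))^{j_k} f_{i_k}(\varphi(z_k)) \in \mathcal{O}(\overline{D(0,1)^d}),
$$
the identities $x(t) = t + O(t^2)$ and $\varphi(z) = \varphi'(0) z + O(z^2)$ give $\log|c^G_{\mathbf{s}^*}| \ge T\log|\varphi'(0)|$.

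For the matching upper bound I apply the one-variable Jensen formula iteratively: first in $z_1$ (with $z_2,\ldots,z_d \in \T$ fixed) to extract the coefficient of $z_1^{s^*_1}$, then in $z_2$, and so on through $z_d$. The lex-min choice of $\mathbf{s}^*$ guarantees that the extracted function is not identically zero at each stage, so the chain of Jensen inequalities integrates to
$$
\log|c^G_{\mathbf{s}^*}| \;\le\; \int_{\T^d}\log|G|\;\mu_{\mathrm{Haar}}.
$$
On the right, the triangle inequality on the defining sum combined with the one-variable separation $\log\max_{0 \le j < D}|p(\varphi(z))|^j = (D-1)\log^+|p(\varphi(z))|$ gives a pointwise bound whose integral over $\T^d$ factors along the product measure into
$$
\int_{\T^d}\log|G|\,\mu_{\mathrm{Haar}} \;\le\; \log A + d\log(mD) + d(D-1)\!\int_\T \log^+|p\circ\varphi|\,\mu_{\mathrm{Haar}} + dM_f,
$$
with $M_f$ a constant depending only on $(\varphi, f_1,\ldots,f_m)$. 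This factorization --- exploiting the independence of the coordinates $z_k$ under $\mu_{\mathrm{Haar}}$ on $\T^d$ --- is the crucial step that turns a naive $\sup$-norm bound into an integrated \emph{mean} bound.

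Combining the two inequalities, dividing by $d$, and sending $D, d \to \infty$ with $T$ saturating the Siegel inequality (by Stirling, $T \sim mDd/e$ in this regime), all lower-order additive terms fall away and one arrives at
$$
\frac{m}{e}\log|\varphi'(0)| \;\le\; \int_\T \log^+|p\circ\varphi|\,\mu_{\mathrm{Haar}},
$$
which is exactly~\eqref{abstracted bound}. Algebraicity of each $f_i$ follows by applying this bound to the powers $1, f_i, f_i^2, \ldots$, which are $\Q(p(x))$-linearly independent and satisfy the same hypotheses as $f_i$ whenever $f_i$ is transcendental --- contradicting the finiteness of the upper bound on $m$. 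The principal technical obstacle is the integrated-mean Jensen step: a naive multivariable Cauchy integral would produce only a $d\log\sup_\T|p\circ\varphi|$ bound, too weak (as emphasized in the introduction) to close the downstream proof of Theorem~\ref{theorem:main}. The lex-minimum selection of $\mathbf{s}^*$ is the technical device making iterated Jensen apply at each stage without extraneous boundary corrections; the alternative approaches flagged in~\S\ref{intro sketch} (plurisubharmonicity with lexicographic induction, or Nevanlinna's canonical factorization) achieve the same mean-versus-sup improvement by different technical means.
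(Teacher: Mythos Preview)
Your proposal is essentially correct and matches one of the paper's three proofs of this theorem, namely the ``first alternative proof'' in \S\ref{psh approach} (suggested to the authors by Andr\'e). Your iterated one-variable Jensen step, applied to the lexicographically minimal monomial, is precisely the content of the paper's Lemma~\ref{lowest lexi}, and your auxiliary construction via Siegel's lemma is Lemma~\ref{Siegel}. The paper's \emph{default} route is different: it first proves an intermediate statement (Theorem~\ref{meromorphic form}) involving $\sup_{\T}\log|h|$ and $\sup_{\T}\log|h\cdot\varphi^*p|$ for an auxiliary holomorphic $h$, and then deduces the integrated bound~\eqref{abstracted bound} via Nevanlinna's canonical factorization of $p\circ\varphi$ into a quotient of bounded holomorphic functions. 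A third approach (\S\ref{initial argument}) uses equidistribution and Vandermonde damping. All three converge on the same bound; your choice is arguably the most direct.

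One technical point deserves more care: your assertion that ``$\log A$ is subleading in the regime $d\to\infty$'' is not automatic if you genuinely saturate the Siegel inequality. With the number of equations a fixed fraction of the number of unknowns, the Dirichlet exponent is bounded below and $\log A$ is of order $T$ (since the Taylor coefficients of the $x^*f_i$ grow like $\rho^{-n}$), hence comparable to the main term $dD$ rather than negligible. The paper handles this by introducing an extra parameter $\kappa\in(0,1)$ so that $(mD)^d \sim (1+1/\kappa)\binom{\alpha+d}{d}$, forcing the Dirichlet exponent $\sim\kappa$ and $\log A \le \kappa C\alpha + o(\alpha)$; one then lets $\alpha\to\infty$, then $d\to\infty$, and finally $\kappa\to 0$. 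With that adjustment (and the observation that the constant $C$ depends only on $p$ and the local inversion radius $\rho$ of $\varphi$, not on $d$ or $\kappa$), your limit $\frac{d}{(d!)^{1/d}}(1+1/\kappa)^{1/d}\to e$ goes through exactly as you indicate.
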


The novel point of the bound~\eqref{abstracted bound} is the integrated term in the numerator instead of a supremum term.  It is critical for our proof of the unbounded denominators conjecture to have the numerator in~\eqref{abstracted bound}, which measures the growth of $\varphi$, expressed as a Nevanlinna characteristic function (or, equivalently in the holomorphic case that we consider, a mean proximity function).  

This abstract dimension bound~\eqref{abstracted bound} will be used more concretely as a holonomy rank bound. 
To state the relevant corollary, let  us introduce an algebra of holonomic power series with integral coefficients
and restricted singularities.

\begin{df} \label{holonomy ring}
For $U \subset \C$ an open subset, $R \subset \C$ a subring with fraction field $F := \mathrm{Frac}(R)$, and $x(t) \in t \Q\llbracket t \rrbracket$ a
formal power series,
 we define $\mathcal{H}(U,x(t),R)$ to be the ring  of  formal
power series $f(x) \in F\llbracket x \rrbracket$ whose $t$-expansion $f(x(t)) \in R\llbracket t \rrbracket$, and such that there exists a nonzero 
linear differential operator $L$ over $\overline{\Q}(x)$ with $L(f) = 0$ and having a trivial local monodromy around all of its singular  points that belong to $U$. 

Further, we let $\mathcal{V}(U, x(t), R)$ to be the $F(x)$-vector space spanned by $\mathcal{H}(U,x(t),R)$. 


For $x(t) = t$, we more simply denote the $R[x]$-algebra $\mathcal{H}(U, t, R)$  by $\mathcal{H}(U,R)$ and the $F(x)$-vector space $\mathcal{V}(U,t,R)$ by $\mathcal{V}(U,R)$.
\end{df}

Here by \emph{trivial local monodromy around $x  = \alpha$} we mean that there exist a complex  neighborhood $U_{\alpha} \ni \alpha$ and meromorphic functions $g_1,\dots, g_n \in \mathcal{M}(U_{\alpha})$ on $U_{\alpha}$, where $n$ is the order of $L$,  such that $g_1,\dots, g_n$ form a $\C$-basis of the solution space of $L(f)=0$ on $U_\alpha \setminus \{\alpha\}$. This is the case if $x = \alpha$ is not a singular point of $L$.
An example at a singular point $x = 0$ include $L_n = x \frac{d}{dx} - n$ for $n \in \Z \setminus \{0\}$, of solution space $\ker{L_n} = \C \cdot x^n$; this is meromorphic (but not holomorphic) when~$n < 0$.  


Our holonomy bound is now a straightforward combination of Theorem~\ref{abstract form} and Cauchy's analyticity theorem on the solutions of linear differential equations with analytic coefficients. 

\begin{cor} \label{holonomy} 
Let $0 \in U \subset \C$ be an open subset containing the origin. If the uniformization radius of the pointed
Riemann surface $(U,0)$ is strictly greater than $1$, then the algebra $\mathcal{V}(U,\Z)$ is finite-dimensional as a $\Q(x)$-vector space. 

More precisely, let $p(x) \in \Q(x) \setminus \Q$ be a non-constant  rational function without poles in $U$, 
and let
$\varphi(z) : \overline{D(0,1)} \to U$ be  a holomorphic map taking $\varphi(0) = 0$ with $|\varphi'(0)| > 1$. 
 If
\numequation 
x(t) \in t +  t^2 \Q\llbracket t \rrbracket
\end{equation}
has $p(x(t)) \in \Z\llbracket t \rrbracket$, 
then
 the following dimension bound holds on $\mathcal{V}(U,x(t),\Z)$ over
$\Q(p(x))$:
  \numequation \label{dimensionbound}
\dim_{\Q(p(x))} \mathcal{V}(U,x(t),\Z)   \leq e  \cdot \frac{  \int_{\T} \log^+{|p \circ \varphi|} \, \mu_{\mathrm{Haar}}}{\log{|\varphi'(0)|}}.
  \end{equation}

\end{cor}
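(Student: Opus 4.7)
The plan is to derive the quantitative bound~\eqref{dimensionbound} by applying Theorem~\ref{abstract form} to a suitably modified family drawn from a $\Q(p(x))$-linearly independent subset of $\mathcal{V}(U,x(t),\Z)$, and to deduce the qualitative finite-dimensionality from the quantitative statement by specializing to any $p, \varphi, x(t)$ allowed by the hypothesis on the uniformization radius.

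Given $f_1, \ldots, f_m \in \mathcal{V}(U,x(t),\Z)$ that are $\Q(p(x))$-linearly independent, a standard clearing of $\Q(x)$-denominators within the defining $\Q(x)$-span of $\mathcal{V}$ reduces the situation to the case $f_i \in \mathcal{H}(U,x(t),\Z)$, preserving $\Q(p(x))$-linear independence since a common nonzero scalar $Q \in \Q(x)$ may be factored out of any vanishing relation $\sum c_i (Q f_i) = 0$ with $c_i \in \Q(p(x))$. Now each $f_i \in \mathcal{H}$ satisfies a linear ODE $L_i(f_i) = 0$ with only trivial local monodromy at its singularities $\alpha_1, \ldots, \alpha_s$ in $U$; by Cauchy's analyticity theorem for solutions of linear ODEs with analytic coefficients, combined with the trivial monodromy, each $f_i$ extends from its germ at $0$ to a single-valued meromorphic function on all of $U$, with poles confined to $\{\alpha_1,\ldots,\alpha_s\}$.

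To satisfy the holomorphicity requirement (iii) of Theorem~\ref{abstract form} while simultaneously preserving the integrality requirement (ii), the main technical idea is to multiply by a polynomial in $p(x)$ rather than in $x$. Choose $\tilde P(y) \in \Z[y]$ vanishing to sufficiently high order at each value $p(\alpha_j) \in \C$ to cancel all the possible poles, and set $F_i := \tilde P(p(x)) \cdot f_i$. Since $p$ has no poles in $U$, the factor $\tilde P(p(x))$ vanishes to the prescribed order at each $\alpha_j$, so $F_i$ extends holomorphically throughout $U$, and hence $\varphi^* F_i \in \mathcal O(\overline{D(0,1)})$ because $\varphi(\overline{D(0,1)}) \subset U$. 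Crucially, the same multiplier preserves integrality on the $t$-side: $x^* F_i = \tilde P(p(x(t))) \cdot f_i(x(t)) \in \Z \llbracket t \rrbracket$ by $\tilde P \in \Z[y]$, the standing hypothesis $p(x(t)) \in \Z\llbracket t \rrbracket$, and the membership $f_i \in \mathcal{H}$.

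Finally, since $\tilde P(p(x)) \in \Q[p(x)] \subset \Q(p(x))^\times$ is a single nonzero scalar of the ambient $\Q(p(x))$-vector space, the family $F_1, \ldots, F_m$ remains $\Q(p(x))$-linearly independent. Theorem~\ref{abstract form} applied to these $F_i$ with the same data $p, x(t), \varphi$ then yields the desired bound~\eqref{dimensionbound}. The main obstacle in this proof is precisely the simultaneous pole-killing and preservation of integrality in the third step; it is neatly resolved by the choice of multipliers from $\Z[p(x)]$ in place of $\Z[x]$, a move made possible by the built-in hypothesis $p(x(t)) \in \Z\llbracket t \rrbracket$ linking the data of (i) and (ii) in Theorem~\ref{abstract form}.
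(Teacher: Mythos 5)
Your core argument is the same as the paper's: multiply each $f_i$ by an element of $\Z[p(x)]$ (the paper uses $g\in\Q[p(x)]\setminus\{0\}$, the same thing up to an integer scalar) vanishing to high order at the trivial-monodromy singularities of $L$ inside $U$, observe that this multiplier preserves both the $\Z\llbracket t\rrbracket$-integrality (because $p(x(t))\in\Z\llbracket t\rrbracket$) and the $\Q(p(x))$-linear independence, restore holomorphy of the pullback on $\overline{D(0,1)}$, and quote Theorem~\ref{abstract form}. That is exactly the paper's proof.

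However, one intermediate step as you state it is wrong and should be repaired. You claim that trivial local monodromy at the singularities $\alpha_1,\dots,\alpha_s\in U$ implies that each $f_i$ ``extends from its germ at $0$ to a single-valued meromorphic function on all of $U$.'' That does not follow: triviality of the \emph{local} monodromies inside $U$ says nothing about the monodromy of $f_i$ along non-contractible loops of $U$ itself, which can encircle singularities of $L$ lying \emph{outside} $U$. In the very application of this corollary, $U=\C\setminus 16^{-1/N}\mu_N$ is multiply connected and the relevant operators have genuinely nontrivial monodromy around the punctures $16^{-1/N}\mu_N$, so the forms in question are not single-valued on $U$. The correct route --- and the one the paper takes --- is to work on the disc rather than on $U$: pull back the ODE by $\varphi$ to a simply connected neighborhood of $\overline{D(0,1)}$; the germ of $\tilde P(p\circ\varphi)\cdot(f_i\circ\varphi)$ at $z=0$ continues holomorphically along every path in $\overline{D(0,1)}$ (by Cauchy's theorem at regular points, and at points lying above the $\alpha_j$ because the local solutions there are single-valued meromorphic and the multiplier $\tilde P(p\circ\varphi)$ kills their poles, with vanishing and pole orders both scaled by the local valency of $\varphi$), and simple connectivity of the disc then yields a single-valued $\varphi^*F_i\in\mathcal{O}(\overline{D(0,1)})$, which is all you need. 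Separately, your opening reduction from $\mathcal{V}(U,x(t),\Z)$ to $\mathcal{H}(U,x(t),\Z)$ by ``clearing $\Q(x)$-denominators'' is looser than stated: multiplying by $Q\in\Q(x)$ does not preserve the $\Z\llbracket t\rrbracket$-integrality that defines $\mathcal{H}$ (only multipliers with integral $t$-expansion, such as elements of $\Z[p(x)]$, do). The paper is equally terse on this point; in the intended use the $\Q(p(x))$-independent families are taken, after integer scaling, directly inside $\mathcal{H}$ (as happens in Proposition~\ref{strategy}, where $R_{2N}$ has a basis in $\mathcal{H}\otimes_{\Z}\Q$), so this is a presentational gloss rather than the heart of the matter, but it should not be advertised as a routine clearing of denominators.
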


\begin{proof}
 The pulled-back space $\varphi^*\mathcal{H}(U,  x(t) , \Z) \subset \varphi^* \mathcal{H}(U,\C)$
 lies in the ring of formal
power series fulfilling linear differential equations with analytic coefficients and no singularities with nontrivial local monodromies on the closed
disc $\overline{D(0,1)}$. Hence, for any such function $f \in \mathcal{H}(U,  x(t) , \Z)$, there exists a nonzero $g(x)\in \Q[p(x)] \setminus \{0\}$ such that for any singular point $\alpha\in U$ of the linear operator $L$ in Definition~\ref{holonomy ring}, and for any local solution $h(x)$ of~$L(h) = 0$ in a small punctured neighborhood of $\alpha$, the product function $g(x)h(x)$ is holomorphic at $x = \alpha$. 
(The singularities of~$L$ all occur at algebraic points.) 
Cauchy's theorem then gives that $\varphi^*(gf)$ is a holomorphic function on $\overline{D(0,1)}$, and we conclude by Theorem~\ref{abstract form}.
\end{proof}

Theorem~\ref{abstract form} is modeled on Andr\'e's Diophantine approximation method~\cite[\SSS~VIII]{AndreG}, \cite[\SSS~5]{Andre}. We include as many as three proofs, all sharing a common basic framework~\SSSS~\ref{auxiliary construction} and relying crucially on a $d \to \infty$ limit for the number of auxiliary variables in the auxiliary function constructed by Lemma~\ref{Siegel} below. Our original treatment was based on equidistribution and is in~\SSSSs~\ref{auxiliary construction}, \ref{initial argument},  and an alternative approach proposed to us by Andr\'e and based on plurisubharmonicity is in~\SSSSs~\ref{auxiliary construction}, \ref{psh approach}.
 Firstly we give a shorter proof based on  Nevanlinna's canonical factorization~\SSSS~\ref{canonical decomposition} and the following intermediate form of Theorem~\ref{abstract form}. 

\begin{lemma}  \label{meromorphic form}
In the setting of Theorem~\ref{abstract form}, consider furthermore an arbitrary holomorphic function $h : \overline{D(0,1)} \to \C$ with $h(0) = 1$. Then
\numequation  \label{meromorphic bound}
m \leq e \, \frac{ \max \big\{  \sup_{\T}{\log{|h|}}, \, \sup_{\T}{\log{|h \cdot \varphi^* p|}}  \big\}  }{\log{|\varphi'(0)|}},
\end{equation}
and $f_1, \ldots, f_m \in \overline{\Q(x)} \cap \Q\llbracket x \rrbracket$. 
\end{lemma}

\begin{remark}
We will find in~\SSSS~\ref{canonical decomposition} that the bound in Theorem~\ref{abstract form} is equal to the infimum of the bounds in Lemma~\ref{meromorphic form} across all choices of the \emph{holomorphic multiplier} function~$h$. Therefore, in this form, Lemma~\ref{meromorphic form} 
is in fact equivalent to our main Theorem~\ref{abstract form}; but it turns out convenient to approach the statement in this intermediate form. 
On the other hand, 
Remark~\ref{large numbers} sketches a strengthened form of the lemma. 
\end{remark}

 For a complete proof of the unbounded denominators conjecture, we invite the reader on a first pass to proceed directly to~\SSS~\ref{main plan} after~\SSS~\ref{canonical decomposition}.

\subsection{The auxiliary construction} \label{auxiliary construction}

We will make a use of a Diophantine approximation construction in a high number $d \to \infty$ of variables $\mathbf{x} := (x_1, \ldots, x_d)$. We will write 
$$
\mathbf{x^j} := x_1^{j_1} \cdots x_d^{j_d}, \quad   p(\mathbf{x}) := (p(x_1), \ldots, p(x_d)). 
$$

Since $\varphi$ maps $(D(0,1),0)$ to~$(\C,0)$ with nonzero derivative, the inverse function theorem gives a positive radius $\rho > 0$ such that
\numequation  \label{inverse function theorem}
\varphi \, : \, \varphi^{-1}(D(0,\rho))_0 \xrightarrow{\cong} D(0,\rho)
\end{equation}
is an analytic isomorphism from the connected component $\varphi^{-1}(D(0,\rho))_0$
of~$\varphi^{-1}(D(0,\rho))$  which contains the element~$0$.

\begin{lemma} \label{Siegel}
 Let $d, \alpha \in \NwithoutzeroA$ and $\kappa \in (0,1)$ be parameters.
Asymptotically
  in $\alpha \to \infty$ as $d$ and $\kappa$ are held fixed, there exists a \emph{nonzero} $d$-variate formal function $F(\mathbf{x})$ of the form
\numequation 
 \label{theform}  F(\mathbf{x}) =     \sum_{\substack{  \mathbf{i} \in \{1, \ldots, m\}^d   \\ \mathbf{k} \in \{0, \ldots, D-1\}^{d} }}  a_{\mathbf{i,k}}\, p( \mathbf{x})^{\mathbf{k}}  \, \prod_{s=1}^d f_{i_s}(x_s) 
\in \Q\llbracket \mathbf{x} \rrbracket \setminus \{0\},
\end{equation}
vanishing to order at least $\alpha$ at $\mathbf{x = 0}$,
   with
  \begin{enumerate}
    \item  \label{degrees} 
$$
  D \leq   \frac{1}{(d!)^{1/d}} \frac{1}{m} \Big( 1+\frac{1}{\kappa} \Big)^{\frac{1}{d}} \alpha + o(\alpha);
$$ 
     \item \label{heights} all $a_{\mathbf{i,k}} \in \Z$ are integers 
 bounded in absolute value by $\exp\big( \kappa C\alpha +   o(\alpha) \big)$
     for some constant $C \in \R$ depending only on 
the radius $\rho$ from~\eqref{inverse function theorem}     and on the degree and height of the rational function $p(x) \in \Q(x)$.
\end{enumerate}
\end{lemma}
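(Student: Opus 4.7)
The plan is to carry out a standard Thue–Siegel pigeonhole construction, counting unknowns against linear vanishing conditions and using Siegel's lemma to produce a nonzero integer solution of controlled size. The $N := m^d D^d$ rational parameters $a_{\mathbf{i,k}}$ are constrained by the vanishing of $F(\mathbf{x})$ to order $\alpha$ at $\mathbf{x}=0$, which imposes $M := \binom{\alpha+d-1}{d} = \alpha^d/d! + o(\alpha^d)$ homogeneous linear conditions. The upper bound on $D$ stated in~(\ref{degrees}) is precisely the one making $N \ge (1 + 1/\kappa) M - o(\alpha^d)$, so the linear system has Siegel surplus $N - M \sim \alpha^d / (\kappa \cdot d!)$, with asymptotic ratio $M/(N-M) \to \kappa$ as $\alpha \to \infty$.

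To convert the a priori rational vanishing conditions into an integer linear system I would pull back along the formal tangent-to-identity substitution $\mathbf{x} = \mathbf{x(t)}$: since $x(t) \in t + t^2 \Q\llbracket t \rrbracket$ is formally invertible, the vanishing of $F(\mathbf{x})$ at $\mathbf{x} = 0$ to order $\alpha$ is equivalent to the vanishing of $F(\mathbf{x(t)})$ at $\mathbf{t} = 0$ to the same order, and each coefficient of the resulting system factorizes as $\prod_{s=1}^d [t_s^{j_s}] \bigl( x^* p(t_s)^{k_s} \cdot x^* f_{i_s}(t_s) \bigr)$, an integer by the integrality hypothesis~(ii). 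The main obstacle, which fixes the constant $C$ in~(\ref{heights}), is the height bound $H \le \exp(C \alpha + o(\alpha))$ on these entries. I would obtain it by translating the analyticity of $\varphi^* p$ and $\varphi^* f_i$ on $\overline{D(0,1)}$ into Cauchy estimates on a small disc via the inverse function theorem~(\ref{inverse function theorem}): $\varphi$ admits an analytic inverse on $D(0, \rho)$ for some $\rho > 0$, so $p$ and each $f_i$ are themselves analytic on $D(0, \rho)$ with sup-norm bounds depending only on $\rho$ and the data of $p, \varphi$; the resulting Cauchy estimates propagate through to the $t$-expansions as $\bigl| [t^j] x^* p(t)^k \cdot x^* f_i(t) \bigr| \le A^{k+1} B \rho_0^{-j}$ for suitable constants $A, B, \rho_0 > 0$, which with $k \le D = O(\alpha)$ and $j < \alpha$ yields each factor $\le \exp(O(\alpha))$, and hence $H \le \exp(C \alpha + o(\alpha))$ for $d$ held fixed.

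Feeding this into the classical Siegel's lemma produces a nonzero integer solution with $\max |a_{\mathbf{i,k}}| \le (N H)^{M/(N-M)}$. Since $\log N = d \log(m D) = O(\log \alpha) = o(\alpha)$ and $M/(N-M) = \kappa + o(1)$ in the regime $\alpha \to \infty$ with $d, \kappa$ fixed, this gives $\log \max |a_{\mathbf{i,k}}| \le \kappa C \alpha + o(\alpha)$, matching~(\ref{heights}). The delicate step is the height bound: the formal nature of $x(t)$ precludes a direct Cauchy estimate on $x^* p$, and the translation through $\varphi$ and its local analytic inverse — equivalently, one could instead apply the Bombieri–Vaaler form of Siegel's lemma over $\Q$ to the uniformized system $\tilde F(\mathbf{z}) := F(\varphi(\mathbf{z}))$, bounding the archimedean heights by Cauchy on the closed unit polydisc and the $\ell$-adic denominators via the integrality hypothesis on $x^* p, x^* f_i$ — requires some careful bookkeeping but is otherwise a routine consequence of hypothesis~(iii) and the inverse function theorem.
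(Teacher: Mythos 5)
Your framework (count unknowns against vanishing conditions, apply Siegel's lemma with Dirichlet exponent $\kappa$, note $\log N = o(\alpha)$) matches the paper, and your choice of $D$ and the resulting exponent $M/(N-M)\to\kappa$ are correct. But the step you yourself flag as delicate — the height bound on the entries — contains a genuine gap, and it is exactly the point where the paper's proof does something you have skipped. You set the linear system up in the $t$-variable and claim that analyticity of $p$ and $f_i$ on $|x|<\rho$ (via the local inverse of $\varphi$) ``propagates through to the $t$-expansions'' as a bound $\bigl|[t^j]\, x^*p(t)^k\, x^*f_i(t)\bigr| \le A^{k+1}B\rho_0^{-j}$ with constants depending only on $\rho$ and the data of $p,\varphi$. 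This is false: $x(t)$ is a purely formal series with no archimedean control on its coefficients, and the hypotheses only force the $t$-coefficients of $x^*p$, $x^*f_i$ to be integers, not to be small. Concretely, take $p(x)=x$, $m=1$, $f_1=1$, and $x(t)=t+At^2$ with $A\in\Z$ arbitrarily large: all hypotheses of Theorem~\ref{abstract form} hold with any admissible $\varphi$, yet $[t^2]\,x^*p = A$, so no bound of the form $\exp\big(C\alpha+o(\alpha)\big)$ with $C$ depending only on $\rho$ and the degree and height of $p$ (as statement~(\ref{heights}) requires) can hold for your $t$-system; with still wilder $x(t)$ the entries need not even be exponentially bounded in $\alpha$, which would wreck the subsequent extrapolation. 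The paper avoids this by working with the system in the $x$-variable: from $p(x(t))\in\Z\llbracket t\rrbracket$ and $p\in\Q(x)$ one first deduces $x(t)\in t+(t^2/M)\Z\llbracket t/M\rrbracket$ with $M$ bounded in terms of the degree and height of $p$, hence the inverse series $t(x)\in x+(x^2/M)\Z\llbracket x/M\rrbracket$ and then $f_i(x)\in\Z\llbracket x/M\rrbracket$; this controls the \emph{denominators} of the $x$-coefficients, while the \emph{archimedean} size is controlled by convergence of $f_i$ on $|x|<\rho$ coming from \eqref{inverse function theorem}. Clearing denominators gives an integer system with entries of size $\exp\big(C\alpha+o(\alpha)\big)$, $e^C=M/\rho$, and the classical Siegel lemma applies.

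Your parenthetical adelic alternative does not repair this as stated: the $\mathbf{z}$-coefficients of $F(\varphi(\mathbf{z}))$ and the $\mathbf{t}$-coefficients of $F(x(\mathbf{t}))$ are two \emph{different} linear systems (with the same kernel but different, and in the first case non-rational, entries), so you cannot take archimedean bounds from one and $\ell$-adic bounds from the other inside a single application of Bombieri--Vaaler; the one coordinate in which both types of estimates coexist for a single $\Q$-rational system is the $x$-variable, which again requires the denominator analysis of $x(t)$ above. Finally, you never verify that $F\not\equiv 0$: Siegel's lemma only gives a nonzero coefficient vector, and concluding that the formal series is nonzero uses the $\Q(p(x))$-linear independence of $f_1,\ldots,f_m$ (together with $p$ nonconstant) to see that the functions $p(\mathbf{x})^{\mathbf{k}}\prod_s f_{i_s}(x_s)$ are linearly independent; this is a short but necessary step.
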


\begin{proof}
We expand our sought-for formal function in (\ref{theform}) into a formal power series in $\Q\llbracket \mathbf{x} \rrbracket$ and solve $\binom{\alpha + d}{d} \sim \alpha^{d}/ d!$ linear equations in the $(mD)^d$ free parameters $  a_{\mathbf{i,j}}$. To begin with, we show that in the formal inverse function expansion, the integrality condition $p(x(t)) \in \Z\llbracket t \rrbracket$ entails $x(t) \in t + (t^2/M) \Z\llbracket t/M \rrbracket$ with some $M \in \NwithoutzeroA$ bounded in terms of the degree and height of the rational function $p(x)$.  
 
Here are the details on the construction of~$M \in \NwithoutzeroA$. Set $y := b(p(x) - p(0))/c$ with~$b \in \Z \setminus \{0\}$ and~$c \in \NwithoutzeroA$ chosen so that $y \in \left( x^N+ x^{N+1} \Q\llbracket x \rrbracket \right) \cap \Q(x)$ for some~$N \in \NwithoutzeroA$. Formally, we have a Puiseux series branch expansion $x = x(y) \in \zeta y^{1/N} + y^{1/N} \Qbar\llbracket y^{1/N} \rrbracket$, where~$\zeta^N = 1$. Eisenstein's theorem~\cite[\S~11.4]{BombieriGubler} supplies an $M_1 \in \NwithoutzeroA$ (depending on $p(x)$) for which $x(y) \in \overline{\Z}\llbracket y^{1/N}/M_1 \rrbracket$. On the other hand, the binomial expansion gives $(1+u)^{1/N} = \sum_{n=0}^{\infty} \binom{1/N}{n} u^n \in \Z\llbracket u/N^2\rrbracket$, by a simple denominator estimate. With our assumptions~$p(x(t)) \in \Z\llbracket t \rrbracket$ and  $x(t) \in t + t^2 \Q\llbracket t \rrbracket$ implying $y(t) =b (p(x(t))-p(0))/c \in c^{-1}\Z\llbracket t \rrbracket \cap \left( x^N + x^{N+1} \Q\llbracket x \rrbracket \right) = c^{-1}\Z\llbracket t \rrbracket \cap \left( t^N + t^{N+1} \Q\llbracket t \rrbracket \right) = t^N + c^{-1} t^{N+1} \Z\llbracket t \rrbracket$ and hence $\zeta y(t)^{1/N} =t \left( 1 + b_1t/c + b_2t^2/c +  b_3t^3/c + \cdots \right)^{1/N}$ with~$\zeta^N = 1$ and
 some integers~$b_1, b_2, \ldots \in \Z$, the binomial expansion gives~$\zeta y(t)^{1/N} \in \Z \llbracket t/(cN^2)\rrbracket$, and therefore 
 $x(t) = x(y(t)) \in \overline{\Z}\llbracket  y(t)^{1/N}/M_1 \rrbracket
\subseteq  \overline{\Z}\llbracket  t/(cN^2M_1) \rrbracket$. Coupled with~$x(t) \in t + t^2 \Q\llbracket t \rrbracket$, this supplies the requisite formula
$x(t) \in t + (t^2/M) \Z\llbracket t/M \rrbracket$ with $M := c^2 N^4 M_1^2$.
 
Now the inverse series also has $t(x) \in x + (x^2/M) \Z \llbracket x/M \rrbracket$, and so $f_i(x(t)) \in \Z \llbracket t \rrbracket$ entails $f_i(x) \in \Z \llbracket x/M \rrbracket$ for all $i = 1,\ldots, m$. Furthermore, by~\eqref{inverse function theorem}, every power series $f_i(x) \in \Q\llbracket x \rrbracket$ is convergent on the archimedean disc $|x| < \rho$. The result then follows from the classical Siegel lemma~\cite[Lemma~2.9.1]{BombieriGubler}, with $e^C := M/\rho$ and 
the degree parameter choice
$$
D  \sim  \frac{1}{m(d!)^{1/d}} \Big( 1+ \frac{1}{\kappa} \Big)^{\frac{1}{d}} \alpha,
$$
that brings in a Dirichlet exponent $\sim \kappa$ as $\alpha \to \infty$. 

Since the formal functions $f_1, \ldots, f_m \in \Q\llbracket x\rrbracket$ are linearly independent over $\Q(p(x))$, an easy induction argument on the dimension $d$ shows that $\{f_\bi\}_{\bi \in \{1, \ldots, m\}^d}$ are linearly independent over $\Q(p(\bx))$. For the step of this induction, simply note that a non-zero element $Q(x_1,\ldots,x_{d+1}) \in \Q(p(x_1),\ldots,p(x_{d+1})) \setminus \{0\}$ specializes to a non-zero element~$Q(\bx,c) \in \Q(p(\bx)) \setminus \{0\}$ for all but finitely many arguments~$c \in \Q$ under setting~$x_{d+1} := c$, and so a putative relation in the~$d+1$ variables~$(\bx, x_{d+1})$ specializes to a relation in the~$d$ variables~$\bx = (x_1,\ldots,x_d)$. 

At this point, having established the $\Q(p(\bx))$-linear independence of the constituent functions~$f_{\bi}$, the property $F \not\equiv 0$ follows since at least one $a_{\mathbf{i,j}} \neq 0$ in the form~\eqref{theform}. 
\end{proof}

\subsection{Extrapolation and proof of Lemma~\ref{meromorphic form}} \label{extrapolate}
We consider the nonzero formal function
\numequation \label{cleared up}
H(\mathbf{z}) := h(z_1)^D \cdots h(z_d)^D \cdot F(\varphi(z_1), \ldots, \varphi(z_d)) \in  \C \llbracket \mathbf{z} 
\rrbracket \setminus \{0\}.
\end{equation}
By construction, it vanishes at $\mathbf{z=0}$ to order at least $\alpha$, and it is holomorphic in a neighborhood of the closed unit polydisc because all the split-variables constituents 
$$
\varphi^*p; \quad  \varphi^* f_1, \ldots, \varphi^* f_m \in {\mathcal{O}(\overline{D(0,1)})}.
$$
Let $\beta \geq \alpha$ be the exact order of vanishing of $F(\mathbf{x}) \in \Q \llbracket \mathbf{x} \rrbracket \setminus \{0\}$ at $\mathbf{x=0}$, and consider $c \, \mathbf{x^n}$ any nonzero monomial of that lowest order $\beta = |\mathbf{n}|$. Since $x(t) \in t + t^2 \Q\llbracket t \rrbracket$, the term $c \, \mathbf{t^n}$ is a lowest order monomial in the formal power series   $F(x(\mathbf{t})) \in \Z \llbracket \mathbf{t} \rrbracket$, and so $c \in \Z \setminus \{0\}$. Thus  we have the Liouville lower bound: 
\numequation \label{LLB}
|c| \geq 1.
\end{equation}
On the other hand, \eqref{cleared up} and the normalizations $h(z) \in 1 + z\, \C \llbracket z \rrbracket$  and $\varphi(z) \in \varphi'(0) z + z^2 \, \C \llbracket z \rrbracket$ exhibit $c \varphi'(0)^{\beta} \, \mathbf{z^n}$ as  a lowest order monomial in $H(\mathbf{z})$. Since the $\mathbf{z^n}$ coefficient is also computed by Cauchy's integral formula $  \int_{\T^d} \frac{ H(\mathbf{z}) }{ \mathbf{z^n}  } \, \mu_{\mathrm{Haar}}(\mathbf{z})$, we have the Cauchy upper bound: 
\numequation \label{CUP}
|c| \cdot |\varphi'(0)|^{\alpha}    \leq |c| \cdot |\varphi'(0)|^{\beta} \leq \sup_{\T^d} |H|. 
\end{equation}
To estimate the last supremum under the asymptotic $\alpha \to \infty$ for  fixed $d$ and $\kappa$, we note that~\eqref{cleared up} expands from~\eqref{theform}
into a~$\Z$-linear combination of $(mD)^d = \exp\big( o(\alpha) \big)$ terms of the form
$$
 \prod_{j=1}^{d} h(z_j)^{D-k_j} \prod_{j=1}^{d} (h(z_j)  \cdot  p(\varphi(z_j)) )^{k_j} \cdot f_{i_1}(\varphi(z_1)) \cdots 
f_{i_d}(\varphi(z_d)),
$$
for some $k_1, \ldots, k_d \in \{0,\ldots,D-1\}$, and with coefficients bounded in magnitude by
the quantity $\exp \big( \kappa C \alpha + o(\alpha)  \big)$.
Every such term is bounded in magnitude on $\T^d$ by
$$
e^{\kappa C \alpha + o(\alpha)} \cdot \max\big\{ \sup_{\T} |h|, \, \sup_{\T} |h \cdot \varphi^*p| \big\}^{dD} \cdot \max_{1 \leq i \leq m} \sup_{\T} |\varphi^* f_i|^d. 
$$
By the triangle inequality, we have in the $\alpha \to \infty$ asymptotic---with respect to a fixed $d$---the supremum bound
\begin{equation*}
\sup_{\T^d} \log{|H|}    \leq  dD  \cdot \max\big\{ \sup_{\T} \log{|h|}, \, \sup_{\T} \log{|h \cdot \varphi^*p|} \big\} + \kappa C \alpha + o(\alpha).
\end{equation*}
Combining with~\eqref{LLB} and~\eqref{CUP}, we get the asymptotic bound
$$
\alpha \log{|\varphi'(0)|}  \leq  \frac{d}{(d!)^{1/d}} \Big( 1 + \frac{1}{\kappa} \Big)^{\frac{1}{d}} \cdot \frac{\alpha}{m}   \max\big\{ \sup_{\T} \log{|h|}, \, \sup_{\T} \log{|h \cdot \varphi^*p|} \big\} + \kappa C \alpha +  o(\alpha)
$$
as $\alpha \to \infty$ with respect to the other parameters. 

This proves  the dimension bound
$$
m \leq  \inf_{ \substack{  d \in \NwithoutzeroA  \\ 0 < \kappa < (\log{|\varphi'(0)|})/|C| }  } \left\{  \frac{ \frac{d}{(d!)^{1/d}} \Big( 1 + \frac{1}{\kappa} \Big)^{\frac{1}{d}} \cdot  \max\big\{ \sup_{\T} \log{|h|}, \, \sup_{\T} \log{|h \cdot \varphi^*p|} \big\}}{{\log{|\varphi'(0)| - \kappa C}} }   \right\}
$$
contingent on the denominator being positive.
Lemma~\ref{meromorphic form} now follows by firstly letting $d \to \infty$ and then $\kappa \to 0$, and observing that in that limit
$$
  \frac{d}{(d!)^{1/d}} \Big( 1 + \frac{1}{\kappa} \Big)^{\frac{1}{d}} \to e \qquad \textrm{ while } \qquad \kappa C \to 0,
$$
by Stirling's asymptotic and the key point that the constant $C$ depends only 
on~$P$ and~$\varphi$  but not on either $d$ or $\kappa$.

 The algebraicity of $f_i$ follows {\it a fortiori} 
 by the finite dimension bound~\eqref{abstracted bound},  since all powers of $f_i$ satisfy $x^*f_i^N\in \Z\llbracket t \rrbracket$ and $\varphi^*f_i^N \in \mathcal{O}(\overline{D(0,1)})$, for any $N\in \NwithoutzeroA$.   \qed

\subsection{Canonical factorization and proof of Theorem~\ref{abstract form}} \label{canonical decomposition}
At this point Theorem~\ref{abstract form} comes as the immediate combination of Lemma~\ref{meromorphic form} and the following classical lemma of Nevanlinna. 

\begin{lemma}[Nevanlinna~\cite{Nevanlinna}]  \label{Beurling Nevanlinna}
Consider a holomorphic function $g : \overline{D(0,1)} \to \C$, and let~$\varepsilon > 0$.  Then there exists a quotient representation
$$
g = \frac{hg}{h},
$$
where $h : \overline{D(0,1)} \to \C$ is holomorphic with
$$
h(0) = 1 \quad \textrm{and}  \quad 
\max \big\{ \sup_{\T} \log{|h|}, \, \sup_{\T} \log{|hg|} \big\} \leq \int_{\T} \log^+{|g|} \, \mu_{\mathrm{Haar}} + \varepsilon.
$$
\end{lemma}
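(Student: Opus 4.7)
I would construct $h$ as a normalization of the reciprocal of the classical outer function with boundary modulus $\max(1,|g|)$ on $\T$. Concretely, let $M := \int_{\T} \log^+{|g|} \, \mu_{\mathrm{Haar}}$, which is finite since $g$ extends holomorphically across $\T$. Define the outer function
$$
F(z) := \exp\!\left(\frac{1}{2\pi}\int_0^{2\pi} \frac{e^{i\theta}+z}{e^{i\theta}-z}\log^+|g(e^{i\theta})|\, d\theta\right),
$$
which is nonvanishing and holomorphic on $D(0,1)$, whose non-tangential boundary modulus is $|F(e^{i\theta})| = \max(1,|g(e^{i\theta})|)$, and with $F(0) = e^{M}$ by the mean value property applied to $\log|F|$. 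Setting $h := F(0)/F = e^{M}/F$ then gives a nonvanishing holomorphic function on $D(0,1)$ with $h(0) = 1$.

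The required bounds are then a short calculation on $\T$: one has
$$
|h| = \frac{e^{M}}{\max(1,|g|)} \le e^{M}, \qquad |hg| = e^{M}\,\frac{|g|}{\max(1,|g|)} = e^{M}\min(1,|g|) \le e^{M},
$$
so $\max\{\sup_{\T}\log|h|,\, \sup_{\T}\log|hg|\} \le M$, as desired. An equivalent and perhaps more transparent viewpoint is to Poisson-extend the boundary datum $M - \log^+|g|$ to a harmonic function $u$ on $D(0,1)$, which satisfies $u(0)=0$ by the mean value property; picking a harmonic conjugate $v$ normalized by $v(0)=0$ and setting $h := e^{u+iv}$ recovers the same object and makes the estimates manifest.

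The only real obstacle is the regularity of $h$ on the \emph{closed} disc $\overline{D(0,1)}$, understood in the paper's sense as holomorphic on some open neighborhood. The boundary datum $\log^+|g|$ is only Lipschitz, with corners along the compact subset of $\T$ where $|g|=1$, so the outer function $F$ need not extend analytically across $\T$. To handle this, I would run the construction at slightly larger radius: since the hypothesis provides a neighborhood $\overline{D(0,R)}$ on which $g$ is holomorphic, for each $R' \in (1,R)$ build the analogue $h_{R'}$ on $D(0,R')$ from the outer function associated with $\max(1,|g|)$ on $\partial D(0,R')$. The maximum principle supplies the uniform estimates $|h_{R'}|, |h_{R'}g| \le e^{M_{R'}}$ on $D(0,R')$, where
$$M_{R'} := \frac{1}{2\pi}\int_0^{2\pi}\log^+|g(R'e^{i\theta})|\,d\theta \longrightarrow M$$
as $R' \to 1^+$ by continuity. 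Montel's theorem then extracts a subsequential limit $h$, holomorphic on a neighborhood of $\overline{D(0,1)}$, preserving $h(0)=1$, and inheriting the sharp bounds $\sup_{\T}\log|h|, \sup_{\T}\log|hg| \le M$ claimed in the lemma.
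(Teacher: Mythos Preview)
Your construction is exactly the paper's: the function
\[
h(z) = \exp\Big( M - \int_{\T} \log^+{|g(w)|}\cdot\frac{w+z}{w-z}\,\mu_{\mathrm{Haar}}(w)\Big)
\]
is your $e^M/F$. Your route is in fact a bit more direct: the paper first clears zeros of $g$ by a Blaschke product to reduce to a functional unit before invoking the Poisson formula for $\log|g|$, whereas you work straight from the outer function with boundary datum $\log^+|g|$, which needs no such reduction (and since $|B|=1$ on $\T$, the two constructions produce the same $h$ anyway).

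There is one genuine glitch in your closed-disc step. As $R'\to 1^+$ the domains $D(0,R')$ \emph{shrink} to $\overline{D(0,1)}$, so there is no fixed open neighborhood of $\overline{D(0,1)}$ on which all but finitely many $h_{R'}$ are defined; Montel therefore only produces a limit holomorphic on the open disc $D(0,1)$, not on a neighborhood. (The paper is equally terse here, writing only ``and then by shrinking the radius a little bit.'') The clean fix is not to take a limit at all: simply set $h:=h_{R'}$ for a single $R'>1$, which is genuinely holomorphic on $D(0,R')\supset\overline{D(0,1)}$, and accept the bound $M_{R'}$ in place of $M$. Since $M_{R'}\to M$ as $R'\to 1^+$ and the lemma is only ever used to feed into Theorem~\ref{meromorphic form} on the way to Theorem~\ref{abstract form}, the $o(1)$ slack is absorbed in the final passage to the limit there.
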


\begin{proof} 
This is in~\cite[\SSS~VII.1.4, Theorem on p.~187]{Nevanlinna} or~\cite[\SSS~VII.5]{Goluzin}, in the more general setting of meromorphic maps~$g(z)$; with the corresponding statement replacing~$\int_{\T} \log^+{|g|} \, \mu_{\mathrm{Haar}}$ by the full Nevanlinna characteristic~\S~\ref{integratedgrowthbound} of~$g$. We present the
argument for the reader's convenience, sticking to the holomorphic case of our statement. 
The statement is, of course, trivial for the zero function; we assume~$g \not\equiv 0$.  
Let~$a_1, \ldots, a_k$ be the finitely many zeros in the closed unit disc   $\overline{D(0,1)} = \{ |z| \leq 1 \}$
of the nonzero meromorphic function~$g : \overline{D(0,1)} \to \C$. (The latter, we recall, means by definition that~$g$ is meromorphic on some open neighborhood of the closed disc; hence the finiteness of the set of zeros that lie in the closed disc.) Let~$n_i \in \NwithzeroA$ be the multiplicity of the zero~$a_i$. The \emph{Blaschke product}
$$
B(z) := \prod_{i=1}^{k} \left( \frac{z - a_i}{ 1 - \overline{a_i} z}  \right)^{n_i} \, : \, D(0,1) \to D(0,1)
$$
is a holomorphic self-map~$D(0,1) \to D(0,1)$ of the unit disc that preserves its boundary, and
$$
G(z) := \frac{g(z)}{B(z)} \in \mathcal{O}^{\times}(D(0,1))
$$
 is a \emph{functional unit} on the open disc: a 
nowhere vanishing holomorphic function on~$D(0,1)$.   The function~$\log{|G|} : D(0,1) \to \R$ 
is therefore harmonic, and so the Poisson kernel formula --- see~\eqref{Poisson extended} below for a review --- together with the canonical decomposition $\log = \log^+ - \log^-$ into positive and negative parts gives the quotient representation
$$
G(z) = \frac{  \exp \Big(  -  \int_{|z|=r} \log^+{ \frac{1}{|G(w)|} } \cdot \frac{w+z}{w-z} \, \mu_{\mathrm{Haar}}(w) \Big) }{
 \exp \Big(  -  \int_{|z|=r} \log^+{ |G(w)| } \cdot \frac{w+z}{w-z}  \, \mu_{\mathrm{Haar}}(w) \Big) }
 $$
 on a neighborhood of the closed disc~$|z| \leq r$, for every~$r < 1$. Therefore, taking~$r \to 1^-$, we have quotient
 representation on the open disc~$D(0,1)$: 
\numequation \label{functional unit case canonical factorization} 
\begin{aligned}
G(z) = \frac{  \exp \Big(  -  \int_{\T} \log^+{ \frac{1}{|G(w)|} } \cdot \frac{w+z}{w-z} \, \mu_{\mathrm{Haar}}(w) \Big) }{
 \exp \Big(  -  \int_{\T} \log^+{ |G(w)| } \cdot \frac{w+z}{w-z}  \, \mu_{\mathrm{Haar}}(w) \Big) }
  =  \frac{  \exp \Big(  -  \int_{\T} \log^+{ \frac{1}{|g(w)|} } \cdot \frac{w+z}{w-z} \, \mu_{\mathrm{Haar}}(w) \Big) }{
 \exp \Big(  -  \int_{\T} \log^+{ |g(w)| } \cdot \frac{w+z}{w-z}  \, \mu_{\mathrm{Haar}}(w) \Big) }. 
 \end{aligned}
\end{equation}
In this factorization, the top and bottom both are holomorphic functions of $z \in D(0,1)$, and they both are bounded in absolute value by $\leq 1$, because the Poisson kernel satisfies $\Re\Big( \frac{w+z}{w-z} \Big) > 0$ for $1 = |w| > |z|$. Furthermore, the bottom in~\eqref{functional unit case canonical factorization} takes the value $ \exp \Big( -  \int_{\T} \log^+{|g|} \, \mu_{\mathrm{Haar}} \Big)$ at~$z = 0$. Therefore, on the open disc~$D(0,1)$, the definition 
$$
h(z) := \exp\left( \int_{\T} \log^+{|g|} \, \mu_{\mathrm{Haar}}    -   \int_{\T} \log^+{ |g (w)| } \cdot \frac{w+z}{w-z}  \, \mu_{\mathrm{Haar}}(w)  \right)
$$
fulfills~$h(0) = 1$ and~$\sup_{D(0,1)} |h| \leq  \int_{\T} \log^+{|g|} \, \mu_{\mathrm{Haar}}$, 
but then also~\eqref{functional unit case canonical factorization} taken with~$g = BG$ gives 
\begin{equation*}
\begin{aligned}
& \quad
\sup_{D(0,1)} \left\{ \log{ |hg|}  \right\} = \sup_{D(0,1)} \left\{ \log{ |hBG|} \right\}  \leq \sup_{D(0,1)} \left\{ \log{  |hG| } \right\} \\
&  = \sup_{|w| < 1} \left\{ \int_{\T} \log^+{|g|} \, \mu_{\mathrm{Haar}}    -    \int_{\T} \log^+{ \frac{1}{|g(w)|} } \cdot \frac{w+z}{w-z} \, \mu_{\mathrm{Haar}}(w) \right\} \\
& \leq  \int_{\T} \log^+{|g|} \, \mu_{\mathrm{Haar}}.
\end{aligned}
\end{equation*}

This constructs the desired quotient representation except on the open disc~$D(0,1)$ rather than on a neighborhood of the closed disc. 
The full statement bootstraps from this by the following limiting argument; this is where the~$\varepsilon > 0$ emerges in the statement of the theorem. 
Taking a small enough $\delta>0$ (to be chosen at the end in dependence on $\varepsilon$) such that $g$ is a holomorphic function on $\overline{D(0, 1+\delta)}$,  
we apply the preceding to the holomorphic function $\widetilde{g}(z):=g((1+\delta)z)$ on $\overline{D(0,1)}$. We obtain a holomorphic function $\widetilde{h} : D(0,1) \to \C$ such that $\widetilde{h}(0)=1$ and
\[\max \left\{\sup_{ D(0,1)} \log |\widetilde{h}|, \sup_{D(0,1)} \log |\widetilde{h}\widetilde{g}| \right\} \leq \int_{\T} \log^+{|\widetilde{g}|} \, \mu_{\mathrm{Haar}}.\]
Define $h(z):=\widetilde{h}(z/(1+\delta))$,  a holomorphic function on $D(0,1+\delta) \supset \overline{D(0,1)}$ with $h(0)=1$. Then
\[\max \big\{ \sup_{\T} \log{|h|}, \, \sup_{\T} \log{|hg|} \big\} \leq \int_{\T} \log^+{|\widetilde{g}|} \, \mu_{\mathrm{Haar}} = \int_{|z|=1+\delta } \log^+{|g(z)|} \, \mu_{\mathrm{Haar}}.\]
We get what we want upon choosing~$\delta = \delta(\varepsilon) > 0$ small enough to have $\displaystyle \int_{|z|=1+\delta } \log^+{|g(z)|} \, \mu_{\mathrm{Haar}} \leq \int_{\T} \log^+{|g|} \, \mu_{\mathrm{Haar}} + \varepsilon$. 
\end{proof}

\begin{remark} \label{large numbers} 
Conversely, for any holomorphic map $h : \overline{D(0,1)} \to \C$ with $h(0) = 1$, we have the lower bound
$$
\max \big\{ \sup_{\T} \log{|h|}, \, \sup_{\T} \log{|hg|} \big\}  \geq  \int_{\T} \log^+{|g|} \, \mu_{\mathrm{Haar}},
$$
as one sees immediately from integrating the pointwise identity
$$
\max\{ \log{|h|}, \log{|hg|} \}  =  \log{|h|} + \log^+{|g|}
$$
over $\T$ and using $\int_{\T} \log{|h|} \geq \log{|h(0)|} = 0$ from subharmonicity. This shows the necessity of the $\varepsilon$ in
Lemma~\ref{Beurling Nevanlinna}. It also shows that Theorem~\ref{abstract form}---our final goal of the current~\SSSS~\ref{holonomy abstraction},
which at this point is fully proved---is in fact equivalent with the intermediate form Lemma~\ref{meromorphic form}.  

On the other hand, with a bit more work based on the Law of Large Numbers, we could restrict the auxiliary construction~\eqref{theform} to only admit those exponent vectors $\bk \in \{0, \ldots, D-1\}^{d}$ that have $\sum_{i=1}^{d} k_i/D$ concentrated around the expectation $d/2$. With such a variant of Lemma~\ref{Siegel}, the same argument leads to 
the finer bound
\begin{equation*} 
m \leq (e/2) \,  \inf_{h: \, h(0) = 1} \Big\{ \frac{   \sup_{\T}{\log{|h|}} +  \sup_{\T}{\log{|h \cdot \varphi^* p|}}   }{\log{|\varphi'(0)|}} \Big\},
\end{equation*}
where the infimum is taken over all holomorphic mappings $h : \overline{D(0,1)} \to \C$ subject to the
normalizing constraint $h(0) = 1$.
We will not need this improvement here. 
\end{remark}

\subsection{A first alternative proof} \label{psh approach} In this section, we complete an idea proposed to us by Andr\'e as an alternative to our original proof of  Theorem~\ref{abstract form} (itself recounted in~\SSSS~\ref{initial argument} further down), based on  plurisubharmonicity and a lexicographic induction instead of on Cauchy's formula. 
We invite the reader at this point to skip ahead  directly to \SSSS~\ref{main plan} on a first pass, as the arithmetic holonomy bound~\eqref{dimensionbound}---the algebraization ingredient that we need for the unbounded denominators conjecture---has already been proved. 

\subsubsection{Lemma on the lexicographically lowest coefficient}\label{lem_lexi}
 The  extrapolation step will now be based on the following analytic lemma, to be applied with $G(\mathbf{z}) = F(\varphi(z_1), \ldots, \varphi(z_d))$, where $F(\mathbf{x})$ is our auxiliary function from Lemma~\ref{Siegel}. The lemma reflects the plurisubharmonic property of the multivariable complex functions of the form $\log{|H(\mathbf{z})|}$ with $H(\mathbf{z})$ holomorphic, used inductively on the number of variables $d$. 

\begin{lemma}  \label{lowest lexi}
Consider a function $G(\mathbf{z}) \in \C \llbracket \mathbf{z} \rrbracket \setminus \{0\}$ holomorphic on the closed unit polydisc $\{\mathbf{z} \, : \, \max_{i=1}^d |z_i|\leq 1\}$, and let $c \, \mathbf{z^n}$ be the lexicographically minimal monomial. Then
\numequation \label{lexi bound}
 \log{|c|}   \leq  \int_{\T^d} \log{|G|} \, \mu_{\mathrm{Haar}}. 
\end{equation}

\begin{proof}
We induct on the number of variables $d$. For $d = 1$, the bound \eqref{lexi bound} follows directly from Jensen's formula, or from the subharmonic property of the function $u(z) := \log{|z^{-n}G(z)|}$, which entails
$$
\log{|c|}  = u(0)  \leq \int_{\T} u \, \mu_{\mathrm{Haar}}  = \int_{\T} \log{|G|} \, \mu_{\mathrm{Haar}}. 
$$ 
The last equality uses that the functions $u(z) = \log{|z^{-n}G(z)|}$ and $\log{|G(z)|}$ have the same restriction on the unit circle $\T$.

For the induction step, we write $\mathbf{z} = (z_1, \mathbf{z}')$ and
$$
G(\mathbf{z}) = z_1^{n_1} H(\mathbf{z}), 
$$
where $H \in \C\llbracket \mathbf{z} \rrbracket$ is holomorphic   by our lexicographic minimality assumption. For any fixed $\mathbf{z}' \in \T^{d-1}$, by the same argument as the $d=1$ case above, we have
\numequation \label{frozen}
\log{|H(0, \mathbf{z}')|} \leq \int_{\T} \log{|H(z_1, \mathbb{z}')|} \, \mu_{\mathrm{Haar}}(z_1) = \int_{\T} \log{|G(z_1, \mathbf{z}')|} \, \mu_{\mathrm{Haar}}(z_1).
\end{equation}
By assumption, the lexicographically minimal monomial in $H(0,\mathbf{z}') \in \C \llbracket \mathbf{z}' \rrbracket$ is equal to $c \, \mathbf{z}'^{\mathbf{n}'}$, where $\mathbf{n} = (n_1, \mathbf{n}')$. Therefore the induction hypothesis gives
\numequation
\log{|c|}  \leq \int_{\mathbf{T}^{d-1}} \log{| H(0, \mathbf{z}') |} \, \mu_{\mathrm{Haar}} (\mathbf{z}').
\end{equation}
We complete the induction by integrating the inequality \eqref{frozen} over $\mathbf{z}' \in \T^{d-1}$. 
\end{proof}
\end{lemma}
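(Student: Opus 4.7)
The plan is to prove the inequality by induction on the number of variables $d$, with the base case $d=1$ reducing to the classical Jensen inequality and the inductive step combining a slice-wise Jensen estimate with the induction hypothesis applied to a judicious restriction. Throughout, the role of the lexicographic minimality is to guarantee that the relevant functions obtained by division or restriction remain holomorphic and nonvanishing in the required sense.

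For the base case $d=1$, the lexicographic minimality of $c\,z^n$ as a nonzero monomial in $G(z) \in \C\llbracket z \rrbracket$ means simply that $z=0$ is a zero of order exactly $n$, so that $G(z)/z^n$ extends to a holomorphic function on $\overline{D(0,1)}$ with value $c$ at the origin. Since $\log|G(z)/z^n|$ is subharmonic on $D(0,1)$ and agrees with $\log|G(z)|$ on the unit circle (where $|z^n|=1$), the sub-mean value inequality at the origin gives
$$ \log|c| \;\leq\; \int_{\T} \log|G(z)/z^n|\,\mu_{\mathrm{Haar}} \;=\; \int_{\T} \log|G|\,\mu_{\mathrm{Haar}}, $$
which is the claim.

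For the induction step, write $\mathbf{z}=(z_1,\mathbf{z}')$ and $\mathbf{n}=(n_1,\mathbf{n}')$. The lexicographic minimality of $c\,\mathbf{z}^{\mathbf{n}}$ forces every monomial of $G$ to have $z_1$-degree at least $n_1$, so $G(\mathbf{z}) = z_1^{n_1} H(\mathbf{z})$ for some $H$ that is still holomorphic on the closed unit polydisc. Freezing $\mathbf{z}' \in \T^{d-1}$ and applying the one-variable case to the holomorphic function $z_1 \mapsto H(z_1,\mathbf{z}')$ at $z_1 = 0$ yields
$$ \log|H(0,\mathbf{z}')| \;\leq\; \int_{\T} \log|H(z_1,\mathbf{z}')|\,\mu_{\mathrm{Haar}}(z_1) \;=\; \int_{\T} \log|G(z_1,\mathbf{z}')|\,\mu_{\mathrm{Haar}}(z_1). $$
Now the restriction $H(0,\mathbf{z}') \in \C\llbracket \mathbf{z}'\rrbracket$ is nonzero and has lexicographically minimal monomial exactly $c\,\mathbf{z}'^{\mathbf{n}'}$: this is immediate from the fact that $c\,\mathbf{z}^{\mathbf{n}}$ was the lexicographic minimum of $G$, combined with the factorization $G = z_1^{n_1} H$. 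The inductive hypothesis on the $(d-1)$-polydisc therefore gives $\log|c| \leq \int_{\T^{d-1}} \log|H(0,\mathbf{z}')|\,\mu_{\mathrm{Haar}}(\mathbf{z}')$, and integrating the slice-wise Jensen inequality above over $\mathbf{z}' \in \T^{d-1}$ and applying Fubini closes the induction.

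The only conceptual subtlety — and the step where the lexicographic (rather than, e.g., total-degree) minimality is indispensable — is the verification in the induction step that the restriction $H(0,\mathbf{z}')$ remains nonzero with lexicographically minimal term $c\,\mathbf{z}'^{\mathbf{n}'}$; without this bookkeeping the induction would collapse. Once that is in hand, the argument is essentially an inductive reading of Jensen's formula, which is the standard one-variable manifestation of the plurisubharmonicity of $\log|H|$ for holomorphic $H$.
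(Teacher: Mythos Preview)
Your proof is correct and follows essentially the same approach as the paper: induction on $d$, with the base case by Jensen/subharmonicity and the inductive step via the factorization $G = z_1^{n_1}H$, a slice-wise sub-mean inequality, and the induction hypothesis applied to $H(0,\mathbf{z}')$. Your added remark on why lexicographic (rather than total-degree) minimality is needed is a helpful clarification but does not change the argument.
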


\subsubsection{Extrapolation and first alternative proof of Theorem~\ref{abstract form}}  \label{extrapolation part}
We apply Lemma~\ref{lowest lexi} to the $\varphi$-pullback of our $d$-variate auxiliary function: 
\numequation
G(z_1, \ldots, z_d) := F(  \varphi(z_1), \ldots, \varphi(z_d)  )  \in \C \llbracket \mathbf{z} 
\rrbracket \setminus \{0\}.
\end{equation}
This is holomorphic in a neighborhood of the closed unit polydisc, because all the split-variables constituents 
$$
\varphi^*p; \quad  \varphi^* f_1, \ldots, \varphi^* f_m \in {\mathcal{O}(\overline{D(0,1)})}.
$$

Thus, with $c \, \mathbf{z^n}$ the lexicographically lowest monomial in $G(\mathbf{z})$, we get from equation~\eqref{lexi bound} and Lemma~\ref{Siegel} our Cauchy upper bound:
\numequation
\begin{split}
 \label{CB}
\log{|c|}   \leq \int_{\T^d} \log{|F(\varphi(z_1), \ldots, \varphi(z_d)) |} \, 
\mu_{\mathrm{Haar}} \\  
\leq  dD  \int_{\T} \log^+{|p \circ \varphi|}
\, \mu_{\mathrm{Haar}}  +  \kappa C \, \alpha + o(\alpha), 
\end{split}
\end{equation}
asymptotically as $\alpha \to \infty$ with regard to the other parameters. Here, we used the pointwise triangle inequality bound
$$
\log{|F(x_1,\ldots,x_d)|} \leq D \sum_{i=1}^d \log^+{|p(x_i)|}
+ \kappa C \alpha + o(\alpha)
$$
for $x_i := \varphi(z_i)$ (note that the sum in~\eqref{theform} is comprised 
of $(mD)^d = \exp(o(\alpha))$ terms), and integrated this pointwise bound over the unit polycircle $\mathbf{z} \in \T^d$. 

The Liouville lower bound comes down to  the integrality property 
\numequation        \label{spacing}
F(x(t_1), \ldots, x(t_d)) \in \Z\llbracket \mathbf{t} \rrbracket
\end{equation}
inherited from our respective assumptions
$$
x^*p; \quad x^* f_1, \ldots, x^* f_m \in \Z\llbracket t \rrbracket
$$
on the split-variables constituents in~\eqref{theform}. Given our normalizations $x(t) \in t + t^2 \Q\llbracket t \rrbracket$ and $\varphi(z)  \in \varphi'(0) z + z^2 \C \llbracket z \rrbracket$, the lexicographically lowest term of $G(\mathbf{z}) \in \C \llbracket \mathbf{z} \rrbracket$ is equal to $\varphi'(0)^{\beta}$ times the lexicographically lowest term of $F(x(\mathbf{t})) \in \Z\llbracket \mathbf{t} \rrbracket$, where $\beta := |\mathbf{n}| = n_1 + \cdots + n_d \geq \alpha$ is the common total degree of these lexicographically lowest terms in $G(\mathbf{z})$ and $F(x(\mathbf{t}))$. By~\eqref{spacing}, this entails that the nonzero coefficient
$$
c \in \varphi'(0)^{\beta} \, \Z \setminus \{0\},
$$
and hence {\it a fortiori} that
\numequation \label{LB}
\log{|c|} \geq \beta \log{|\varphi'(0)|} \geq \alpha \log{|\varphi'(0)|}.
\end{equation}

We get our requisite dimension bound~\eqref{abstracted bound} on combining the degree bound~(\ref{degrees}) of Lemma~\ref{Siegel} with the Cauchy upper bound~\eqref{CB} and the Liouville lower bound~\eqref{LB}, and letting firstly $\alpha \to \infty$, then $d \to \infty$, and finally $\kappa \to 0$.

\medskip

This completes another proof of Theorem~\ref{abstract form}.   \qed

\subsection{A second alternative proof}   \label{initial argument}

The remainder of~\SSSS~\ref{holonomy abstraction} presents our original argument for Theorem~\ref{abstract form}, with the thought that it could still be useful for other settings including potential theory (see~\ref{potential sketch}). Like~\SSSS~\ref{extrapolate} and unlike~\SSSS~\ref{psh approach}, it is based on the leading order jet rather than the overall lexicographically lowest monomial in $F(\mathbf{x})$, and on the pointwise Cauchy integral formula instead of on plurisubharmonicity. Contrastingly to both, it employs a cross-variables equidistribution idea. 

\subsubsection{Equidistribution} \label{equidistribution} We start out the same way as with Lemma~\ref{Siegel}, but now aim to extrapolate based directly on the pointwise Cauchy bound.  The key idea here is that upon substituting $x_j = \varphi(z_j)$ into (\ref{theform}), the $d \to \infty$ equidistribution on the circle of the 
uniform independent and identically distributed points $z_1, \ldots, z_d$ will normally get the constituent monomials in (\ref{theform})  to grow at most at the integrated exponential rate of $dD \int_{\T} \log^+{|p \circ \varphi| }\, \mu_{\mathrm{Haar}}$. 
The problem with directly applying the Cauchy bound as in~\cite[VIII~1.6]{AndreG} is that it involves a pointwise upper bound on the intervening functions $| p(\varphi(\mathbf{z}))^{\mathbf{k}}|$ on the unit polycircle $\mathbf{z} \in \T^d$, and while the Monte Carlo heuristic applies on the majority of $\T^d$ under $d \to \infty$, with a probability tending to $1$ roughly speaking at a rate exponential in $-d$ (this follows by Hoeffding's concentration inequality with~\eqref{Erdos-Turan} below), the peaks at the biased part of $\T^d$ get overwhelmingly large, and a direct extrapolation with~\eqref{theform} in this way  still only leads to a dimension bound with $\sup_{|z| = 1} \log{|p \circ \varphi|} $. 

To improve the supremum term to the mean term $\int_{\T} \log^+{|p \circ \varphi|} \, \mu_{\mathrm{Haar}}$, we dampen the size at the peaks by firstly multiplying (\ref{theform}) by a suitably chosen power $V(\mathbf{z})^M$ of the Vandermonde polynomial 
\numequation \label{Vandermonde}
V(\mathbf{z}) := \prod_{i< j}(z_i - z_j) = \det{ \begin{bmatrix} 1 & z_1 & z_1^2 & \cdots & z_1^{d-1} \\ 1 & z_2 & z_2^2 & \cdots & z_2^{d-1} \\
\vdots & \vdots & \vdots & \cdots & \vdots \\
1 & z_d & z_d^2 & \cdots & z_d^{d-1}\end{bmatrix} } \in \Z [z_1, \ldots, z_d] \setminus \{0\}.
\end{equation}
By applying the Hadamard volume inequality to the Vandermonde determinant in (\ref{Vandermonde}), we recover the following classical result of Fekete, crucial for the present approach. 

\begin{lemma}[Fekete] \label{Fekete}
The supremum of $|V(\mathbf{z})| = \prod_{1 \leq i < j \leq d} |z_i-z_j|$ over the unit polycircle $\mathbf{z} \in \T^d$ is equal to $d^{d/2}$, with equality if and only if the points $z_1, \ldots, z_d$ are the vertices of a regular $d$-gon. 
\end{lemma}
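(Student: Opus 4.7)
The plan is to derive Fekete's bound as a direct corollary of Hadamard's volume inequality applied to the Vandermonde matrix in display~\eqref{Vandermonde}, and to extract the equality characterization from the orthogonality case of Hadamard's inequality.

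First, I would fix $\mathbf{z} = (z_1, \dots, z_d) \in \T^d$ and view $V(\mathbf{z})$ as the determinant of the matrix $M = (z_i^{j-1})_{i,j=1}^d$ exhibited in \eqref{Vandermonde}. Since $|z_i| = 1$, every entry of $M$ has modulus $1$, so each of the $d$ rows has Hermitian Euclidean norm exactly $\sqrt{d}$. Hadamard's inequality therefore yields
\begin{equation*}
\bigl|V(\mathbf{z})\bigr| \;=\; |\det M| \;\leq\; \prod_{i=1}^d \Bigl(\sum_{k=0}^{d-1} |z_i|^{2k}\Bigr)^{1/2} \;=\; d^{d/2},
\end{equation*}
which is the claimed bound. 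Testing $\mathbf{z} = (1, \zeta, \zeta^2, \dots, \zeta^{d-1})$ with $\zeta = e^{2\pi i/d}$, one computes $|V(\mathbf{z})|^2 = \prod_{i \neq j} |1 - \zeta^{j-i}| = d^d$ via the factorization $\prod_{k=1}^{d-1}(x - \zeta^k) = 1 + x + \cdots + x^{d-1}$ evaluated at $x = 1$; this shows $d^{d/2}$ is attained by the vertices of the regular $d$-gon (and by its rotations, since $V$ is homogeneous under the diagonal $\T$-action on $\T^d$).

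For the equality characterization, I would invoke the fact that Hadamard's inequality is sharp if and only if the rows of $M$ are pairwise Hermitian orthogonal (and nonzero, which is automatic here). The Hermitian inner product of rows $i$ and $j$ is the geometric sum
\begin{equation*}
\langle M_i, M_j \rangle \;=\; \sum_{k=0}^{d-1} (z_i \overline{z_j})^k,
\end{equation*}
and since $|z_i \overline{z_j}| = 1$, this sum vanishes precisely when $w_{ij} := z_i \overline{z_j}$ is a nontrivial $d$-th root of unity. Thus equality in Hadamard's bound forces every ratio $z_i/z_j = z_i \overline{z_j}$ (for $i \neq j$) to lie in $\mu_d \smallsetminus \{1\}$. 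Fixing $j$, this means $z_1, \dots, z_d$ all belong to the coset $z_j \cdot \mu_d$, and the pairwise nontriviality condition forces them to be $d$ distinct elements of this coset; since $|\mu_d| = d$, they exhaust the coset, which is exactly a rotated regular $d$-gon.

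The argument is essentially mechanical once one recognizes the Vandermonde as a determinant; the only step requiring slight care is the equality clause, where one must check that the vanishing of $\sum_{k=0}^{d-1} w^k$ on the unit circle $\{|w|=1\}$ is equivalent to $w \in \mu_d \smallsetminus \{1\}$, and that this in turn cleanly produces the regular polygon structure via the coset argument above. No genuine obstacle is anticipated.
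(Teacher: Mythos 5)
Your proof is correct and follows essentially the same route as the paper, which derives Fekete's lemma precisely by applying Hadamard's volume inequality to the Vandermonde determinant in~\eqref{Vandermonde}. You in fact supply more detail than the paper does, including the equality characterization via pairwise orthogonality of the rows, and that argument is sound.
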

 
The idea for sifting out the equidistributed tuples $(z_1, \ldots, z_d)$ is the following. If the points $z_1, \ldots, z_d$ are poorly distributed in the uniform measure of the circle,  the quantity $|V(\mathbf{z})|$ is uniformly exponentially small in $-d^2$ (Lemma~\ref{non-equidistribution} below). This plays off against the $d^{d/2} = \exp(o(d^2))$ bound of Lemma~\ref{Fekete} to sift out the equidistributed points in our pointwise upper bound in the Cauchy integral formula when we extrapolate in \SSSS~\ref{extrapolation part} above. 
Liouville's Diophantine lower bound still succeeds like in Andr\'e~\cite[\SSS5]{Andre}, thanks to the chain rule and the integrality of the expansion~(\ref{Vandermonde}), but at the Cauchy upper bound  we are now aided by the fact that $V( \mathbf{z})^M$ is extremely small (an exponential in $-M d^2$, see (\ref{damping grace})) at the peaks of the pointwise Cauchy bound, where the point $(z_1, \ldots,z_d)$ is poorly distributed, while still not too large (subexponential in $M d^2$, thanks to Lemma~\ref{Fekete}) uniformly throughout the whole polycircle~$\T^d$.

In the remainder of the current subsection, we spell out the notion of `well-distributed' and `poorly distributed', and supply the key equidistribution property for the numerical integration step. The following is the standard notion of discrepancy theory. 

\begin{df}
The (normalized, box) {\bf discrepancy function} $D : \T^d \to (0,1]$ is the supremum over all circular arcs
$I \subset \T$ of the defect between the normalized arc length of $I$ and the proportion of points falling inside $I$: 
$$
D(z_1, \ldots, z_d) := \sup_{I \subset \T} \big| \mu_{\mathrm{Haar}}(I) - \frac{1}{d}  \# \{ i \, : \, z_i \in I \}  \big|.
$$
\end{df}

We also recall the basic properties of the total variation functional on the circle. In our situation, all that we need is that $\log^+{|h|}$ is of bounded variation 
for an arbitrary $C^1$ function $h : \T \to \R$. Then Koksma's estimate permits us to integrate numerically. All of this can be alternatively phrased 
in the qualitative language of  weak-$*$ convergence.

\begin{df}
The {\bf total variation} $V(g)$ of a function $g : \T \to \R$ is the supremum over all partitions $0 \leq \theta_1 < \cdots < \theta_n < 1$ of $\sum_{j=1}^{n-1} | g(e^{2\pi \sqrt{-1} \theta_{j+1}}) - g(e^{2\pi \sqrt{-1} \theta_j}) |$. 
\end{df}

Thus, for $g \in C^1(\T)$, we have the simpler formula 
\numequation \label{total variation}
V(g) = \int_{\T} |g'(z)| \, \mu_{\mathrm{Haar}}(z), \quad g \in C^1(\T). 
\end{equation}

We have $V(\log^+{|h|}) < \infty$ for $h \in C^1(\T)$, and Koksma's inequality (see for example Drmota--Tichy~\cite[Theorem~1.14]{DrmotaTichy}):
\numequation \label{Koksma}
\Big| \frac{1}{d} \sum_{j=1}^d g(z_j) - \int_{\T} g \, \mu_{\mathrm{Haar}} \Big| \leq V(g) D(z_1, \ldots, z_d).
\end{equation}
In practice the discrepancy function is conveniently estimated by the  Erd\"os--Tur\'an  inequality (cf. Drmota--Tichy~\cite[Theorem~1.21]{DrmotaTichy}):
\numequation \label{Erdos-Turan}
D(z_1, \ldots, z_d) \leq 3 \Big(  \frac{1}{K+1} + \sum_{k=1}^K \frac{1}{k} \Big| \frac{z_1^k + \cdots + z_d^k}{d} \Big| \Big), \quad \textrm{for all } K \in \NwithzeroA,
\end{equation}
 in terms of the power sums. 
 Here we note in passing that, by (\ref{Erdos-Turan}) and the Chernoff tail bound or the Hoeffding concentration inequality (see, for example, Tao~\cite[Theorem~2.1.3 and Ex.~2.1.4]{Tao}), 
 we have that for any fixed $\varepsilon > 0$, the probability 
of the event $D(z_1, \ldots, z_d) \geq \varepsilon$ decays to $0$ exponentially in $-d$ as $d \to \infty$. This last remark has purely a heuristic value for our next step, and is not used in the estimates in itself (but rather shows that these estimates are sharp).  

Thus we introduce another parameter $\varepsilon > 0$, which in the end will be let to approach $0$ but only after $d \to \infty$, and we divide the points $\mathbf{z} \in \T^d$ into two groups
according to whether $D(z_1, \ldots, z_d) < \varepsilon$ (the well-distributed points) or $D(z_1 \ldots, z_d) \geq \varepsilon$ (the poorly distributed points). For the well-distributed group  we use Koksma's inequality (\ref{Koksma}), and for the poorly distributed group we take advantage of the overwhelming damping force of the Vandermonde factor. 

The following is essentially Bilu's equidistribution theorem~\cite{Bilu}, in a mild disguise.

\begin{lemma} \label{non-equidistribution} 
There  are functions $c(\varepsilon) > 0$ and $d_0(\varepsilon) \in \R$  such that, for every $\varepsilon \in (0,1]$, if $d \geq d_0(\varepsilon)$  and $(z_1, \ldots, z_d) \in \T^d$ is a $d$-tuple with discrepancy $D(z_1, \ldots, z_d) \geq \varepsilon$, then
\numequation \label{damping grace}
|V(z_1, \ldots, z_d)| = \prod_{1 \leq i < j \leq d} |z_i - z_j| < e^{-c(\varepsilon) d^2}. 
\end{equation}
\end{lemma}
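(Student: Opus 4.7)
The plan is to argue by contradiction via compactness and a potential-theoretic energy inequality; this is essentially Bilu's theorem in its original circle form. The key ingredient is that $\mu_{\mathrm{Haar}}$ is the equilibrium measure of $\overline{D(0,1)}$ (whose logarithmic capacity equals $1$), uniquely minimizing the logarithmic energy
\[
I(\mu) := \iint_{\T \times \T} \log \frac{1}{|z-w|}\, d\mu(z)\, d\mu(w)
\]
among probability measures supported in $\overline{D(0,1)}$, at the value $I(\mu_{\mathrm{Haar}}) = 0$; strict convexity on finite-energy measures supplies the uniqueness.

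Assume for contradiction that some $\varepsilon \in (0,1]$ admits a sequence $d_k \to \infty$ and tuples $\mathbf{z}^{(k)} = (z_1^{(k)}, \ldots, z_{d_k}^{(k)}) \in \T^{d_k}$ with $D(\mathbf{z}^{(k)}) \geq \varepsilon$ but $\log|V(\mathbf{z}^{(k)})|/d_k^2 \to 0$; note that Lemma~\ref{Fekete} already supplies the matching upper bound $\log|V|/d_k^2 \leq \log(d_k)/(2d_k)$ for free. Attach the empirical probability measures $\mu_k := d_k^{-1} \sum_{i=1}^{d_k} \delta_{z_i^{(k)}}$ on $\T$, and by weak-$*$ compactness of probability measures on the compact space $\T$ pass to a convergent subsequence $\mu_k \to \mu_\infty$.

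I would first check $\mu_\infty \neq \mu_{\mathrm{Haar}}$. If not, then since $\mu_{\mathrm{Haar}}$ is atomless the cumulative distribution functions $F_k(\theta) := \mu_k\bigl(\{e^{2\pi i t} : 0 \leq t \leq \theta\}\bigr)$ converge pointwise to the identity on $[0,1]$; because the $F_k$ are monotone and the limit is continuous, this convergence is automatically uniform (a classical P\'olya observation), which forces $D(\mu_k) \to 0$ and contradicts the hypothesis. Hence $\mu_\infty \neq \mu_{\mathrm{Haar}}$.

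The core of the argument is the energy lower bound by truncation. For each $M > 0$ the kernel $K_M(z, w) := \max(\log|z-w|, -M)$ is bounded and continuous on $\T \times \T$, so the weak convergence of the product measures $\mu_k \otimes \mu_k \to \mu_\infty \otimes \mu_\infty$ yields
\[
\iint K_M\, d\mu_\infty\, d\mu_\infty \;=\; \lim_{k \to \infty} \iint K_M \, d\mu_k\, d\mu_k \;\geq\; \liminf_{k \to \infty} \left( \frac{2 \log|V(\mathbf{z}^{(k)})|}{d_k^2} - \frac{M}{d_k} \right) \;=\; 0,
\]
using $K_M \geq \log|\cdot - \cdot|$ off the diagonal and tracking the diagonal contribution $-M/d_k$ to $\iint K_M\, d\mu_k\, d\mu_k$. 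Monotone convergence $-K_M \uparrow \log(1/|z-w|)$ upgrades this to $I(\mu_\infty) \leq 0$; but $I \geq 0$ on probability measures supported in $\overline{D(0,1)}$ with equality only at $\mu_{\mathrm{Haar}}$, contradicting $\mu_\infty \neq \mu_{\mathrm{Haar}}$. The main obstacle is keeping the truncation-and-diagonal bookkeeping tidy, so that the purely continuous energy $I(\mu_\infty)$ is cleanly lower-bounded by the discrete $\log|V|/d_k^2$ in the limit; the quantitative extraction of explicit $c(\varepsilon)$ and $d_0(\varepsilon)$ from the qualitative contradiction is then standard (alternatively, one can replace compactness by a direct Fourier expansion of $K_M$ combined with the Erd\"os--Tur\'an inequality~\eqref{Erdos-Turan}).
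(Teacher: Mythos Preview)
Your proof is correct and takes essentially the same approach as the paper: argue by contradiction, pass to a weak-$*$ limit $\mu_\infty$ of the empirical measures, show $\mu_\infty \neq \mu_{\mathrm{Haar}}$ from the discrepancy hypothesis, and derive $I(\mu_\infty) \leq 0$ via a truncation of the logarithmic kernel to contradict the uniqueness of the equilibrium measure. Your truncation $K_M = \max(\log|z-w|, -M)$ is in fact slightly cleaner than the paper's, which uses a smooth cutoff $\phi_\eta$ and must treat separately the case where $\mu_\infty$ has atoms; your device handles both cases at once via the diagonal bookkeeping term $-M/d_k$.
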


\begin{proof} 
Since the qualitative result suffices for our purposes here, we give a soft proof based on compactness. The following argument borrows from
Bombieri and Gubler's exposition~\cite[page~103]{BombieriGubler} of Bilu's equidistribution theorem. The contrapositive of the requisite statement
is the existence of an~$\varepsilon \in (0,1]$ with
\[\liminf_{d\rightarrow \infty} \left\{ \inf_{\mathbf{z} \in \T^d, D(\mathbf{z}) \geq \varepsilon} \frac{1}{d^2} \sum_{1 \leq i < j \leq d} \log{\frac{1}{|z_i-z_j|}} \right\} \leq 0.\]
(If this quantity is strictly positive for all~$\varepsilon \in (0,1]$, then define~$c(\varepsilon) > 0$ to be that quantity.)
Hence, arguing for the contradiction, we suppose 
that there is an $\varepsilon \in (0,1]$ and an infinite sequence $(z_1^{(d)}, \ldots, z_d^{(d)}) \in \T^d$ such that
\numequation \label{non-positive energy}
\lim_{d \to \infty} \left\{ \frac{1}{\binom{d}{2}} \sum_{1 \leq i < j \leq d} \log{\frac{1}{|z_i^{(d)}-z_j^{(d)}|}} \right\} \leq 0,
\end{equation}
but
\numequation \label{uniform non-equidistribution}
\textrm{for all } d \in \NwithoutzeroA, \quad  \quad D(z_1^{(d)},\ldots, z_d^{(d)}) \geq \varepsilon.
\end{equation}
By the Banach--Alaoglu theorem of the compactness of the weak-$*$ unit ball of $C(\T)^*$, we may extract 
a subsequence of the sequence of normalized Dirac masses $\delta_{\{ z_1^{(d)},\ldots, z_d^{(d)} \}}$ that converges weak-$*$ 
to some limit probability measure $\mu$ of the unit circle. By   continuity of the discrepancy functional, (\ref{uniform non-equidistribution}) 
implies that the limit discrepancy
$$
D(\mu) := \sup_{I \subset \T} \big| \mu_{\mathrm{Haar}}(I) - \mu(I)  \big| \geq \varepsilon.
$$
In particular, $\mu$ is not the uniform measure $\mu_{\mathrm{Haar}}$. 

On the other hand, it is a well-known theorem from potential theory that every compact $K \subset \C$ admits a unique probability measure $\mu_K$, called the equilibrium measure, that minimizes the Dirichlet energy integral
$$
I(\nu) :=  \iint_{K \times K} \log{ \frac{1}{|z-w|} } \, \nu(z) \, \nu(w) 
$$
across all probability measures $\nu$ supported by $K$. Since $\T$ is invariant under rotation and $\mu_{\T}$ is unique, we have $\mu_{\T} = \mu_{\mathrm{Haar}}$, and since $I(\mu_{\mathrm{Haar}}) = 0$, but $\mu \neq \mu_{\mathrm{Haar}}$, we have the strict inequality
\numequation \label{positive energy}
I(\mu) =  \iint_{\T \times \T} \log{ \frac{1}{|z-w|} } \, \mu(z) \, \mu(w)  > 0. 
\end{equation}
If the measure $\mu$ is continuous (that is, the measure of a point is $0$, or equivalently the diagonal of $\T \times \T$ has $\mu \times \mu$ measure $0$), then the positive energy (\ref{positive energy}) contradicts (\ref{non-positive energy}) by 
weak-$*$ convergence.  In more detail, take a continuous function $\phi : [0, \infty) \to [0,\infty)$ to have
$\phi|_{[0,1/2]} \equiv 0$ and $\phi|_{[1,\infty)} \equiv  1$, and let $\phi_{\eta}(t) := \phi(t /\eta)$ for $0 < \eta \leq 1$. 
Then, since $\phi_{\eta}(t) < 1$ implies $\log{(1/t)} > 0$ while $\phi_{\eta}(t) \leq 1$ always, assumption
(\ref{non-positive energy}) implies
$$
\lim_{d \to \infty} \frac{1}{\binom{d}{2}} \sum_{1 \leq i < j \leq d} \phi_{\eta}\big( |z_i^{(d)}-z_j^{(d)}| \big) \log{\frac{1}{|z_i^{(d)}-z_j^{(d)}|}} \leq 0
$$
leading by weak-$*$ convergence to the non-positivity
$$
 \iint_{\T \times \T} \phi_{\eta}(|x-y|) \log{ \frac{1}{|z-w|} } \, \mu(z) \, \mu(w) \leq 0, 
$$
for every $\eta \in (0,1]$. Since the diagonal has measure $0$,  this runs in contradiction with (\ref{positive energy}) upon letting $\eta \to 0$.

If instead the measure $\mu$ is not continuous, then there is a point $a \in \T$ and a positive constant $c > 0$ such that, for any $\eta > 0$, and any $d \gg_{\eta} 1$ sufficiently large, there are 
at least $cd$ points among $\{  z_1^{(d)}, \ldots, z_d^{(d)} \}$ in the neighborhood $|z - a| < \eta / 2$. The contribution to (\ref{non-positive energy}) from all these pairs of points 
is alone $\geq c^2 \log(1/\eta)$, and since the total contribution from any subset of the points is in any case $\geq -\log{2}$, we get again in contradiction with (\ref{non-positive energy}) on letting 
$\eta \to 0$.
\end{proof}

\subsubsection{Damping the Cauchy estimate} \label{damping}

We combine Lemmas~\ref{Fekete} and  \ref{non-equidistribution}  for our choice of the damping term $ V( \mathbf{z})^M$. In the following, all asymptotics are taken  under $\alpha \to \infty$ with respect to all other parameters. 

By Lemma~\ref{Siegel} and our defining assumption that all $f_i(\varphi(z))$ are holomorphic on some neighborhood of the closed unit disc $|z| \leq 1$, we have uniformly on the polycircle $\mathbf{z} \in \T^d$ the 
pointwise bound
\numequation \label{pointwise majorization}
\log{|F(\varphi(z_1), \ldots, \varphi(z_d))|} \leq D \sum_{j=1}^d \log^+{|p(\varphi(z_j))|}
+ \kappa C \alpha + o(\alpha). 
\end{equation}

Since the function $\log^+{|p \circ \varphi|} : \T \to \R$ is  of finite variation $V(\log^+{|p \circ \varphi|}) < \infty$, Koksma's estimate~(\ref{Koksma}) yields, on the well-distributed part $\mathbf{z} \in \T^d$, the uniform pointwise upper bound
\begin{equation*}
\begin{split}
 D(z_1, \ldots, z_d) < \varepsilon \quad \Longrightarrow \\
\log{  |F(\varphi(z_1), \ldots, \varphi(z_d)) | } \leq    d D  \,  \int_{\T} \log^+{ |p \circ \varphi|}  \, \mu_{\mathrm{Haar}}
 + \kappa C \alpha  + O_{p,\varphi} (\varepsilon \, dD)  + o(\alpha).
\end{split}
\end{equation*}
The implicit constant in $O_{p,\varphi}( \varepsilon \, dD)$ can be taken as the total variation  $V(\log^+{|p \circ \varphi|})$; that this error term is $o_{\varepsilon \to 0}( dD) 
= o_{\varepsilon \to 0} (\alpha)$ is all that matters to us in the asymptotic argument.

On the poorly distributed but exceptional part $D(z_1, \ldots, z_d) \geq \varepsilon$, the  sum in (\ref{pointwise majorization}) can get as large as $d \sup_{|z| = 1} \log{|p \circ \varphi|}$.
This trivial bound gives, for all $ \mathbf{z} \in \T^d$:
\numequation \label{pointwise trivial}
  \log{  |F(\varphi(z_1), \ldots, \varphi(z_d)) | } \leq dD \sup_{|z| = 1} \log^+{|p \circ \varphi|} +  \kappa C \alpha  + o(\alpha). 
\end{equation}

We now impose the condition
\numequation \label{d large}
d \geq d_0(\varepsilon), \quad  \quad \textrm{ for the function $d_0(\varepsilon)$ in Lemma~\ref{non-equidistribution}},
\end{equation}
for the remainder of the proof of Corollary~\ref{holonomy} (at the end we will firstly take $d \to \infty$, and only then $\varepsilon \to 0$), 
and we
 select the Vandermonde exponent
\numequation \label{exponent in Vandermonde}
M := \Bigl\lfloor \frac{\sup_{|z| = 1} \log^+{|p \circ \varphi|}}{c(\varepsilon)} \frac{D}{d} \Bigr\rfloor,
\end{equation}
with $c(\varepsilon)$ the function from Lemma~\ref{non-equidistribution}. We are now in a position to usefully
 estimate the supremum of $|V(\mathbf{z})^M F(\varphi(\mathbf{z}))|$ uniformly across the unit 
polycircle $\mathbf{z} \in \T^d$, by separately examining the well-distributed and the poorly distributed cases 
of $\mathbf{z}$. 

On the poorly distributed part  $D(z_1, \ldots, z_d) \geq \varepsilon$, Lemma~\ref{non-equidistribution} with (\ref{pointwise trivial}),  (\ref{d large}) and (\ref{exponent in Vandermonde}) gives
\numequation \label{biased part}
\sup_{\mathbf{z} \in \T^d: \, D(z_1, \ldots, z_d) \geq \varepsilon} \log{ |V(\mathbf{z})^MF( \varphi(\mathbf{z}))|  } \ll \kappa \alpha. 
\end{equation}

On the well-distributed part $D(z_1, \ldots, z_d) \leq \varepsilon$, we have 
\numequation
\begin{split}
\label{balanced part}
\sup_{\mathbf{z} \in \T^d: \, D(z_1, \ldots, z_d) \leq \varepsilon} \log{ |V(\mathbf{z})^M F(\varphi(\mathbf{z}))|  }  \\ \leq
  d D  \,  \int_{\T} \log^+{ |p \circ \varphi|}  \, \mu_{\mathrm{Haar}}
+ \kappa C \alpha  + O_{p,\varphi} (\varepsilon \alpha) + O_{\varepsilon, p,\varphi}\Big(\frac{\log{d}}{d} \alpha \Big)  + o(\alpha).
\end{split}
\end{equation}
 by (\ref{exponent in Vandermonde}) and Lemma~\ref{Fekete}.

Consider the holomorphic function
\numequation  \label{our damped function}
H(\mathbf{z}) :=     V( \mathbf{z})^M    F(\varphi(z_1), \ldots, \varphi(z_d))  =: \sum_{\mathbf{n} \in \NwithzeroB^d}  c(\mathbf{n}) \, \mathbf{z^n}   \in \C\llbracket \mathbf{z} \rrbracket, 
\end{equation}
convergent on the closed unit disc $\| \mathbf{z} \| \leq 1$.  For each $\mathbf{n} \in \NwithzeroB^d$, the $\mathbf{z^n}$ coefficient  of $H(\mathbf{z})$ is given
by the Cauchy integral formula
\numequation \label{Cauchy}
c(\mathbf{n}) =  \int_{\T^d} \frac{ H(\mathbf{z}) }{ \mathbf{z^n}  } \, \mu_{\mathrm{Haar}}(\mathbf{z}), 
\end{equation}
entailing the Cauchy upper bound 
\numequation \label{CauchyBound}
 |c(\mathbf{n})|    \leq \sup_{\mathbf{z} \in \T^d} |H(\mathbf{z})|, \quad \textrm{for all } \mathbf{n} \in \NwithzeroB^d.
\end{equation}
On combining the bounds (\ref{balanced part}), on the well-distributed part of $\T^d$,   and (\ref{biased part}), on the 
poorly distributed part of $\T^d$, we arrive at our damped Cauchy estimate: 
\numequation
\begin{split}
 \label{the upper bound}
 \log{ |c(\mathbf{n})| }   \leq  dD \,
  \int_{\T} \log^+{ |p \circ \varphi|}  \, \mu_{\mathrm{Haar}} \\ 
  + O(\kappa  \alpha)  + O_{p,\varphi} (\varepsilon \, \alpha)  + O_{\varepsilon, p, \varphi}\Big(\frac{\log{d}}{d} \alpha \Big)  + o(\alpha), 
\end{split}
\end{equation}
asymptotically under $\alpha \to \infty$.

\subsubsection{The extrapolation}  \label{extrapolation part reprised}
Finally we combine the degree estimate (\ref{degrees}) of Lemma~\ref{Siegel} with the Cauchy bound (\ref{the upper bound}) 
and the integrality properties of the functions $F(x(\mathbf{t})) \in \Z\llbracket \mathbf{t} \rrbracket$ of (\ref{theform}) and $V(\mathbf{z}) \in \Z[\mathbf{z}]$ of (\ref{Vandermonde}).

Let $\beta \geq \alpha$ be the exact order of vanishing of $F(\mathbf{x})$ at the origin $\mathbf{x = 0}$.    Among the nonvanishing monomials $c \, \mathbf{x^n}$ of this minimal order $|\mathbf{n}| = \beta$, choose the one whose degree vector $\mathbf{n}$ has the highest lexicographical ordering. By the chain rule and the minimality of $|\mathbf{n}|$, the normalization condition~\eqref{normal} on the formal substitution $x(t)$ entails that $c \, \mathbf{t^n}$ is a minimal order term in the $t$-expansion $F(x(\mathbf{t}))$. Hence the integrality $f(x(\mathbf{t})) \in \Z \llbracket \mathbf{t} \rrbracket$ gives that $c \in \Z \setminus \{0\}$ is a nonzero rational integer. 

Consider now our product function $H(\mathbf{z}) = V(\mathbf{z})^M F(\varphi(\mathbf{z})) \in \C\llbracket \mathbf{z} \rrbracket$. 
 In the factor $V(\mathbf{z})^M$, it is $z_1^{(d-1)M} z_2^{(d-2)M} \cdots z_{d-1}^M$ that has the highest lexicographical ordering. Consequently, by the chain rule again, 
$$
c  \, \varphi'(0)^{\beta} \, z_1^{n_1+(d-1)M} z_2^{n_2+(d-2)M} \cdots z_d^{n_d}
$$
 exhibits a monomial in $V(\mathbf{z})^M F(\varphi(\mathbf{z}))$ of the minimal order $\beta + M \binom{d}{2}$,
 this is because
$$\big(n_1 + (d-1) M, n_2 + (d-2)M, \ldots, n_d\big)$$
 has the strictly highest lexicographical ordering across all monomials of degree $\beta + M \binom{d}{2}$ in $V(\mathbf{z})^M F(\varphi(\mathbf{z}))$.

We have thus found a nonzero coefficient of $H(\mathbf{z}) \in \C\llbracket \mathbf{z} \rrbracket$ that belongs to the $\Z$-module $\varphi'(0)^{\beta} \Z$, where $\beta \geq \alpha$. Thus the Cauchy upper bound~(\ref{CauchyBound}) is supplemented with the Liouville lower bound
\numequation \label{Liouville}
\sup_{\mathbf{n} \in \NwithzeroB^d} \left\{ \log{|c(\mathbf{n})|} \right\} \geq \beta \log{|\varphi'(0)|}  \geq \alpha \log{|\varphi'(0)|}.
\end{equation}
We get the requisite holonomy rank bound (\ref{abstracted bound}) on combining the degree bound
(part~(\ref{degrees}) of Lemma~\ref{Siegel}) with the Cauchy upper bound
(\ref{the upper bound}) and the Liouville lower bound~(\ref{Liouville}), and letting firstly $\alpha \to \infty$, then $d \to \infty$, then $\kappa \to 0$, and finally $\varepsilon \to 0$. 

\medskip

This concludes also our original proof of Theorem~\ref{abstract form}. \qed

\subsubsection{A potential-theoretic generalization} \label{potential sketch} The path with~\SSSSs~\ref{auxiliary construction} and~\ref{initial argument} leads straightforwardly to an extension in potential theory, which we formulate without detailing a proof. Consider a compact subset $K \subset \overline{D(0,1)}$ with \emph{transfinite diameter} $d(K)$ and \emph{equilibrium measure} $\mu_K$. 
This means that the logarithmic energy functional satisfies
$$
\iint_{K \times K} \log{\frac{1}{|x-y|}} \, \mu(x) \, \mu(y) \geq -\log{d(K)}
$$
for all probability measures~$\mu$ supported by~$K$, and the equality is attained if and only if~$\mu = \mu_K$. See, for example, \cite{Kirsch} for these definitions and their basic properties, including the relation to capacitance. 

If 
$$
\log{|\varphi'(0)|} + \log{d(K)} > 0,
$$
then under the hypotheses of Corollary~\ref{holonomy} we have the holonomy rank bound 
  \numequation  \label{potential bound}
\dim_{\Q(p(x))} \mathcal{V}(U,x(t),\Z)  \leq e  \, \frac{  \int_{K} \log^+{|p \circ \varphi|} \, \mu_{K}}{\log{|\varphi'(0)|} + \log{d(K)}}.
  \end{equation}
The cases $K  = \overline{D(0,1)}$ or $K = \T$ both recover Corollary~\ref{holonomy}.

\begin{remark}
The result is still more general than~\ref{potential sketch}, and the restriction here to $\Z\llbracket t \rrbracket$ expansions was chosen as minimal for our application to noncongruence modular forms. 
In a sequel work we will generalize our integrated holonomy rank bound, in particular to the case of $\Q\llbracket t \rrbracket$ formal functions, and study its applications to transcendence theory. 
With regard to the latter, it is of some interest to inquire about the optimal numerical constant that could take the place of the coefficient $e$ in~(\ref{potential bound}).

In these optics,  Bost and Charles~\cite[Corollary~8.3.5]{BostCharles} have very recently refined our Theorem~\ref{abstract form} 
to the cleaner form
$$
m  \leq \frac{\iint_{\T^2} \log{ |p(\varphi(z)) - p(\varphi(w))| }  \, \mu_{\mathrm{Haar}}(z) \mu_{\mathrm{Haar}}(w) }{\log{|\varphi'(0)|}}. 
$$
In particular, on replacing $p$ by $p^k$ with using the elementary inequality $\log{|x-y|} \leq \log^+{|x|} + \log^+{|y|} + \log{2}$ and
taking $k \to +\infty$, their result improves our coefficient $e$ in~\eqref{potential bound} to the value~$2$.   We do not know whether or not this is the best-possible constant. 
\end{remark}

\section{Our approach to the Unbounded Denominators Conjecture}   \label{main plan}

In this section, we lay out our main approach to the  unbounded
denominators conjecture.
This will reduce the proof to a number of independent results
in group theory, complex geometry, and complex analysis which we
take up in \SSSSs~\ref{serresection}, \ref{uniformizations}, and~\ref{nevanlinna}.
Our main idea is to use 
our arithmetic holonomicity theorems to prove the following:

\begin{proposition} \label{strategy}

Let~$F_N: D(0,1) \rightarrow \C \setminus  \mu_N$ be an analytic universal covering map sending~$0$  to~$0$.
Suppose that:
\begin{enumerate}
\item \label{radius} The conformal radius $|F_N'(0)|$ of~$F_N$ is asymptotically at least
$$16^{1/N} \left(1 + \frac{A}{N^3} \right)$$
 for some constant~$A > 0$.
\item \label{mean}  For a fixed $B > 0$, the following mean value bound holds on the circle $|z| = 1 - BN^{-3}$: 
$$
\int_{|z| = 1 - BN^{-3}} \log^+{|F_N|} \, \mu_{\mathrm{Haar}}   \ll_B \frac{\log{N}}{N}.
$$
\end{enumerate}
Then the $\Q(\lambda)$-vector space $R_{2N}$ generated by the modular functions with Fourier coefficients in $\Q$ and bounded denominators at the cusp $\zeta = i \infty$, and having cusp widths dividing~$2N$ at all cusps $\zeta \in \mathbb{P}^1(\Q)$, has dimension at most~$CN^3\log{N}$ over the field $\Q(\lambda)$ of modular functions of
level~$\Gamma(2)$,
for some absolute constant~$C$.
\end{proposition}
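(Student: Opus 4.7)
The plan is to apply the arithmetic holonomy bound of Corollary~\ref{holonomy} after identifying a suitable open set $U \subset \C$, a formal power series $x(t)$, and a holomorphic uniformization $\varphi : \overline{D(0,1)} \to U$ such that $R_{2N}$ embeds into $\mathcal{V}(U, x(t), \Z)$. I take $U := \C \setminus 16^{-1/N}\mu_N$, rational function $p(x) := x^N$, and $x(t) := (\lambda(t^N)/16)^{1/N} \in t + t^{N+1}\Q\llbracket t \rrbracket$ (normalized with positive leading coefficient); by~(\ref{Legendre}) then $p(x(t)) = \lambda(t^N)/16 \in \Z\llbracket t \rrbracket$, and $\Q(p(x)) = \Q(\lambda)$.

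For the uniformization I fix any $B \in (0, A)$ and set $\varphi(z) := 16^{-1/N}F_N\big((1 - BN^{-3})z\big)$, a holomorphic map $\overline{D(0,1)} \to U$ sending $0$ to $0$. Hypothesis~(1) then gives the asymptotic
\[
\log|\varphi'(0)| \geq \log\big((1 - BN^{-3})(1 + AN^{-3} + o(N^{-3}))\big) \sim (A - B)N^{-3} > 0,
\]
so $|\varphi'(0)| > 1$ for all sufficiently large $N$. The pointwise identity $\log^+|p\circ\varphi| = N\log^+|\varphi| \leq N\log^+|F_N((1 - BN^{-3}) z)|$ combined with hypothesis~(2) yields
\[
\int_{\T} \log^+|p\circ\varphi|\, \mu_{\mathrm{Haar}} \leq N \int_{|w| = 1 - BN^{-3}} \log^+|F_N|\, \mu_{\mathrm{Haar}} \ll_B \log N,
\]
and substitution into (\ref{dimensionbound}) produces $\dim_{\Q(\lambda)}\mathcal{V}(U, x(t), \Z) \ll N^3 \log N$.

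The main obstacle is the embedding $R_{2N} \hookrightarrow \mathcal{V}(U, x(t), \Z)$. For $f \in R_{2N}$, the bounded-denominators and cusp-width-dividing-$2N$ hypothesis at $i\infty$ will ensure --- after clearing a common integer denominator and, if needed, multiplying by a power of $\lambda \in \Q(\lambda)$ to absorb a pole at $i\infty$ --- that $f(x(t)) \in \Z\llbracket t \rrbracket$, because $q = t^N$ desingularizes every Puiseux branch at $\lambda = 0$ whose cusp width divides $2N$. Since $f$ is algebraic over $\Q(\lambda) \subset \Q(x)$, it satisfies a linear differential operator $L$ over $\overline{\Q}(x)$ whose solution space over $\C(x)$ is, up to apparent singularities, spanned by the $\Q(x)$-conjugates of $f$. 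The essential step is that each such conjugate is modular for a $\Gamma(2)$-conjugate of the underlying $\Gamma$, so shares the cusp-width profile, and is therefore analytic in $x$ at $x = 0$ by the same desingularization. The only potential singular points of $L$ in $U$ are then $x = 0$ (trivial local monodromy, by the preceding analyticity) and the finitely many $x$-preimages of the poles of $f$ on $Y(\Gamma)$ (trivial local monodromy because the cover $Y(\Gamma) \to Y(2)$ is unramified outside $\{\lambda = 0, 1, \infty\}$, making every conjugate meromorphic at these preimages). In both cases $L$ has trivial local monodromy in the sense of Definition~\ref{holonomy ring}, so $f \in \mathcal{V}(U, x(t), \Z)$ and the dimension bound follows.
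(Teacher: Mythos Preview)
Your proof is correct and follows essentially the same approach as the paper's: the same choice of $U = \C \setminus 16^{-1/N}\mu_N$, $p(x) = x^N$, $x(t) = (\lambda(t^N)/16)^{1/N}$, and $\varphi(z) = 16^{-1/N}F_N(rz)$ with $r = 1 - BN^{-3}$ for some $B < A$ (the paper takes $B = A/2$), followed by the embedding $R_{2N} \subset \mathcal{V}(U, x(t), \Z)$ via the cusp-width condition at $\lambda = 0$. Your justification of trivial local monodromy at $x = 0$ through the $\Gamma(2)$-conjugates of $f$ is a correct rephrasing of the paper's argument via local coordinates at the cusps of $Y(\Gamma)$ over $\lambda = 0$; the mention of ``poles of $f$ on $Y(\Gamma)$'' is superfluous (the generators are holomorphic on $\H$) but harmless.
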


\begin{proof}
Let $t := q^{1/N} = e^{\pi i \tau / N}$.
We use Corollary~\ref{holonomy} with $U := \C \setminus 16^{-1/N}\mu_N$, $p(x) := x^N$ and
\numequation \label{coordinate}
x := (\lambda(\tau)/16)^{1/N} \in t + t^2 \Z[1/N]\llbracket t \rrbracket,
\end{equation}
with the Kummer integrality condition $p(x) = x^N \in \Z\llbracket q \rrbracket = \Z\llbracket t^N \rrbracket \subset \Z\llbracket t \rrbracket$ being in place. 

The integrality and cusp widths conditions in the definition of the $\Q(\lambda)$-vector space $R_{2N}$ entail  a basis of $R_{2N}$ made of elements of the ring $\mathcal{H}(U,x(t),\Z)\otimes_{\Z}\Q$. More precisely, for a modular function $f$ with Fourier expansion at $i\infty$ lying in $\Z\llbracket q^{1/N}\rrbracket \otimes_{\Z}\Q$, by our choice of $t = q^{1/N}$ and $x(t) = (\lambda(q)/16)^{1/N}$ we have  $x^* f \in \Z\llbracket t\rrbracket \otimes_{\Z} \Q$, on defining $x^*f$ as the formal $x$-expansion of $f(q) = f(q(x)) \in \Z \llbracket q^{1/N} \rrbracket \otimes_{\Z} \Q \subset  \Q \llbracket x \rrbracket$. As $f$ is a regular function on some affine modular curve $Y$ over $\overline{\Q}$ which admits a Galois finite \'etale map to $Y(2)_{\overline{\Q}}$, a minimal-order nonzero linear differential operator $L$ over $\overline{\Q}(\lambda)$ with~$L(f)= 0$ has trivial local monodromies around any~$\lambda \neq 0, 1, \infty$. Indeed, by the minimality of~$L$, this amounts to analytically continuing the algebraic function~$f(\lambda) \in \C\llbracket \lambda^{1/N}\rrbracket$ along all paths in~$Y(2)_{\C} = \C \setminus \{0,1\}$; this is for instance since, by the lifting property for covering maps, every holomorphic map~$D(0,1) \to Y(2)_{\C} = \spec \C[\lambda,1/\lambda, 1/(1-\lambda)]$ based at~$0 \mapsto y_0 \in Y(2)_{\C}$ lifts to a holomorphic map~$D(0,1) \to Y_{\C}$ based at an arbitrary fiber point of~$y_0$ under the covering~$Y \to Y(2)$.  
Moreover, our assumption on the cusp widths dividing $2N$ implies that a local coordinate in a small neighborhood of each cusp of $Y$ above $\lambda=0$ can be chosen to be the lift of some (positive integer) power of $x=(\lambda/16)^{1/N}$. This means that the pullback of $L$ to $U\setminus\{0\} = \C^{\times} \setminus 16^{-1/N} \mu_N$  admits a full set of meromorphic solutions in some sufficiently small neighborhood of $x=0$, i.e. has a trivial local monodromy around $x = 0$.  Therefore $f\in \mathcal{H}(U,x(t),\Z)\otimes_{\Z}\Q$, and $R_{2N}\subset \mathcal{V}(U,x(t),\Z)$. 

 It thus suffices to bound $\dim_{\Q(x^N)}\mathcal{V}(U,x(t),\Z)$ by $CN^3\log{N}$.
We take $r := 1 - AN^{-3}/2$ and
$$
\varphi(z) := 16^{-1/N}F_N(rz) \quad  : \quad \overline{D(0,1)} \to U.
$$

By assumption~\ref{strategy}(\ref{radius}) of  Proposition~\ref{strategy} and the choice of radius $r = 1  - AN^{-3}/2$, we have
\numequation \label{deg}
 \log{|\varphi'(0)|}
> \log{(1+A/N^3)}  + \log{r}  
= A N^{-3} / 2 + O_{A}(N^{-6}).
\end{equation}

Thus, with $c := A/3$, we get for $N \gg 1$ sufficiently large that
\numequation \label{redeg}
\log{|\varphi'(0)|} > c N^{-3}.
\end{equation}
Corollary~\ref{holonomy} now  gives the upper bound
\numequation
\label{extrastep}
\dim_{\Q(x^N)} \mathcal{V}(U,x(t),\Z) \leq e \cdot \frac{\int_{|z| = 1 - A/(2N^3)} \log^+{|F_N^N|} \, \mu_\mathrm{Haar}}{cN^{-3}}. 
\end{equation}
From assumption~(\ref{mean})
of Proposition~\ref{strategy} (with the choice $B := A/2$) together with the identity $\log^{+}|F_N^N| = N \log^{+} |F_N|$,
we have
$$
 e \cdot \frac{\int_{|z| = 1 - A/(2N^3)} \log^+{|F_N^N|} \, \mu_\mathrm{Haar}}{cN^{-3}}
   \ll_B e \cdot  \frac{\displaystyle{ N \cdot \frac{\log{N}}{N} }}{ c N^{-3}} = O(N^3 \log N),
   $$
   which, combined with equation~(\ref{extrastep}), is the desired upper bound.
\end{proof}

\begin{remark}
We may also prove this proposition by using Theorem~\ref{abstract form} directly. Using the notation in the proof
of Proposition~\ref{strategy}, let $Y'$ denote the modular curve $Y$ with all the cusps above $0\in Y(2)\cup\{0\}$ filled in. The fiber product $Y'\times_{Y(2)_{\overline{\Q}}\cup\{0\}} U$ with its natural map to $U$ is a covering map (one can check this claim locally; the assumption on cusp widths is used to prove that $0\in U$ is not ramified). Therefore, the universal covering map $D(0,1)\rightarrow U$ factors through $Y'\times_{Y(2)_{\overline{\Q}}\cup \{0\}} U$ and thus we obtain a map $D(0,1)\rightarrow Y'\times_{Y(2)_{\overline{\Q}}\cup \{0\}} U\rightarrow Y'$ (the second map is the natural map) such that its composition with $Y'\rightarrow Y(2)_{\overline{\Q}}\cup \{0\}$ is the map $D(0,1)\rightarrow U \rightarrow Y(2)_{\overline{\Q}}\cup \{0\}$. Thus $f\circ \varphi$ is also given by the natural pullback of $f$ from $Y'$ to $D(0,1)$ and thus it is holomorphic over $D(0,1)$ as far as $f$ is holomorphic at all cusps in $Y'$, which can be achieved by multiplying $f$ with a suitable power of $\lambda$. Thus we verify the analyticity property in Theorem~\ref{abstract form}. The rest of the proof is the same as  above.
\end{remark}

\subsection{A guide to the  proof of the main theorem}

We prove both of the assumptions of Proposition~\ref{strategy} hold in
Theorems~\ref{exactconformal} and
Theorem~\ref{growth term} respectively. This provides a~$CN^3 \log{N}$ dimension bound 
for the vector space $R_{2N}$ of all modular functions against the obvious $\gg N^3$ lower bound for the subring of the \emph{congruence} examples from the
fact that~$[\Gamma(2):\Gamma(2N)] \gg N^3$ (see equation~\ref{zeta2}).
We then need to provide an additional argument to overcome this ``small error''
(a logarithmic gap $O(\log{N})$ in every level~$N$) between the lower
and upper bounds. 

The following is  a guide to what we do in the next few sections of our paper:
 \begin{enumerate}
 \item In~\SSSS~\ref{serresection}, we prove that the logarithmic gap between  the ring of modular forms
 with bounded denominators and the ring of congruence modular forms 
  can be leveraged
 to prove the full unbounded denominators conjecture. The main idea here is that given
 a noncongruence modular form~$f(q) \in \Z  \llbracket q^{1/N} \rrbracket$, one can construct
 many more 
 such forms independent over the ring of congruence forms by considering~$f(q^{p}) \in \Z\llbracket q^{1/N} \rrbracket$ for primes~$p$. 

 \item In~\SSSS~\ref{uniformizations}, we study the properties of the function~$F_N$.
 It turns out more or less to be related to a Schwarzian automorphic function on a
 (generally non-arithmetic) triangle group. This allows us to compute the conformal radius of~$F_N$
 exactly (see Theorem~\ref{exactconformal}), and indeed it has the form~$16^{1/N}\big(1 + (\zeta(3)/2) N^{-3}+ \cdots \big)$.

\item In~\SSSS~\ref{uniformizations}, we also study the maximum value of $|F_N|$ on the
circle $|z| = R$, uniformly in both $N$ and $R <  1$. The main idea here is that a normalized variant function~$G_N(q) = F_N(q^{1/N})^N$
``converges'' to the modular~$\lambda$ function~$\lambda(q) = 16 q - 128 q^2 + \cdots$.
Approximating the region where~$F_N$ is large by the corresponding region for~$\lambda(q)$
one predicts a growth rate of the desired form. However, the problem is that the convergence
of~$G_N(q)$ to~$\lambda(q)$ is not in any way uniform, especially in the neighbourhoods of the cusps
of~$F_N$ which certainly vary with~$N$.

\item In~\SSSS~\ref{nevanlinna}, we  solve this uniformity problem on the abstract grounds of Nevanlinna theory. We
combine the crude growth bound on $|F_N|$ with a version of Nevanlinna's lemma on the logarithmic derivative to prove our requisite uniform upper estimate on the mean proximity function $m(r,  F_N) = \int_{|z| = r} \log^+{|F_N|} \, \mu_{\mathrm{Haar}}$.

\item Putting all the pieces together, the proof of Theorem~\ref{theorem:main} is then completed in \SSSS~\ref{putting together}.

\end{enumerate}

The following leitfaden gives an abbreviated summary of how the argument is laid out:

\begin{center}
\begin{tikzpicture}[node distance = 1.5cm, auto]
\node (A){\TS\ref{putting together}};
\node (R)[below of=A]{\ref{theorem:main}};
\node(B)[above of =A, left of =A]{\ref{growth term}};
\node(C)[above of = A, node distance =3cm,  right of =A]{\ref{exactconformal}};
\node(Z)[above of = A, node distance =7.5cm]{\ref{holonomy}};
\node(S)[left of = Z, node distance = 2cm]{\ref{abstract form}};
\node(Y)[right of = C]{};
\node(W)[above of = Y, right of = Y]{};
\node(X)[right of = W]{\ref{selfcontained}};
\node(V)[left of = X, node distance = 2cm]{\ref{fix}};
\node(U)[left of = V, node distance = 2cm]{\TS\ref{amalgams}};
\node(T)[left of = U, node distance = 2cm]{\ref{enhanced}};
\node(D)[ left of =B]{\TS\ref{proof resumed}};
\node(Q)[above of=D, left of=D]{\ref{trivialsupFN}};
\node(E)[above of=A, node distance =6cm]{\ref{strategy}};
\draw[-] (A) -- (B);
\draw[-] (A) -- (X);
\draw[-]  (B) -- (D);
\draw[-] (A) -- (E);
\draw[-] (A) -- (C);
\draw[-] (Z) -- (E);
\draw[-] (V) -- (X);
\draw[-] (U) -- (V);
\draw[-] (T) --( U);
\draw[-] (S) --(Z);
\draw[-] (R) --(A);
\draw[-] (Q) --(D);
\end{tikzpicture}
\end{center}

\section{Noncongruence forms}
\label{serresection}

\subsection{Wohlfahrt Level}

We begin by recalling a notion of level for noncongruence subgroups
due to Wohlfahrt~\cite{Wohlfahrt}.
Let~$G \subset \SL_2(\Z)$ be a finite index subgroup. (Many of the arguments of
this section do not require this hypotheses but since it is  satisfied for our applications
we assume it to avoid unnecessary distractions.)
The group consisting of the two matrices $\pm  I$,
where~$I =  \left( \begin{matrix} 1 & 0 \\ 0 & 1 \end{matrix} \right)$,
will be denoted by~$E$.
The group~$\SL_2(\Z)$ acts via M\"{o}bius transformations both on the upper half plane~$\H$ and the extended upper half 
plane~$\H^* = \H \cup \mathbf{P}^1(\Q)$. 
The action of~$\SL_2(\Z)$ on~$\mathbf{P}^1(\Q)$ is transitive. It follows that 
if a nontrivial element~$\gamma \in G$  fixes an element~$\zeta \in \mathbf{P}^1(\Q)$,
then~$\gamma$ has the form~$\pm M U^m M^{-1}$, where~$M  \in \SL_2(\Z)$, $M  \infty = \zeta$, and
\numequation
\label{levelN}
U = \left( \begin{matrix} 1 & 1 \\ 0 & 1 \end{matrix} \right).
\end{equation}
We call such a~$\zeta \in \mathbf{P}^1(\Q)$ a \emph{cusp} of~$G$.
If~$\zeta \in \mathbf{P}^1(\Q)$, then~$ M U^m M^{-1}  = (M U M^{-1})^m \in G$ for some~$m$ because~$G$
has finite index in~$\SL_2(\Z)$, and hence every element of~$\mathbf{P}^1(\Q)$ is a cusp of~$G$. 
The stabilizer in~$G$ of a cusp~$\zeta \in \mathbf{P}^1(\Q)$ is either isomorphic to~$E \times \Z = \Z/2 \Z \times \Z$ or~$\Z$,
depending on whether~$E \subset G$ or not. For each~$\zeta$, there is a minimal positive integer~$m$ such
that~$\pm M U^m M^{-1} \in G$, and we say that~$m$ is the \emph{width} of the cusp~$\zeta$. The action of~$G$ on~$\mathbf{P}^1(\Q)$
has finitely many orbits, and the cusp width only depends on the orbit of the cusp under~$G$.
Geometrically, the complex structure on~$\H$ imbues the quotient~$X(G) = \H^*/G$ with the structure of an algebraic curve.
From this point of view,  the equivalence classes
of cusps of~$G$ (up to the action of~$G$) are in bijection with  the pre-images of~$\infty$ under the 
projection~$X(G) \rightarrow X(\SL_2(\Z)) = \mathbf{P}^1_j$, and the cusp widths are exactly the ramification indices
of this map at~$j = \infty$.

\begin{df}[\cite{Wohlfahrt}] 
The level~$L(G)$ of~$G$ is  the lowest common multiple of all the cusp widths of~$G$.
\end{df}

We begin with some elementary properties concerning this definition.
We typically only consider groups containing~$E = \langle -I \rangle$ since we are  generally interested
in stabilizers of functions under M\"obius transformations.

\begin{lemma} \label{intersect}
Let~$G$ and~$H$ be finite index subgroups of~$\SL_2(\Z)$ both containing~$E$. Suppose that~$L(G)$ and~$L(H)$  both divide~$N$.  Then any cusp of~$G \cap H$ also has cusp width dividing~$N$.
\end{lemma}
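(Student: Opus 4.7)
The plan is to work directly from the definition of cusp width. Fix an arbitrary $\zeta \in \mathbf{P}^1(\Q)$ and choose $M \in \SL_2(\Z)$ with $M\infty = \zeta$; write $m_G, m_H, m_{G\cap H}$ for the widths of $\zeta$ in $G$, $H$, and $G \cap H$ respectively. The goal is to prove $m_{G \cap H} \mid N$.

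My first step would be to dispose of the $\pm$ sign ambiguity in the definition. Since both $G$ and $H$ contain $E = \{\pm I\}$, so does $G \cap H$, and therefore $m_G$ is characterized simply as the smallest positive integer with $MU^{m_G}M^{-1} \in G$ (any element $-M U^m M^{-1}$ lying in $G$ can be multiplied by $-I \in G$ to land in $G$ proper), and similarly for $H$ and $G \cap H$. With this normalization, the set
\[
S_G := \{\, m \in \Z \, : \, M U^m M^{-1} \in G \,\}
\]
is the cyclic subgroup $m_G \Z$ of $\Z$, and analogously $S_H = m_H \Z$ and $S_{G \cap H} = m_{G \cap H} \Z$. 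Tautologically, $S_{G \cap H} = S_G \cap S_H$.

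Next, the hypothesis $m_G \mid L(G) \mid N$ gives $N \in S_G$ via
\[
M U^N M^{-1} = \bigl( M U^{m_G} M^{-1} \bigr)^{N/m_G} \in G,
\]
and symmetrically $N \in S_H$. Hence $N \in S_G \cap S_H = S_{G \cap H} = m_{G \cap H} \Z$, which is precisely the desired divisibility $m_{G \cap H} \mid N$.

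There is no real obstacle here; the only mild subtlety is the bookkeeping around $\pm I$, which is why the hypothesis $E \subset G, H$ is included in the statement. The argument is purely formal and does not require any geometric input beyond the standard fact that the stabilizer in a finite-index subgroup of $\SL_2(\Z)$ of a cusp $\zeta = M\infty$ is generated (modulo $E$) by a single power of the parabolic $MUM^{-1}$.
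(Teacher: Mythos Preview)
Your proof is correct and follows essentially the same approach as the paper's: both identify the stabilizer of a cusp in a subgroup containing $E$ with $E \times m\Z$ and then observe that $m_G\Z \cap m_H\Z$ contains $N$ (equivalently, $\mathrm{lcm}(m_G,m_H)\Z$), giving the divisibility. You have simply spelled out the details that the paper leaves implicit.
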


\begin{proof}  The stabilizer of a cusp inside any subgroup of~$\SL_2(\Z)$ containing~$E$ is~$E \times  \Z$.
In particular, if~$G$ contains the group~$E \times a \Z$ and~$H$ contains~$E \times b \Z$ then~$G \cap H$ contains~$E \times \mathrm{lcm}(a,b) \Z$,
and the result follows.
\end{proof}

\begin{lemma} \label{normallevel}
Let~$G \subset \SL_2(\Z)$ be a finite index subgroup containing~$E$ with Wohlfahrt level~$N$.
Let~$N(G)$ be the largest normal subgroup of~$\SL_2(\Z)$ contained in~$G$.
Then~$N(G)$  has finite index in~$\SL_2(\Z)$ and~$L(N(G)) = N$.
\end{lemma}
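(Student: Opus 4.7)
The plan is to establish finite index and the two divisibilities $N \mid L(N(G))$ and $L(N(G)) \mid N$ separately.

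For finite index, I would observe that $N(G)$ is the kernel of the permutation representation $\SL_2(\Z) \to \mathrm{Sym}(\SL_2(\Z)/G)$ on the finite coset space $\SL_2(\Z)/G$; this is manifestly a normal subgroup contained in $G$, and any normal subgroup contained in $G$ lies in it, so it is indeed the maximal such. Being the kernel of a map to a finite group, $N(G)$ has finite index in $\SL_2(\Z)$.

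For the inequality $N \mid L(N(G))$: because $N(G) \subset G$, the stabilizer of any cusp $\zeta$ inside $N(G)$ is a subgroup of the corresponding stabilizer in $G$, so the cusp width of $\zeta$ in $N(G)$ is a positive multiple of its width in $G$. Taking the lcm over all cusps forces $L(G) = N$ to divide $L(N(G))$.

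For the reverse inequality $L(N(G)) \mid N$, the key point is that a \emph{normal} subgroup of $\SL_2(\Z)$ has the same width at every cusp, because the transitive $\SL_2(\Z)$-action on $\mathbf{P}^1(\Q)$ conjugates any cusp stabilizer in $N(G)$ to any other. So it suffices to exhibit $U^N \in N(G)$, where $U$ is as in \eqref{levelN}. Here I use the hypothesis $E \subset G$ together with Wohlfahrt level $N$: for any $M \in \SL_2(\Z)$ with $M\infty = \zeta$ of width $m_\zeta \mid N$, the element $\pm MU^{m_\zeta}M^{-1}$ lies in $G$, and absorbing the sign using $-I \in E \subset G$ gives $MU^{m_\zeta}M^{-1} \in G$; raising to the power $N/m_\zeta$ yields $MU^N M^{-1} \in G$. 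Since this holds for every $M \in \SL_2(\Z)$, the subgroup of $\SL_2(\Z)$ normally generated by $U^N$ lies in $G$, hence lies in $N(G)$. In particular $U^N \in N(G)$, so the cusp width at $\infty$ in $N(G)$ divides $N$, and by the normality observation this forces $L(N(G)) \mid N$.

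The only mildly delicate point is making sure the sign ambiguity in the definition of cusp width does not obstruct step three, which is precisely where the hypothesis $E \subset G$ is needed; everything else is a direct unpacking of the definitions.
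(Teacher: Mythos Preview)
Your proof is correct. The paper's own proof is much terser: it notes that $N(G)$ is the intersection of the finitely many $\SL_2(\Z)$-conjugates of $G$ (giving finite index), observes that each conjugate still contains $E$ and has Wohlfahrt level $N$, and then invokes Lemma~\ref{intersect} to conclude $L(N(G)) \mid N$; the reverse divisibility from $N(G)\subset G$ is left implicit.

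Your route to $L(N(G))\mid N$ is slightly different in packaging: rather than appealing to Lemma~\ref{intersect}, you directly verify that $MU^N M^{-1}\in G$ for every $M\in\SL_2(\Z)$, so that $U^N$ lies in the normal core $N(G)$, and then use normality to transport the width bound at $\infty$ to all cusps. This is really an unpacking of what Lemma~\ref{intersect} would do when applied to the conjugates of $G$, so the two arguments are essentially the same underneath. Your version has the minor advantage of being self-contained and making the role of the hypothesis $E\subset G$ explicit; the paper's version has the advantage of reusing a lemma it needs elsewhere anyway.
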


\begin{proof}  Since~$G$ has finite index in~$\SL_2(\Z)$, the group~$N(G)$ is the intersection of the finitely many conjugates
of~$G$ by~$\SL_2(\Z)$. Hence~$N(G)$ has finite index and~$L(N(G))=N$ by Lemma~\ref{intersect}.
\end{proof}

\begin{note} Let~$A$ denote the following matrix:
\numequation
\label{defineA}
\displaystyle{A:=  \left( \begin{matrix} p & 0 \\ 0 & 1 \end{matrix} \right)}.
\end{equation}
\end{note}
(We use this notation so as to be consistent with
that of Serre in~\cite{SerreThompson} which we follow below.)
We now prove the following lemma concerning how the level of a subgroup changes under conjugation
by~$A$.

\begin{lemma}  \label{timesp} Let~$H \subset \SL_2(\Z)$  be a finite index subgroup containing~$E$ such that~$L(H) = N$.
Then~$L(A^{-1} H A \cap \SL_2(\Z))$ divides~$Np$.
\end{lemma}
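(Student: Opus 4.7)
The plan is to bound the cusp width at every $\zeta \in \mathbf{P}^1(\Q)$ in $\Gamma' := A^{-1} H A \cap \SL_2(\Z)$ by a direct matrix computation, and then invoke the definition of the Wohlfahrt level. Writing $\zeta = a/c$ in lowest terms and choosing $M_\zeta = \left( \begin{smallmatrix} a & b \\ c & d \end{smallmatrix} \right) \in \SL_2(\Z)$ with $M_\zeta \infty = \zeta$, the width of $\zeta$ in $\Gamma'$ is the smallest positive integer $m$ with $\pm M_\zeta U^m M_\zeta^{-1} \in A^{-1}HA$, equivalently with
$$(A M_\zeta) U^m (A M_\zeta)^{-1} = \begin{pmatrix} 1 - acm & pa^2 m \\ -c^2 m / p & 1 + acm \end{pmatrix} \in H.$$
This is a parabolic matrix (trace $2$, determinant $1$) fixing the point $A\zeta = pa/c \in \mathbf{P}^1(\Q)$; the question is when it lands inside the integral group $H$.

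First I would split into two cases according to $\gcd(p, c)$. In Case A ($p \nmid c$), the fraction $pa/c$ is already in lowest terms, and the displayed matrix is integral iff $p \mid m$. Writing $m = pm'$ and fixing any $M_{A\zeta} = \left( \begin{smallmatrix} pa & \ast \\ c & \ast \end{smallmatrix} \right) \in \SL_2(\Z)$, a second short computation identifies the matrix above with $M_{A\zeta} U^{m'} M_{A\zeta}^{-1}$. Hence the width of $\zeta$ in $\Gamma'$ equals $p \cdot w'$, where $w'$ is the width of $A\zeta$ in $H$. Since $w' \mid L(H) = N$, this divides $pN$.

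In Case B ($p \mid c$), write $c = p c'$; then $A\zeta = a/c'$ in lowest terms, the matrix is integral for every $m$, and the same kind of calculation with $M_{A\zeta} = \left( \begin{smallmatrix} a & \ast \\ c' & \ast \end{smallmatrix} \right) \in \SL_2(\Z)$ recognizes it as $M_{A\zeta} U^{pm} M_{A\zeta}^{-1}$. Thus the smallest admissible $m$ is $w'/\gcd(p, w')$, which divides $w' \mid N \mid pN$. In either case the cusp width of $\zeta$ in $\Gamma'$ divides $pN$, and taking the least common multiple over all cusps yields $L(\Gamma') \mid pN$, as claimed.

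The main ``obstacle'' is essentially bookkeeping rather than conceptual: one must carefully distinguish the two cases, because the cusp width behaves oppositely in them (it is multiplied by $p$ when $p \nmid c$ but can be divided by $p$ when $p \mid c$), and it is precisely the interplay of these two effects that produces the single bound $pN$. Everything else is an elementary $\SL_2(\Z)$-stabilizer argument in the spirit of Lemma~\ref{intersect}.
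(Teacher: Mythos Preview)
Your proof is correct and follows essentially the same approach as the paper: both reduce to relating the stabilizer of a cusp in $A^{-1}HA \cap \SL_2(\Z)$ to the stabilizer of the corresponding cusp $A\zeta$ in $H$, and split into two cases according to divisibility by $p$. The only cosmetic difference is one of direction---you parametrize by the cusp $\zeta = a/c$ of $\widetilde{H}$ and split on $\gcd(p,c)$, whereas the paper parametrizes by the cusp of $H$ and splits on $\gcd(p,a)$; under the correspondence $\zeta \leftrightarrow A\zeta$ your Case~A matches the paper's case~(2) (width multiplied by $p$) and your Case~B matches case~(1) (width possibly divided by $p$).
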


\begin{proof}
Let us write~$\wH:=A^{-1} H A \cap \SL_2(\Z)$. Note that~$\wH$ contains~$A^{-1}EA = E$.
In particular, the stabilizer of any cusp of~$\wH$ has the form~$E \times \Z$, where the~$\Z$
is generated by a unipotent element~$\wh$ of~$\wH$ conjugate in~$\SL_2(\Z)$
to~$U^m$ for some positive integer~$m$,
and we want to show that~$m$ divides~$Np$.

Any unipotent element~$\wh$ in~$\wH$ has the form~$\wh = A^{-1} h A$ for some unipotent
element~$h \in H$. The element~$h \in H$ will stabilize some cusp~$\zeta \in \mathbf{P}^1(\Q)$.
The stabilizer of~$\zeta$ in~$H$ has the form~$E \times \Z$ where~$\Z$ is generated by a
unipotent element~$\gamma \in H$. It follows that~$h$ will be the smallest power of~$\gamma$ which lies
in~$\wH$, or equivalently in~$\SL_2(\Z)$.
Since~$L(H) = N$, we may write
$$\gamma = B \left( \begin{matrix} 1 & n \\ 0 & 1 \end{matrix} \right) B^{-1}$$
with~$n|N$, and~$\displaystyle{B = \left( \begin{matrix} a & b \\ c & d \end{matrix} \right) \in \SL_2(\Z)}$.
We  define
$$\begin{aligned}
\wgamma:= A^{-1} \gamma A = \ &
A^{-1} B   \left( \begin{matrix} 1 & n \\ 0 & 1 \end{matrix} \right)  B^{-1} A \\
= & \  (A^{-1} B A) \left( A^{-1}  \left( \begin{matrix} 1 & n \\ 0 & 1 \end{matrix} \right)  A \right)  (A^{-1} B^{-1} A)  \\
=  & \ (A^{-1} B A)  \left( \begin{matrix} 1 & n/p \\ 0 & 1 \end{matrix} \right)  (A^{-1} B A)^{-1}, \end{aligned}
$$
where~$\displaystyle{ (A^{-1} B A)  =  \left( \begin{matrix} a & b/p \\ cp & d \end{matrix} \right)}$.
Since~$h$ is a power of~$\gamma$, we deduce that~$\wh = A^{-1} h A$ is a power of~$\wgamma$,
although~$\wgamma$ need not be in~$\wH$ since it is not necessarily integral.
We consider two cases:
\begin{enumerate}
\item
Suppose that~$(a,p) = 1$. 
Since~$(a,c) = 1$ we have~$(a,pc) = 1$, and thus there exist~$r,s \in \Z$ 
with~$a s - r p c = 1$, and hence 
$$C = \left( \begin{matrix} a & r \\ pc & s \end{matrix} \right) \in \SL_2(\Z).$$
For such a~$C$, we have, with~$t = b s - d p r$, the identity
$$A^{-1} B A = C  \left( \begin{matrix} 1 & t/p \\ 0 & 1 \end{matrix} \right).$$
But since~$\left( \begin{matrix} 1  & t/p \\ 0 & 1 \end{matrix} \right)$ commutes with~$\left( \begin{matrix} 1  & n/p \\ 0 & 1 \end{matrix} \right)$,
it follows that we may write
$$ \wgamma =
(A^{-1} B A)  \left( \begin{matrix} 1 & n/p \\ 0 & 1 \end{matrix} \right)  (A^{-1} B A)^{-1} 
= C  \left( \begin{matrix} 1 & n/p \\ 0 & 1 \end{matrix} \right)  C^{-1}.$$
We now have
$$(\wgamma)^p = C  \left( \begin{matrix} 1 & n \\ 0 & 1 \end{matrix} \right)   C^{-1} \in \SL_2(\Z)$$
 and thus~$(\wgamma)^p$ is in~$\wH$.
 Hence either~$\wh = \wgamma$ if~$\wgamma$ lies in~$\SL_2(\Z)$ or~$\wh =  (\wgamma)^p$.
 In  particular, the cusp width at this cusp is either~$n$ or~$n/p$ and certainly divides~$N$ and hence also~$Np$.
\item Suppose that~$p | a$, so~$p$ does not divide~$c$, so~$a/p$ and~$c$ are  co-prime integers.
 Now take
$$ C = \left( \begin{matrix} a/p & b \\ c & pd \end{matrix} \right)=(A^{-1}BA)\left( \begin{matrix} p & 0 \\ 0 & 1/p \end{matrix} \right) \in \SL_2(\Z).$$
Then
$$  \wgamma = 
(A^{-1} B A)  \left( \begin{matrix} 1 & n/p \\ 0 & 1 \end{matrix} \right)  (A^{-1} B A)^{-1} 
= C  \left( \begin{matrix} 1 & np \\ 0 & 1 \end{matrix} \right)  C^{-1},$$
and hence~$\wh = \wgamma$ and the cusp width at this cusp is~$np$ which divides~$Np$. \qedhere
\end{enumerate}
\end{proof}

\subsection{Modular Forms}

For an  integer~$N$, we will consider the following spaces of modular functions
with rational coefficients generated by forms with bounded denominators,
 that is, subspaces of~$\Q \llp q^{1/N} \rrp = \Q \llbracket q^{1/N} \rrbracket [1/q]$ 
 (with~$q = e^{\pi i \tau}$) generated by elements
of~$\Z \llbracket q^{1/N} \rrbracket \otimes \Q$ as~$\Q(\lambda)$-vector spaces.
\begin{df} \label{rings}  \leavevmode
\begin{enumerate}
\item Let~$M_{2N}$ denote the~$\Q(\lambda)$-vector space generated by holomorphic modular functions on the modular curve~$Y(2N) = \H/\langle E,\Gamma(2N)\rangle$
with coefficients in~$\Q$ at the cusp~$\zeta = i \infty$.
\item Let~$R_{2N}$ denote the~$\Q(\lambda)$-vector space  generated by holomorphic modular functions
with coefficients in~$\Q$,  bounded denominators at the cusp $\zeta = i \infty$, and cusp widths dividing~$2N$ at all cusps $\zeta \in \mathbf{P}^1(\Q)$.
\end{enumerate}
\end{df}

(The vector space~$R_{2N}$ was also defined in Proposition~\ref{strategy} but we repeat the definition here for convenience.)
For example, the (weight~$0$) holomorphic modular forms on~$Y(2)$ are given by~$\Q[\lambda, 1/\lambda, 1/(1-\lambda)]$, and the~$\Q(\lambda)$-vector
space generated by such elements inside~$\Q \llp q \rrp $ with~$q = e^{\pi i \tau}$ is~$M_2 = \Q(\lambda)$.

\begin{lemma}  \label{unrefined} There is a containment~$M_{2N} \subset R_{2N}$, and~$M_{2N}$ and~$R_{2N}$ have finite dimensions over~$M_2 = \Q(\lambda)$.
\end{lemma}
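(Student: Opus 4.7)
The lemma consists of three assertions, which I handle in order.

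\emph{Containment $M_N \subset R_N$.} Let $f$ be a generator of $M_N$, i.e., a holomorphic modular function on $Y(N) = \H/\langle E,\Gamma(N)\rangle$ with rational Fourier coefficients at $i\infty$. Since $\Gamma(N)$ is normal in $\SL_2(\Z)$ and the stabilizer of $\infty$ inside $\langle E,\Gamma(N)\rangle$ is generated by $-I$ and $U^N$, every cusp of $Y(N)$ has width equal to $N$, which trivially divides $N$. Bounded denominators at $i\infty$ follow from the classical $q$-expansion principle (Shimura, Katz): the modular curve $Y(N)$ admits a smooth affine model over $\Z[1/N,\zeta_N]$ whose formal neighborhood at $i\infty$ is $\mathrm{Spec}\,\Z[1/N,\zeta_N]((q^{2/N}))$, so a function on $Y(N)_{\Q}$ with $\Q$-coefficient Fourier expansion must in fact have coefficients in $\Z[1/N]$ after clearing a single common denominator.

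\emph{Finite dimension of $M_N$.} The generators of $M_N$ all lie in the function field $\Q(Y(N))$ of the algebraic curve $Y(N)_{\Q}$, which is a finite extension of $\Q(\lambda) = \Q(Y(2))$ of degree $[\Gamma(2) : \Gamma(N)]$. Hence $M_N$, being a $\Q(\lambda)$-subspace of $\Q(Y(N))$, is finite-dimensional, with dimension bounded by this index.

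\emph{Finite dimension of $R_N$.} Because cusp widths dividing $N$ also divide $2N$ and $\Z\llbracket q^{1/N}\rrbracket \subset \Z\llbracket q^{1/(2N)}\rrbracket$, one has the inclusion $R_N \subset R_{2N}$. Applying Proposition~\ref{strategy} with its parameter taken equal to $N$ then yields $\dim_{\Q(\lambda)} R_{2N} \leq C N^3 \log N < \infty$. Proposition~\ref{strategy} is itself conditional on two inputs about the universal covering $F_N : D(0,1) \to \C \setminus \mu_N$ — a lower bound on the conformal radius $|F_N'(0)|$ and an integrated growth bound on $|F_N|$ near the unit circle — which will be supplied by Theorems~\ref{exactconformal} and~\ref{growth term} of Sections~\ref{uniformizations} and~\ref{nevanlinna}. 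This third assertion is therefore the substantive content of the lemma, ultimately resting on the arithmetic holonomicity theorem (Theorem~\ref{abstract form}) together with the Fuchsian uniformization and Nevanlinna-theoretic analyses of the later sections; by contrast, the containment and the finite-dimensionality of $M_N$ are soft consequences of the $q$-expansion principle and of the algebraic-geometric structure of congruence modular curves, respectively.
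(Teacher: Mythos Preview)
Your proof of the containment $M_N \subset R_N$ and of the finite-dimensionality of $M_N$ is correct and essentially matches the paper (which likewise invokes Shimura's bounded-denominators theorem, after first multiplying by a power of $\Delta$ to reduce to the case of a form holomorphic at the cusps).

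For the finite-dimensionality of $R_N$, however, you take a genuinely heavier route than the paper. You invoke Proposition~\ref{strategy}, which delivers the quantitative bound $\dim_{\Q(\lambda)} R_{2N} \leq C N^3 \log N$ but is conditional on Theorems~\ref{exactconformal} and~\ref{growth term}---the exact conformal-radius formula and the Nevanlinna-theoretic integrated growth bound. The paper instead applies Corollary~\ref{holonomy} directly, and for that one only needs the \emph{qualitative} fact that the conformal radius of $\C \setminus 16^{-1/N}\mu_N$ strictly exceeds~$1$. This follows at once from the Schwarz lemma: the map $z \mapsto \sqrt[N]{\lambda(z^N)/16}$ is a well-defined holomorphic (but non-univalent) map $D(0,1) \to \C \setminus 16^{-1/N}\mu_N$ with derivative~$1$ at the origin, so the universal cover must have strictly larger derivative there. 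Your argument is not circular---Proposition~\ref{strategy} and the later theorems are logically independent of Lemma~\ref{unrefined}---but it forward-references the hardest analytic ingredients of the paper for a lemma that, in the paper's treatment, requires only this soft Schwarz-lemma observation together with the already-proved Corollary~\ref{holonomy}.
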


\begin{proof} 
Let~$f$ be a holomorphic modular function on~$Y(2N)$, that is, a meromorphic function
on the compact modular curve~$X(2N)$ whose poles are all at the cusps. Assume also that~$f$ has
 coefficients in~$\Q$ at the cusp~$\zeta = i \infty$, Then the modular form~$f \Delta(\tau)^m$ is holomorphic at the cusps for
 sufficiently large~$m$. Moreover, $f \Delta(\tau)^m$ has coefficients in~$\Q$. It follows 
 from~\cite[Theorem~3.52]{Shimura} that~$f \Delta(\tau)^m$ has bounded denominators. Since~$\Delta^{-1} \in q^{-2} \Z \llbracket q \rrbracket$ has integral coefficients,  it follows that~$f$ also has bounded denominators,  and thus there is a containment~$M_{2N} \subset R_{2N}$.
 
The second claim follows from Corollary~\ref{holonomy} and the remark (cf. the second paragraph
of~\SSSS~\ref{sec_overconv}) that the conformal radius of $\C \setminus 16^{-1/N}\mu_N$ is strictly larger than~$1$. Indeed, 
$$\sqrt[N]{\lambda(z^N)/16} : D(0,1) \to \C \setminus 16^{-1/N}\mu_N$$
 is a well-defined holomorphic map with unit derivative at the origin, and hence
by Schwarz's lemma the universal covering~$D(0,1) \to \C \setminus 16^{-1/N}\mu_N$ has derivative strictly larger than~$1$ in absolute value. (Later, in Theorem~\ref{exactconformal} below, we will exactly compute this latter derivative.)
\end{proof}

We have the following refinement of Lemma~\ref{unrefined}:

\begin{lemma} \label{finiteness} 
The vector spaces~$M_{2N}$ and~$R_{2N}$ are fields.
The space~$M_{2N}$ may be identified with the field of rational functions on the modular curve~$Y(2N)/\Q$. 
There are injective algebra maps
$$M_2 \rightarrow M_{2N} \rightarrow R_{2N}.$$
The space~$R_{2N}$ is invariant under  a normal finite index subgroup~$G_{2N} \subset \langle E, \Gamma(2N)\rangle \subset  \SL_2(\Z)$ containing~$E$ 
with~$L(G_{2N}) = 2N$.
\end{lemma}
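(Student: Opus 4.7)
The plan is to leverage the finite-dimensionality supplied by Lemma~\ref{unrefined} together with the fact that $\Q\llp q^{1/N}\rrp = \Q\llbracket q^{1/N}\rrbracket[1/q] \cong \Q((s))$ (with $s = q^{1/N}$) is a field of formal Laurent series, and then to use the group-theoretic tools Lemmas~\ref{intersect} and \ref{normallevel}.

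First, I would verify that $M_N$ and $R_N$ are closed under multiplication inside $\Q\llp q^{1/N}\rrp$. For $M_N$ this is routine: the product of two modular functions on $Y(N)$ with rational $q$-expansions at $i\infty$ is again such. For $R_N$, if $f$ has bounded denominators and is fixed by $H_f$ with $L(H_f) \mid N$, and similarly for $g$, then $fg$ has bounded denominators (since $\Z\llbracket q^{1/N}\rrbracket$ is a ring), is fixed by $H_f \cap H_g$, and this intersection has all cusp widths dividing $N$ by Lemma~\ref{intersect}; hence $fg \in R_N$. Thus $M_N, R_N$ are $\Q(\lambda)$-subalgebras of the field $\Q\llp q^{1/N}\rrp$, so integral domains, and being finite-dimensional over $\Q(\lambda)$ by Lemma~\ref{unrefined}, the standard fact that a finite-dimensional integral domain over a field is a field shows both are fields, with injective algebra morphisms $M_2 = \Q(\lambda) \hookrightarrow M_N \hookrightarrow R_N$ coming from the containments of Lemma~\ref{unrefined}.

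To identify $M_N$ with $\Q(Y(N))$, the inclusion $M_N \subseteq \Q(Y(N))$ holds by construction (an element of $M_N$ is a rational function on $X(N)$ whose $q$-expansion at $i\infty$ lies in $\Q\llp q^{1/N}\rrp$, hence lies in the $\Q$-model distinguished by this $q$-expansion principle); the reverse inclusion is standard, since the function field of the $\Q$-model of $Y(N)$ is generated over $\Q(\lambda)$ by finitely many modular functions admitting rational Fourier expansions at $i\infty$ (for instance Hauptmoduln or suitable Eisenstein series / Siegel units).

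The substantive point is the construction of $G_N$. Since $R_N$ is finite-dimensional over $\Q(\lambda)$, I would choose a finite $\Q(\lambda)$-spanning set $f_1, \ldots, f_r$ consisting of modular functions in the defining generating set of $R_N$; each $f_i$ is fixed by a finite-index subgroup $H_i \subseteq \SL_2(\Z)$ containing $E$ with $L(H_i) \mid N$. Put
\[
H \;:=\; \langle E, \Gamma(N)\rangle \;\cap\; \bigcap_{i=1}^r H_i,
\]
a finite-index subgroup of $\SL_2(\Z)$ containing $E$, fixing every $f_i$, and hence fixing all of $R_N$ pointwise. Iterated application of Lemma~\ref{intersect} yields $L(H) \mid N$. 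Now let $G_N$ be the normal core of $H$ in $\SL_2(\Z)$; by Lemma~\ref{normallevel} it is a normal finite-index subgroup containing $E$ with $L(G_N) \mid N$, and it is contained in $H \subseteq \langle E, \Gamma(N)\rangle$ as required.

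It remains to show $N \mid L(G_N)$, which is the step I expect to be the main obstacle and is the reason for intersecting with $\langle E, \Gamma(N)\rangle$ from the start. Every cusp of $\langle E, \Gamma(N)\rangle$ has width exactly $N$, and the cusp-stabilizer order relation under subgroup inclusion forces each cusp width of $G_N$ to be a positive-integer multiple of the corresponding cusp width of $\langle E, \Gamma(N)\rangle$, hence a multiple of $N$. Therefore $L(G_N)$ is divisible by $N$, and combining this with $L(G_N) \mid N$ gives $L(G_N) = N$, completing the construction.
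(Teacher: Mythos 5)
Your proof is correct and follows essentially the same route as the paper: closure under multiplication plus finite dimensionality over $\Q(\lambda)$ from Lemma~\ref{unrefined} inside the Laurent-series domain gives the field property, the $\Q$-model of $Y(N)$ with rational cusp at $i\infty$ and the $q$-expansion principle identifies $M_N$ with the function field, and $G_N$ is produced by intersecting the stabilizers of finitely many generators and passing to the normal core via Lemmas~\ref{intersect} and~\ref{normallevel}. Your explicit intersection with $\langle E,\Gamma(N)\rangle$ together with the cusp-width divisibility argument forcing $N \mid L(G_N)$ is simply a more detailed rendering of the step the paper asserts tersely, not a different method.
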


\begin{proof} 
Note that~$M_{2N}$ and~$R_{2N}$ are subspaces of~$\Q \llp q^{1/N} \rrp$, which is a domain.
Hence if~$M_{2N}$ and~$R_{2N}$ are rings then they are also integral domains, and any integral  domain which has finite dimension over a field is itself a field.

The curve~$Y(2N)$ has a standard model over~$\Q$ (as a moduli space of elliptic curves~$C$ with a given symplectic isomorphism~$C[2N] \simeq \Z/2N\Z \oplus \mu_{2N}$)
such that the cusp~$\zeta = i  \infty$ is  defined over~$\Q$, and the action of~$\Gal(\Qbar/\Q)$ on the global sections of~$Y(2N)$
is compatible with the~$q$-expansion map.  It follows that the set of generators of~$M_{2N}$ is closed under addition and multiplication and hence that~$M_{2N}$
is a ring, and thus a field. Moreover, $M_{2N}$ contains the global sections of the (affine) curve~$Y(2N)/\Q$, and hence~$M_{2N}$
must be the function field of~$Y(2N)/\Q$.

The vector space~$R_{2N}$ is generated by holomorphic modular forms with bounded denominators at~$\zeta = i \infty$. To show~$R_{2N}$ is a ring, it suffices to show that the
product of any two such generators~$g$ and~$h$ is also a generator. Certainly~$gh$ is a holomorphic modular form  with rational coefficients and  bounded denominators, so it suffices to show that the cusp width still divides~$2N$.
But we may assume that~$g$ and~$h$ are invariant under finite index subgroup~$G,H \subset \SL_2(\Z)$ containing~$E$,
and thus~$gh$ is invariant under~$G \cap H$. 
It follows from Lemma~\ref{intersect} that~$L(G \cap H)$ also has  Wohlfahrt level dividing~$2N$.

Since~$R_{2N}$ is finite over~$M_2$, it is generated by a finite number of  basis elements each of which is invariant under some finite index subgroup~$\Phi \subset \SL_2(\Z)$ containing~$E$ with~$L(\Phi)$ dividing~$2N$.
The intersection of all these groups still has finite index and level~$2N$ by Lemma~\ref{intersect},
and then we take~$G_{2N}$ to be the largest normal subgroup of~$\SL_2(\Z)$ contained in this intersection, which also has~$L(G_{2N})=2N$
by Lemma~\ref{normallevel}.
\end{proof}

\subsection{A leveraging argument}
\label{leveraging}

Let us assume that there exists an~$N$ such that~$R_{2N}$ is strictly larger than~$M_{2N}$.
Let~$f(\tau)  \in \Z  \llbracket q^{1/N} \rrbracket \in R_{2N}$ be an element which does not lie in~$M_{2N}$.
Recall that all  forms in~$R_{2N}$ and thus in particular~$f$ is invariant by a subgroup~$G=G_{2N} \subset \langle E, \Gamma(2N) \rangle$
which is normal with finite index in~$\langle E,\Gamma(2N)\rangle$ and has~$L(G_{2N}) = 2N$ by Lemma~\ref{finiteness}.
The main idea of this section is to exploit the fact that~$f(p \tau) \in \Z \llbracket q^{1/N} \rrbracket$ is also a modular form with integer coefficients for any prime~$p$.
Since the form~$f(\tau)$ is invariant under~$G$, the form~$f(p \tau)$
is invariant under~$A^{-1}GA$  and thus also the group~$A^{-1}GA \cap \SL_2(\Z)$.
Now, by Lemma~\ref{timesp}, we know that this group has 
(Wohlfahrt)
level dividing~$2Np$. In particular~$f(p \tau)$ has cusp width dividing~$2Np$ at each cusp,
and hence~$f(p \tau) \in R_{2Np}$.

Our main result is as follows:
\begin{theorem} \label{fix}
Suppose that~$(p,2N)=1$ is  prime.
Suppose that~$f(\tau) \in R_{2N}$ is \emph{not} invariant under a congruence subgroup.
Then the form~$f(p \tau)$ is not in the~$M_{2Np}$-algebra generated by~$R_{2N}$.
\end{theorem}

That is, we can leverage one exception to the unbounded denominators
to produce many examples. 
Before proving Theorem~\ref{fix} (whose proof is deferred to~\SSSS~\ref{amalgams}), we first draw the following consequence:

\begin{theorem} \label{theoremgoal}
Let~$p$ be a prime not dividing~$2N$.
Suppose 
that~$[R_{2N}:M_{2N}] > 1$.
Then one has
$$[R_{2Np}:M_{2Np}] \ge 2 [R_{2N}:M_{2N}].$$
\end{theorem}

\begin{proof}
Let~$f(\tau)$ be a form in~$R_{2Np}$ which is not in~$M_{2N}$.
By Theorem~\ref{fix},
we deduce
that~$f(p \tau) \in R_{2Np}$ is not in the~$M_{2Np}$-algebra
$R_{2N} M_{2Np}$ 
generated by~$R_{2N}$ (which is a subfield of~$R_{2Np}$). We have
\begin{equation*}
\begin{aligned}
[R_{2Np}:M_{2N}] = & \  [R_{2Np}:R_{2N} M_{2Np}] [R_{2N} M_{2Np}:M_{2N}] \\
= & \ [R_{2Np}:R_{2N} M_{2Np}]  [R_{2N}:M_{2N}] [M_{2Np}:M_{2N}], \end{aligned}
\end{equation*}
because the intersection of~$M_{2Np}$ and~$R_{2N}$ is~$M_{2N}$. Thus
$$\frac{[R_{2Np}:M_{2Np}] }{[R_{2N}:M_{2N}]} =   [R_{2Np}:R_{2N} M_{2Np}]$$
is an integer which is~$\ge 2$, 
which implies
 Theorem~\ref{theoremgoal}.
\end{proof}

Our goal is to prove that~$R_{2N}= M_{2N}$. 
As noted in Lemma~\ref{unrefined}, $[R_{2N}:M_2] < \infty$. 
The degree of~$M_{2N}$ over~$M_2$ is equal to the degree of the modular curve~$Y(2N)$ over~$Y(2)$, and this is given,
for~$N > 1$, 
by the explicit formula
\numequation
\label{zeta2}
\begin{aligned}
[M_{2N}:M_2] =
\frac{1}{2}[\Gamma(2):\Gamma(2N)] = & \  \frac{(2N)^3}{2[\SL_2(\Z):\Gamma(2)]} \prod_{p|2N} \left(1 - \frac{1}{p^2} \right)  \\
> & \  \frac{(2N)^3}{2[\SL_2(\Z):\Gamma(2)]} \prod_{p} \left(1 - \frac{1}{p^2} \right) = \frac{2 N^3}{3 \zeta(2)}
= \frac{4 N^3}{\pi^2}. \end{aligned}
\end{equation}
The factor of~$1/2$ comes from the fact that~$E = -I \in \Gamma(2)$ and the degree of~$Y(2N)$ over~$Y(2)$
 is the index of the images of these groups inside~$\PSL_2(\Z)$.
Since~$[R_{2N}:M_2] = [R_{2N}:M_{2N}] \cdot [M_{2N}:M_2]$, 
it follows that we have a bound:
\numequation
\label{goal}
[R_{2N}:M_{2N}] \le  \frac{\pi^2[R_{2N}:M_2]}{4 N^3}
\end{equation}
for all~$N$.  We can now compare this bound against the one
coming from Theorem~\ref{theoremgoal}.

\begin{proposition} \label{selfcontained}
Suppose that there exists a constant~$C$ and a bound
$$[R_{2N}:M_2] \le C N^3 \log N$$
for all  integers~$N$. Then~$R_{2N} = M_{2N}$ for every~$N$, that is,
the unbounded denominators conjecture holds.
\end{proposition}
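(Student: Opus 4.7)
The plan is to argue by contradiction, leveraging the doubling estimate of Theorem~\ref{theoremgoal} against the polylogarithmic upper bound $[R_N:M_2] \leq CN^3 \log N$ until the two collide.

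Suppose for contradiction that $R_{N_0} \neq M_{N_0}$ for some $N_0$, that is, $[R_{N_0}:M_{N_0}] \geq 2$. We may assume $N_0$ is even: if not, apply Theorem~\ref{theoremgoal} once with $p=2$ to replace $N_0$ by $2N_0$, which only enlarges $[R_{N_0}:M_{N_0}]$. Now choose an infinite sequence of distinct odd primes $p_1 < p_2 < \cdots$ none of which divides $N_0$, for instance the smallest such primes, and set $N_k := N_0 \, p_1 p_2 \cdots p_k$, which remain even. Iterating Theorem~\ref{theoremgoal} (at each step $p_{j+1}$ does not divide $N_j$ because $p_{j+1} \notin \{p_1, \ldots, p_j\}$ and $p_{j+1} \nmid N_0$) yields
\[
[R_{N_k}:M_{N_k}] \,\geq\, 2^k [R_{N_0}:M_{N_0}] \,\geq\, 2^k.
\]

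Combining this with the congruence lower bound~(\ref{zeta2}),
\[
[M_{N_k}:M_2] \,\geq\, \frac{N_k^3}{12 \zeta(2)},
\]
and multiplicativity of degrees, we obtain
\[
[R_{N_k}:M_2] \,=\, [R_{N_k}:M_{N_k}] \cdot [M_{N_k}:M_2] \,\geq\, \frac{2^k \, N_k^3}{12 \zeta(2)}.
\]
The hypothesized upper bound $[R_{N_k}:M_2] \leq C N_k^3 \log N_k$ therefore forces
\begin{equation} \label{finalineq}
2^k \,\leq\, 12 \, C \, \zeta(2) \, \log N_k.
\end{equation}

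It remains to verify that~\eqref{finalineq} is violated for large $k$. By the prime number theorem (Chebyshev's estimate suffices), $\sum_{i=1}^{k} \log p_i = \theta(p_k) + O(1) \ll p_k \ll k \log k$, so $\log N_k \leq \log N_0 + O(k \log k)$. Thus the right hand side of~\eqref{finalineq} grows at most like $O(k \log k)$ while the left hand side grows exponentially in $k$, a contradiction for any sufficiently large $k$ depending on $N_0$ and $C$. Hence no such $N_0$ exists, and $R_N = M_N$ for every $N$. The main (and essentially only) obstacle in this argument is packaged into Theorem~\ref{theoremgoal} itself; once that doubling is in hand, the quantitative extraction above is an elementary growth comparison.
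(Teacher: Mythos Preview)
Your proof is correct and follows essentially the same route as the paper: assume a counterexample, multiply in a growing set of new primes, use Theorem~\ref{theoremgoal} to get exponential growth of $[R_{N_k}:M_{N_k}]$, and contrast this against the merely polylogarithmic growth of $\log N_k$ via the hypothesis and~(\ref{zeta2}). The paper parametrizes by a cutoff $X$ (taking all primes $<X$ coprime to $N$) while you parametrize by the count $k$ of primes used, but the estimates are the same. One small remark: since $R_N$ and $M_N$ are defined in the paper only for even $N$ (Definition~\ref{rings}), your reduction step to even $N_0$ via $p=2$ is unnecessary---any counterexample $N_0$ is already even---though it does no harm.
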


\begin{proof}
Assume there exists an~$N$ such that~$R_{2N} \ne M_{2N}$.
Let~$S$ denote the set of primes $< X$ which are co-prime to~$2N$.
By induction, Theorem~\ref{theoremgoal}  implies for such an~$N$ that,
for any~$\varepsilon > 0$,
\numequation
\label{lowerbound}
[R_{2N\prod_{p\in S} p}:M_{2N\prod_{p\in S}p}] \ge 2^{\#S}
> 2^{(1 - \varepsilon) X/\log X},
\end{equation}
for sufficiently large~$X$ (depending on~$N$ and~$\varepsilon$) by the prime number theorem.
 The right-hand side
 certainly increases faster than any power of~$X$.
On the other hand, from the assumed bound on~$[R_{2N}:M_2]$
together with the bound~(\ref{goal}), we obtain
\numequation
\label{upperbound}
\begin{aligned}
 \relax [R_{2N\prod_{p\in S}p}:M_{2N\prod_{p\in S}p}]   \le  & \ 
\frac{C \pi^2}{4} \cdot  \log \left(2N\prod_{p\in S}p\right) \\
= & \ \frac{C \pi^2}{4} \cdot  \log 2N + \frac{C \pi^2}{4}  \sum_{p\in S} \log p <  \frac{C \pi^2}{4}   \cdot  X(1 + \varepsilon),
\end{aligned}
\end{equation}
where the last inequality follows  (with the same~$\varepsilon > 0$) once more from the prime number theorem
for sufficiently large~$X$.
Combining the bounds~(\ref{lowerbound})  and~(\ref{upperbound})  gives, for
all sufficiently large~$X$,
$$ 2^{(1 - \varepsilon) X/\log X} < \frac{C \pi^2}{4} \cdot   X(1 + \varepsilon),$$
which (by some margin!)  is a contradiction for any fixed~$\varepsilon < 1$.
\end{proof}

\begin{remark} \label{expansion rate} The argument still works with a bound weaker 
than~$[R_{2N}:M_2] \ll N^3 \log N$, although~$[R_{2N}:M_2] \ll N^{3 + \varepsilon}$ would not be
strong enough.
\end{remark}

\subsection{Amalgams and a non-abelian version of Ihara's Lemma}
\label{amalgams}
In~\SSS~\ref{leveraging}, we introduced a group~$G = G_{2N}  \subset \langle E,\Gamma(2N) \rangle$
which was normal with finite index and had~$L(G_{2N}) = 2N$.
In this section, we consider more generally (up to a notational shift)
a group~$G = G_N  \subset \langle E,\Gamma(N) \rangle$
which is normal of finite index and with~$L(G_{N}) = N$, and then apply our results to the particular
group~$G$ of~\SSS~\ref{leveraging}  when we prove of Theorem~\ref{fix}.
(See equations~(\ref{serre}) and~(\ref{serreagain}) and the surrounding discussion.)

Since~$G = G_N$ is normal and is contained in~$\langle E,\Gamma(N)\rangle$, 
we may define a group~$S$ by taking~$S = \langle E,\Gamma(N) \rangle/G$.
By construction, the group~$S$
is finite.  There is a natural projection:
$$f:\langle E,\Gamma(N) \rangle \rightarrow \langle E,\Gamma(N) \rangle/G = S.$$
We define
 two homomorphisms~$f_1$ and~$f_2$ from~$\langle E,\Gamma(N) \rangle  \cap \Gamma_0(p)$ to~$S$ as follows:
\begin{enumerate}
\item The map~$f_1$ is the restriction of~$f$ to~$\langle E,\Gamma(N) \rangle  \cap \Gamma_0(p)$ under the natural
inclusion
$$\langle E,\Gamma(N) \rangle  \cap \Gamma_0(p) \rightarrow \langle E,\Gamma(N) \rangle,$$
so~$f_1(x) = f(x)$.
\item Conjugation by~$A$ induces an isomorphism
$$ \langle E,\Gamma(N) \rangle  \cap \Gamma_0(p) \rightarrow  \langle E,\Gamma(N) \rangle  \cap \Gamma^0(p), \quad
\gamma \longrightarrow A \gamma A^{-1}. $$
The map~$f_2$ is the composition of this map composed with~$f$, so~$f_2(x) = f(AxA^{-1})$. 
\end{enumerate}

\begin{lemma}[Serre, Berger] \label{serreberger}
The map~$(f_1,f_2): \langle E, \Gamma(N)\rangle \cap \Gamma_0(p) \rightarrow S \times S$
is surjective.
\end{lemma}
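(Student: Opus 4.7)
The plan is to apply Goursat's lemma to the image of $(f_1,f_2)$, reducing surjectivity to two conditions---that each projection $f_i$ is individually surjective, and that the common Goursat quotient is trivial---and then attack them with different tools. The first is a routine level-$N$ reduction using $(N,p)=1$; the second requires the congruence subgroup property of $\SL_2(\Z[1/p])$ (Mennicke, Serre).

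Set $H := (f_1,f_2)(\langle E,\Gamma(N)\rangle\cap\Gamma_0(p))\subseteq S\times S$, let $H_i\subseteq S$ be its two projections, and let $K_1:=\{s:(s,1)\in H\}$, $K_2:=\{s:(1,s)\in H\}$. Goursat's lemma gives a canonical isomorphism $H_1/K_1\cong H_2/K_2$, and $H=S\times S$ is equivalent to $H_1=H_2=S$ together with $K_2=S$. The level assumption $L(G)=N$ forces $G$ to contain $\left(\begin{smallmatrix}1&N\\0&1\end{smallmatrix}\right)$ and $\left(\begin{smallmatrix}1&0\\N&1\end{smallmatrix}\right)$, whose reductions mod $p$ generate $\SL_2(\F_p)$ since $(N,p)=1$; hence $G\cdot\Gamma(p)=\SL_2(\Z)$, and
\[
\langle E,\Gamma(N)\rangle=G\cdot(\langle E,\Gamma(N)\rangle\cap\Gamma(p))\subseteq G\cdot(\langle E,\Gamma(N)\rangle\cap\Gamma_0(p)),
\]
giving $H_1=S$; the analogous computation with $\Gamma^0(p)\supseteq\Gamma(p)$ gives $H_2=S$.

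The remaining condition $K_2=S$ is equivalent to the subgroup equality $\langle E,\Gamma(N)\rangle=L$ inside $\SL_2(\Z)$, where $L:=G\cdot A(G\cap\Gamma_0(p))A^{-1}$ (a subgroup since $G$ is normal). To establish it, I pass to $\SL_2(\Z[1/p])$ and consider $\widetilde G:=\langle G,AGA^{-1}\rangle$. Using the Bass--Serre decomposition $\SL_2(\Z[1/p])=\SL_2(\Z)*_{\Gamma^0(p)}A\SL_2(\Z)A^{-1}$ and that both $G$ and $AGA^{-1}$ have finite index in their respective vertex stabilizers, $\widetilde G$ has finite index in $\SL_2(\Z[1/p])$. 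The congruence subgroup property of $\SL_2(\Z[1/p])$ then supplies some $M\in\N$ coprime to $p$ with $\ker(\SL_2(\Z[1/p])\to\SL_2(\Z/M))\subseteq\widetilde G$; enlarging $M$ if necessary, I assume $(M,N)=1$ as well.

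A Bass--Serre analysis of subgroups of amalgamated products identifies $\widetilde G\cap\SL_2(\Z)=L$, whence $L\supseteq\Gamma(M)$. Since $(M,N)=1$, the unipotent argument applied mod $M$ gives $G\cdot\Gamma(M)=\SL_2(\Z)$; intersecting with $\Gamma:=\langle E,\Gamma(N)\rangle$ and using $G\subseteq\Gamma$ yields $\Gamma=G\cdot(\Gamma\cap\Gamma(M))\subseteq L$, closing the argument. The main obstacle I anticipate is the Bass--Serre identification $\widetilde G\cap\SL_2(\Z)=L$: it requires verifying that every word in the generators $G\cup AGA^{-1}$ of $\widetilde G$ that reduces to an integer matrix can be rewritten as a product of elements in $G$ and $A(G\cap\Gamma_0(p))A^{-1}$, which demands a careful normal-form analysis in the amalgamated product structure.
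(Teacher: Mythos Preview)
Your overall architecture---Goursat's lemma plus the congruence subgroup property for $\SL_2(\Z[1/p])$---matches the paper's, and your verification that $H_1=H_2=S$ via the unipotent surjection onto $\SL_2(\F_p)$ is correct. But the execution of the second half has two real problems.

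First, the sentence ``enlarging $M$ if necessary, I assume $(M,N)=1$ as well'' is simply wrong: replacing $M$ by a multiple preserves $\ker(\SL_2(\Z[1/p])\to\SL_2(\Z/M))\subseteq\widetilde G$ but can only \emph{add} prime factors to $M$, not remove the primes dividing $N$. In fact one expects the opposite---the level of $\widetilde G$ should divide $N$, not be coprime to it. This step is salvageable: once you know $L$ is a congruence subgroup (contains some $\Gamma(M)$), then since $L\supseteq G$ one has $L(L)\mid L(G)=N$, and Wohlfahrt's theorem (\cite[Theorem~2]{Wohlfahrt}) forces $L\supseteq\langle E,\Gamma(N)\rangle$. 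So the repair is to invoke Wohlfahrt, exactly as the paper does at the end.

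Second, and more seriously, the Bass--Serre identification $\widetilde G\cap\SL_2(\Z)=L$ that you flag as ``the main obstacle'' is genuinely hard and you have not proved it. Your justification for $\widetilde G$ having finite index (``both $G$ and $AGA^{-1}$ have finite index in their respective vertex stabilizers'') is insufficient: in a general amalgam $P*_R Q$, finite-index subgroups $P'\leq P$ and $Q'\leq Q$ can generate a subgroup of infinite index (take $P=Q=\Z$, $R=2\Z$, $P'=Q'=2\Z$). What actually makes it work here is the transitivity of $G$ on $\SL_2(\Z)/\Gamma^0(p)\cong\mathbb P^1(\F_p)$, forcing $\widetilde G\backslash T$ to be a single edge---but even granting this, identifying the vertex stabilizer $\widetilde G\cap\SL_2(\Z)$ with your explicit group $L=G\cdot A(G\cap\Gamma_0(p))A^{-1}$ requires controlling the edge group $\widetilde G\cap\Gamma^0(p)$, which is essentially circular.

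The paper sidesteps both issues by a cleaner maneuver. Rather than analyzing the unknown group $\widetilde G$, it works by contradiction: if $(f_1,f_2)$ is not surjective, Goursat produces a nontrivial common quotient $\Delta$ with $\pi_1\circ f_1=\pi_2\circ f_2$ on $\langle E,\Gamma(N)\rangle\cap\Gamma_0(p)$. This compatibility is exactly what is needed to glue the maps $g_1:\langle E,\Gamma(N)\rangle\to\Delta$ and $g_2:A^{-1}\langle E,\Gamma(N)\rangle A\to\Delta$ into a single homomorphism on the amalgam
\[
\Phi:=\langle E,\Gamma(N)\rangle *_{\langle E,\Gamma(N)\rangle\cap\Gamma_0(p)} A^{-1}\langle E,\Gamma(N)\rangle A.
\]
The point is that $\Phi$ is \emph{already known} (by Serre--Berger) to be the level-$N$ congruence subgroup of $\SL_2(\Z[1/p])$, so CSP applies immediately: the map $\Phi\to\Delta$ has congruence kernel, hence so does its restriction $\langle E,\Gamma(N)\rangle\to\Delta$, contradicting Wohlfahrt as above. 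In short, the paper uses the amalgam description of a group it understands, rather than trying to compute the vertex stabilizers of a group it does not.
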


This is more or less precisely~\cite[Theorem~3]{SerreThompson}
with the addition of
level structure  as in~\cite{Berger}. Ihara's Lemma~\cite{RibetICM} is (informally) the statement
that the two maps~$H^1(\Gamma,\F_q) \rightarrow H^1(\Gamma_0(p),\F_q)$ coming from the restriction map
and (respectively) the restriction map conjugated by~$A$ have images which are as disjoint as possible.
One may think of Lemma~\ref{serreberger}
as a non-abelian version of Ihara's Lemma, because (as explained below
in the proof of Lemma~\ref{ihararibet}) the
case when~$S$ is a vector space over~$\F_q$ reduces precisely to the statement
of Ihara's Lemma as proved by Ribet~\cite{RibetICM}. (The proofs of both
claims are very similar.)

\begin{proof} 
The intersection of~$\langle E,\Gamma(N) \rangle$ with~$A \langle E,\Gamma(N) \rangle A^{-1}$ is
the group~$\langle E,\Gamma(N) \rangle \cap \Gamma_0(p)$. 
We proceed by contradiction.
Assume that the map~$(f_1,f_2)$ is not surjective. By Goursat's lemma, there exists a nontrivial 
quotient~$\Delta$ of~$S$  and projections~$\pi_i: S \rightarrow \Delta$ such that the composites~$\pi_1 \circ f_1$ 
and~$\pi_2 \circ f_2$ agree.  
We define a map~$g_1$ by the composite
\numequation
 \xymatrix{g_1:   \langle  E,\Gamma(N)\rangle \ar[r]^-{f} & S \ar[r]^{\pi_1} & \Delta} 
 \end{equation}
and a map~$g_2$ by the composite
\numequation
 \xymatrix{g_2:   A^{-1} \langle  E,\Gamma(N)\rangle A
 \ar[r] & \langle  E,\Gamma(N)\rangle \ar[r]^-{f} & S \ar[r]^{\pi_2} & \Delta} 
 \end{equation}
 where the first map sends~$x \rightarrow A x A^{-1}$. On the intersection
 $$\langle  E,\Gamma(N) \rangle \cap  A^{-1} \langle  E,\Gamma(N)\rangle A
 = \langle  E,\Gamma(N) \rangle  \cap \Gamma_0(p),$$
 the restriction of~$g_1$ is given by~$\pi_1 \circ f_1$ and the restriction of~$g_2$
 is given by~$\pi_2 \circ f_2$. By construction these maps coincide, and hence
 they induce a surjective map on the amalgam
 $$\Phi:=\langle E,\Gamma(N) \rangle  \star_{\langle  E,\Gamma(N) \rangle  \cap \Gamma_0(p)} 
  A^{-1} \langle  E,\Gamma(N)\rangle A \rightarrow \Delta.$$
There are natural inclusions from~$\langle E,\Gamma(N) \rangle $ and~$ A^{-1} \langle  E,\Gamma(N)\rangle A$ 
to the  congruence subgroup of~$\SL_2(\Z[1/p])$ consisting of matrices congruent to~$\pm I \bmod N$, and these inclusions
induce a map from~$\Phi$ to this congruence subgroup.
This map is an isomorphism (\cite[p.919]{Berger}, using ideas of~\cite{Trees} and following the proof of~\cite[Theorem~3]{SerreThompson}).
But the group~$\SL_2(\Z[1/p])$ (and thus the congruence subgroup~$\Phi$) satisfies
the congruence subgroup property~\cite{Mennicke,SerreSL2}. Hence the map~$\Phi \rightarrow \Delta$
is a congruence map, and thus the same is true for the restriction to~$\langle E,\Gamma(N) \rangle \subset \Phi$.
This implies that the kernel~$K \supseteq G$ of the map 
$$\langle E, \Gamma(N)\rangle  \rightarrow  \langle E, \Gamma(N)\rangle/G = S \rightarrow \Delta$$
is a congruence subgroup of~$\SL_2(\Z)$ containing~$E$ and strictly contained in~$\langle E,\Gamma(N)\rangle$.
But this contradicts the assumption that the Wohlfahrt level  of~$G$ is~$N$, because the smallest congruence
 subgroup of Wohlfahrt level~$N$ containing~$E$ is precisely~$\langle E,\Gamma(N)\rangle$
 by~\cite[Theorem~2]{Wohlfahrt}.
 \end{proof}

Let~$B \subset \SL_2(\F_p)$ denote the Borel subgroup of upper triangular matrices.
There is a natural surjection~$\pi: \langle E, \Gamma(N) \rangle \cap \Gamma_0(p) \rightarrow B$
whose kernel is~$\Gamma(Np)$. 
We have the following extension of Lemma~\ref{serreberger}.

\begin{lemma}\label{effort} The map~$(f_1,f_2,\pi): \langle E, \Gamma(N)\rangle \cap \Gamma_0(p) \rightarrow S \times S \times B$
is surjective.
\end{lemma}

\begin{proof} Let~$\gamma = \left( \begin{matrix} 1 & N \\ 0 & 1 \end{matrix}  \right)$ and~$\eta = \left( \begin{matrix} 1 & 0 \\ N& 1 \end{matrix}  \right)$. The assumption that~$L(G) = N$
and~$G$ has finite index in~$\SL_2(\Z)$ impies that~$\gamma, \eta \in G$. 
Since~$A^{-1} \gamma^p A = \gamma \in A^{-1} G A \cap \Gamma_0(p)$, we see that~$\gamma \in \ker(f_1)$ and~$\gamma \in \ker(f_2)$, and yet
$$\pi(\gamma) = \left( \begin{matrix} 1 & N \\ 0 & 1 \end{matrix}  \right) \in B$$
generates the normal unipotent subgroup~$U \subset B$. 
By Goursat's Lemma, we can detect the failure of surjectivity coming from a map of~$S \times S$ and~$B$ to some common quotient.
Because the image contains~$0 \times 0 \times \langle U \rangle$, this common quotient is  a quotient of the abelian group~$B/\langle U \rangle$.
 Thus, by Nakayama's Lemma, the failure of surjectivity can be detected by maps to~$\F_q$ for 
 primes~$q$.
 Maps to~$\F_q$ are determined by cohomology classes with coefficients in~$\F_q$. 
 Let~$\Gbox:=G \cap \Gamma(N)$. Since~$E \in G$, we have
 $$S \simeq \langle E,\Gamma(N) \rangle/G
 \simeq \Gamma(N)/\Gbox,$$
 and so the map~$(f_1,f_2)$ remains surjective after restriction
  to~$\Gamma(N) \cap \Gamma_0(p)$.
 The surjectivity of~$(f_1,f_2)$ implies the injectivity of the map
 \numequation \label{beforeupgrade}
 H^1(\Gamma(N)/\Gbox,\F_q)^{2} = H^1(S,\F_q)^2  \rightarrow 
H^1(\Gamma(N) \cap \Gamma_0(p),\F_q).
\end{equation}
The assumption that~$L(G)=N$ 
implies that
\numequation \label{nogo}
H^1(\Gamma(N)/\Gbox,\F_q) \cap \Hcong(\Gamma(N),\F_q) = 0 \in H^1(\Gamma(N),\F_q),
\end{equation}
where~$\Hcong(\Gamma(N),\F_q) \subset H^1(\Gamma(N),\F_q)$ denotes
the classes which vanish after restriction to a congruence subgroup (Definition~\ref{conghom}).
This is because the kernel of any nontrivial map in~$ \Hcong(\Gamma(N),\F_q)$
has level strictly divisible by~$N$.
The claim~(\ref{beforeupgrade})  follows from~(\ref{nogo}) as a consequence
of Ihara's Lemma, as proved by Ribet~\cite{RibetICM} (see Lemma~\ref{ihararibet}).
The maps
$$ \langle E, \Gamma(N)\rangle \cap \Gamma_0(p)  \rightarrow B/\langle U \rangle \rightarrow \F_q$$
on the other hand come from the classes in~$H^1(\Gamma(N) \cap \Gamma_0(p),\F_q)$
which restricts to zero on~$H^1(\Gamma(N) \cap \Gamma_1(p),\F_q)$,
and thus what is required is to upgrade  the injection of~(\ref{beforeupgrade}) to an injection
 \numequation \label{afterupgrade}
 H^1(\Gamma(N)/\Gbox,\F_q)^{2} = H^1(S,\F_q)^2  \rightarrow 
H^1(\Gamma(N) \cap \Gamma_1(p),\F_q),
\end{equation}
which is dual to the desired claim that the map
$$\Gamma(N) \cap \Gamma_0(p) \rightarrow S^{\ab}/q S^{\ab} \times S^{\ab}/q S^{\ab} \times
B/U$$
is surjective.
But now we may invoke an enhanced version of Ihara's Lemma (Lemma~\ref{enhanced})
which we prove in~\SSSS~\ref{enhancedihara}, and  the injectivity of~(\ref{afterupgrade})
follows
directly from~(\ref{nogo}).
\end{proof}

\begin{remark} 
Because~$\gamma$ and~$E$ map to zero in~$S \times S$ and~$B/\langle E,U \rangle$ has order~$(p-1)/2$,
the proof of Lemma~\ref{effort} is almost immediate if one imposes the additional hypothesis
 that~$\left( \frac{p-1}{2}, |S|\right) = 1$. In particular, one would not have to appeal to the
 results in~\SSSS~\ref{invariant} and~\SSSS~\ref{enhancedihara} (which are not used elsewhere in this paper).
It turns out that proving Lemma~\ref{effort} under this weaker hypothesis would suffice for the proof of the unbounded denominators conjecture.
The key point is that if~$\langle E,\Gamma(N) \rangle/G_N \simeq S$ and~$G_{Np}$ is the group 
given by the intersection of the three groups~$\langle E,\Gamma(Np) \rangle$, $G$, and~$A G A^{-1}$, then~$\langle E,\Gamma(Np) \rangle/G_{Np} \simeq S \times S$. In particular, one can control the primes dividing~$S$ as one varies~$N$. Then, in the argument of Proposition~\ref{selfcontained}, instead of adding all primes~$< X$ prime to~$N$, one
 only includes  primes in some arithmetic progression
satisfying the congruence~$\left( \frac{p-1}{2}, |S|\right) = 1$ for some fixed~$S$.
However, it seems more natural to prove Lemma~\ref{effort} without such an ugly hypothesis.  Additionally, \SSSS~\ref{invariant} and~\SSSS~\ref{enhancedihara} may be of independent interest.
\end{remark}

Returning to the assumptions of Lemma~\ref{effort},
let~$K = \langle \ker((f_1,\pi)), \ker(f_2) \rangle$ be the group generated by~$\ker((f_1,\pi))$ and~$\ker(f_2)$. We deduce from Lemma~\ref{effort} that
the image of~$(f_1,f_2,\pi)$ contains the elements~$(x,0,z)$ and~$(0,y,0)$ for any
triple~$(x,y,z) \in S \times S \times B$. But the pre-images of these elements
clearly lie in~$\ker(f_2)$ and~$\ker((f_1,\pi))$ respectively, and thus lie in~$K$. But then the pre-image of
any element lies in~$K$, and we deduce that~$K = \langle E,\Gamma(N) \rangle \cap \Gamma_0(p)$, or equivalently that
\numequation 
\label{serre}
\langle E,G \cap \Gamma(Np), A^{-1} G A  \cap \Gamma_0(p) \rangle = \langle E,\Gamma(N)\rangle \cap \Gamma_0(p).
\end{equation}
Now specializing to the group~$G = G_{2N}$ of~\S~\ref{leveraging}, we obtain the corresponding identity
\numequation 
\label{serreagain}
\langle E,G \cap \Gamma(2Np), A^{-1} G A  \cap \Gamma_0(p) \rangle = \langle E,\Gamma(2N)\rangle \cap \Gamma_0(p).
\end{equation}

We now complete the proof of Theorem~\ref{fix} and hence the proof of Theorem~\ref{theoremgoal}
(as explained at the beginning of \SSSS~\ref{leveraging}).

\begin{proof}[Proof of Theorem~\ref{fix} ] 
Consider the function~$f(p \tau)$.
Assume that this lies in the algebra generated by~$f(\tau)$ and~$M_{2Np}$.
Then~$f(p \tau)$
is invariant under both~$A^{-1}GA \cap \SL_2(\Z)$  and~$G \cap \Gamma(2Np)$.
But from~(\ref{serreagain}) we see that these groups together generate a congruence subgroup,
and thus~$f(p \tau)$ and~$f(\tau)$ are congruence, a contradiction.
\end{proof}

\subsection{Invariant vectors}
\label{invariant}

The \emph{congruence completion} $\Gammah$ of a congruence subgroup~$\Gamma \subset \SL_2(\Z)$ is the inverse limit of all quotients of~$\Gamma$
by normal congruence subgroups.
We recall the following definition (cf.~\cite[\SSS~3.7]{CalegariVenkatesh}).
\begin{df} \label{conghom} Let~$\Gamma \subset \SL_2(\Z)$ be a congruence subgroup.
A \emph{congruence class}~$\eta \in H^1(\Gamma,\F_{\ell})$ is a class
that restricts to zero on some congruence subgroup~$\Gamma' \subset \Gamma$.
Denote the subgroup of congruence classes by
$$\Hcong(\Gamma,\F_{\ell}) \subset H^1(\Gamma,\F_{\ell}).$$
 \end{df}
If~$\Gammah$ denotes the congruence completion of the group~$\Gamma$,
then~$\Hcong(\Gamma,\F_{\ell}) \simeq H^1(\Gammah,\F_{\ell})$.
In practice, we shall usually talk about~$H^1(\Gammah,\F_{\ell})$ rather than~$\Hcong(\Gamma,\F_{\ell})$
but we have recalled the definition here to allow for an easier comparison with the arguments of~\cite{CalegariVenkatesh}.
For a prime~$\ell$, one may define (\cite[\SSS~2]{CEgrowth}, \cite[\SSS~1]{MR3466858}, see also~\cite{survey}) the groups
$$\Htw^1(\F_{\ell}) := \lim_{N} H^1(\Gamma(N),\F_{\ell}),
\quad \Htw^1(\Q/\Z) := \lim_{N} H^1(\Gamma(N),\Q/\Z)$$
over all levels~$N$.  The limit has an action of the group~$\SL_2(\Zhat) = \prod_{p} \SL_2(\Z_p)$.
The goal of this section is to prove:

\begin{theorem} \label{noinvariants} The~$\SL_2(\Zhat)$-invariant subspace of~$\Htw^1(\F_{\ell})$ is trivial.
\end{theorem}

It follows that the~$\SL_2(\Zhat)$-invariant subspace of~$\Htw^1(\Q/\Z)$ is also trivial.
We shall use Theorem~\ref{noinvariants} in the following equivalent form.

\begin{corr} \label{new}
Let~$N$ be an integer, and~$\eta \in H^1(\Gamma(N),\F_{\ell})$.
If, for all~$g \in \SL_2(\Z/N\Z)$, the class~$g \eta - \eta
\in H^1(\Gamma(N),\F_{\ell})$ is a congruence class, then~$\eta$ is   a congruence class.
\end{corr}

\begin{proof} The assumptions imply that the image of~$\eta$ in~$\Htw^1(\F_{\ell})$
is~$\SL_2(\Zhat)$-invariant, and thus zero. But the kernel of the 
map~$H^1(\Gamma(N),\F_{\ell}) \rightarrow \Htw^1(\F_{\ell})$ consists
precisely of congruence classes.
\end{proof}

Our first goal is to control the
group~$H^2(\Gammah(N),\F_{\ell})$ for various~$N$, in particular for~$N=1$, which we do in a sequence
of steps.

\begin{lemma} \label{lemmaone}
We have~$H_2(\SL_2(\F_p),\Z) = 0$ for all primes~$p$. 
\end{lemma}

\begin{proof}
It suffices to prove the vanishing of~$H_2(\Delta,\Z)$
for any Sylow subgroup~$\Delta$ of~$\SL_2(\F_p)$. For odd primes, the Sylow
subgroup is cyclic and the cohomology of a cyclic group is only non-zero
in even degree. 
For a finite group~$G$, we have~$H^{n+1}(G,\Z) \simeq \Ext^1(H_{n}(G,\Z),\Z)$ for~$n \ge 0$ by
the universal coefficient theorem.
Hence  the homology of a cyclic group is zero in even degree~$n >0$.
The~$2$-Sylow subgroup is a generalized quaternion group,
whose cohomology also vanishes in odd degree
(as follows from~\cite[Satz~25.3(a), p.643]{Huppert}
and~\cite[Theorem~2]{Swan}), and once more we are done by the universal coefficient theorem.
\end{proof}

\begin{lemma}  \label{firstlemma}
For~$n=1$ and~$n=2$, we have:
$$H^n(\SL_2(\F_p),\F_{\ell})^{\vee} \simeq H_n(\SL_2(\F_p),\F_{\ell}) = 
\begin{cases} \F_{\ell}, & p=\ell \in \{2,3\} \\
0, & \text{otherwise}. \end{cases}
$$
\end{lemma}

\begin{proof} 
There is a short exact sequence:
$$0 \rightarrow H_2(G,\Z)/\ell \rightarrow H_2(G,\F_{\ell}) \rightarrow H_1(G,\Z)[\ell] \rightarrow 0,$$
and~$H_1(G,\F_{\ell}) \simeq H_1(G,\Z)/\ell$. Hence the result follows from combining
Lemma~\ref{lemmaone} with the fact that~$\SL_2(\F_p)^{\mathrm{ab}}$ is trivial
for~$p \ge 5$ and~$\Z/p\Z$ for~$p=2$ and~$p=3$.
\end{proof}

\begin{lemma} \label{forkunneth}
 For~$n=1$, we have:
$$H^1(\SL_2(\Z_p),\F_{\ell})  = 
\begin{cases} \F_{\ell}, & p=\ell \in \{2,3\} \\
0, & \text{otherwise}. \end{cases}
$$
For~$n =2$, we have~$H^2(\SL_2(\Z_p),\F_{\ell}) = 0$ unless~$\ell = p$ and~$p \le 5$. 
\end{lemma}

\begin{remark} We shall compute the exceptional cases when~$\ell = p \le 5$ in
Lemma~\ref{postkunneth} below
 as a consequence
of Theorem~\ref{noinvariants}.
\end{remark}

\begin{proof}
Assume that~$\ell \ne p$. By Hochschild--Serre, we have an isomorphism
$$H^*(\SL_2(\Z_p),\F_{\ell})  \simeq H^*(\SL_2(\F_p),\F_{\ell})$$
and thus the result follows from Lemma~\ref{firstlemma}.
Thus we may assume that~$\ell = p$.
Assume that~$p > 2$.
Let~$G(p)$ be the~$p$-congruence subgroup of~$\SL_2(\Z_p)$.
Recall that a group~$G$ is~$p$-powerful  if~$[G,G]$ is contained in the subgroup
generated by~$p$th powers (for~$p$ odd) or~$4$th powers for~$p=2$.
The group~$G(p)$ is~$p$-torsion free
and~$p$-powerful (for~$p > 2$), so, with~$M = M_0(\F_p)^{\vee}$ where~$G(p)/G(p^2) \simeq M_0(\F_p)$, we deduce by
Lazard's Theorem~(\cite[Chapter~V, 2.2.6.3 and~2.2.7.2, page~551]{Lazard})
 that there are isomorphisms
$$H^1(G(p),\F_p) \simeq M, \quad H^2(G(p),\F_p) \simeq \wedge^2 M,$$
where the cup product map~$\wedge^2 M: H^1 \wedge H^1 \rightarrow H^2$
is an isomorphism.
Assuming~$p \ge 3$, we find that~$M \simeq M^{\vee}$ is self-dual as a~$\SL_2(\F_p)$-module and so~$\wedge^2 M \simeq M$.
Moreover, we have an equality~$M^{\SL_2(\F_p)} = 0$. Consider the Hochschild--Serre spectral
sequence:
$$E^{2}_{i,j} = H^i(\SL_2(\F_p),H^j(G(p),\F_p)) \Rightarrow H^{i+j}(\SL_2(\Z_p),\F_p).$$
Since~$M^{\SL_2(\F_p)} = 0$ and~$M \simeq \wedge^2 M$ we have~$E^{2}_{0,1} = E^{2}_{0,2} = 0$.
It follows that~$E^{\infty}_{0,1} = E^{\infty}_{0,2} = 0$, but also that~$E^{\infty}_{2,0} = E^{2}_{2,0}
= H^2(\SL_2(\F_p),\F_p)$. 
The vanishing of~$E^{\infty}_{0,2}$ implies that~$H^2(\SL_2(\Z_p),\F_p)$
is an extension of~$E^{\infty}_{2,0}$ by~$E^{\infty}_{1,1} \subseteq E^{2}_{1,1} = H^1(\SL_2(\F_p),M)$,
and hence
there is an exact sequence:
 \numequation
 \label{p3}
 0 \rightarrow H^2(\SL_2(\F_p),\F_p)
\rightarrow H^2(\SL_2(\Z_p),\F_p)
\rightarrow H^1(\SL_2(\F_p),M).
\end{equation}
If~$p \ne 5$, then~$H^1(\SL_2(\F_p),M)=0$ (see~\cite[Lemma~2.48]{DDT}) and the result follows from Lemma~\ref{firstlemma}.
\end{proof}

We deduce:

\begin{lemma} \label{mapiszero}  For every prime~$\ell$, there is an isomorphism~$H^2(\SL_2(\Zhat),\F_{\ell})
\simeq H^2(\SL_2(\Z_{\ell}),\F_{\ell})$. If~$N$ is a power of~$\ell$
and~$G(N) \subset \SL_2(\Z_{\ell})$ the corresponding principal congruence subgroup,
then
$$H^2(\Gammah(N),\F_{\ell}) \simeq H^2(G(N),\F_{\ell}).$$
 If~$\ell$ is odd and~$N$ is a  nontrivial power of~$\ell$ or~$N \ge 8$
is a power of~$\ell = 2$, then the map
$$H^2(\SL_2(\Zhat),\F_{\ell}) \rightarrow H^2(\Gammah(N),\F_{\ell})$$
is trivial.
\end{lemma}

\begin{proof} Since~$H^n(\SL_2(\Z_p),\F_{\ell}) = 0$ for~$n=1$ and~$n=2$
unless~$\ell = p$, the first two claims follow from the K\"{u}nneth formula
and Lemma~\ref{forkunneth}.
It remains to show that the map
$$H^2(\SL_2(\Z_p),\F_p) \rightarrow H^2(G(N),\F_p)$$
is the zero map for~$N = p$ if~$p$ is odd and~$N = 8$ if~$p = 2$.
For~$p > 2$,
 we have~$H^2(G(N),\F_p) \simeq M$  and~$M^{\SL_2(\F_p)} = 0$.
 Since  the source is~$\SL_2(\F_p)$-invariant, the image must be trivial. For~$p = 2$, we have~$H^2(G(N),\F_2) \simeq \wedge^2(M) \simeq M$
whenever~$N \ge 4$ (to ensure that~$G(N)$ is~$2$-powerful). 
Unlike what happens for~$p$ odd, we have~$M^{\SL_2(\F_2)} = \F_2$.
However, the map
$$M = H^1(G(4),\F_2) \rightarrow H^1(G(8),\F_2) = M$$ 
is zero, and thus the induced map
$$\wedge^2 M =  H^2(G(4),\F_2) \rightarrow H^2(G(8),\F_2) = \wedge^2 M$$ 
is also zero.
\end{proof}

Now let us consider the following commutative diagram for~$N \in \{3,4,5,8,16\}$
and~$\ell$ dividing~$N$ coming from compatible
Hochschild--Serre spectral sequences:

{\small
\numequation \label{HScommutative}
\begin{tikzcd}[column sep=small, row sep=large]
0 \arrow[r] & H^1(\SL_2(\Zhat),\F_{\ell}) \arrow[r] \arrow[d] & H^1(\SL_2(\Z),\F_{\ell}) \arrow[r] \arrow[d] & (\Htw^1(\F_{\ell}))^{\SL_2(\Zhat)} \arrow[r] \arrow[d, hook] & H^2(\SL_2(\Zhat),\F_{\ell}) \arrow[r] \arrow[d] & H^2(\SL_2(\Z),\F_{\ell})  \\
0 \arrow[r] & H^1(\Gammah(N),\F_{\ell}) \arrow[r] & H^1(\Gamma(N),\F_{\ell}) \arrow[r] & (\Htw^1(\F_{\ell}))^{\Gammah(N)} \arrow[r] & H^2(\Gammah(N),\F_{\ell}) \arrow[r] & 0
\end{tikzcd}
\end{equation}
\stepcounter{equation}
}
Here~$H^2(\Gamma(N),\F_{\ell}) = 0$ because~$\Gamma(N)$ is a free group.
(For~$N \ge 3$, $\Gamma(N)$ is torsion free, and so~$\Gamma(N)$ may
be identified with the fundamental group of a surface~$\H/\Gamma(N)$ with cusps.) 
The last vertical map is zero by the previous lemma if~$N \in \{3,5,8,16\}$, and thus the image
of~$(\Htw^1)^{\SL_2(\Zhat)}$ in~$(\Htw^1)^{\Gammah(N)}$ lands in the image
of~$H^1(\Gammah(N),\F_{\ell})$ in these cases. But these are finite groups we can compute explicitly.

\begin{lemma} \label{computeinvariants}
We have
$$
\begin{aligned}
 & \dim H^1(\Gamma(3),\F_3)^{\SL_2(\F_3)} =  0, \\
&  \dim H^1(\Gamma(5),\F_3)^{\SL_2(\F_5)} =  0, \\
\dim H^1(\Gamma(4),\F_2)^{\SL_2(\Z/4\Z)} = \
&  \dim H^1(\Gamma(8),\F_2)^{\SL_2(\Z/8\Z)}  = 
\dim H^1(\Gamma(16),\F_2)^{\SL_2(\Z/16\Z)}  = 1.
\end{aligned}
$$
For~$N = 3,4,5$, the same result holds even after considering
the semi-simplifications 
of these modules.
\end{lemma}

\begin{proof}
Recall that for~$N=3$, $4$, and~$5$ that~$X(N)$ has genus zero. Hence the cohomology of the module
$V =H^1(\Gamma(N),\Z)$
 is coming entirely from the the cusps, which correspond
to the cosets of~$\langle \left( \begin{matrix} 1 & 1 \\ 0 & 1 \end{matrix} \right) \rangle$
in~$\PSL_2(\Z/N \Z)$. In particular,  in the Grothendieck 
group~$K_0(\Q[\SL_2(\Z/N \Z)])$ of~$\SL_2(\Z/N\Z)$-representations over~$\Q$,
$$V_{\Q} :=  \left[ H^1(\Gamma(N),\Q)\right] \simeq \Q \left[\PSL_2(\Z/N\Z)/ \left( \begin{matrix} 1 & 1 \\ 0 & 1 \end{matrix} \right) \right] - [\Q].$$
Since~$\Gamma(N)$ is free, this is enough to determine the semi-simplification 
of~$V_{\F_{\ell}}:= H^1(\Gamma(N),\F_{\ell})$.
We consider each case in turn.
\begin{enumerate}
\item For~$N=3$, we have~$\PSL_2(\F_3) = A_4$, and~$V_{\Q}$
is absolutely irreducible of dimension~$3$.
The associated Brauer character is also irreducible
and so~$[V_{\F_{\ell}}]$ is also
 irreducible and has no invariants.
\item For~$N=5$, we have~$\PSL_2(\Z/5\Z) \simeq A_5$, and~$V_{\Q}$ decomposes
as a sum of irreducibles of dimensions~$3$, $3$, and~$5$.
The corresponding Brauer characters are all still irreducible, so~$[V_{\F_{\ell}}]$ 
does not contain the trivial representation.
 \item
 For~$N =4$, we have~$\PSL_2(\Z/4\Z) = S_4$,
 and~$V_{\Q}$ is a sum of absolutely irreducible representations of dimensions~$2$
 and~$3$. The group~$S_4$ has two Brauer characters of dimension~$1$ and~$2$ respectively.
 The~$2$-dimensional representation remains irreducible and the semi-simplification
 of both the~$3$-dimensional representations has constituents of dimensions~$1$ and~$2$.
 Hence the invariant space  of~$V^{\sss}_{\F_{\ell}}$ is~$1$-dimensional.
 But~$H^1(\Gamma(4)/\Gamma(8),\F_2)$ is a direct sum of the~$1$ and~$2$-dimensional
 representations, so this $1$-dimensional constituent occurs as a sub-representation.
\item 
For~$N=8$ and~$N=16$, we resort to a less elegant calculation; the groups~$\Gamma(N)$
are free (of ranks~$33$ and~$257$ respectively). The~$\SL_2(\Z/N\Z)$-invariant part of cohomology over~$\F_2$ can be determined as (the dual of) the quotient of this group by the 
relations~$x^2 = (x y)^2 = e$ for each generator~$x \in \Gamma(N)$ and the relations~$g x g^{-1} = x$ for the generators~$g$ of~$\SL_2(\Z)$. In both cases, \texttt{magma} determines that
the corresponding quotients have order~$2$.
\end{enumerate}
This completes the proof of the Lemma.
\end{proof}

\begin{proof}[Proof of Theorem~\ref{noinvariants}]
We now complete the proof of Theorem~\ref{noinvariants}.

We need to show that any~$v \in \Htw^1(\F_{\ell})^{\SL_2(\Zhat)}$ is zero.
Let us consider the images of~$v$ under various 
maps in equation~(\ref{HScommutative}).
We first note that~$H^1(\SL_2(\Z),\F_{\ell}) \simeq \Z/12\Z \otimes \F_{\ell}$ and that the map
 $$H^1(\SL_2(\Zhat),\F_{\ell}) \rightarrow H^1(\SL_2(\Z),\F_{\ell})$$
 is an 
isomorphism for any~$\ell$. Hence we may assume the image of~$v$ in~$H^2(\SL_2(\Zhat),\F_{\ell})$
is non-zero.
  From the K\"{u}nneth formula
and Lemma~\ref{forkunneth},  there is an isomorphism~$H^2(\SL_2(\Zhat),\F_{\ell}) = 0$
for any prime~$\ell > 5$, and hence we may assume that~$\ell \le 5$.

Suppose that~$\ell = 3$ or~$\ell = 5$, and take~$N = \ell$ in equation~(\ref{HScommutative}). We
proved that the map
$H^2(\SL_2(\Zhat),\F_{\ell}) \rightarrow H^2(\Gammah(\ell),\F_{\ell})$
is zero by Lemma~\ref{mapiszero}. It follows that the image of~$v$
in~$(\Htw^1(\F_{\ell}))^{\Gammah(\ell)}$  is~$\SL_2(\Z/\ell \Z)$-invariant
and lands in the image of~$H^1(\Gamma(\ell),\F_{\ell})$.
Thus the~$\SL_2(\Z/\ell\Z)$-invariants of the semi-simplification
of~$H^1(\Gamma(\ell),\F_{\ell})$ as a~$\SL_2(\Z/\ell\Z)$-module is nontrivial.
But this space has dimension~$0$ by Lemma~\ref{computeinvariants}.

Finally, let~$\ell = 2$. By Lemma~\ref{mapiszero}, the map
$$H^2(\SL_2(\Zhat),\F_{2}) \rightarrow H^2(\Gammah(8),\F_{2})$$
is zero, and thus, arguing as in the case~$\ell =3$ or~$5$ above,
  the image of~$v$ in~$(\Htw^1(\F_{2}))^{\Gammah(8)}$ 
coincides with the image of an element~$w \in H^1(\Gamma(8),\F_2)$.
Furthermore, the~$\SL_2(\Z/8\Z)$-module generated by~$w$
is~$\SL_2(\Z/8\Z)$-invariant  after passing to the quotient by the congruence homology
$$H^1(\Gammah(8),\F_2) \simeq H^1(\Gammah(8)/\Gammah(16),\F_2) \simeq H^1(\Gamma(8)/\Gamma(16),\F_2).$$
But that means that the image of~$w$ in~$H^1(\Gamma(16),\F_2)$
 is invariant under~$\SL_2(\Z/16 \Z)$. By
Lemma~\ref{computeinvariants}, the space of such invariants is 
$1$-dimensional. But this $1$-dimensional space lands in the image
of~$H^1(\Gammah(16),\F_2)$,  and thus the image of~$w$ and hence also of~$v$ must be trivial 
in~$\Htw^1(\F_2)^{\Gammah(16)} \subset \Htw^1(\F_2)$, and in particular~$v = 0$.
\end{proof}

We note in passing that this result implies 
the following strengthening of
Lemma~\ref{forkunneth}:

\begin{lemma} \label{postkunneth}
 For~$n=1$ and~$n=2$ we have:
$$H^n(\SL_2(\Z_p),\F_{\ell})  = 
\begin{cases} \F_{\ell}, & p=\ell \in \{2,3\} \\
0, & \text{otherwise}. \end{cases}
$$
We also have~$H_2(\SL_2(\Z_p),\Z) = 0$ for all~$p$.
\end{lemma}

\begin{proof} 
From  Lemma~\ref{forkunneth},
it suffices to consider the case of~$n=2$ and~$\ell = p$.
For any~$\ell$ and~$p$,
there is an exact sequence:
\numequation
\label{forkunnethsequence}
0 \rightarrow H_2(\SL_2(\Z_p),\Z)/\ell
\rightarrow H_2(\SL_2(\Z_p),\F_{\ell}) 
\rightarrow H_1(\SL_2(\Z_p),\Z)[\ell] \rightarrow 0.
\end{equation}
Since~$H_1(\SL_2(\Z_p),\Z) \simeq \Z/12\Z \otimes \Z_p$,
this proves that~$H^2(\SL_2(\Z_p),\F_p)$ has dimension
at least one when~$p \in \{2,3\}$.
Suppose we prove that~$H^2(\SL_2(\Z_p),\F_{\ell})$
has dimension at most one when~$\ell = p \in \{2,3\}$ and dimension zero otherwise.
First, this would complete the computation of~$H^2(\SL_2(\Z_p),\F_{\ell})$.
Second, it would follow that the second map in equation~(\ref{forkunnethsequence})
is always an isomorphism, and so~$H_2(\SL_2(\Z_p),\Z)/\ell = 0$
for all primes~$\ell$.
Since the group~$H_2(\SL_2(\Z_p),\Z)$
is  a finitely generated abelian group, this will also show that
 that~$H_2(\SL_2(\Z_p),\Z) = 0$, completing the proof of the lemma.
 
 Let us now bound from above the dimension of~$H^2(\SL_2(\Z_p),\F_{\ell})$.
 We may assume that~$\ell = p$ by Lemma~\ref{forkunneth}.
 There are maps:
 \numequation
 \label{usedforbound}
 H^2(\SL_2(\Z_{\ell}),\F_{\ell}) \simeq
H^2(\SL_2(\Zhat),\F_{{\ell}}) \hookrightarrow H^2(\SL_2(\Z),\F_{{\ell}}),
\end{equation}
where the first map is an isomorphism
by Lemma~(\ref{mapiszero}), and the second map is an inclusion
from the exact sequence~(\ref{HScommutative})
and the vanishing of~$(\Htw^1(\F_{\ell}))^{\SL_2(\Zhat)}$
by Theorem~\ref{noinvariants}.
But for~$n > 0$ we have
$$H^n(\SL_2(\Z),\Z) = \begin{cases} \Z/12\Z, & n \equiv 0 \bmod 2, \\
0, & n \equiv 1 \bmod 2 \end{cases},$$
from which it follows that~$H^2(\SL_2(\Z),\F_{\ell}) = \F_{\ell}$
if~$\ell \in \{2,3\}$ and is zero otherwise.
This gives the desired upper bound on the dimension
of~$H^2(\SL_2(\Z_{\ell}),\F_{\ell})$ via 
the inclusion~(\ref{usedforbound}), completing the proof.
\end{proof}

\subsection{An enhancement of Ihara's Lemma}
\label{enhancedihara} 
We shall prove an enhanced version of Ihara's Lemma. We begin
by recalling Ihara's Lemma.
 Let~$q$ be prime, let~$N \ge 3$,  and let~$(N,p) = 1$. There is a homomorphism
\numequation \label{ihara}
H^1(\Gamma(N),\F_q)^{2} \rightarrow H^1(\Gamma(N) \cap \Gamma_0(p),\F_q),
\end{equation}
given by the difference of the following two maps:
\begin{enumerate}
\item The map sending~$\psi: \Gamma(N) \rightarrow \F_q$ to its restriction to~$\Gamma(N) \cap \Gamma_0(p)$.
\item The twisted restriction map coming from viewing~$\psi \in H^1(\Gamma(N),\F_q)$ as a map~$\Gamma(N) \rightarrow \F_q$
and then considering the map
$$A\psi: \Gamma(N) \cap \Gamma_0(p) \rightarrow \F_q, \quad g \mapsto \psi(A g A^{-1}).$$
\end{enumerate}
By abuse of notation we denote the restriction of~$\psi$ by~$\psi$, so the map sends~$(\psi,\phi)$ to~$\psi - A \phi$.

\begin{lemma}[Ihara's Lemma]  \label{ihararibet} The kernel of the map~(\ref{ihara}) lies 
inside~$\Hcong(\Gamma(N),\F_q)^2$. 
\end{lemma}

\begin{proof} This version of Ihara's Lemma was essentially proved
by Ribet in~\cite{RibetICM}. The proof is just an abelian version
of Lemma~\ref{serreberger}. 
We recall some of the details.
Let~$\Phi \subset \Gamma(N)$ be the maximal normal subgroup whose quotient is an
elementary~$q$-abelian group~$T$. Canonically, we have~$\Gamma(N)/\Phi \simeq T$
and~$H^1(\Gamma(N),\F_q) \simeq \Hom(T,\F_q)$. The kernel of the map
$$\Hom(T,\F_q) \times \Hom(T,\F_q) \rightarrow H^1(\Gamma(N) \cap \Gamma_0(p),\F_q)$$
is governed by the cokernel of the dual map
$$\Gamma(N) \cap \Gamma_0(p) \rightarrow T \times T.$$
Exactly as in the proof of Lemma~\ref{serreberger}, we deduce from
Goursat's Lemma that the cokernel~$\Delta$ arises
from two maps from~$\Gamma(N)$ to~$\Delta$ which agree along~$\Gamma(N)
\cap \Gamma_0(p)$, and thus on their amalgam~$\SL_2(\Z[1/p])(N)$.
Since~$\SL_2(\Z[1/p])(N)$ has the congruence subgroup property, it thus
arises from a congruence quotient of this group at primes away from~$p$.
 But that precisely means that the
classes in~$\Hom(T,\F_q) = H^1(\Gamma(N),\F_q)$ become trivial after passing to
a congruence subgroup~$\Gamma' \subset \Gamma(N)$, hence the claim.
\end{proof}

Using  Corollary~\ref{new}, we prove a slight enhancement of this
claim.

\begin{lemma}[Ihara's Lemma, enhanced] \label{enhanced}
The kernel of the composite of the Ihara map~(\ref{ihara})
with the map
\numequation
\label{newmap}
H^1(\Gamma(N) \cap \Gamma_0(p),\F_q) \rightarrow H^1(\Gamma(N) \cap \Gamma_1(p),\F_q)
\end{equation}
also lies inside~$\Hcong(\Gamma(N),\F_q)^2$. 
\end{lemma}

\begin{proof}
The map~(\ref{ihara}) is~$\SL_2(\Z/N\Z)$-equivariant. But the kernel
of the map~(\ref{newmap}) is also easily seen to be~$\SL_2(\Z/N\Z)$-invariant.
Hence, if~$(\psi,\phi)$ lies in the kernel of the composite of~(\ref{ihara})
and~(\ref{newmap}), then~$(\psi^g - \psi,\phi^g - \phi)$ lies in the kernel of~(\ref{ihara}),
and thus lies in~$\Hcong(\Gamma(N),\F_q)^2$ by Lemma~\ref{ihararibet}. But then~$\psi$ and~$\phi$ are themselves
congruence classes by Corollary~\ref{new}.
\end{proof}

\section{The uniformization of~\texorpdfstring{$\C \setminus \mu_N$}{CminusMu}}
\label{uniformizations}

In this section we develop all the particular analytic properties that  we need of the universal 
covering map $F_N : D(0,1) \to \C \setminus \mu_N$ for $N\geq 2$.  Andr\'e has pointed out to us that
our two main results here, namely Theorem~\ref{exactconformal} and Lemma~\ref{trivialsupFN}, 
appear in work of Kraus and Roth~\cite[Remark 5.1 and Theorems 1.2 and 1.10]{KrausRoth}. Nevertheless, as our proofs are simplified to cover our current needs, and since the results of Kraus and Roth rely on some previous work of themselves and others, 
we keep our self-contained exposition as a convenience to the reader, and  refer to~\cite{ASVV,KrausRothSugawa,KrausRoth} and the references there for various further results and a more thorough study of the uniformization of 
$\C \setminus \mu_N$. The reader will also benefit from the material in \SSSS~III.1 in Goluzin's book~\cite{Goluzin}, which recovers~$F_N$ via  an explicit computation of the Riemann map of a $\Z / N\Z$-rotationally symmetric circular $N$-gon, taking the case of zero angles and doing Schwarz reflections in the sides of the circular polygon. 

\begin{remark}[A word on notation]
We denote by~$\H$  the upper half plane and by $\mathbb{P}^1 = \C \cup \{\infty\}$ the complex projective line or Riemann sphere. There is a conformal isomorphism from the disc~$D(0,1)$ to~$\H$
by the Cayley transform
$$z \mapsto 
 i \cdot \frac{1+z}{1-z}.$$
 This allows one to pass freely between uniformizations by~$D(0,1)$ and~$\H$.
 In this section, we choose notation so that the corresponding passage from~$D(0,1)$
 to~$\H$ is marked by the addition of a tilde. Thus, for example, $\wF_N$ constructed
 below denotes a map on~$\H$ and~$F_N$ (Definition~\ref{tildeexample}) is simply
 the pull-back of~$\wF_N$ to~$D(0,1)$ via the map above. Similarly, $\Gamma_N$
 will denote a lattice in~$\PSU(1,1)$ whereas~$\wGamma_N$ denotes the corresponding lattice in~$\PSL_2(\R)$.
 
Unless  we expressly state otherwise, we reserve  $z, \tau, x$ to denote respectively the coordinates on $D(0,1)$, $\H$, and $\C\setminus \mu_N$.
 \end{remark}
 
  \subsection{Schwarzians and the conformal radius}
\label{section:normalizations}
Let~$N \ge 2$ be an integer. Then~$ \mathbf{C} \setminus \mu_N = \mathbf{P}^1 \setminus \{\infty,\mu_N\}$
is the complement of at least~$3$ points, and thus admits a complex uniformization map:
$$\wF_N: \H \rightarrow \H/\wGamma_N = \mathbf{C} \setminus \mu_N,$$
where $\wGamma_N\subset \PSL_2(\R)$ denotes the Fuchsian group of $\mathbf{C} \setminus \mu_N$.
The map~$\wF_N$ is unique up to the action of~$\PSL_2(\R)$ by M\"{o}bius transformations
on the source,
which also changes~$\wGamma_N$ by conjugation.
The cusps of~$\wGamma_N$ are the elements~$x \in  \partial \H = \mathbf{P}^1(\R) = \R \cup \{i \infty\}$
such that the stabilizer of~$x$
under~$\wGamma_N$ contains a parabolic element. If~$\H^*$ denotes the union of~$\H$ with the cusps,
then~$\H^*/\wGamma_N$ may be identified with the compactification~$\mathbf{P}^1$
of~$\C \setminus \mu_N$. Since~$\PSL_2(\R)$ acts transitively on~$\partial \H$, we may
assume, after
translation by an element of~$\PSL_2(\R)$,  that~$i \infty$ is a cusp of~$\wGamma_N$
and  that~$\wF_N(i \infty) = 1$.
The stabilizer of~$i \infty$ in~$\PSL_2(\R)$ consists of M\"{o}bius transformations of the 
form~$\tau \rightarrow a \tau + b$ for some~$a,b \in \R$. Thus we may pin down~$\wF_N$ and~$\wGamma_N$
exactly by further specifying that~$\wF_N(i) = 0$.


\begin{df} \label{tildeexample}
Define~$F_N: D(0,1) \rightarrow \C \setminus \mu_N$
by the formula
$$
F_N(z) = \wF_N \left( i \cdot \frac{1+z}{1-z} \right).
$$
\end{df}

Note that $F_N$ is just the map~$\wF_N$ composed with a conformal isomorphism~$D(0,1) \rightarrow \H$
 sending~$0$ to~$i$,
and hence
$$
F_N:D(0,1) \rightarrow \C \setminus \mu_N
$$
is the universal covering map with $F_N(0)=0$ and $F_N(1)=1$.

Note that the statements of the main results of this section, Theorem~\ref{exactconformal} and Lemma~\ref{trivialsupFN}, only depend on the normalization $F_N(0)=0$ and do not depend on the choice $F_N(1)=1$.

The following lemma gives the basic symmetric property of $\wF_N$ and $F_N$.
\begin{lemma}\label{FNbasic}
Let~$\zeta_N = \exp(2 \pi i/N)$ and $\zeta$ be any $N$th root of unity. Then $\zeta_N \wF_N(\tau) = \wF_N(\wr_N \cdot \tau)$ and $F_N(\zeta x) = \zeta F_N(x)$, where \numequation
\label{elementr}
\wr_N = \left( \begin{matrix}  \cos(\pi/N) & -\sin(\pi/N) \\ \sin(\pi/N) & \cos(\pi/N) \end{matrix} \right) \in \PSO_2(\R).
\end{equation}
\end{lemma}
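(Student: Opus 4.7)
The lemma expresses the $\Z/N\Z$-rotational symmetry of $F_N$ (respectively $\wF_N$), whose underlying reason is that the multiplication action of $\mu_N$ on the target $\C \setminus \mu_N$ lifts across the universal covering to a rotation on the source disc (respectively, to an $\SO_2(\R)$-action fixing $i$ on $\H$). Both assertions reduce to the following uniqueness principle: two holomorphic universal covering maps $D(0,1) \to \C \setminus \mu_N$ sharing both their value and their first derivative at $0$ must coincide, since any two such covers differ only by precomposition with a rotation of $D(0,1)$ about $0$, and such a rotation is pinned down to the identity once the derivatives are required to match.

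For the disc identity $F_N(\zeta x) = \zeta F_N(x)$, it suffices by iteration to handle $\zeta = \zeta_N$. Set
\[
g(x) \; := \; \zeta_N^{-1} F_N(\zeta_N x).
\]
Since $\zeta_N \mu_N = \mu_N$, the map $g$ is holomorphic into $\C \setminus \mu_N$, and it is itself a universal covering map (being the composition of a rotation of the source, the universal cover $F_N$, and a rotation of the target). One checks $g(0) = 0$ and, by the chain rule, $g'(0) = \zeta_N^{-1} \cdot F_N'(0) \cdot \zeta_N = F_N'(0)$. The uniqueness principle then gives $g = F_N$, which is the required identity. The general case $\zeta = \zeta_N^k$ follows by iterating.

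For the $\H$ identity, transport the disc identity via the Cayley transform $\phi(x) = i(1+x)/(1-x)$, using the relation $F_N = \wF_N \circ \phi$ that comes directly from Definition~\ref{tildeexample}. A one-line computation shows that $\wr_N$ fixes $i \in \H$ with
\[
(\wr_N)'(i) \; = \; (\sin(\pi/N)\, i + \cos(\pi/N))^{-2} \; = \; e^{-2\pi i / N},
\]
and the chain rule applied to the conjugate $\phi^{-1} \circ \wr_N \circ \phi$ identifies this automorphism of $D(0,1)$ as multiplication by a specific $N$-th root of unity. Substituting $\tau = \phi(x)$ into the $\H$ identity then reduces it to an instance of the disc identity already proved. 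The only real obstacle is the routine bookkeeping of sign/orientation conventions between the M\"obius action on $\H$ and the induced rotation action on $D(0,1)$; the conceptual substance is the one-line uniqueness statement for normalized universal covers recorded in the opening paragraph.
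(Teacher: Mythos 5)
Your treatment of the disc identity $F_N(\zeta x)=\zeta F_N(x)$ is correct and takes a genuinely different route from the paper's. The paper argues on $\H$: it observes that $\zeta_N\wF_N$ is another covering map with the same value at $i$, so $\zeta_N\wF_N(\tau)=\wF_N(\wr\cdot\tau)$ for some $\wr\in\SO_2(\R)$; it then invokes the freeness (hence torsion-freeness) of $\wGamma_N$ to conclude $\wr$ has order $N$, passes to the disc to get $F_N(\zeta^m x)=\zeta F_N(x)$ with $(m,N)=1$, and finally differentiates at $0$ to force $m=1$. You replace all of this by the uniqueness statement for pointed universal covers normalized by value \emph{and} derivative at $0$, applied to $g(x)=\zeta_N^{-1}F_N(\zeta_N x)$; the derivative normalization does the work of the paper's free-group and $(m,N)=1$ steps, and you never need $\wGamma_N$ at all. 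That is a clean simplification; just note that the uniqueness principle itself deserves its one-line justification (lift one cover through the other, the lift fixes $0$, is an automorphism of $D(0,1)$, hence a rotation, and matching derivatives kills the rotation).

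The weak point is the half-plane statement with the explicit matrix \eqref{elementr}, which you defer to ``routine bookkeeping of sign/orientation conventions.'' Carried out honestly, your own computation settles that bookkeeping against the formula as printed: since $(\wr_N)'(i)=e^{-2\pi i/N}$ and the Cayley map $\phi(x)=i(1+x)/(1-x)$ gives $(\phi^{-1}\circ\wr_N\circ\phi)'(0)=(\wr_N)'(i)$, the conjugated automorphism of $D(0,1)$ is multiplication by $\zeta_N^{-1}$, so substituting $\tau=\phi(x)$ and using the disc identity yields $\wF_N(\wr_N\cdot\tau)=\zeta_N^{-1}\wF_N(\tau)$, equivalently $\zeta_N\wF_N(\tau)=\wF_N(\wr_N^{-1}\cdot\tau)$ --- the asserted identity with \eqref{elementr} replaced by its inverse (transpose). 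So you cannot simultaneously keep your computation and assert the lemma verbatim: a blind proof must resolve this sign explicitly, because the direction of rotation is precisely the content of the explicit matrix. What your argument does establish is that the rotation about $i$ with derivative $e^{+2\pi i/N}$ at $i$ (an order-$N$ element of $\SO_2(\R)$) satisfies $\zeta_N\wF_N(\tau)=\wF_N(\wr_N\cdot\tau)$; this orientation discrepancy with the printed matrix is immaterial for the sequel (Proposition~\ref{prop-GammaN} and later only use that $\wr_N$ is an order-$N$ rotation fixing $i$ that permutes the cusp stabilizers), but it should be stated rather than waved at. Note also that the paper's own proof first \emph{defines} $\wr_N$ abstractly as the stabilizer element realizing the identity and only writes the explicit matrix at the end, whereas your route, being computational, is exactly the one obliged to pin the sign down.
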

\begin{proof}
Note that~$\zeta_N \wF_N$ is another covering map such that $\zeta_N \wF_N(i)=0$. Therefore $\zeta_N \wF_N$ must
differ from~$\wF_N$ by a M\"{o}bius transformation in the stabilizer of $i$; that is~$\zeta_N \wF_N(\tau) = \wF_N(\wr_N \cdot \tau)$ for some $\wr_N\in \PSO_2(\R)$. 
We deduce that~$\wF_N(\wr^N_N \cdot \tau) = \zeta_N^N \wF_N(\tau) = \wF_N(\tau)$,
 and thus~$\wr^N_N \in \PSO_2(\R)$ must also lie in~$\wGamma_N$. But~$\wGamma_N$ is a free group (due to the fact that $\wF_N$ is a covering map with no ramification points),
 and hence~$\wr^N_N$ is trivial in $\PSO_2(\R)$, and~$\wr_N$ is a hyperbolic rotation around~$i$ of order~$N$.
 
The action of~$\PSO_2(\R)$ on~$D(0,1)$ under the pullback map is just given by rotation, and hence~$\wr_N$
acts on~$D(0,1)$ by a rotation of order~$N$. We deduce that~$F_N(\zeta^m z) = \zeta F_N(z)$ for some~$(m,N) = 1$. By taking
the derivatives with respect to $q$ of both sides at $q=0$, we have $\zeta^m F_N'(0)=\zeta F_N'(0)$.
Since~$F_N$
is a covering  map, we must also have~$F'_N(0) \ne 0$. We deduce that $\zeta^m = \zeta$ and hence also that $F_N(\zeta z) = \zeta F_N(z)$. We thus also deduce~\eqref{elementr} since it follows
that~$\wr_N$ is a (hyperbolic) rotation  by~$2\pi/N$ degrees around~$\tau = i$ in~$\H$.
\end{proof}

Our first main goal of this section is an explicit computation of the uniformization radius of $\C \setminus \mu_N$. This formula has been previously proved by Kraus and Roth in~\cite[Remark~5.1]{KrausRoth}. 

\begin{theorem} \label{exactconformal} The conformal size $|F_N'(0)|$ (Riemann uniformization radius of $\C \setminus \mu_N$) 
is equal to
\numequation   \label{radius explicit}
|F_N'(0)| =  \gamma_N := 16^{1/N}\frac{
\displaystyle{
 \Gamma \left( 1+\frac{1}{2N} \right)^2  \Gamma \left(1 - \frac{1}{N} \right)
}
}{
\displaystyle{
 \Gamma \left( 1-\frac{1}{2N} \right)^2  \Gamma \left(1 + \frac{1}{N} \right)
}}.
\end{equation}
We have an expansion for~$\gamma_N$ as follows:
\numequation\label{explicit-gammaN}
\gamma_N = 16^{1/N} \left(1 + \frac{\zeta(3)}{2 N^3} + \frac{3 \zeta(5)}{8 N^5} + O(N^{-6}) \right),
\end{equation}
where the remaining term $O(N^{-6})$ is a positive real number.
\end{theorem}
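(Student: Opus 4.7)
The plan is to exploit the $\Z/N$-symmetry established in Lemma~\ref{FNbasic} to reduce~\eqref{radius explicit} to a classical hypergeometric computation, then derive~\eqref{explicit-gammaN} from the Taylor expansion of $\log \Gamma(1+z)$. First, using $F_N(\zeta x) = \zeta F_N(x)$, the function $F_N(x)^N$ is $\Z/N$-invariant under $x \mapsto \zeta x$ and so factors as $F_N(x)^N = \tilde\psi(x^N)$ for a holomorphic map $\tilde\psi \colon D(0,1) \to \C \setminus \{1\}$ with $\tilde\psi(0) = 0$ and $\tilde\psi'(0) = F_N'(0)^N$. The composition $F_N^N = \tilde\psi \circ (x \mapsto x^N)$ is the orbifold universal cover of $\C \setminus \{1\}$ equipped with the hyperbolic triangle-orbifold structure of signature $(N, \infty, \infty)$: a cone point of order $N$ at $w = 0$ together with cusps at $w = 1, \infty$.

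Next, I would invoke Poincar\'e's ODE description of this uniformization~\cite{Hempel}: the multivalued inverse of $F_N^N$ satisfies a second-order Fuchsian equation on $\mathbf{P}^1 \setminus \{0, 1, \infty\}$ with exponent data $(1/N, 0, 0)$ forced by the orbifold, namely the Gauss hypergeometric equation with parameters
\[
a = b = \tfrac{1}{2} - \tfrac{1}{2N}, \qquad c = 1 - \tfrac{1}{N}.
\]
Its Frobenius basis at $w = 0$ is $y_1(w) = F(a,b;c;w)$ and $y_2(w) = w^{1/N} F(a+1-c, b+1-c; 2-c; w)$, so $(F_N^N)^{-1}$ is a M\"obius combination of $y_2/y_1$. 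Since $c - a - b = 0$, the degenerate limit of Gauss's summation formula produces logarithms at $w = 1$ with leading coefficients
\[
y_1(w) \sim -\frac{\Gamma(1-1/N)}{\Gamma(1/2 - 1/(2N))^2}\log(1-w), \qquad y_2(w) \sim -\frac{\Gamma(1+1/N)}{\Gamma(1/2 + 1/(2N))^2}\log(1-w),
\]
and analogously at $w = \infty$. Imposing the cusp conditions at $w = 1$ and $w = \infty$ fixes the remaining M\"obius parameters; reading off the modulus of the leading $w^{1/N}$-coefficient of $(F_N^N)^{-1}$ near $w = 0$ and inverting then yields~\eqref{radius explicit}. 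The multiplicative factor $16$ arises as a cusp-width normalization, consistent with the familiar Fourier expansion $\lambda(\tau) = 16 q + O(q^2)$ recovered in the limit $N \to \infty$.

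For the asymptotic~\eqref{explicit-gammaN}, I would directly Taylor-expand using $\log \Gamma(1+z) = -\gamma z + \sum_{k \geq 2} (-1)^k \zeta(k) z^k/k$. In the antisymmetric combination
\[
\log \Gamma(1+z) - \log \Gamma(1-z) = -2\gamma z - 2 \sum_{m \geq 1} \frac{\zeta(2m+1)}{2m+1}\, z^{2m+1},
\]
the Euler constant and every even zeta value cancel; substituting $z = 1/(2N)$ twice and $z = 1/N$ once into the logarithm of the right-hand side of~\eqref{radius explicit} then collapses to
\[
\log \gamma_N - \frac{\log 16}{N} = \frac{\zeta(3)}{2 N^3} + \frac{3\zeta(5)}{8 N^5} + O(N^{-7}),
\]
and exponentiating via $e^u = 1 + u + \tfrac{1}{2} u^2 + O(u^3)$ recovers~\eqref{explicit-gammaN}. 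The first unlisted contribution at order $N^{-6}$ equals $\tfrac{1}{2}(\zeta(3)/(2N^3))^2 = \zeta(3)^2/(8 N^6) > 0$, yielding the positivity of the remainder.

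The main technical obstacle will be in the second step: precisely tracking Kummer's connection formulas in the logarithmically degenerate case $c = a + b$ (where logarithms appear at both cusps) and pinning down the factor $16$ from the cusp-width normalization---this can be secured either by matching the $q$-expansion at one of the cusps against the lambda-function's $\lambda(\tau) = 16 q + O(q^2)$ in the limit $N \to \infty$, or by carrying out the direct Schwarz--Christoffel computation on a circular $N$-gon as in~\cite[\S III.1]{Goluzin}. The same closed form~\eqref{radius explicit} was established earlier by Kraus and Roth~\cite[Remark 5.1]{KrausRoth} by a related argument.
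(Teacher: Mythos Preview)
Your proposal is correct and follows essentially the same route as the paper: reduce via the $\Z/N$-symmetry to the map $G_N$ (your $\tilde\psi$), identify its inverse as a ratio of hypergeometric solutions with parameters $a=b=\tfrac{1}{2}-\tfrac{1}{2N}$, $c=1-\tfrac{1}{N}$, and read off $|F_N'(0)|$ from the logarithmic asymptotics at $w=1$; the paper packages the ODE derivation through an explicit Schwarzian and accessory-parameter computation (Lemmas~\ref{diff-eq-FN}--\ref{sol-diffG}) rather than invoking the $(N,\infty,\infty)$ triangle structure directly, but the content is the same.

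One clarification: the factor $16^{1/N}$ is not a separate cusp-width normalization requiring an independent argument. The hypergeometric limit at $w=1$ delivers $|F_N'(0)| = \Gamma\!\big(\tfrac{1}{2}-\tfrac{1}{2N}\big)^{2}\Gamma\!\big(1+\tfrac{1}{N}\big)\big/\Gamma\!\big(\tfrac{1}{2}+\tfrac{1}{2N}\big)^{2}\Gamma\!\big(1-\tfrac{1}{N}\big)$ on the nose, and the Legendre duplication formula then rewrites this as~\eqref{radius explicit} with the $16^{1/N}$ emerging automatically. Also, for the positivity claim on the $O(N^{-6})$ remainder you should note (as your own log-series expansion already shows) that \emph{every} coefficient $\tfrac{2^{2k}-1}{2^{2k-1}(2k+1)}\zeta(2k+1)$ in $\log\gamma_N - \tfrac{\log 16}{N}$ is positive, so the full tail is positive upon exponentiation---checking only the $N^{-6}$ term is not quite enough.
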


To prove this formula, we follow Hempel~\cite{Hempel} to get a second order linear ODE whose ratio of two linearly independent solutions gives the (local analytic) inverse of~$\wF_N$  (Lemma~\ref{diff-eq-FN}). 
The uniformization maps of Riemann surfaces---and their inverses---do not typically admit explicit solutions in terms of standard functions, but
our particular case of interest turns out to be an exception due to the extra symmetries of~$\C \setminus \mu_N$. 
We use Lemma~\ref{FNbasic} to define a function $G_N$ closely related to $F_N$ (see Definition~\ref{GN}) and explicitly find two solutions of the associated linear ODE in terms of hypergeometric functions (Lemma~\ref{sol-diffG}). These solutions allow us to compute the explicit conformal radius for $G_N$ and then derive
the corresponding conformal radius for~$F_N$ given in equation~\eqref{radius explicit}.

Our computation here is very similar to the treatment by Goluzin in \cite[\SSS~III.1]{Goluzin}, who also gives the explicit formula for the inverse of $G_N$ in terms of hypergeometric functions. See the $q = 0$ case of equation~(17) and the last paragraph on page 86 of \emph{loc.~cit.}  Goluzin more generally computes the Riemann map for the $\Z/N\Z$-rotationally symmetric circular $N$-gon with angles $\pi q$, and explains~\cite[\SSS~II.6]{Goluzin} how the $q = 0$ case (formula~(21) on page~86 of \emph{loc.~cit.}) by Schwarz reflections entails a description of $G_N$ and $F_N$.   

\begin{df} 
Let~$\psi_N$ be the local analytic inverse of~$F_N$ such that $\psi_N(0) = 0$. 
\end{df}

This inverse exists and is unique in a small neighborhood of $z=0$. As all we need is to compute $F'_N(0)=\psi_N'(0)^{-1}$, having $\psi_N$ well-defined in a small neighborhood of $z=0$ is enough for our purpose.


\begin{lemma}\label{diff-eq-FN}
The local analytic inverse map~$\psi_N$ of~$F_N$ has the form $\psi_N = \eta_1 / \eta_2$, where $\eta_1$ and $\eta_2$ satisfy
the second order linear differential equation 
\numequation
\label{3.3}
4  (x^N- 1)^2 y'' + ((N^2-1) x^{N-2} + x^{2N-2}) y = 0.
\end{equation}
\end{lemma}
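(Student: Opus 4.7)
The plan is to exploit the Schwarzian derivative. For any second order linear ODE $y'' + Q(z) y = 0$ with linearly independent solutions $\eta_1, \eta_2$, the ratio $\psi = \eta_1 / \eta_2$ satisfies $\{\psi, z\} = 2 Q(z)$, and conversely any locally injective meromorphic $\psi$ is such a ratio for the ODE with $Q = \tfrac{1}{2}\{\psi, z\}$. Hence it suffices to show that the Schwarzian $S(z) := \{\psi_N, z\}$ equals
\[
S(z) \;=\; \frac{z^{2N-2} + (N^2-1)\,z^{N-2}}{2(z^N - 1)^2},
\]
since dividing the resulting ODE $y'' + \tfrac{1}{2} S(z)\, y = 0$ through by $2$ and clearing denominators produces exactly~\eqref{3.3}.

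First I would observe that $S$ extends to a single-valued rational function on $\mathbb{P}^1$. Different local branches of $\psi_N$ differ by deck transformations in $\Gamma_N \subset \mathrm{PSU}(1,1)$, which act by M\"obius transformations, and the Schwarzian is M\"obius invariant; hence $S$ is a well-defined holomorphic function on $\C \setminus \mu_N$. Near each cusp $a \in \mu_N$ the universal covering is locally modeled on $F_N(\tau) - a = e^{ic(\tau - \tau_0)} + \mathrm{O}(e^{2ic(\tau-\tau_0)})$ for some $c \in \R$, and its inverse has the form $\psi_N(z) = \tau_0 + (ic)^{-1}\log(z - a) + (\text{holomorphic})$. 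Since $\{\log(z-a), z\} = \tfrac{1}{2}(z-a)^{-2}$ and the additive constant $\tau_0$ together with the multiplicative constant $(ic)^{-1}$ are M\"obius in the dependent variable, we get $S(z) = \tfrac{1}{2(z-a)^2} + \mathrm{O}((z-a)^{-1})$. The analogous local analysis in the coordinate $w = 1/z$ at the cusp $\infty$ gives $S(z) = \tfrac{1}{2 z^2} + \mathrm{O}(z^{-3})$ as $z \to \infty$. Consequently the polynomial
\[
R(z) \;:=\; S(z)(z^N - 1)^2
\]
has degree at most $2N - 2$ with leading coefficient $\tfrac{1}{2}$.

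Next I would invoke the symmetry from Lemma~\ref{FNbasic}: the relation $\psi_N(\zeta z) = \zeta \psi_N(z)$ with $\zeta = \zeta_N$ (the analytic inverse of $F_N(\zeta x) = \zeta F_N(x)$), combined with M\"obius invariance of the Schwarzian and the chain rule $\{\psi(\zeta z), z\} = \zeta^2 \{\psi, w\}|_{w = \zeta z}$, yields $\zeta^2 S(\zeta z) = S(z)$, hence $R(\zeta z) = \zeta^{-2} R(z)$. Writing $R(z) = \sum_k r_k z^k$, every exponent $k$ with $r_k \neq 0$ must satisfy $k \equiv -2 \pmod N$, so the only admissible monomials are $z^{N-2}$ and $z^{2N-2}$. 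Together with the leading coefficient constraint, this forces $R(z) = \tfrac{1}{2} z^{2N-2} + \alpha z^{N-2}$ for some $\alpha \in \C$. To pin down $\alpha$, evaluate at $z = 1$: using $z^N - 1 = N(z-1) + \mathrm{O}((z-1)^2)$ and the local pole $S(z) = \tfrac{1}{2(z-1)^2} + \cdots$, we read off $R(1) = N^2/2$, and therefore $\alpha = (N^2 - 1)/2$. Substituting gives the claimed formula for $S$, and the lemma follows.

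The only real obstacle is the precise local pole analysis at each cusp. Everything downstream is linear algebra on a small space of polynomials and a single interpolation value at $z = 1$. The fact that the coefficient $\tfrac{1}{2}$ at each cusp is universal (independent of the cusp width $c$ and the shift $\tau_0$) is what makes the interpolation clean, and this universality is exactly the M\"obius invariance of the Schwarzian applied to the normalization of the log.
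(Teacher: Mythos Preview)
Your proof is correct and follows essentially the same strategy as the paper: determine the Schwarzian $S(z)=\{\psi_N,z\}$ as a rational function by combining the local double-pole behavior at each puncture (with universal leading coefficient $\tfrac12$) together with the $\mu_N$-symmetry, and then read off the ODE $y''+\tfrac12 S(z)y=0$.  The only organizational difference is that the paper quotes Hempel's accessory-parameter formula for the Schwarzian of a punctured sphere and solves the three linear constraints~(\ref{constraints}) coming from the $O(z^{-4})$ decay at infinity (after first passing to $1/\wF_N$ to match Hempel's conventions), whereas you carry out the local analysis directly and fix the single remaining constant by evaluating $R(1)=N^2/2$.  Your route is slightly more self-contained; the paper's route makes the connection to the classical accessory-parameter problem explicit.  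Either way, the content is the same computation.
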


\begin{remark} \label{remark Hermann Schwarz}
The equation~\eqref{3.3} is more transparent in terms of the \emph{Schwarzian derivative}: 
$$
y'' + \frac{1}{2} \{\tau,\wF_N\} y = 0.
$$ 
We recall here the role~\cite[\S~IV.1.2]{SaintGervais} of Schwarz's \emph{departure from infinitesimal projectivity}: 
$$
\{w, x\} := \left( \frac{w''}{w'} \right)' - \frac{1}{2} \left( \frac{w''}{w'} \right)^2 = \frac{d^2}{dx^2} \log{\frac{dw}{dx}} - \frac{1}{2} \left(
\frac{d}{dx} \log{\frac{dw}{dx}} \right)^2,
$$
 the simplest differential operator invariant under all M\"obius transformations. It is featured in the ODE~\cite[Proposition~VIII.3.5]{SaintGervais}
\begin{equation} \label{Schwarzian ODE}
\frac{d^2y}{dx^2} + \frac{1}{2}\{ w, x \} y = 0
\end{equation}
that can be used to formally represent an unknown function~$w$ as the quotient~$w = v_1/v_2$ of the two linearly independent solutions~$y = v_1 := w/\sqrt{w'}$ and~$y = v_2 := 1/\sqrt{w'}$ of the second-order linear ODE~\eqref{Schwarzian ODE}. Following Poincar\'e in his ODE approach to the uniformization of Riemann surfaces, we are interested to describe in this way the multivalued holomorphic inverse~$w : U \to \H$ to an analytic universal covering map~$\H \to U$, in the case that the Riemann surface~$U = \C \setminus \{a_1, \ldots, a_N\}$ is the complement of finitely many punctures in the complex plane, and with~$x$ taken as some local coordinate of  the complex projective line. In this case, by a local analysis near the punctures~$\{a_i\} \cup \{\infty\} \subset \mathbb{P}^1$, the Schwarzian~$\{w,x\} \in \C(x)$ is simply a rational function, which is far easier to compute in practice than the map~$w$ {\it a priori}. The ODE~\eqref{Schwarzian ODE} then furnishes a local analytic description of the requisite inverse map~$w = v_1/v_2$ which can then be analyzed both locally (as in the rest of the current~\S~\ref{section:normalizations}) and globally (as in the next~\S~\ref{GeoFNGN}).  
\end{remark}

\begin{example}[See also Example~\ref{ex23}] \label{example H}
\emph{For~$U = \C \setminus \{0, 1\}$, a universal covering map is~$\lambda : \H \to U$, but the more basic element is the multivalued inverse (``upper half plane'')
$$
\tau : \C \setminus \{0,1\} = U \to \H, \qquad
\tau(\lambda) := \omega_2 / \omega_1 = i K'(\lambda) / K(\lambda) = i \frac{ \pFq{2}{1}{1/2,1/2}{1}{1-\lambda}}{ \pFq{2}{1}{1/2,1/2}{1}{\lambda}} 
$$
which is locally the quotient of two periods of the Legendre elliptic curve~$y^2 = x(x-1)(x-\lambda)$, {\it alias} two linearly independent
  solutions~$K(\lambda) =  \pFq{2}{1}{1/2,1/2}{1}{\lambda}$ and~$i K'(\lambda) = i \cdot  \pFq{2}{1}{1/2,1/2}{1}{1-\lambda}$ of Gauss's hypergeometric ODE
$(\lambda^2-\lambda) y'' + (2\lambda-1)y' + y/4 = 0$. The latter is tantamount to~\eqref{Schwarzian ODE} for the case~$x = \lambda$ and~$w = \tau$, and this is the 
picture that we want to generalize.}
\end{example}

\begin{proof}[Proof of Lemma~\ref{diff-eq-FN}]
First, since $F_N$ is the composition of $\wF_N$ and a M\"{o}bius transformation, we only need to prove the similar assertion for $\wF_N$.  By taking reciprocals,
we have a companion uniformization map~$1/\wF_N: \H \rightarrow \mathbf{P}^1 \setminus \{0,\mu_N\}$. (The reason for first considering the reciprocal of~$\wF_N$ is that the standard form considered in~\cite{Hempel}
is for maps from $\H$ to~$\mathbf{P}^1 \setminus S$ where~$S$ is a finite set of points which does not contain~$\infty$.) This is similar to Example~\ref{example H}, except now taking~$x = \wF_N$, rather than~$x=\lambda$, as the coordinate of the projective line, once again parametrized by~$\H$ via our universal covering map of~$\C \setminus \mu_N$. 

By~\cite[Lemma~3.3]{Hempel}, the analytic local
inverse map of~$1/\wF_N$ (resp.~$\wF_N$) is, up to a M\"{o}bius transformation, the ratio of two linearly independent solutions of the differential equation equation~$y'' + \frac{1}{2} \{\tau,1/\wF_N\} y=  0$ (resp.~$y'' + \frac{1}{2} \{\tau,\wF_N\} y=  0$), where $\{ \tau, 1/\wF_N\}$ and $\{ \tau, \wF_N\}$ denote the Schwarzian derivatives.
 We now compute $\{ \tau, 1/\wF_N\}$ and then $\{ \tau, \wF_N\}$ following~\cite[\SSS~3, \SSS~6]{Hempel}.
Let~$p_k =  \zeta_N^k = e^{2 \pi i k/N}$ for~$k = 1, \ldots, N$ and let~$p_0 = 0$.
We deduce from~\cite[Theorem~3.1]{Hempel} that
the Schwarzian~$\{\tau,1/\wF_N\}$ is given by
\numequation 
\label{firstS}
\{\tau,1/\wF_N\} = \frac{1}{2} \sum_{k=0}^{N} \frac{1}{(X - p_k)^2}
 + \sum_{k=0}^{N} \frac{m_k}{X - p_k},
 \end{equation}
 where the~$m_k$ for~$k = 0 ,\ldots, N$ denote the so-called \emph{accessory parameters} at~$z = p_k$.
The accessory parameters are notoriously hard to compute in general, but  in our particular example we may find them using the~$\Z/N \Z$ symmetry.
Expressing the fact that~\eqref{firstS} vanishes to order four at $X = \infty$, the accessory parameters are subject to the following three constraints~\cite[Theorem~3.1]{Hempel} obtained by equating the $1/X, 1/X^2$ and $1/X^3$ coefficients to zero:
\numequation
\label{constraints}
\sum_{k=0}^{N} m_k = 0, \quad \sum_{k=0}^{N} 2 m_k p_k + 1 = 0, \quad \sum_{k=0}^{N} m_k p^2_k + p_k = 0.
\end{equation}
Since~$\mathbf{C} \setminus \mu_N$ is invariant under the action of~$\mu_N$, we deduce
exactly as
 in~\cite[\SSS~6, Example~1]{Hempel} that
 the accessory parameters~$m_k$ for~$k \ne 0$ satisfy the symmetry~$m_k = c \cdot \zeta_N^{-k}$
for some constant~$c$. 
The constraint~$\sum_{k=1}^{N} m_k = 0$ in~\eqref{constraints} then gives~$m_0 = 0$.
Then the second constraint in~(\ref{constraints}) gives 
$$\sum_{k=0}^{N} (2 m_k \zeta_N^k + 1) = 1 + \sum_{k=1}^{N} (2c + 1) = 0,$$
and hence~$c = -\frac{1}{2} - \frac{1}{2N}$. This determines all the~$m_k$, and turns~(\ref{firstS}) (still with $X=1/\wF_N$) into
$$\{\tau,1/\wF_N\} 
=
 \frac{1}{2 X^2} + \frac{1}{2} \sum_{k=1}^N \frac{1}{(X  - \zeta_N^k)^2}
- \frac{(1+N)}{2N} \sum_{k=1}^{N} \frac{\zeta_N^{-k}}{X - \zeta_N^k} = \frac{ (1 + (N^2  - 1) X^N)}{2 X^2 (X^N - 1)^2}.$$
From the chain rule, we deduce that with~$x = \wF_N = 1/X$
the equality:
$$\{\tau,\wF_N\} = \frac{1}{x^4} \frac{ (1 + (N^2  - 1) (1/x)^N)}{2 (1/x)^2 ((1/x)^N - 1)^2}
=
\frac{(N^2 - 1) x^{N-2} + x^{2N-2}}{2 (x^N- 1)^2},
$$
and from this we find that the equation~$y'' + \frac{1}{2} \{\tau,\wF_N\} y=  0$ 
is given by~(\ref{3.3}). We then conclude the proof by~\cite[Lemma~3.3]{Hempel}. 
\end{proof}

\begin{df}  \label{GN} Let~$G_N$ denote the map~$D(0,1) \rightarrow \C \setminus \{1\}$ such
that~$G_N(z^N) = (F_N(z))^N$, or equivalently~$G_N(z) = (F_N(z^{1/N}))^N$.
\end{df}

The fact that~$G_N$ is well-defined is a formal consequence of the relation~$F_N(\zeta z) = \zeta F_N(z)$ in Lemma~\ref{FNbasic}.

The inverse map of~$G_N$ is closely related to the inverse map of~$F_N$, and turns out to have a nicer form. We will give some geometric description of $G_N$ in \SSSS~\ref{GeoFNGN} in terms of triangle groups, which suggests an explicit description of the inverse of~$G_N$ in terms of hypergeometric functions.

\begin{lemma} \label{Gdiff} \label{sol-diffG}
Let~$\varphi_N$ denote the  local  inverse map of~$G_N$ around $x = 0$, normalized so that~$\varphi_N(0) = 0$.
The function~$\varphi_N$ has the form~$\delta^{-1}_N (\phi_1/\phi_2)^N$, where~$\phi_1$ and~$\phi_2$ are the solutions
to the differential equation:
\numequation\label{diff-eq-G}
x (x-1)^2 y'' + \left(1-\frac{1}{N} \right)(x-1)^2 y' + \left(\frac{1}{4} + \frac{x-1}{4 N^2} \right) y = 0
\end{equation}
given explicitly by
\numequation\label{sol-diff-G}
\phi_1 = \sqrt{1-x}  \cdot x^{1/N}  \cdot  \displaystyle{
\pFq{2}{1}{\frac{N+1}{2N}, \frac{N+1}{2N}}{1 + \frac{1}{N}}{x}
}, \quad
\phi_2 = \sqrt{1-x}   \cdot  \displaystyle{
\pFq{2}{1}{\frac{N-1}{2N}, \frac{N-1}{2N}}{1 - \frac{1}{N}}{x}
},
\end{equation}
and $\delta_N=|G_N'(0)|$ denotes conformal radius  of the map~$G_N$. 

Further, let~$s_N(x)$ denote the function
\numequation\label{sN}
s_N(x):=x^{1/N} \frac{
\displaystyle{
\pFq{2}{1}{\frac{N+1}{2N}, \frac{N+1}{2N}}{1 + \frac{1}{N}}{x}
}
}{
\displaystyle{
\pFq{2}{1}{\frac{N-1}{2N}, \frac{N-1}{2N}}{1 - \frac{1}{N}}{x}
}}.
\end{equation}
Then $\varphi_N(x)=\delta_N^{-1}s_N(x)^N$, $\psi_N(x)=|F'_N(0)|^{-1}s_N(x^N)$, and $\delta_N=|F'_N(0)|^{N}$.
\end{lemma}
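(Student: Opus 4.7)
The plan is to derive the ODE~\eqref{diff-eq-G} from~\eqref{3.3} of Lemma~\ref{diff-eq-FN} via the substitution $z = x^N$, verify the explicit hypergeometric formulas, and then pin down the scalars via the $\mu_N$-symmetry of~\eqref{3.3}. Throughout, I use that $F_N$ has real Taylor coefficients (by uniqueness of $F_N$ combined with the complex-conjugation symmetry of $\C\setminus\mu_N$ preserved by the normalizations $F_N(0)=0,F_N(1)=1$), so that $F_N'(0)=|F_N'(0)|=:\gamma_N$ is real positive.

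First, with $\eta_1,\eta_2$ the particular solutions of~\eqref{3.3} satisfying $\psi_N=\eta_1/\eta_2$, define $\tilde\phi_i(z):=\eta_i(z^{1/N})$ near $z=0$. With $x=z^{1/N}$, the chain rule gives $\tilde\phi'(z) = \tfrac{x}{Nz}\eta'(x)$ and $\tilde\phi''(z) = \tfrac{x^2}{N^2z^2}\eta''(x) + \tfrac{1-N}{Nz}\tilde\phi'(z)$. Substituting $\eta''(x) = -\tfrac{(N^2-1)x^{N-2}+x^{2N-2}}{4(x^N-1)^2}\eta(x)$ and using $x^N=z$, the equation for $\tilde\phi$ simplifies, after multiplying by $z(z-1)^2$, to exactly the ODE~\eqref{diff-eq-G}. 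Next, I verify that $\phi_1,\phi_2$ of~\eqref{sol-diff-G} solve~\eqref{diff-eq-G}: writing $\phi_2=(1-z)^{1/2}F$ with $F={}_2F_1(\tfrac{N-1}{2N},\tfrac{N-1}{2N};1-\tfrac{1}{N};z)$, a direct expansion of the derivatives reduces~\eqref{diff-eq-G} to the hypergeometric equation $z(1-z)F''+(c-(a+b+1)z)F'-abF=0$ for $(a,b,c)=(\tfrac{N-1}{2N},\tfrac{N-1}{2N},1-\tfrac{1}{N})$; $\phi_1$ is handled identically, using the companion Frobenius solution $z^{1-c}{}_2F_1(a-c+1,b-c+1;2-c;z)$ at $z=0$.

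The central step is to identify $(\tilde\phi_1,\tilde\phi_2)$ with $(\phi_1,\phi_2)$ up to scalars. By the invariance of~\eqref{3.3} under $x\mapsto\zeta x$ for $\zeta\in\mu_N$, its solution space decomposes under this order-$N$ automorphism into one-dimensional eigenspaces for the characters $\zeta$ and $1$. From $F_N(x)=\gamma_N x+O(x^{N+1})$ I normalize $\eta_1(x)=x+O(x^{N+1})$ and $\eta_2(x)=\gamma_N+O(x^N)$, so that $\psi_N=\eta_1/\eta_2$ on the nose. Symmetry then forces $\eta_1$ to involve only exponents $kN+1$ and $\eta_2$ only exponents $kN$, so $\tilde\phi_1(z)=z^{1/N}(1+O(z))$ and $\tilde\phi_2(z)=\gamma_N+O(z)$ is single-valued analytic at $z=0$. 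Since the indicial roots $0$ and $1/N$ of~\eqref{diff-eq-G} at $z=0$ exhibit one-dimensional Frobenius eigenspaces, matching leading coefficients yields $\tilde\phi_1=\phi_1$ and $\tilde\phi_2=\gamma_N\phi_2$, hence $\psi_N(z^{1/N})=s_N(z)/\gamma_N$, which rewritten with $y=z^{1/N}$ is precisely $\psi_N(y)=|F_N'(0)|^{-1}s_N(y^N)$.

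Inverting the defining relation $G_N(x^N)=F_N(x)^N$ gives $\varphi_N(z)=\psi_N(z^{1/N})^N=s_N(z)^N/\gamma_N^N$; since $(s_N^N)'(0)=1$, comparison with $\varphi_N'(0)=1/G_N'(0)=1/\delta_N$ forces $\delta_N=\gamma_N^N=|F_N'(0)|^N$, completing all the claimed identities. The main obstacle is this identification step: Hempel's theorem alone recovers $\varphi_N$ only up to a M\"obius post-composition of $\phi_1/\phi_2$ followed by an $N$-th power, and it is the $\mu_N$-symmetry of~\eqref{3.3} together with the explicit Taylor normalization of $\psi_N$ at $0$ that removes this ambiguity and fixes the exact formula $\varphi_N=\delta_N^{-1}s_N^N$.
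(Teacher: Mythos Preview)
Your proof is correct and follows essentially the same approach as the paper: both derive~\eqref{diff-eq-G} from~\eqref{3.3} via the substitution $z\mapsto z^{1/N}$, verify the explicit hypergeometric solutions, and match leading terms to pin down the constants. The only notable differences are that you explicitly justify $F_N'(0)>0$ via complex conjugation (which the paper tacitly uses when writing $|F_N'(0)|$ in place of $F_N'(0)$), and that you invoke the $\mu_N$-symmetry of~\eqref{3.3} to decompose the solution space into eigenspaces and thereby force the denominator in $\psi_N=\eta_1/\eta_2$ to lie in the trivial-character eigenspace; the paper instead matches the leading Frobenius terms of $\eta_i(z^{1/N})$ directly against the explicit $\phi_i$, which implicitly uses the same eigenstructure. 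Both routes arrive at the same identification, and your symmetry argument is arguably the cleaner way to see why the ratio $\psi_N$ is \emph{exactly} (not merely M\"obius-equivalent to) a scalar multiple of $\eta_1/\eta_2$.
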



\begin{proof}
By Definition~\ref{GN}, $G_N(z)=(F_N(z^{1/N}))^N$; and by the assumptions $\varphi_N(0)=0$ and $\psi_N(0)=0$, we obtain the formal identity~$\varphi_N(x) = \psi_N(x^{1/N})^{N}$ (formally: $x=G_N(z)$).

Let $\eta_1, \eta_2$ denote the solutions of the differential equation in Lemma~\ref{diff-eq-FN} such that $\eta_1(0)=0, \eta'_1(0)=1, \eta_2(0)=1, \eta'_2(0)=0$; then $\eta_1, \eta_2$ are linearly independent and $\eta_1(x)/\eta_2(x)=x+O(x^2)$. Since $\psi_N(x) = |F_N'(0)|^{-1} x + O(x^2)$, then by Lemma~\ref{diff-eq-FN}, we have $\psi_N= |F_N'(0)|^{-1} \, \eta_1/\eta_2$.
We deduce
$$
\varphi_N(x) = |F_N'(0)|^{-N} (\eta_1(x^{1/N})/\eta_2(x^{1/N}))^N.
$$

Let~$\phi_i (x)= \eta_i(x^{1/N})$. Then 
$$
\phi'_i(x)=N^{-1}x^{\frac{1}{N}-1}\eta'_i(x^{1/N})
$$ 
and 
$$
\phi''_i (x)= \frac{1-N}{N^2}x^{\frac{1}{N}-2}\eta'_i +N^{-2}x^{\frac{2}{N}-2}\eta''_i(x^{1/N}).
$$
From equation~\eqref{3.3}, we have 
$$
4(x-1)^2\eta''_i(x^{1/N})+((N^2-1)x^{1-\frac{2}{N}}+x^{2-\frac{2}{N}})\eta_i(x^{1/N})=0.
$$
We rewrite this differential equation in terms of derivatives of $\phi_i$ using the above equations and then conclude that $\phi_1, \phi_2$ are solutions to~\eqref{diff-eq-G}. 

In order to prove that $\phi_i$ are given by the explicit formula in~\eqref{sol-diff-G}, we first deduce from the second order differential equations satisfied by hypergeometric functions that both $\sqrt{1-x}  \cdot x^{1/N}  \cdot  \displaystyle{
\pFq{2}{1}{\frac{N+1}{2N}, \frac{N+1}{2N}}{1 + \frac{1}{N}}{x}
}$ and $\sqrt{1-x}   \cdot  \displaystyle{
\pFq{2}{1}{\frac{N-1}{2N}, \frac{N-1}{2N}}{1 - \frac{1}{N}}{x}
}$ satisfy~\eqref{diff-eq-G}. (See, for instance, \cite[pp.~84--85]{Goluzin} on how to adjust by some rational power of $1-x$ to obtain a hypergeometric differential equation and then obtain the two solutions.) Moreover, we conclude that these explicit solutions are exactly $\phi_1$ and $\phi_2$ by noticing that they have the same leading terms as $\eta_1, \eta_2$ (once we replace $x$ by $x^N$).

The last assertion is just a summary of the above results.
\end{proof}



In order to prove Theorem~\ref{exactconformal}, we need the following formula of the behavior of hypergeometric functions in Lemma~\ref{Gdiff} near $x=1$.

\begin{lemma}[See, for instance, {\cite[15.3.10]{AS}}]\label{lem-asymp2F1} Given $a\notin \Z_{\leq 0}$, for $|x|<1$, we have
 \numequation\label{asymp2F1}
   \displaystyle{
\pFq{2}{1}{a,a}{2a}{1-x}
=  \frac{\Gamma(2a)}{\Gamma(a)^2}
\sum_{k=0}^\infty \frac{(a)_k (a)_k}{k!^2} x^k
\left( - \log{x} + 2 (\psi(k+1) - \psi(k+a)) \right),
}
 \end{equation}
 where $\psi$ denotes the digamma function~$\psi(x) = d/dx \log \Gamma(x) = \Gamma'(x)/\Gamma(x)$.
 Note that the above hypergeometric function is multivalued around~$x = 0$, but all different branches are
accounted for by the branches of the logarithm.  
\end{lemma}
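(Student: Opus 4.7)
The plan is to derive this classical identity (formula 15.3.10 in Abramowitz and Stegun) by the Frobenius method applied at the resonant regular singular point of the hypergeometric equation. The function $F(w) := \pFq{2}{1}{a,a}{2a}{w}$ satisfies the Gauss hypergeometric ODE, and under the substitution $v = 1 - w$ this becomes $v(1-v)F_{vv} + (1 - (2a+1)v)F_v - a^2 F = 0$, i.e., the hypergeometric equation with parameters $(a,a,1)$. Its indicial equation at $v = 0$ has a double root at $0$, so by Fuchs--Frobenius the local solution space at $v = 0$ is spanned by one analytic and one logarithmic solution.

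A short calculation identifies the analytic solution as $y_1(v) = \pFq{2}{1}{a,a}{1}{v} = \sum_{k \geq 0} \alpha_k v^k$ with $\alpha_k := (a)_k^2/(k!)^2$ satisfying the recurrence $\alpha_{k+1}/\alpha_k = (a+k)^2/(k+1)^2$. Writing the logarithmic solution as $y_2(v) = y_1(v) \log v + h(v)$ with $h = \sum_{k \geq 1} \beta_k v^k$ analytic, substitution and cancellation of the log terms reduce to the inhomogeneous equation $L(h) = 2a y_1 - 2(1-v) y_1'$, which in turn yields the recurrence $(k+1)^2 \beta_{k+1} - (a+k)^2 \beta_k = \frac{2(a+k)(1-a)}{k+1}\alpha_k$. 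A direct telescoping check using $\psi(n+a) - \psi(n-1+a) = 1/(a+n-1)$ verifies that this is solved in closed form by $\beta_k = 2\alpha_k\bigl[(\psi(k+a) - \psi(a)) - (\psi(k+1) - \psi(1))\bigr]$.

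Since $F(1-z)$ is a local solution of the transformed ODE near $v = z = 0$, it must decompose as $F(1-z) = C_1 y_1(z) + C_2 y_2(z)$. I would pin down the two constants from the $z \to 0^+$ asymptotics: the classical logarithmic divergence $F(w) \sim -\frac{\Gamma(2a)}{\Gamma(a)^2}\log(1-w)$ as $w \to 1^-$ (derivable from the Euler integral representation, or from the confluence limit $c \to 2a$ of the standard Gauss connection formula) forces $C_2 = -\Gamma(2a)/\Gamma(a)^2$, while matching the finite parts via $\lim_{z \to 0^+}\bigl[F(1-z) + (\Gamma(2a)/\Gamma(a)^2)\log z\bigr] = \tfrac{\Gamma(2a)}{\Gamma(a)^2} \cdot 2(\psi(1) - \psi(a))$ gives $C_1 = \tfrac{\Gamma(2a)}{\Gamma(a)^2}\cdot 2(\psi(1) - \psi(a))$. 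Assembling the pieces, the $z^k$ coefficient of $F(1-z) \cdot (\Gamma(a)^2/\Gamma(2a))$ becomes $[-\log z + 2(\psi(1) - \psi(a))]\alpha_k - \beta_k$, and inserting the closed form for $\beta_k$ produces $\alpha_k[-\log z + 2(\psi(k+1) - \psi(k+a))]$ after the cancellation of $\psi(1)$ and $\psi(a)$, which is precisely the claimed formula. The multivaluedness statement is automatic since $\log z$ is the only branched factor on the right-hand side, and its monodromy around $z=0$ contributes $2\pi i$ times the analytic factor $y_1(z)$, matching the monodromy of the logarithmic Frobenius solution.

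The main technical obstacle is extracting the exact finite-part constant $C_1$, which amounts to the Laurent expansion of the hypergeometric function at its logarithmic singularity $w = 1$. I would handle this by the confluence method: substituting $c = 2a + \epsilon$ in Gauss's connection formula and expanding $\Gamma(\pm \epsilon) = \pm \epsilon^{-1} - \gamma + O(\epsilon)$, $(1-w)^\epsilon = 1 + \epsilon \log(1-w) + O(\epsilon^2)$, and $\Gamma(a+\epsilon) = \Gamma(a)(1 + \epsilon \psi(a) + O(\epsilon^2))$, the $\epsilon^{-1}$ singularities in the two summands cancel exactly, and the residual finite part collects precisely the constant $2(\psi(1) - \psi(a))$ via $\tfrac{d}{d\epsilon}\log\Gamma(a+\epsilon)|_{\epsilon=0} = \psi(a)$.
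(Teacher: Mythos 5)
Your derivation is correct, but be aware that the paper does not prove this lemma at all: it is quoted as a classical identity with a citation to Abramowitz--Stegun 15.3.10, and the proof box closes the statement without argument. So the comparison is between your self-contained derivation and an appeal to the literature. Your route --- passing to the hypergeometric equation with parameters $(a,a,1)$ in the variable $v=1-w$, building the Frobenius basis $y_1$, $y_2=y_1\log v+h$ with $\beta_k=2\alpha_k\bigl[(\psi(k+a)-\psi(a))-(\psi(k+1)-\psi(1))\bigr]$ (which indeed satisfies your recurrence; the telescoping check is right), and then pinning the two connection constants by the confluence $c\to a+b$ in Gauss's connection formula --- is in substance the standard proof of the cited identity (this is essentially how Erd\'elyi and Abramowitz--Stegun obtain 15.3.10), so nothing is lost mathematically; what you gain is self-containedness, what the paper gains is brevity. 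Two small points to tighten if you write it out: (i) justify interchanging the limit $\epsilon\to 0$ with the two series in the connection formula (routine at fixed $|z|<1$, since convergence is geometric and the coefficients are continuous in $\epsilon$); and (ii) the Euler-integral route to the leading asymptotic $-\frac{\Gamma(2a)}{\Gamma(a)^2}\log(1-w)$ requires $\mathrm{Re}(a)>0$, so for general $a$ you should rely on your alternative confluence argument --- in the paper's application $a=\tfrac12\pm\tfrac1{2N}$, so either works. One also needs $2a\notin\Z_{\leq 0}$ for the left-hand side to be defined, an implicit hypothesis in the lemma that is automatic in its use here.
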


\begin{proof}[Proof of Theorem~\ref{exactconformal}] 
Since $F_N$ is a covering map of $\C \setminus \mu_N$, the local inverse $\psi_N$ is naturally defined on $D(0,1)\subset \C\setminus \mu_N$. Moreover, since $F_N(1)=1$, we have that $\lim_{x\rightarrow 1} \psi_N(x)=1$, where $x\in D(0,1)$ approaches $1$. (\emph{A priori}, we only conclude that $\lim_{x\rightarrow 1} \psi_N(x)$ approaches a cusp of $D(0,1)$, and thus that $|\lim_{x\rightarrow 1} \psi_N(x)|=1$; this suffices for the rest of the proof. The more precise statement $\lim_{x\rightarrow 1} \psi_N(x)=1$ follows from our assumption $\psi_N(0)=0$, either by the description of the fundamental domain further down in Lemma~\ref{prop-GammaN}, or more directly by the computation of the rest of the proof that follows, which shows  that $\lim_{x\rightarrow 1} \psi_N(x)$ is a positive real number.)  

By Lemma~\ref{Gdiff}, we have $\psi_N(x)=|F'_N(0)|^{-1}s_N(x^N)$. In particular, 
$$
|F'_N(0)|^{-1}\lim_{x\in D(0,1), x\rightarrow 1} s_N(x^N) = 1.
$$
 Thus, by~\eqref{sN} and~\eqref{asymp2F1}, we have 
 \[|F'_N(0)|=\lim_{x\in D(0,1), x\rightarrow 1} s_N(x^N)\]
 \[=\lim_{x\in D(0,1), x\rightarrow 1} \frac{\displaystyle{\pFq{2}{1}{\frac{N+1}{2N}, \frac{N+1}{2N}}{1 + \frac{1}{N}}{x}}}{\displaystyle{ \pFq{2}{1}{\frac{N-1}{2N}, \frac{N-1}{2N}}{1 - \frac{1}{N}}{x}}}=  \frac{
\displaystyle{
 \Gamma \left( \frac{N - 1}{2N} \right)^2  \Gamma \left(1 + \frac{1}{N} \right)
}
}{
\displaystyle{
 \Gamma \left( \frac{N + 1}{2N} \right)^2  \Gamma \left(1 - \frac{1}{N} \right)
}}.\]

Basic properties of the Gamma function~\cite[6.1.18]{AS} transform the latter expression into
\[\gamma_N=2^{4/N}\frac{
\displaystyle{
 \Gamma \left( 1+\frac{1}{2N} \right)^2  \Gamma \left(1 - \frac{1}{N} \right)
}
}{
\displaystyle{
 \Gamma \left( 1-\frac{1}{2N} \right)^2  \Gamma \left(1 + \frac{1}{N} \right)
}}.\]
Then by~\cite[6.1.33]{AS}, we also have
\[
\log \gamma_N = \frac{\log 16}{N} +
\sum_{k=1}^{\infty} \frac{  (2^{2k} - 1)}{2^{2k-1} (2k+1)}
\cdot \frac{ \zeta(2k+1) }{N^{2k+1}}.
\]
We obtain~\eqref{explicit-gammaN} by taking the exponential of the above formula.
\end{proof}

\begin{example} \label{ex23}   \emph{If~$N=2$, then~$\C \setminus \{\pm 1\}$ is biholomorphic to~$Y(2) = \mathbf{P}^1 \setminus \{0,1,\infty\}$, 
and a direct description of the uniformization~$\H \rightarrow \C \setminus \{\pm 1\}$  sending~$i$ to~$0$ is given by~$2 \lambda(\tau) - 1$.
In this case, the formulas above specialize to the standard identity~$q = e^{-\pi K'/K}$ where the elliptic periods~$K'$ and~$K$ are
directly related to hypergeometric functions. 
The only other such case of an incidental isomorphism $\C \setminus \mu_N \cong Y(N)$ is $N = 3$: this is~\cite[\SSS~6 Example~5]{Hempel}. In our notation, 
these two respective uniformization maps~$F_N : D(0,1) \to \C \setminus \mu_N$ are explicitly
$$
F_2 : D(0,1) \to \C \setminus \{\pm 1\}, \qquad F_2(z) = 2 \lambda \left( i \frac{1-z}{1+z} \right) - 1
$$
and
\begin{equation*}
\begin{aligned}
& F_3 : D(0,1) \to \C \setminus \mu_3,  \\  F_3(z)  & =   9\eta\left( 9i \frac{( 2 + \sqrt{3} + 3i)z + 2 + \sqrt{3} - 3i }{ (-6-\sqrt{3}+3i)z + 6 + \sqrt{3} + 3i } \right)^3  \eta\left( i \frac{( 2 + \sqrt{3} + 3i)z + 2 + \sqrt{3} - 3i }{ (-6-\sqrt{3}+3i)z + 6 + \sqrt{3} + 3i }  \right)^{-3} + 1,
\end{aligned}
\end{equation*}
where~$\eta(z) := q^{1/12} \prod_{n=1}^{\infty} (1-q^{2n})$ with~$q := e^{\pi i z}$ is the Dedekind eta function. These explicit examples confirm our general formula for the derivative: 
\begin{equation*}
\begin{aligned}
 |F_2'(0)| & = \frac{\Gamma(1/4)^4}{4\pi^2} = 4.37687923\ldots &  > \sqrt{16}  \\
|F_3'(0)| & = \frac{\Gamma(1/6)^3}{12\pi^{3/2}} = 2.5810565\ldots & > 2.519842\ldots = \sqrt[3]{16}. 
\end{aligned}
\end{equation*}}
\end{example}

\subsection{Geometry of~\texorpdfstring{$\Gamma_N$}{GammaN} and a uniform growth estimate of $F_N$}\label{GeoFNGN}
Our second aim in the present~\SSSS~\ref{uniformizations} is the uniform supremum growth estimate Lemma~\ref{trivialsupFN} of the universal covering map $F_N : D(0,1) \to \C \setminus \mu_N$ near the boundary,  as both the circle $|z| = r$ and the level $N$ vary. This result is subsumed by the more precise bound of Kraus and Roth~\cite[Theorems~1.2 and~1.10]{KrausRoth}; our treatment is self-contained. In \SSSS~\ref{nevanlinna} we will refine this  supremum growth bound  (uniformly exponential in $\frac{N}{1-r}$) to an integrated growth bound (uniformly linear in $\frac{N}{1-r}$). 

The idea of the proof is to use the symmetry of $\C\setminus \mu_N$ and the action of the Fuchsian group $\wGamma_N$ to reduce the question to the study of the asymptotic of $\wF_N$ near the cusp $\tau= i \infty$. We study this asymptotic using the explicit description of the inverse of $\wF_N$ given in Lemma~\ref{Gdiff}. 

We begin in this subsection by explicitly describing the Fuchsian group $\wGamma_N$. 


\begin{proposition}\label{prop-GammaN}
The stabilizer of~$i \infty$ in~$\wGamma_N$ is generated by
$$\wt_N:=\left( \begin{matrix} 1& 2 \cot(\pi/2N) \\ 0 & 1 \end{matrix} \right).$$
The group~$\wGamma_N$  is the free group on~$N$ generators given by~$\wt_N$ and its conjugates by powers of~$\wr_N$ in~\eqref{elementr}.
A fundamental domain $\Omega\subset D(0,1)$ for $\Gamma_N$ is  the region with~$2N$ cusps given by half-integer powers of~$\zeta_N= \exp(2 \pi i/N)$ and bounded by geodesics connecting adjacent cusps.
\end{proposition}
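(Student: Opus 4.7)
The plan is to pin down the cusp stabilizer generator $\wt_N$ first by a monodromy computation, and then read off the remaining structure geometrically from the $\mu_N$-symmetry established in Lemma~\ref{FNbasic}. Since the stabilizer of $i\infty \in \H$ in $\wGamma_N$ is infinite cyclic, generated by some $\bigl(\begin{smallmatrix} 1 & \lambda \\ 0 & 1 \end{smallmatrix}\bigr)$ with $\lambda>0$, the content of the first assertion is that $\lambda = 2\cot(\pi/2N)$.

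I would first use the explicit inverse of Lemma~\ref{Gdiff}: near $0 \in D(0,1)$ one has $\psi_N(z) = |F_N'(0)|^{-1} s_N(z^N)$ with $s_N(z) = z^{1/N} H_+(z)/H_-(z)$, where $H_\pm(z) = \pFq{2}{1}{a_\pm,a_\pm}{2a_\pm}{z}$ and $a_\pm = \tfrac12 \pm \tfrac{1}{2N}$. The companion inverse $\wpsi_N(z) = i(1+\psi_N(z))/(1-\psi_N(z))$ sends a punctured neighborhood of $z=1\in\C\setminus\mu_N$ to a horoball at $i\infty\in\H$. Applying the asymptotic of Lemma~\ref{lem-asymp2F1} gives $H_\pm(w) \sim c_\pm(-\log(1-w) + d_\pm)$ as $w\to 1$, with $c_\pm = \Gamma(2a_\pm)/\Gamma(a_\pm)^2$ and $d_\pm = 2(\psi(1)-\psi(a_\pm))$. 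Substituting $w = z^N$ near $z=1$ (so $\log(1-z^N) = \log N + \log(1-z) + O(1-z)$, with the $\log N$ terms cancelling in the ratio $H_+/H_-$) and extracting the leading singularity produces $\wpsi_N(z) \sim -\tfrac{2i}{d_--d_+}\log(1-z)$, so the monodromy $\log(1-z)\mapsto\log(1-z)+2\pi i$ around $z=1$ acts on $\wpsi_N$ by translation $\lambda = 4\pi/(d_--d_+)$. The reflection identity $\psi(1-x)-\psi(x) = \pi\cot(\pi x)$ at $x=a_-$ collapses $d_--d_+$ to $2\pi\tan(\pi/2N)$, yielding $\lambda = 2\cot(\pi/2N)$ as desired.

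For the remaining assertions, Lemma~\ref{FNbasic} shows that $\wr_N$ normalizes $\wGamma_N$ modulo $\pm I$, so each $\wgamma_k := \wr_N^k\wt_N\wr_N^{-k}$ is a parabolic in $\wGamma_N$ stabilizing the cusp $\wr_N^k(i\infty)$, which in the disc model corresponds to $\zeta_N^k\in\partial D(0,1)$. I would then take $\Omega\subset D(0,1)$ to be the ideal $2N$-gon with cusps at $\zeta_{2N}^j = e^{i\pi j/N}$, $j=0,\dots,2N-1$, bounded by geodesics connecting adjacent cusps; its hyperbolic area is $(2N-2)\pi$, matching the Gauss--Bonnet area $2\pi(N-1) = -2\pi\chi(\C\setminus\mu_N)$ required of a fundamental domain of $\wGamma_N$. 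Pushing to $\H$ via $x\mapsto i(1+x)/(1-x)$ sends the two geodesics of $\Omega$ meeting at the cusp $1$ to the vertical lines $\Re\tau = \pm\cot(\pi/2N)$, which are exchanged by $\wt_N$; the $\mu_N$-rotation then propagates the same side-pairing to each even cusp $\zeta_N^k$ via $\wgamma_k$.

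Finally, Poincar\'e's polygon theorem applied to $\Omega$ with these $N$ parabolic side-pairings certifies that $\Omega$ is a fundamental domain for the subgroup $\Gamma' := \langle \wgamma_0,\dots,\wgamma_{N-1}\rangle$ and that $\Gamma'$ is freely generated by them: the only cycle relations come from loops around the cusps and are trivially satisfied for parabolic side-pairings (the $N$ odd vertices $\zeta_{2N}^{2k+1}$ get merged by the $\wgamma_k$ into the single cusp above $\infty\in\C\setminus\mu_N$). The area equality then forces $\Gamma' = \wGamma_N$, proving all three claims. The hardest step will be the hypergeometric monodromy computation of the second paragraph; the polygon-theoretic verification is a standard application of Poincar\'e's theorem once the dihedral symmetry and translation length are in hand.
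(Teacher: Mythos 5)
Your first half is fine, and it is a genuinely different route from the paper's: you extract the translation length $2\cot(\pi/2N)$ from the monodromy of the hypergeometric inverse (Lemma~\ref{Gdiff} plus Lemma~\ref{lem-asymp2F1} and the digamma reflection formula), whereas the paper never computes the cusp width analytically at this point; it pins down $c_N$ by a purely geometric argument, describing the Dirichlet domain of $\wGamma_N$ centered at $0$ and using finite covolume together with the count of $N+1$ cusps to force the bounding geodesics to meet exactly at the points $\zeta_N^{k+1/2}$. (Your computation is consistent with the asymptotics the paper later develops in Lemmas~\ref{uniform} and~\ref{fnearone}, and the sign/primitivity issues you leave implicit are harmless.)

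The second half, however, has a genuine gap at the Poincar\'e polygon step. The assertion that ``the only cycle relations come from loops around the cusps and are trivially satisfied for parabolic side-pairings'' is false as a general statement and skips exactly the nontrivial verification. The even ideal vertices $\zeta_N^k$ cause no trouble (their cycle transformation is $\wgamma_k$ itself), but the $N$ odd vertices $\zeta_{2N}^{2k+1}$ form a single cycle whose cycle transformation is a product of all $N$ generators (each once, in a specific order), and Poincar\'e's theorem requires this element to be parabolic: this is the completeness hypothesis at an ideal vertex, and if it fails the translates of the ideal $2N$-gon need not tile $\H$, so neither the freeness claim nor your concluding area comparison $\Gamma'=\wGamma_N$ is available. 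This condition is not automatic for parabolic pairings --- for a generic translation length the same combinatorial pairing of the same ideal polygon produces a hyperbolic cycle transformation and an infinite-area quotient --- and in fact it is precisely where the value $2\cot(\pi/2N)$ must be used (for $N=2$ one checks the cycle word $\wt_2\,\wgamma_1$ has trace $-2$). To close the gap you must either verify parabolicity of the odd-vertex cycle word directly, or argue as the paper does with the Dirichlet domain of the actual deck group, or identify each $\wgamma_k$ with a based loop around $\zeta_N^k$ in $\C\setminus\mu_N$ and match the ordered cycle word (up to cyclic permutation and inversion) with a loop around $\infty$, whose deck transformation is parabolic because $\infty$ is a puncture; none of these steps is carried out in the proposal.
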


\begin{proof} Since $\wF_N$ is the universal covering map of $\C\setminus \mu_N$,  the Fuchsian group~$\wGamma_N$ is  generated by the stabilizers of the cusps~$c$ with~$\wF_N(c) \in \mu_N$. If we denote
the generator of the stabilizer of~$i \infty$ by
\numequation
\wt:=\left( \begin{matrix} 1& c_N \\ 0 & 1 \end{matrix} \right),
\end{equation}
then the stabilizers of the other cusps associated to~$\mu_N$ are generated by the conjugates of~$\wt_N$ by~$\wr_N$ since $\zeta_N^k\wF_N(\tau)=\wF_N(\wr_N^k\cdot \tau)$ by Lemma~\ref{FNbasic}.

Consider the Dirichlet domain~$\Omega_N\subset D(0,1)$ associated to~$\Gamma_N$
around~$z = 0$. More precisely, we can describe~$\Omega_N$ as the region
$$
\{z \in D(0,1) \, : \,   d(g z,0) \ge  d(z,0)  \ \text{for all} \ g, g^{-1} \in \{r_N^k \, t \, r_N^{-k}\}, k = 0,1,\ldots, N-1 \}.
$$
Here, $d$ is the hyperbolic distance in $D(0,1)$.
The region in $D(0,1)$ such that~$d(g z,0) \ge d(z,0)$ and~$d(g^{-1}z,0) \ge d(z,0)$ for $g=r_N^k \cdot t \cdot r_N^{-k}$
is the region bounded by two geodesics starting at~$\zeta_N^k$ going in opposite directions and intersecting the boundary
at~$\zeta_N^k e^{\pm i \theta}$ where~$c_N = 2 \cot(\theta/2)$.  There are exactly~$2N$ such arcs corresponding to the~$N$
generators and their inverses.
In particular,
if~$\theta < \pi/N$ is too small, the fundamental region will have infinite volume, whereas if~$\theta > \pi/N$ is too big, then
the Dirichlet domain will only contain at most~$N$ cusps. Since $\H/\wGamma_N$ has~$N+1$ cusps and~$\wGamma_N$ has finite covolume, we must have~$c_N = 2 \cot(\pi/2N)$ and these geodesics intersecting at $\zeta_N^{k+1/2}$
for $k=0,\ldots, N-1$.
\end{proof}

 
 Now we consider the group associated to $\wF_N^N$.
 \begin{df}
 Let~$\wPhi_N$ denote the group~$\langle \wGamma_N, \wr_N \rangle=\langle \wr_N, \wt_N \rangle$ and let $\Phi_N$ denote the corresponding lattice in $\PSU(1,1)$.
 \end{df}
 
\begin{cor}\label{corPhiN}
The function~$\wF^N_N$ is invariant under~$\wPhi_N$ and $\wPhi_N$ is the largest subgroup of $\PSL_2(\R)$ with this property. A fundamental domain $\Omega'_N$ for~$\Phi_N$ in $D(0,1)$ is given by the hyperbolic quadrilateral with vertices~$0, \zeta_N^{-1/2}, 1, \zeta_N^{1/2}$.
Translated to~$\H$ this is bounded by
geodesics from~$i$ to~$\cot(\pi/2N)$ to~$i \infty$ to~$-\cot(\pi/2N)$ and back to~$i$.
\end{cor}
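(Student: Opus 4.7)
The plan is to dispatch the four assertions of the corollary in turn. The invariance of $\wF_N^N$ under $\wPhi_N$ is immediate from Lemma~\ref{FNbasic}: raising $\wF_N(\wr_N\tau) = \zeta_N \wF_N(\tau)$ to the $N$-th power gives $\wr_N$-invariance of $\wF_N^N$, while $\wGamma_N$-invariance is tautological, and together they show that $\wF_N^N$ is preserved by all of $\wPhi_N = \langle \wGamma_N, \wr_N\rangle$. The equality $\wPhi_N = \langle \wr_N, \wt_N\rangle$ then follows from Proposition~\ref{prop-GammaN}, which exhibits $\wGamma_N$ as free on the generators $\wr_N^k \wt_N \wr_N^{-k}$ for $k = 0, \ldots, N-1$; once $\wr_N$ is adjoined, only $\wt_N$ is needed among these.

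For the maximality in $\PSL_2(\R)$, I would give a short covering-space argument. If $g \in \PSL_2(\R)$ preserves $\wF_N^N$, the meromorphic ratio $\wF_N(g\tau)/\wF_N(\tau)$ takes values in the finite set $\mu_N$ off the discrete zero locus of $\wF_N$, hence is a constant $\zeta_N^k$ by connectedness of $\H$ minus a discrete set. Therefore $\wF_N(g\tau) = \wF_N(\wr_N^k \tau)$ identically on $\H$, so $\wr_N^{-k} g$ is a deck transformation of the universal covering $\wF_N : \H \to \C \setminus \mu_N$ and thus lies in $\wGamma_N$; this gives $g \in \wr_N^k\, \wGamma_N \subseteq \wPhi_N$.

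For the fundamental domain, the key observation is that $\wr_N$ has order $N$ in $\PSL_2(\R)$ and normalizes $\wGamma_N$, because conjugation by $\wr_N$ cyclically permutes the generating set $\{\wr_N^k \wt_N \wr_N^{-k}\}$. Hence $\wPhi_N / \wGamma_N \cong \Z/N\Z$ is generated by the class of $\wr_N$, and a fundamental domain for $\wPhi_N$ is obtained by intersecting the $\wGamma_N$-domain $\Omega_N$ of Proposition~\ref{prop-GammaN} with any fundamental sector for the $\wr_N$-rotation on $D(0,1)$. Since $\Omega_N$ is rotationally symmetric of order $N$, the natural choice is the $1/N$-sector containing the cusp $x = 1$, which is the hyperbolic quadrilateral with vertices $0,\; \zeta_N^{-1/2},\; 1,\; \zeta_N^{1/2}$, bounded by two radii emanating from $0$ and by the two boundary arcs of $\Omega_N$ meeting at $1$. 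Translating via $\tau = i(1+x)/(1-x)$ sends $0 \mapsto i$ and $1 \mapsto i\infty$, and a routine half-angle computation gives $e^{\pm i\pi/N} \mapsto \mp \cot(\pi/2N)$, producing the $\H$-quadrilateral with vertices $i, \cot(\pi/2N), i\infty, -\cot(\pi/2N)$ claimed in the statement. I foresee no real obstacle in this program; the only piece requiring a bit of care is the orientation and sign bookkeeping in the half-angle identity when transferring vertices from $D(0,1)$ to $\H$.
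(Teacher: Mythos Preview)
Your proof is correct and takes essentially the same approach as the paper, which simply states that ``the statements follow directly from Lemma~\ref{FNbasic} and Proposition~\ref{prop-GammaN}.'' You have unpacked this one-line proof into its natural constituent pieces: the invariance from Lemma~\ref{FNbasic}, the maximality via the constant-ratio argument reducing to deck transformations, and the fundamental domain by quotienting the $\Gamma_N$-domain $\Omega_N$ by the order-$N$ rotation $r_N$. Your half-angle computation $e^{i\theta} \mapsto -\cot(\theta/2)$ under $\tau = i(1+x)/(1-x)$ is correct and confirms the vertex correspondence.
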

\begin{proof}
The statements follow directly from Lemma~\ref{FNbasic} and Proposition~\ref{prop-GammaN}.
\end{proof}

An example of the fundamental domain in Corollary~\ref{corPhiN}
for~$N=3$ is given in Figure~\ref{domain}, which also includes translates
of the domain by elements in~$\Omega'_3$
by words in~$\{r_3,r^2_3,t_3,t^{-1}_3\}$ of length at most~$6$.
The shading reflects where the absolute value of~$|F^3_3 - 1|$ is small --- it
vanishes precisely at the cusps corresponding to~$z=1$.

\addtocounter{subsubsection}{1}
 \begin{figure}[!h]  
\begin{center}
  \includegraphics[width=50mm]{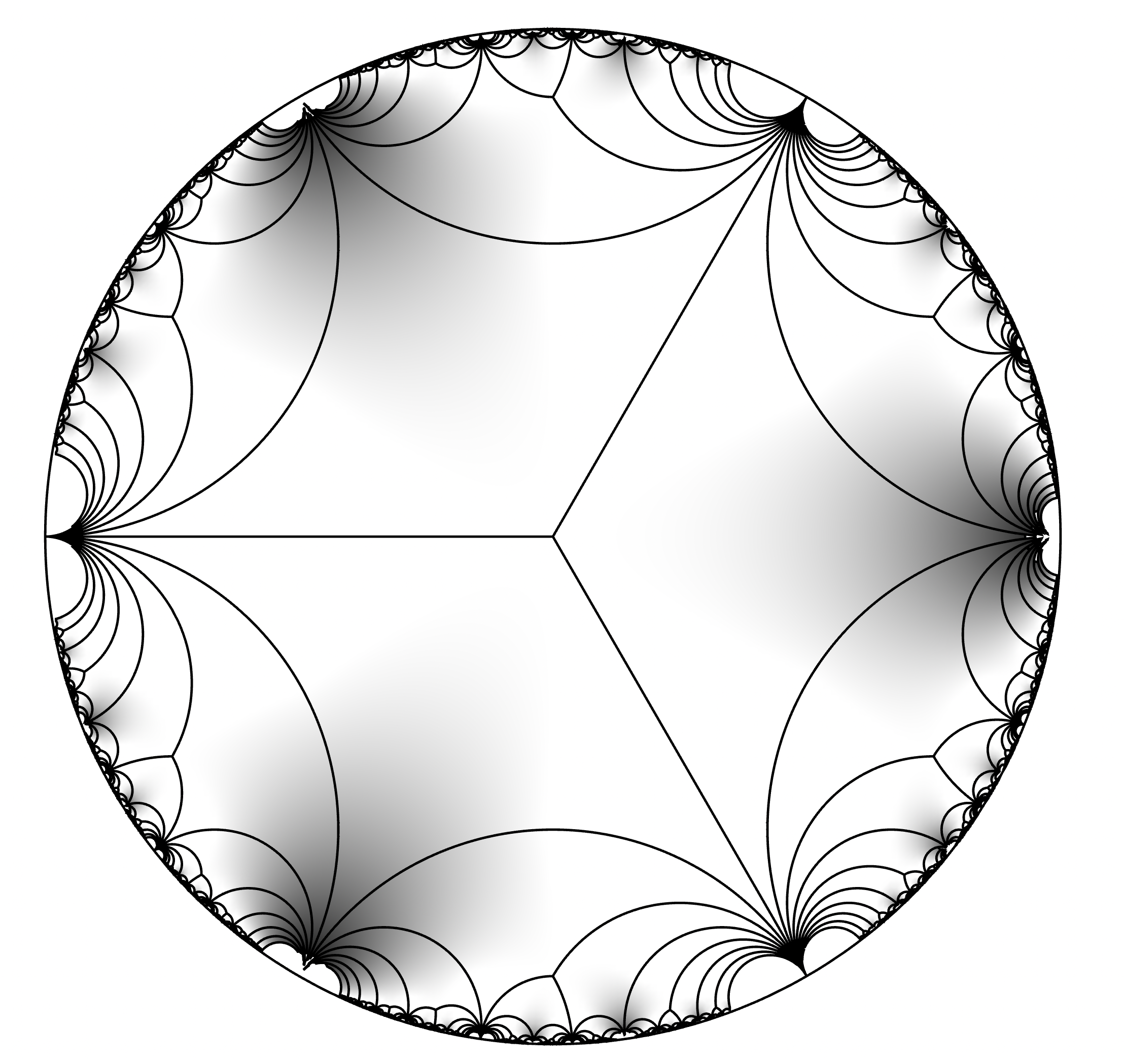}
\end{center}
\caption{A (partial) tiling of~$D(0,1)$ by a fundamental
domain for~$\Phi_3$, together with a density plot of~$|F^3_3 - 1|$ which vanishes at~$z=1$.}
   \label{domain}
\end{figure}


\begin{lemma}   \label{inversion symmetry}
Let $\ws\in \PSL_2(\R)$ be a rotation of order~$2N$ such that $\ws^2=\wr_N$.
Then
\numequation
\label{twist}
1 - \wF^N_N(\ws \cdot \tau) =  \frac{1}{1 - \wF^N_N(\tau)}.
\end{equation}
\end{lemma}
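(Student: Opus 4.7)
The plan is to recognize $\wF^N_N$ as the quotient map of a Galois covering $\H \to \C \setminus \{1\}$, so that the rotation $\ws$ descends to a Möbius involution of $\mathbb{P}^1$ which can then be identified explicitly through a short local calculation at $\tau = i$.

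First I will show that $G := \wF^N_N \colon \H \to \C \setminus \{1\}$ is Galois with deck group $\wPhi_N$. Invariance under $\wPhi_N$ is Corollary~\ref{corPhiN}; conversely, if $g \in \PSL_2(\R)$ satisfies $G \circ g = G$, then $\wF_N \circ g = \zeta\, \wF_N$ for some $\zeta \in \mu_N$ (the holomorphic function $\wF_N(g\tau)/\wF_N(\tau)$ being a constant $N$-th root of unity), and Lemma~\ref{FNbasic} then forces $g \in \wr_N^k \wGamma_N \subset \wPhi_N$. Hence $G$ descends to a biholomorphism $\H/\wPhi_N \xrightarrow{\sim} \C \setminus \{1\}$.

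I will then introduce the Möbius involution $M(z) := z/(z-1)$, which fixes $0$, swaps $1$ and $\infty$, and restricts to a biholomorphism $\C \setminus \{1\} \to \C \setminus \{1\}$ satisfying $M \circ M = \mathrm{id}$. Because $M \circ G$ is a second Galois covering $\H \to \C \setminus \{1\}$ of deck group $\wPhi_N$ (ramified to the same order at the $\wPhi_N$-orbit of $i$, since $M$ is unramified at $0$), it is isomorphic to $G$: there is $g \in \mathrm{Aut}(\H) = \PSL_2(\R)$ with $G(g\tau) = M(G(\tau))$, and because $M(0) = 0 = G(i)$ we may choose $g(i) = i$. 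Such a $g$ is a rotation around $i$, unique modulo the stabilizer $\langle \wr_N \rangle$ of $i$ in $\wPhi_N$.

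The decisive step will be to identify $g$ with $\ws$. Since $\wF_N$ has a simple zero at $i$, one has $G(\tau) = c(\tau-i)^N + O\!\left((\tau-i)^{N+1}\right)$ with $c \neq 0$ near $\tau = i$, while $M(z) = -z + O(z^2)$ near $z = 0$. Comparing leading coefficients in $G(g\tau) = M(G(\tau))$ forces $g'(i)^N = -1$, placing $g'(i)$ in $\mu_{2N} \setminus \mu_N$. As replacing $g$ by $\wr_N^k g$ multiplies $g'(i)$ by an $N$-th root of unity, I can normalize $g$ to realize any preassigned value in $\mu_{2N} \setminus \mu_N$ for $g'(i)$; in particular, to coincide with the given $\ws$, which is a rotation of order $2N$ with $\ws^2 = \wr_N$. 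The identity of the lemma will then follow from
\[
1 - \wF^N_N(\ws \tau) \;=\; 1 - \frac{G(\tau)}{G(\tau)-1} \;=\; \frac{1}{1 - G(\tau)} \;=\; \frac{1}{1 - \wF^N_N(\tau)}.
\]
The main point of care will be the bookkeeping with the $\phi(N)$-fold ambiguity in the admissible lifts $g$ and in the admissible choices of $\ws$, checking that each $\ws$ permitted by the hypotheses indeed arises as one of the valid lifts of $M$, so that the identity holds uniformly in the choice.
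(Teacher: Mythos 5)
Your proposal is correct, but it takes a genuinely different route from the paper's proof. The paper argues directly with automorphic functions: both sides of \eqref{twist} are uniformizers of $\H/\wPhi_N$ that vanish at the cusp $\cot(\pi/2N)$ and have a pole at the cusp $i\infty$ (this uses the cusp data of Corollary~\ref{corPhiN} together with the normalizations $\wF_N(i\infty)=1$, $\wF_N(\pm\cot(\pi/2N))=\infty$), so they agree up to a scalar, which is then fixed by evaluating at $\tau=i$. You instead transport the M\"obius involution $M(z)=z/(z-1)$, which satisfies $1-M(z)=1/(1-z)$, through the branched covering $G=\wF_N^N\colon\H\to\C\setminus\{1\}$: both $G$ and $M\circ G$ are the simply connected Galois cover branched only over $0$ with all indices $N$, hence differ by an automorphism $g$ of $\H$, which your $N$-jet computation at the elliptic point $i$ identifies with $\ws$ after adjusting by a power of $\wr_N$. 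What your route buys is independence from the cusp values (only $\wF_N(i)=0$, Lemma~\ref{FNbasic}, and the $\wPhi_N$-invariance from Corollary~\ref{corPhiN} are used), at the cost of invoking the uniqueness/lifting theory for branched (orbifold) coverings, which the paper avoids. When writing it up you should make three small points explicit: the descent of $G$ to a biholomorphism of $\H/\wPhi_N$ needs fibers equal to $\wPhi_N$-orbits, which follows from the same two-line computation you use for the deck group (if $G(\tau_1)=G(\tau_2)$ then $\wF_N(\tau_1)=\wF_N(\wr_N^k\tau_2)$, so $\tau_1\in\wGamma_N\wr_N^k\tau_2$); the given $\ws$ does fix $i$, since its fixed point is also fixed by $\ws^2=\wr_N$, whose unique fixed point in $\H$ is $i$, and having order $2N$ it satisfies $\ws'(i)^N=-1$; and finally a rotation about $i$ is determined by its derivative there, so matching $g'(i)=\ws'(i)$ (possible because the lifts fixing $i$ realize exactly the $N$ values with $N$-th power $-1$) indeed forces $g=\ws$, uniformly over every admissible choice of $\ws$. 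With those sentences added, the argument is complete.
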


\begin{proof}
Recall our normalization of $\wF_N$ that $\wF_N(i)=0$ and $\wF_N(i \infty)=1$. By Corollary~\ref{corPhiN}, $\tau=\pm \cot(\pi/2N)$ (in the same $\wPhi_N$-orbit) is the other cusp of $\wPhi_N$ and thus $\wF_N(\pm \cot(\pi/2N))=\infty$. Moreover,
both sides of~\eqref{twist} are uniformizers of~$\H/\wPhi_N$ which take the value~$0$ at the cusp $\cot(\pi/2N)$ and~$\infty$ at the cusp $i\infty$.
This specifies them uniquely up to~$x \mapsto \lambda x$ scalings. However, this last ambiguity is
removed by noting that both sides are $1$ at~$\tau = i$.
\end{proof}




\begin{remark}
The group~$\wPhi_N$ is contained with index two in the larger group~$\wwPhi_N = \langle \ws,\wt_N \rangle$.
The group~$\wwPhi_N$ has a fundamental domain consisting of the points~$0,1,\zeta_N^{1/2}$.
But this is none other than
a  hyperbolic triangle with angles~$\{\alpha,\beta,\gamma\} = \{\pi/N,0,0\}$, whose conformal mapping from $\H$ is given by Schwarz triangle functions (see, for instance, \cite[\SSS~404 on page~185]{Caratheodory}).
This suggests that~$\wF_N$ should directly be related to Schwarz triangle functions,
which leads to a direct description of the inverse functions~$\psi_N$ and~$\varphi_N$
in terms of hypergeometric functions in Lemma~\ref{Gdiff}.
\end{remark}

In order to study the behavior of $\wF_N$ near $x=1$, we use the explicit formula for its inverse $\psi_N$ given in Lemma~\ref{Gdiff}. The following lemma gives the asymptotic of the function $s_N$ used in formula of $\psi_N$.


 \begin{lemma}  \label{uniform} Fix a real constant~$M_0 > 0$. 
 For~$M\geq M_0$ and $|x| < e^{-M N}$, we have the uniform estimate:
 \numequation
\left| \frac{s_N(1 - x)}{\gamma_N} -(1 - x)^{1/N}  \frac{-\log x + 2 \gamma -2  \psi(1/2 + 1/2N)}{-\log x + 2 \gamma - 2 \psi(1/2 - 1/2N)}\right|
\ll \frac{|x|}{N},
\end{equation}
where $\psi$ is the digamma function as in Lemma~\ref{lem-asymp2F1},
and the implicit constant depends on~$M_0$ but not on~$N,M$.
\end{lemma}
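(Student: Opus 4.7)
The plan is to start from the explicit formula for $s_N$ given by Lemma~\ref{Gdiff} and expand the two hypergeometric factors $\pFq{2}{1}{a_\pm,a_\pm}{2a_\pm}{1-z}$ with $a_\pm = \tfrac12 \pm \tfrac{1}{2N}$ via Lemma~\ref{lem-asymp2F1}. Writing each expansion as a Gamma-function prefactor $A_N^{\pm} := \Gamma(2a_\pm)/\Gamma(a_\pm)^2$ times $\sum_{k\geq 0} c_k^\pm z^k T_k^\pm(z)$ with $c_k^\pm := (a_\pm)_k^2/k!^2$ and $T_k^\pm(z) := -\log z + 2(\psi(k+1) - \psi(k+a_\pm))$, a direct Gamma-function manipulation identifies $A_N^+/A_N^-$ with the constant $\gamma_N$ of Theorem~\ref{exactconformal}. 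After this cancellation, the claim reduces to estimating
$$
(1-z)^{1/N}\left(\frac{P(z)}{Q(z)} - \frac{T_0^+(z)}{T_0^-(z)}\right), \qquad P := \sum_{k\geq 0} c_k^+ z^k T_k^+,\ \ Q := \sum_{k\geq 0} c_k^- z^k T_k^-,
$$
since the leading $k=0$ ratio $T_0^+/T_0^-$ recovers the asserted main term (using $\psi(1)=-\gamma$).

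I would then write
$$
\frac{P(z)}{Q(z)} - \frac{T_0^+(z)}{T_0^-(z)} = \frac{P(z)\, T_0^-(z) - T_0^+(z)\, Q(z)}{Q(z)\, T_0^-(z)},
$$
and, setting $T_k^\pm = T_0^\pm + R_k^\pm$ with $R_k^\pm := 2\sum_{j=0}^{k-1}\bigl(\tfrac{1}{j+1}-\tfrac{1}{j+a_\pm}\bigr)$, expand the numerator as
$$
PT_0^- - T_0^+ Q = \sum_{k\geq 1}\frac{z^k}{k!^2}\Bigl\{T_0^+ T_0^-\bigl((a_+)_k^2-(a_-)_k^2\bigr) + (a_+)_k^2 R_k^+ T_0^- - (a_-)_k^2 R_k^- T_0^+\Bigr\},
$$
the $k=0$ terms cancelling automatically.

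The key estimates, each obtained by differentiating Pochhammer symbols in their parameters and using Stirling, are: \emph{(i)} $(a_+)_k^2 - (a_-)_k^2 = O(N^{-1})\cdot (\tfrac12)_k^2 \log(k{+}2)$; \emph{(ii)} $R_k^\pm = O(\log(k{+}2))$ with $R_k^+ - R_k^- = O(\log(k{+}2)/N)$; \emph{(iii)} $T_0^+ - T_0^- = 2(\psi(a_-)-\psi(a_+)) = O(1/N)$; \emph{(iv)} $(\tfrac12)_k^2/k!^2 = O(1/(k{+}1))$; and \emph{(v)} in the regime $|z| \leq e^{-MN}$ with $M \geq M_0$, one has $|T_0^\pm(z)| = |\log z|+O(1) \asymp MN$ and consequently $|Q(z)T_0^-(z)| \gg (MN)^2$. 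Substituting these bounds into the $k$-th summand of $PT_0^- - T_0^+ Q$ gives a term of size at most $|z|^k (MN)^2\cdot O(\log(k{+}2)/(N(k{+}1)))$, and division by $|QT_0^-| \gg (MN)^2$ followed by summation of the resulting (geometrically convergent, since $|z|\leq e^{-MN}$) series yields an $O(|z|/N)$ bound with constants depending only on $M_0$. The outer factor $|(1-z)^{1/N}| = 1 + O(|z|/N)$ does not disturb this.

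\emph{Main obstacle.} The essential subtlety is extracting the extra factor of $1/N$. On its own, each $k\geq 1$ tail term is of size only $O(|z|)$; the improvement to $O(|z|/N)$ requires simultaneously exploiting the proximity of the hypergeometric parameters $a_+$ and $a_-$ (which produces the $1/N$ in the Pochhammer-coefficient differences~(i)–(ii) and in the $T_0^\pm$ difference~(iii)) \emph{and} the $|\log z|^2 \asymp (MN)^2$ amplification in the denominator. Careful bookkeeping of this cancellation through the three terms in the expanded numerator is the only real work; the rest is a routine application of the hypergeometric reflection formula together with the Pochhammer–digamma identities.
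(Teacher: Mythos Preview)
Your approach is correct and follows the same strategy as the paper's: expand both hypergeometric factors via Lemma~\ref{lem-asymp2F1}, identify the Gamma-prefactor ratio $A_N^+/A_N^-$ with $\gamma_N$, and control the $k\geq 1$ tails. The paper is terser --- it only records that the individual tails $\sum_{k\geq 1} c_k^\pm z^k$ and $\sum_{k\geq 1} c_k^\pm \beta_k^\pm z^k$ are $O(|z|)$ uniformly in $N$, writes the resulting ratio as $(T_0^+ + O(z))/(T_0^- + O(z))$, and concludes ``from which the result follows (using that $|\log z|\gg N$).'' Your bookkeeping in items (i)--(iii), tracking the $1/N$ cancellations between the $a_+$- and $a_-$-series, spells out what the paper's final sentence leaves to the reader: after cross-multiplying, the $(\log z)^2$-coefficient in the numerator $PT_0^- - T_0^+Q$ is exactly $\sum_{k\geq 1}(c_k^+ - c_k^-)z^k$, which is \emph{not} damped by the $(MN)^2$ in the denominator and hence genuinely requires the Pochhammer difference estimate $c_k^+ - c_k^- = O(\,\cdot\,/N)$ of your item~(i) to reach the stated $O(|z|/N)$ rather than merely $O(|z|)$. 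Your identification of the ``main obstacle'' is thus accurate.
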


 \begin{proof}
By Lemma~\ref{lem-asymp2F1}, we have for~$a =1/2 \pm 1/2N$ and~$|x| < 1$ the following equality:
 \numequation\label{2F1temp}
   \displaystyle{
\pFq{2}{1}{a,a}{2a}{1-x}
=  \frac{\Gamma(2a)}{\Gamma(a)^2}
\sum_{k=0}^\infty \frac{(a)_k (a)_k}{k!^2} x^k
\left( - \log{x} + 2 (\psi(k+1) - \psi(k+a)) \right).
}
 \end{equation}
We first prove that the coefficients in this power series are uniformly bounded.
Since~$|a| < 1$, we have~$|(a)_k|/k! < 1$. Basic properties of the digamma function (cf.~\cite[6.3.5, 6.3.14]{AS}) show that $\psi(x)$ is negative and strictly increasing for $0<x<1$, and $|\psi(k)-\psi(k+a)|> |\psi(k+1) -\psi(k+1+a)|$. Hence, $|\psi(k) - \psi(k + a)|$ is maximized when~$k=1$ and~$a = 1/2 - 1/4$.
This immediately leads to  the uniform estimates
$$ \left| \sum_{k=1}^{\infty} \frac{(a)_k (a)_k}{k!^2} x^k \right|,
\quad \left| \sum_{k=1}^{\infty} \frac{(a)_k (a)_k}{k!^2} x^k  \left(2 (\psi(k+1) - \psi(k+a)) \right) \right|
\ll |x| < e^{-MN}$$
For~$|x| < e^{-MN}$, we also have~$\Re(\log{x}) \le - M N$, and so in particular~$|\log{x}| \ge M N$ regardless
of the branch of logarithm. Combined with Lemma~\ref{Gdiff} and \eqref{2F1temp}, this leads to the estimate
$$\frac{s_N(1-x)}{\gamma_N (1-x)^{1/N}} =  \frac{-\log{x} + 2 \psi(1) - 2\psi(1/2 + 1/2N) + O(x)}{-\log{x} + 2 \psi(1) - 2\psi(1/2 - 1/2N) + O(x)}$$
where the implicit constants are uniform in~$N$, from which the result follows (using that~$|\log{x}| \gg N$).
 \end{proof}


 \begin{lemma} \label{fnearone} Fix a pair of real positive numbers~$M_0 > 0$ and $\epsilon > 0$. Consider any $M\geq M_0$, and let $\wOm'_N\subset \H$ denote the fundamental domain for $\wPhi_N$ corresponding to~$\Omega'_N$ in Corollary~\ref{corPhiN}.
 If~$\tau \in \wOm'_N$ has
 $$\|\wF_N(\tau)^N - 1\| < e^{-MN},$$
  then 
 \numequation  \label{horo}
\Im(\tau) >  \frac{2 N^2 M}{\pi^2} (1 - \epsilon)
\end{equation}
once $N\gg_{\epsilon, M_0} 1$, where the implicit constant depends only on~$M_0$ and $\epsilon$.
\end{lemma}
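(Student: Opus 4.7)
The central observation is that the hypothesis $|\wF_N(\tau)^N - 1| < e^{-MN}$ forces $\tau$ to lie exponentially close to the cusp $i\infty$ of the fundamental domain $\wOm'_N$. Indeed, the descent of $\wF_N^N$ to a biholomorphism $\H/\wPhi_N \to \C \setminus \{1\}$ (via Corollary~\ref{corPhiN} and Lemma~\ref{inversion symmetry}) carries the three cusps $i\infty$, $\pm \cot(\pi/2N)$ of $\wPhi_N$ to $1$ and $\infty$ respectively, and the elliptic vertex $i$ to $0$. Hence the preimage in $\wOm'_N$ of a small ball around $1$ is a small neighborhood of $i\infty$, and moreover $w := \wF_N(\tau) \in \C \setminus \mu_N$ is automatically close to $1 \in \mu_N$ rather than to any other root of unity (since $1$ is the only root of unity in the closure of the sector $\wF_N(\wOm'_N)$). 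Setting $z := 1 - w^N$, we therefore have $|z| < e^{-MN}$ and may take $w = (1-z)^{1/N}$ in the principal branch.

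Next, I would pass to the disk coordinate $x = \psi_N(w) = \gamma_N^{-1}\, s_N(w^N)$ of Lemma~\ref{Gdiff}, related to $\tau$ by $\tau = i(1+x)/(1-x)$. Writing $L := -\log z$ (so $\mathrm{Re}(L) > MN$) and $P_\pm := 2\gamma - 2\psi(\tfrac{1}{2} \pm \tfrac{1}{2N})$, the key asymptotic Lemma~\ref{uniform} gives
\[
1 - x \;=\; (1-w) \;+\; \frac{w\,(P_- - P_+)}{L + P_-} \;+\; O\!\left(\frac{|z|}{N}\right).
\]
The Taylor expansion $(1-z)^{1/N} = 1 + O(|z|/N)$ shows that both $|1-w|$ and the explicit error are $O(e^{-MN}/N)$, hence exponentially small in $N$. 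On the other hand, the mean value theorem and $\psi'(1/2) = \pi^2/2$ give $P_- - P_+ = \psi'(1/2)/N + O(N^{-3}) = \pi^2/(2N) + O(N^{-3})$, so the middle term has magnitude $\asymp \pi^2/(N|L|)$, which is only polynomially small in $N$. The middle term therefore dominates, and uniformly in $M \ge M_0$ and $|z| < e^{-MN}$,
\[
1 - x \;=\; \frac{\pi^2}{N \, L}\bigl(1 + o(1)\bigr) \qquad (N \to \infty).
\]

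To conclude, write $1 - x = r\, e^{i\theta}$; a direct computation from $\tau = i(1+x)/(1-x)$ yields $\mathrm{im}(\tau) = (2\cos\theta - r)/r$. From the explicit formula above, $\theta = -\arg(L) + o(1)$; and since $L = -\log|z| - i\arg(z)$ has $\mathrm{Re}(L) > MN$ while $|\arg(z)| \le \pi$, we have $|\arg(L)| = O(1/MN)$ and therefore $\cos\theta = 1 - O((MN)^{-2})$. Combined with $|L| \ge \mathrm{Re}(L) > MN$ this gives
\[
\mathrm{im}(\tau) \;\ge\; \frac{2 \cos\theta \cdot N\, |L|}{\pi^2}\bigl(1 + o(1)\bigr) \;\ge\; \frac{2 M N^2}{\pi^2}\bigl(1 + o(1)\bigr),
\]
and for $N \gg_{\epsilon,M_0} 1$ the $o(1)$ is absorbed into $1-\epsilon$. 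The main obstacle — and the crux of the argument — is the dominant-term vs.\ error-term bookkeeping in Step 2: the ``error'' $O(|z|/N)$ from Lemma~\ref{uniform} is exponentially small in $N$, while the leading $\pi^2/(NL)$ is only polynomially so, and it is this exponential-versus-polynomial gap that makes the sharp constant $2/\pi^2$ in the conclusion emerge cleanly from the identity $\psi'(1/2)=\pi^2/2$ together with the asymptotic cusp width $c_N \sim 4N/\pi$.
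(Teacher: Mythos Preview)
Your approach is essentially the same as the paper's: both invert $\wF_N$ near the cusp $i\infty$ via Lemma~\ref{uniform}, the paper computing $\tau$ directly as $\frac{i\cot(\pi/2N)}{\pi}\bigl(2\gamma - \log Z - \psi(\tfrac12 - \tfrac1{2N}) - \psi(\tfrac12 + \tfrac1{2N})\bigr) + O(1)$ using the reflection formula $\psi(\tfrac12+\tfrac1{2N}) - \psi(\tfrac12-\tfrac1{2N}) = \pi\tan(\pi/2N)$, while you Taylor-expand around $\psi'(1/2)$ instead. One arithmetic slip: $P_- - P_+ = 2\bigl[\psi(\tfrac12+\tfrac1{2N}) - \psi(\tfrac12-\tfrac1{2N})\bigr] = \tfrac{2\psi'(1/2)}{N} + O(N^{-3}) = \tfrac{\pi^2}{N} + O(N^{-3})$, not $\tfrac{\pi^2}{2N}$; your subsequent formula $1-x = \tfrac{\pi^2}{NL}(1+o(1))$ is nonetheless correct, so the conclusion stands.
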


\begin{proof}
By Corollary~\ref{corPhiN}, $\wOm'_N$ is a fundamental domain of~$\wF^N_N$ and the only cusp where~$\wF_N^N = 1$
is at~$\tau=i \infty$.
So it suffices to consider~$\wF^N_N$ in a neighbourhood of the cusp~$i \infty$.

For~$N$ sufficiently large, the inequality~$ \|\wF_N(\tau)^N - 1\| < e^{-MN}$
implies that~$|\wF_N(\tau) - 1| < e^{-M(1 - \varepsilon_0) N}$
for some~$\varepsilon_0$ that tends to zero as~$N$ increases. 
Recall from the proof of Theorem~\ref{exactconformal} that $\psi_N(1)=1$ and hence $\wpsi_N(1)=i\infty$; then it suffices to bound the imaginary part of
$$
\tau = \wpsi_N(1 - x), \qquad \text{for } |x| < e^{-M(1-\varepsilon_0)N}.
$$
We may write this as
\numequation
\tau = i \cdot \frac{1 + \psi_N(1-x)}{1 - \psi_N(1-x)}
= i  \cdot \frac{\gamma_N + s_N((1-x)^N)} {\gamma_N - s_N((1-x)^N)}.
\end{equation}
Writing $1-X := (1- x)^N$, then the same estimate as above implies~$|X| < e^{-M(1 - 2\varepsilon_0)N}$
for sufficiently large~$N$. Thus we reduce the lemma to the estimate of
\numequation
\tau = i  \cdot \frac{\gamma_N + s_N(1-X)} {\gamma_N - s_N(1-X)}, \quad |X| < e^{-M(1- 2\varepsilon_0)N}.
\end{equation}
Now by Lemma~\ref{uniform} and \cite[6.3.7]{AS}, we have
$$\tau =  \frac{i \cot(\pi/2N)}{\pi} \left(2 \gamma - \log{X} - \psi(1/2 - 1/2N) - \psi(1/2 + 1/2N) \right) + O(1),$$
where the implicit constant only depends on $M_0$ and $\varepsilon_0$.
The imaginary part of this does not depend on the choice of branch of~$\log{X}$ and indeed only
depends on~$|X|$, and we deduce with this approximation that
$$
\Im(\tau) \ge
 \frac{ \cot(\pi/2N)}{\pi} \left(2 \gamma + NM (1-2 \varepsilon_0) - \psi(1/2 - 1/2N) - \psi(1/2 + 1/2N) \right) +O(1).
$$
If we choose $\varepsilon_0 := \epsilon/2$, then for $N \gg_{\epsilon, M_0} 1$ this lower bound clearly exceeds 
$$
\frac{2 N^2 M (1-\epsilon)}{\pi^2}
$$
as desired.
\end{proof}

The region~\eqref{horo} is a \emph{horoball} for the cusp $i\infty$ in the upper half plane model $\H$ of the hyperbolic plane. Recall that in the Poincar\'{e} disc model~$D(0,1)$, the horoballs are the euclidean discs inside~$D(0,1)$ which are tangent to the boundary circle. The following easy lemma describes how these horoballs transform under the hyperbolic isometry group. 

\begin{lemma} \label{translates} Let~$\HH_D$ denote the image in the Poincar\'e disc model~$D(0,1)$ of the horoball
$$
\{  \tau \in \H \, : \, \Im(\tau) \ge D \}
$$
in the upper half plane model $\H$. 
  For $\gamma\in \PSU(1,1)$ with image $\widetilde{\gamma}=\begin{pmatrix} a & b \\ c & d \end{pmatrix} \in \PSL_2(\R)$, 
the image~$\gamma \HH_D$ of $\HH_D$ under $\gamma$ in~$D(0,1)$ is the disc with diameter
\numequation
E(\gamma,D):=\frac{2}{1 + D(a^2 + c^2)}
\end{equation}
tangent to the boundary circle $\T$ at the point~$(a - i c)/(a + i c)$.   \qed
\end{lemma}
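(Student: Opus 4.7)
The plan is to transport the horoball $\HH_D$ through the Cayley identification $U(\tau) = (\tau - i)/(\tau + i)$ between $\H$ and $D(0,1)$, which conjugates $\widetilde{\gamma} \in \PSL_2(\R)$ acting on $\H$ into $\gamma \in \PSU(1,1)$ acting on $D(0,1)$. Under $U$, the disc horoball $\HH_D$ pulls back to the half-plane horoball $\{\tau \in \H : \im(\tau) \geq D\}$, so $\gamma \HH_D$ is the $U$-image of its $\widetilde{\gamma}$-translate in $\H$. The tangent-point assertion is then immediate: $\gamma(1) = U(\widetilde{\gamma}(\infty)) = U(a/c) = (a - ic)/(a + ic)$ when $c \neq 0$, and $\gamma(1) = 1$ when $c = 0$, matching the displayed formula in both cases.

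For the diameter, I would first locate $\widetilde{\gamma}(\{\im(\tau) \geq D\})$ inside $\H$. Assuming $c \neq 0$, the identity $\im(\widetilde{\gamma}(\tau)) = \im(\tau)/|c\tau + d|^2$ (using $\det \widetilde{\gamma} = 1$) combined with the boundary constraint $\im(\tau) = D$ gives a maximum imaginary part $1/(c^2 D)$, attained at $\tau = -d/c + iD$. Hence the image horoball in $\H$ is the Euclidean disc tangent to $\R$ at $a/c$ with Euclidean diameter $d_0 := 1/(c^2 D)$.

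The remaining task is a general Cayley dictionary for horoball diameters: for $p \in \R$ and $d_0 > 0$, the transform $U$ sends the $\H$-horoball at $p$ of Euclidean diameter $d_0$ to the $D(0,1)$-horoball at $U(p)$ of Euclidean diameter $E = 2d_0/(1 + p^2 + d_0)$. I would derive this by setting $q = U(p)$ and $q' = U(p + id_0)$, computing the telescoping identity $q - q' = 2d_0/((p+i)(p + i(d_0+1)))$, and using that $q'$ lies on the disc horoball at $q$ of diameter $E$, i.e.\ satisfies $|q' - (1 - E/2)q|^2 = (E/2)^2$ (the characterizing equation of the circle tangent to $\T$ at $q$ with that Euclidean diameter). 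Substituting $p = a/c$ and $d_0 = 1/(c^2 D)$ turns $1 + p^2 + d_0$ into $(1 + D(a^2 + c^2))/(c^2 D)$, from which $E(\gamma,D) = 2/(1 + D(a^2 + c^2))$ follows. The case $c = 0$ is handled directly: $\im(\widetilde{\gamma}(\tau)) = a^2 \im(\tau)$, so the image horoball stays at $\infty$ with new height $a^2 D$, and $U$ carries this to the disc horoball at $1$ of diameter $2/(1 + a^2 D)$, again agreeing with the formula.

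The main obstacle will be the algebra inside the Cayley dictionary: unwinding $|q' - (1 - E/2)q|^2 = (E/2)^2$ into the tidy form $E = 2d_0/(1 + p^2 + d_0)$ hinges on the factorization identity $(1 + p^2 + d_0)^2 + p^2 d_0^2 = (1 + p^2)(p^2 + (d_0 + 1)^2)$, after which the $|q - q'|^2$ and $1 - \mathrm{Re}(\bar q q')$ terms combine and all denominators cancel. Everything else in the argument is routine M\"obius bookkeeping.
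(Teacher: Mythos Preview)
Your proposal is correct. The paper itself gives no proof of this lemma---it is stated with a terminal \qed\ and treated as an elementary computation---so there is nothing to compare against; your route via the Cayley transform $U(\tau)=(\tau-i)/(\tau+i)$, locating $\widetilde{\gamma}\{\im\tau\ge D\}$ in $\H$ as the horoball at $a/c$ of Euclidean diameter $1/(c^2D)$, and then pushing through the horoball-diameter formula $E=2d_0/(1+p^2+d_0)$ (with your factorization identity handling the algebra), is exactly the kind of direct verification the paper is implicitly deferring to the reader, and your case split $c=0$ versus $c\neq 0$ is handled cleanly.
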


We  close this section by using Lemma~\ref{fnearone} to derive
 a coarse yet fairly uniform upper bound on $\sup_{|z|=r} \log |F_N(z)|$. Although  not best-possible, it  is enough as an input for the logarithmic error term in the Nevanlinna theory estimate in \SSSS~\ref{nevanlinna}.

\begin{lemma}\label{trivialsupFN}
For $N \gg 1$ and $r\in (0,1)$, we have
\[\sup_{|z|=r} \log |F_N(z)|\ll \frac{N}{1-r},\]
where the implicit constants are both absolute.
\end{lemma}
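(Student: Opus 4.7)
The plan is to combine three tools established earlier in this section: the $\wPhi_N$-invariance of $|\wF_N|$ from Lemma~\ref{FNbasic}, the inversion symmetry of Lemma~\ref{inversion symmetry}, and the cuspidal decay estimate Lemma~\ref{fnearone}.

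I will first transfer to the upper half plane via $\tau = i(1+x)/(1-x)$, so that $|F_N(x)| = |\wF_N(\tau)|$ and $\im\tau \leq (1+r)/(1-r) \leq 2/(1-r)$. Since $|\wF_N^N|$ is invariant under $\wPhi_N$ by Lemma~\ref{FNbasic}, I may replace $\tau$ by its $\wPhi_N$-translate $\tau_0 \in \wOm'_N$ with no change in $|\wF_N^N|$. On $\wOm'_N$ the function $\wF_N^N$ has poles only at the cusps $\pm\cot(\pi/2N)$ (it takes the finite value $1$ at $i\infty$ and is bounded by an absolute constant on any compact subset of $\wOm'_N$ away from all three cusps), so only the case in which $\tau_0$ is close to one of $\pm\cot(\pi/2N)$ is nontrivial.

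In that case I will apply the inversion symmetry. The order-$2N$ rotation $\ws$ satisfies $\ws(-\cot(\pi/2N))=i\infty$ (and $\ws^{-1}$ sends $\cot(\pi/2N)$ to $i\infty$), so, applying the appropriate $\ws^{\pm 1}$ to $\tau_0$ and then a horizontal shift by a power of $\wt_N$ (which preserves imaginary part), I obtain a point $\tau^* \in \wOm'_N$ close to $i\infty$ with $\im\tau^* = \im(\ws^{\pm 1}\tau_0)$ and, by Lemma~\ref{inversion symmetry} combined with $\wPhi_N$-invariance,
\[
|1 - \wF_N^N(\tau^*)| \;=\; 1/|1 - \wF_N^N(\tau_0)|.
\]
The contrapositive of Lemma~\ref{fnearone} (in the form implied by the $q$-expansion $\wF_N^N - 1 \sim A\, e^{2\pi i \tau/c_1}$ at the cusp $i\infty$ with cusp width $c_1 = 2\cot(\pi/2N)$) gives the lower bound $|1 - \wF_N^N(\tau^*)| \gg \exp(-2\pi\, \im\tau^*/c_1)$, whence
\[
\log|F_N(x)| \;=\; \tfrac{1}{N}\log|\wF_N^N(\tau_0)| \;\leq\; \frac{\pi\tan(\pi/2N)}{N}\,\im\tau^* \;+\; O(1).
\]

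What is left is to bound $Y := \im\tau^* = \im(\ws^{\pm 1}\tau_0)$. The M\"obius formula $\im(\ws\tau_0) = \csc^2(\pi/2N)\,\im\tau_0/|\tau_0 + \cot(\pi/2N)|^2$ (and its analogue for $\ws^{-1}$), combined with $\im\tau \leq 2/(1-r)$ and a short case analysis in the two regimes $1-r \leq \pi/N$ and $1-r \geq \pi/N$, should yield $Y \ll 1/(1-r) + N^2(1-r)$. Together with $\pi\tan(\pi/2N)/N = O(N^{-2})$ for $N \gg 1$, this gives
\[
\log|F_N(x)| \;\ll\; \frac{1}{N^2(1-r)} \;+\; (1-r) \;\ll\; \frac{N}{1-r},
\]
as required. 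The main technical obstacle will be this estimate on $Y$: tracking how the $\wPhi_N$-action (in particular the elliptic rotation $\wr_N$ around $i$) can distort the imaginary part when bringing $\tau$ into $\wOm'_N$ requires some bookkeeping, but no conceptually new ideas beyond the three lemmas cited.
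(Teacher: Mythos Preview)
Your high-level strategy matches the paper's: combine $\wPhi_N$-invariance, the inversion symmetry of Lemma~\ref{inversion symmetry}, and the cuspidal estimate Lemma~\ref{fnearone}. The difference is in how the group action is controlled, and it is precisely here that your proposal has a genuine gap.

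You correctly note that the whole problem reduces to bounding $Y = \im\tau^*$, and you describe this as ``bookkeeping.'' It is not. Your M\"obius formula for $\im(\ws\tau_0)$ is expressed in terms of $\tau_0$, but your only input bound $\im\tau \leq 2/(1-r)$ is on the \emph{original} $\tau$, and the passage $\tau \mapsto \tau_0$ is by an arbitrary element $\gamma \in \wPhi_N$. In the regime $1-r \gtrsim 1/N$ your idea does work: a bare rotation $\wr_N^k$ (which preserves $|x|$ in the disc model) already lands in $\Omega'_N$, since the geodesic sides from $1$ to $\zeta_N^{\pm 1/2}$ stay within Euclidean distance $\sec(\pi/2N)-\tan(\pi/2N)\approx 1-\pi/(2N)$ of the origin, and then $|x^*|=r$ gives $Y\leq 2/(1-r)$ directly. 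But in the regime $1-r \ll 1/N$ --- which is exactly the one needed for the application with $r = 1 - B/N^3$ --- a rotation no longer suffices, $\gamma$ must involve parabolic factors, and you have no control over $\im\tau_0$ or $|\tau_0 + \cot(\pi/2N)|$. Your claimed bound $Y \ll 1/(1-r) + N^2(1-r)$ cannot be extracted from the two-case split you describe.

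The paper fills this gap by working dually: instead of pushing $x$ into the fundamental domain, it shows that the entire sublevel set $\{|F_N^N-1|<e^{-MN}\}$ is contained in the $\Phi_N$-orbit of a horoball $\mathcal{H}_D$ with $D\asymp N^2M$ (Lemma~\ref{fnearone}), computes via Lemma~\ref{translates} that each translate $\gamma\mathcal{H}_D$ has Euclidean diameter $\leq 2/(Dc^2)$, and invokes Shimizu's lemma to get $|c|\geq 1/(2\cot(\pi/2N))\gg 1/N$ uniformly over all non-parabolic $\gamma$. This is what forces every such horoball to have diameter $\ll (1-r)/N < 1-r$ when $M=N/(1-r)$, hence to miss the circle $|x|=r$. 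Shimizu's lemma is the missing ingredient in your argument; with it one gets $Y\ll N^2/(1-r)$ (not your claimed form), which still feeds into your final inequality to give the result.
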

\begin{proof}
Set $S(M,N):=\{z\in D(0,1) \, : \, |F_N^N(z)-1|< e^{-MN}\}$ and $M := \frac{N}{1-r}$. Since $M\geq 2$, we may take $M_0=2$ and $\epsilon=1/2$ in Lemma \ref{fnearone} and conclude that for $N\gg 1$ (with absolute implicit constant here), we have
$$
S(M,N)   \subset  \bigcup_{\gamma \in \Phi_N}\gamma \HH_D,  \quad \textrm{where }  D=\frac{N^2M}{\pi^2}.
$$

By Shimizu's Lemma (see, for example, \cite[Theorem~3.1]{ElstrodtGrunewaldMennicke}) and Proposition~\ref{prop-GammaN}, we have ~$2(|a|+|c|) \cot(\pi/2N) \ge 1$ for all $\widetilde{\gamma} =\left( \begin{matrix} a & b \\ c & d \end{matrix} \right) \in \widetilde{\Gamma}_N$. Thus by Lemma \ref{translates}, \[E(\gamma,D)\leq 2D^{-1}(a^2+c^2) \ll  N^{-2}N^{-1}(1-r) N^2=\frac{1-r}{N},\]
where the implicit constant is absolute. Thus we have $E(\gamma,D)\leq 1-r$ for all $\gamma \in \Phi_N$ once $N^{-1}$ times the implicit constant is less than $1$.

By Lemma \ref{inversion symmetry}, the set $\{z \in D(0,1) \, : \, |F_N(z)|> e^{M}+1\}$ is contained in $S(M,N)$, which is contained in $D(0,1) \setminus \overline{D(0,r)}$ by the above argument for $N\gg 1$. Thus we conclude that
\[\sup_{|z|=r} \log |F_N(z)| \leq \log(e^M +1) \ll M =\frac{N}{1-r}.\qedhere\]
\end{proof}

\begin{remark}  \label{Renggli note}
A more refined bound is proved in Kraus--Roth~\cite[Theorems~1.2 and 1.10]{KrausRoth}. On the other hand,
one can push our method further and prove, with rather more work but uniformly in $N \in \NwithoutzeroA$ and $M \in [1,\infty)$, that the supremum region $|F_N| < e^M$ is simply connected of conformal radius $1 - O(M^{-2}N^{-3})$
from the origin; this is a sharp estimate.  But taking for $\varphi$ in Corollary~\ref{holonomy} the pullback of $F_N$ by the Riemann map of some such region $|F_N| < e^M$, and
ignoring thus the fine savings from the integrated bound~(\ref{dimensionbound}) as opposed to the supremum,  would only lead to an $O(N^4)$ holonomy rank bound in place of our requisite logarithmically inflated bound $O(N^3\log{N})$. In the next section we will see how to make the full use of the integrated holonomy bound, and use Nevanlinna's value distribution theory to supply our final piece of the proof of the unbounded denominators conjecture. 
\end{remark}

\section{Nevanlinna theory and uniform mean growth near the boundary}  \label{nevanlinna}

For our application of Corollary~\ref{holonomy}, 
we prove in this section the following uniform growth bound. Throughout this section, we assume as
we may that $N \geq 2$. Then the analytic map $F_N :D(0,1) \to \mathbb{P}^1$ omits the $N+1 \geq 3$ values $\mu_N \cup \{\infty\}$. In such a situation, we seek to exploit  whatever growth constraints are imposed on the map by Nevanlinna's value distribution theory. A theorem of Tsuji~\cite[Theorem~11]{Tsuji} gives the general asymptotic
$$
\int_{|z| = r} \log^+{|F|} \, \mu_{\mathrm{Haar}}  =  \frac{1}{N-1} \log{\frac{1}{1-r}} + O_{a_1, \ldots, a_N}(1),
$$ 
 for any universal covering map $F : D(0,1) \to \C \setminus \{a_1, \ldots, a_N\}$ based at $F(0) = 0$ (see also the discussion in Nevanlinna~\cite[page~272]{Nevanlinna}), however this is only asymptotically
in $r \to 1^-$ for given punctures $\{a_i\}$ whereas we need a uniformity in both $r$ and $N$.
It is at the point~\eqref{choice of p} exploiting the small\footnote{Precisely, the relevant point of the specific puncture set~$\{a_1,\ldots, a_N\} \cup \{\infty\} = \mu_N \cup \{\infty\}$ in~$\mathbb{P}^1 \setminus D(0,1)$ is that the degree-$N$ polynomial~$\prod_{i=1}^{N} (x-a_i) \in \C[x]$ has~$N^{O(1)}$ coefficients.} coefficients of~$\prod_{\zeta \in \mu_N} (x-\zeta) = x^N -1$
that 
our argument below makes a critical use of the special feature of the target set $\mu_N \cup \{\infty\}$ of omitted values.

\begin{thm}  \label{growth term}
For each of the choices
$$
p(x) \in \big\{ x^N,  x^N / (x^N - 1), \, 1/(x^N-1) \big\}, 
$$
we have uniformly in $N \in \NwithoutzeroA$ and $r \in (0,1)$ the mean growth bound
\numequation \label{mean proximity bound}
\int_{|z| = r} \log^+|p \circ F_N| \, \mu_{\mathrm{Haar}} \ll \log{\frac{N}{1-r}}, 
\end{equation}
with some (effectively computable) absolute  implicit constant. 
\end{thm}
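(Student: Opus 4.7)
The plan is to reduce the three cases to a single uniform estimate, namely $m(r, 1/(F_N^N - 1)) \ll \log\frac{N}{1-r}$ uniformly in $N \geq 2$ and $r \in (0,1)$. Since $F_N$ omits $\mu_N$, the function $F_N^N - 1$ is holomorphic and non-vanishing on $D(0,1)$ with value $-1$ at the origin, so Jensen's formula yields $\int_\T \log|F_N^N - 1|\, d\mu_{\mathrm{Haar}} = 0$ and hence $m(r, F_N^N - 1) = m(r, 1/(F_N^N - 1))$; the pointwise inequalities $|F_N^N| \leq |F_N^N - 1| + 1$ and $|F_N^N/(F_N^N - 1)| = |1 + 1/(F_N^N - 1)|$ then handle the other two choices of $p$ up to an $O(1)$ additive error.

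For the core estimate I appeal to Nevanlinna's lemma on the logarithmic derivative for the unit disc~\cite{Nevanlinna}, which bounds $m(r, f'/f)$ by an absolute multiple of $\log^+ T(\rho, f) + \log\frac{1}{\rho - r} + 1$ for $0 < r < \rho < 1$. Lemma~\ref{trivialsupFN} supplies the crude bound $T(r, F_N) \ll N/(1-r)$ and hence $T(r, F_N^N - 1) \ll N^2/(1-r)$; taking $\rho = (1+r)/2$ then yields $m(r, F_N'/F_N) \ll \log\frac{N}{1-r}$ and $m(r, h'/h) \ll \log\frac{N}{1-r}$, where $h := F_N^N - 1$. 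The arithmetic of $\mu_N$ enters through the partial fractions identity $\frac{N x^{N-1}}{x^N - 1} = \sum_{i=1}^N \frac{1}{x - \zeta_i}$, which on pullback by $F_N$ and multiplication by $F_N'$ reads $h'/h = \Phi \cdot F_N'$ with $\Phi := \sum_i 1/(F_N - \zeta_i)$. Since the covering $F_N$ is unramified, $F_N'$ is non-vanishing, and Jensen together with $|F_N'(0)| = \gamma_N = 16^{1/N}(1 + O(N^{-3}))$ from Theorem~\ref{exactconformal} gives $m(r, 1/F_N') = m(r, F_N') + O(1/N) \leq m(r, F_N'/F_N) + m(r, F_N) + O(1)$; combining,
$$m(r, \Phi) \leq m(r, h'/h) + m(r, 1/F_N') \ll \log\tfrac{N}{1-r} + m(r, F_N).$$

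The closing move, which I expect to be the main technical crux, is a Nevanlinna second-main-theorem-type sum-versus-max comparison that bounds $m(r, F_N)$ back in terms of $m(r, \Phi)$. The pairwise separation $\min_{i\neq j}|\zeta_i - \zeta_j| = 2\sin(\pi/N) \geq 2/N$ ensures that at each $z$ at most one $|F_N(z) - \zeta_i|$ lies below $1/N$, with each of the others contributing at most $\log N$ to $\sum_i \log^+(1/|F_N - \zeta_i|)$; combined with the trivial bound $\max_i 1/|F_N - \zeta_i| \leq |\Phi|$, this gives the pointwise inequality $\sum_i \log^+(1/|F_N - \zeta_i|) \leq \log^+|\Phi| + O(N \log N)$. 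Since each $F_N - \zeta_i$ is holomorphic and non-vanishing with $|F_N(0) - \zeta_i| = 1$, Jensen gives $m(r, 1/(F_N - \zeta_i)) = m(r, F_N - \zeta_i) \geq m(r, F_N) - \log 2$, and summing yields $N\, m(r, F_N) \leq m(r, \Phi) + O(N \log N)$. Substituted back into the estimate on $m(r, \Phi)$ and solved, both $m(r, F_N)$ and $m(r, \Phi)$ come out $\ll \log\frac{N}{1-r}$. An analogous bookkeeping, exploiting the identity $\prod_{j \neq i_0}|\zeta_{i_0} - \zeta_j| = N$, so that $|F_N^N - 1| \asymp N |F_N - \zeta_{i_0}|$ when $F_N$ is near some $\zeta_{i_0}$ (and $|F_N^N-1|$ is bounded below otherwise), furnishes the pointwise inequality $\log^+(1/|F_N^N - 1|) \leq \log^+|\Phi| + O(\log N)$, whose integrated form $m(r, 1/(F_N^N - 1)) \leq m(r, \Phi) + O(\log N)$ completes the proof. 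The delicate feature throughout is that the special arithmetic of $\mu_N$ keeps each sum-versus-max comparison losing only $\log N$ worth of $N$-dependence, rather than the polynomial loss that a generic configuration of $N$ omitted values would incur.
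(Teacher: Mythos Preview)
Your approach is correct and tracks the paper's closely: both are a second-main-theorem-style bootstrap, feeding the crude supremum bound of Lemma~\ref{trivialsupFN} into the lemma on the logarithmic derivative for the unit $f = 1 - F_N^N$, and both hinge on the partial-fractions identity~\eqref{choice of p}. The paper's organization is a bit cleaner: with $p(x) = x^N/(x^N-1)$, the identity $p\circ F_N = \dfrac{F_N}{NF_N'}\cdot \dfrac{f'}{f}$ together with the trivial lower bound $m(r, p\circ F_N) \geq N\,m(r,F_N) - O(1)$ yields $(N-1)\,T(r,F_N) \ll \log\frac{N}{1-r}$ directly, with no sum-versus-max step at all. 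Your route through $\Phi$ and the classical sum-versus-max comparison also succeeds, but two points need care. First, the ``trivial bound $\max_i 1/|F_N-\zeta_i| \leq |\Phi|$'' is false as stated (at $F_N=0$ one has $\Phi=0$ while each $|F_N-\zeta_i|=1$); what is true and suffices is $\log^+\max_i \tfrac{1}{|F_N-\zeta_i|} \leq \log^+|\Phi| + O(\log N)$, obtained directly from the closed form $|\Phi| = N|F_N|^{N-1}/|F_N^N-1|$. Second, invoking the logarithmic derivative lemma for $F_N'/F_N$ requires the usual normalization since $F_N(0)=0$; the paper sidesteps this by routing through the unit $1-F_N$ instead. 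After these fixes your $O(N\log N)$ pointwise loss divides away harmlessly by $N-1$, and your final pointwise bound $\log^+(1/|F_N^N-1|) \leq \log^+|\Phi| + O(\log N)$ in fact holds with $O(1)$, again from $|\Phi| = N|F_N|^{N-1}/|F_N^N-1|$.
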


\subsection{Preliminaries in Nevanlinna theory} This section collects some standard material from Nevanlinna's value distribution theory. The reader should feel encouraged  to skip this part on a first reading, and refer back as necessary.

\subsubsection{The Nevanlinna characteristic} \label{integratedgrowthbound} 
The left-hand side of \eqref{mean proximity bound} is known as the \emph{mean proximity function at $\infty$} 
$$
m(r, f) = m(r,f ; \infty) := \int_{|z| = r} \log^+{|f|} \, \mu_{\mathrm{Haar}} \in [0,\infty).
$$
 It is complemented by the \emph{counting function}
$$
N(r,f)= N(r,f; \infty) := \sum_{\rho \, : \, 0 < |\rho| < r} \mathrm{ord}_{\rho}^-(f) \log{\frac{r}{|\rho|}} 
+ \mathrm{ord}_0^-(f) \, \log{r},
$$
where, in general for a meromorphic mapping $f : D(0,1) \to \mathbf{P}^1$, we denote by $\mathrm{ord}_{\rho}^-(f) :=  \mathrm{ord}^+(1/f) = \max(0, \mathrm{ord}(1/f))$  the pole order (if $\rho$ is a pole, and $0$ if $f$ is holomorphic at $\rho$).  

The \emph{Nevanlinna characteristic function}
$$
T(r,f)  := m(r,f) + N(r,f)
$$
is the well-behaved quantity functorially. 

\begin{lemma} \label{counting is positive}
For every meromorphic function $f : D(0,1) \to \mathbf{P}^1$ regular at $0$ (that is: with $f(0) \neq \infty$), and for every $r \in (0,1)$, we have
$$
N(r,f)  \geq 0, 
$$
with equality if and only if $f$ is holomorphic (has no poles) throughout the disc $D(0,r)$. 

The Nevanlinna characteristic function $T(r,f)$
satisfies for every $a \in \C$ the relation
\numequation \label{first main theorem}
|T(r,f)  - T(r, 1 / (f-a)) - \log{|c(f,a)|} |  \leq \log^+{|a|} + \log{2}, 
\end{equation}
where
$$
c(f,a) := \lim_{z \to 0} (f(z)-a) z^{-\mathrm{ord}_0(f-a)}.
$$
\end{lemma}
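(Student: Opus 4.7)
The plan is to verify both assertions from elementary principles, the first being a direct reading of the definition and the second being a standard form of Nevanlinna's First Main Theorem, which in turn reduces to Jensen's formula. I would expect to write this up rather briefly.

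For the first claim, since $f$ is assumed regular at $0$ we have $\mathrm{ord}_0^-(f) = 0$, so the term $\mathrm{ord}_0^-(f) \log r$ contributes nothing to $N(r,f)$. What remains is the sum
\begin{equation*}
N(r,f) \;=\; \sum_{0 < |\rho| < r} \mathrm{ord}_\rho^-(f) \, \log\frac{r}{|\rho|},
\end{equation*}
whose summands are products of a nonnegative integer and a strictly positive real number, hence nonnegative. Equality to zero forces $\mathrm{ord}_\rho^-(f) = 0$ for every $\rho \in D(0,r) \setminus \{0\}$, which combined with the hypothesis that $f$ is regular at $0$ is exactly the statement that $f$ has no poles on $D(0,r)$.

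For the second claim, my first step would be to record the classical Jensen formula in the form: for any meromorphic $g$ on a neighborhood of $\overline{D(0,r)}$ with $g \not\equiv 0$,
\begin{equation*}
\int_{|z| = r} \log|g| \, \mu_{\mathrm{Haar}} \;=\; \log|c(g,0)| \,+\, N(r, 1/g) \,-\, N(r, g).
\end{equation*}
Splitting $\log|g| = \log^+|g| - \log^+|1/g|$ and rearranging, the left-hand side is $m(r,g) - m(r,1/g)$, which gives the fundamental symmetry of the Nevanlinna characteristic,
\begin{equation*}
T(r, g) \;-\; T(r, 1/g) \;=\; \log|c(g,0)|.
\end{equation*}
I would then apply this identity to $g := f - a$, noting that poles of $f-a$ coincide with those of $f$ (with the same multiplicities), so that $N(r,f-a) = N(r,f)$.

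It remains to compare $T(r,f)$ with $T(r, f-a)$. Their counting parts coincide by the previous observation, so the difference is entirely in the proximity function, and there the elementary pointwise bound $\log^+|w-a| \leq \log^+|w| + \log^+|a| + \log 2$ (valid since $|w-a| \leq 2\max(|w|, |a|) \leq 2|w| \cdot \max(1,|a|)$ when $|w| \geq 1$, with the complementary case being trivial) integrates over $|z| = r$ to give $|m(r,f) - m(r,f-a)| \leq \log^+|a| + \log 2$. Combining, we obtain
\begin{equation*}
\bigl| T(r,f) - T(r, 1/(f-a)) - \log|c(f,a)| \bigr| \;=\; | T(r,f) - T(r, f-a)| \;\leq\; \log^+|a| + \log 2,
\end{equation*}
which is the desired inequality. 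I do not expect any real obstacle in this proof: the only care-point is the normalization in Jensen's formula, which is exactly why the factor $c(f,a)$ is built into the statement, absorbing the cases $f(0) = \infty$ or $f(0) = a$.
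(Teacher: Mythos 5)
Your proposal is correct and follows essentially the same route as the paper: Jensen's (Poisson--Jensen) formula recast as $T(r,g)-T(r,1/g)=\log|c(g,0)|$, applied to $g=f-a$, combined with the pointwise bound $\big|\log^+|f-a|-\log^+|f|\big|\leq\log^+|a|+\log 2$; the paper merely states these two ingredients and cites Hayman/Bombieri--Gubler for the details you spell out. The nonnegativity of $N(r,f)$ is, as you say, immediate from the definition.
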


\begin{proof} 
This is Rolf Nevanlinna's \emph{first main theorem}, and is proved formally and straightforwardly from the Poisson--Jensen formula (see, for instance, \cite[Proposition~13.2.6]{BombieriGubler}), which we may rewrite as
\numequation \label{Jensen}
T(r,f)-T(r,1/f)=\log |c(f,0)|,
\end{equation}
 and the triangle inequality relation
\numequation \label{trivial}
\big| \log^+{|f-a|} - \log^+{|f|} \big| \leq \log^+{|a|} + \log{2}.
\end{equation}
See Hayman~\cite[Theorem~1.2]{Hayman} or Bombieri--Gubler~\cite[Theorem~13.2.10]{BombieriGubler} for the details. 
We note that $c(f,a) = f(0) - a$ when $a \neq f(0)$.
\end{proof}

\subsubsection{The lemma on the logarithmic derivative}

The  {\it lemma on the logarithmic derivative}---a strong explicit form of which is cited in~\eqref{log derivative lemma} below---is the centerpiece of Rolf Nevanlinna's original analytic proof of his 
 \emph{second main theorem} of value distribution theory. The logarithmic error feature of this sharp upper bound on the proximity function of a logarithmic derivative enables us to derive Theorem~\ref{growth term} from the relatively crude supremum growth bound in Lemma~\ref{trivialsupFN}. 
 
 The  reader willing to take~\eqref{log derivative lemma} for granted may at this point proceed directly to \SSSS~\ref{proof resumed}. Nevertheless, since the proof simplifies considerably in the case that we need of a \emph{functional unit} (a nowhere vanishing holomorphic function), we include our own self-contained treatment of a basic explicit case of the lemma on the logarithmic derivative.

\begin{lemma}  \label{log derivative self-contained}
Let $g : \overline{D(0,R)} \to \C^{\times}$ be a nowhere vanishing holomorphic function on some open neighborhood of the closed disc $|z| \leq R$. 
Assume that $g(0) = 1$. Then, for all $0 < r < R$,
\numequation \label{log derivatives are small}
m\Big(r , \frac{g'}{g}\Big) <  \log^+{\Big\{ \frac{m(R,g)}{r} \frac{R}{R-r} \Big\}}
+ \log{2} + 1/e.
\end{equation}
\end{lemma}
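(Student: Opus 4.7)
The plan is to use the Schwarz integral representation of $\log g$ together with Jensen's inequality, exploiting the functional-unit hypothesis to control the logarithmic derivative by a mean of $\log|g|$ on the boundary circle. Since $g(0) = 1$ and $g$ is nowhere vanishing on a neighborhood of $\overline{D(0,R)}$, I take the single-valued holomorphic branch $h := \log g$ with $h(0) = 0$; its real part $u := \log|g|$ is harmonic on this neighborhood. Jensen's formula~\eqref{Jensen} applied to this functional unit gives $\int_0^{2\pi} u(Re^{i\theta}) \, d\theta = 0$, and the canonical decomposition $u = u^+ - u^-$ turns this into the $L^1$ identity
$$
\int_0^{2\pi} |u(Re^{i\theta})| \, d\theta \;=\; 4\pi \, m(R, g).
$$

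Next I differentiate the Schwarz representation
$$
h(z) \;=\; \frac{1}{2\pi} \int_0^{2\pi} u(Re^{i\theta}) \, \frac{Re^{i\theta} + z}{Re^{i\theta} - z} \, d\theta
$$
to obtain, for $|z| < R$,
$$
\frac{g'(z)}{g(z)} \;=\; \frac{1}{\pi} \int_0^{2\pi} u(Re^{i\theta}) \, \frac{Re^{i\theta}}{(Re^{i\theta} - z)^2} \, d\theta.
$$
Integrating $|g'/g|$ over the circle $|z| = r$ and applying Fubini together with the classical evaluation $\int_0^{2\pi} d\phi / |Re^{i\theta} - re^{i\phi}|^2 = 2\pi / (R^2 - r^2)$ yields the averaged bound
$$
\frac{1}{2\pi} \int_0^{2\pi} \Bigl| \frac{g'(re^{i\phi})}{g(re^{i\phi})} \Bigr| \, d\phi \;\leq\; \frac{4R \, m(R,g)}{R^2 - r^2}.
$$
Jensen's inequality applied to the concave function $\log(1 + \cdot)$ then converts this into
$$
m(r, g'/g) \;\leq\; \log\!\Bigl(1 + \frac{4R \, m(R,g)}{R^2 - r^2}\Bigr).
$$

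The final step converts this into the stated form via the algebraic identity
$$
\frac{4R}{R^2 - r^2} \;=\; \frac{4r}{R + r} \cdot \frac{R}{r(R - r)},
$$
which isolates $4r/(R + r) \in (0,2)$ as a bounded correction. Using $\log(1 + x) \leq \log^+ x + \log 2$ (sharp at $x = 1$, supplying the $\log 2$ summand) together with the elementary inequality $\log t \leq t/e$ applied to this correction factor (supplying the $1/e$ summand via $\log^+(4r/(R+r)) \leq 1/e$ in the appropriate regime) delivers the bound. For $r$ close to $R$, where the correction factor approaches $2$ and the simple absorption is lossy, I supplement the mean bound with the sharper pointwise estimate $|g'(z)/g(z)| \leq 4R \, m(R,g)/(R - r)^2$ obtained from $|Re^{i\theta} - z| \geq R - r$, for which $m(r, g'/g) \leq \log^+(4R\, m(R,g)/(R-r)^2)$ holds trivially.

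The main obstacle is calibrating the two complementary bounds so that the composite additive constant does not exceed $\log 2 + 1/e$ uniformly for $r/R \in (0,1)$. This amounts to a routine but careful case split between the mean estimate and the pointwise estimate, with the break point determined by when $4r/(R+r)$ crosses the threshold $e^{1/e}$.
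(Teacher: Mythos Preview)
Your derivation up through the integrated bound
\[
\frac{1}{2\pi}\int_0^{2\pi} \Bigl|\frac{g'(re^{i\phi})}{g(re^{i\phi})}\Bigr|\,d\phi \;\leq\; \frac{4R\,m(R,g)}{R^2-r^2}
\]
is correct and matches the paper exactly (Schwarz representation of $\log g$, differentiation under the integral, Fubini with the explicit kernel integral).

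The gap is in the final step. Your Jensen inequality with the concave function $\log(1+\cdot)$ gives $m(r,g'/g)\leq\log(1+C)$ with $C=\frac{4R\,m(R,g)}{R^2-r^2}=A\cdot B$, $A=\frac{4r}{R+r}\in(0,2)$, $B=\frac{R\,m(R,g)}{r(R-r)}$. But this is too lossy to reach the constant $\log 2+1/e$: take $r\to R$ (so $A\to 2$) and $B=1$; then $\log(1+C)\to\log 3\approx 1.099$, which exceeds $\log^+B+\log 2+1/e=\log 2+1/e\approx 1.061$. So no ``routine case split'' can rescue the constant from this estimate. Moreover, your proposed supplement is in the wrong direction: the pointwise bound $|g'/g|\leq 4R\,m(R,g)/(R-r)^2$ is \emph{weaker} than the mean bound, since $(R-r)^2<(R-r)(R+r)=R^2-r^2$; invoking it for $r$ near $R$ only worsens matters.

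The paper instead applies Jensen (for $\log$, concave on $[1,\infty)$) on the sublevel set $E=\{|z|=r:|g'/g|>1\}$ of Haar measure $t\in(0,1]$. This yields
\[
m\Bigl(r,\frac{g'}{g}\Bigr)\;\leq\; t\,\log^+\!\Bigl(\frac{1}{t}\int_E\Bigl|\frac{g'}{g}\Bigr|\Bigr)\;\leq\;\log^+ C+t\log\frac{1}{t}\;\leq\;\log^+ C+\frac{1}{e},
\]
cleanly producing the $1/e$ from $\sup_{t\in(0,1]}t\log(1/t)$. Then $C<2B$ (from $R^2-r^2>2r(R-r)$) supplies the $\log 2$. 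Replace your $\log(1+\cdot)$ step by this restricted Jensen and the argument goes through.
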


\begin{proof}
Our functional unit assumption means that the function  $\log{g(z)}$ has a single valued holomorphic branch on a neighborhood of the closed disc $|z| \leq R$ with
$\log{g(0)}  = 0$. Its real part is the harmonic function $\log{|g(z)|}$. Poisson's formula 
on the harmonic extension of a continuous function from the boundary to the interior of a disc reads
\numequation \label{Poisson}
\log{|g(z)|} = \int_{|w| = R} \log{|g(w)|} \cdot \mathfrak{R} \Big( \frac{w+z}{w-z} \Big) \,  \mu_{\mathrm{Haar}}(w),
\end{equation}
where $k(z,w) := \mathfrak{R} \Big( \frac{w+z}{w-z} \Big)$ is the {\it Poisson kernel}.  This formula in fact upgrades to
\numequation \label{Poisson extended}
\log{g(z)} = \int_{|w| = R} \log{|g(w)|} \cdot \frac{w+z}{w-z} \,  \mu_{\mathrm{Haar}}(w),
\end{equation}
because both sides are holomorphic in $z$, have identical real parts, and evaluate to zero at $z = 0$.

Differentiation in the integrand of (\ref{Poisson extended})  gives
a reproducing kernel for our logarithmic derivative as well: 
\numequation \label{reproducing log derivative}
\frac{g'(z)}{g(z)} = \int_{|w| = R} \frac{2w}{ ( w - z )^2 } \log{|g(w)|} \,\mu_{\mathrm{Haar}}(w), \qquad  z \in D(0,R).
\end{equation}
for the logarithmic derivative in the interior of the disc $|z| \leq R$ in terms of boundary values on the circle $|z| = R$.
We have the elementary calculation
\numequation \label{kernel calc}
\int_{|z| = r} |w-z|^{-2} \, \mu_{\mathrm{Haar}}(z) = \frac{1}{R^2-r^2} \quad \textrm{ for } |w| = R > r,
\end{equation}
and thus the $|z| = r$ integral of (\ref{reproducing log derivative}) with the triangle inequality and interchanging the orders
of the integrations and using $|\log{|g|}| = \log^+{|g|} + \log^-{|g|} = \log^+{|g|} + \log^+{|1/g|}$ yields
\begin{equation*}
\begin{split}
\int_{|z| = r} \Big| \frac{g'(z)}{g(z)} \Big| \, \mu_{\mathrm{Haar}}  
\leq 2R  \int_{|z| = r} \int_{|w| = R} |w-z|^{-2} \, \big| \log{|g(w)|} \big| \,\mu_{\mathrm{Haar}}(w) \,  \mu_{\mathrm{Haar}}(z)
\\ 
= 2R \int_{|w| = R} \Big( \int_{|z| = r} |w-z|^{-2} \, \mu_{\mathrm{Haar}}(z) \Big) \, \big| \log{|g(w)|} \big| \,\mu_{\mathrm{Haar}}(w)
\\
=\frac{2R}{R^2-r^2}   \int_{|w| = R}  \big| \log{|g(w)|} \big| \,\mu_{\mathrm{Haar}}(w)
\\
= \frac{2R}{R^2-r^2} \Big( m(R, g) + m(R,1/g) \Big) = \frac{4R \, m(R,g)}{R^2  -r^2}, 
\end{split}
\end{equation*}
on using on the final line the harmonicity property again which implies
$$
\int_{|w| = R} \log{|g|} \, \mu_{\mathrm{Haar}}(w) = \log{|g(0)|} = 0.
$$

The final piece of the proof borrows from~\cite[section~4]{BenbourenaneKorhonen}.
Let
$$
E := \Big\{ z \, : \, |z| = r, \, |g'(z)/g(z)| > 1 \Big\},
$$
a measurable subset of the circle $|z| = r$.
Since the function $\log^+{|x|}$ is concave on $x \in [1,\infty)$ where it coincides with $\log{|x|}$, Jensen's inequality gives
\begin{equation*}
\begin{split}
\int_{|z| = r} \log^+{\Big| \frac{g'}{g} \Big|} \, \mu_{\mathrm{Haar}}  \leq \mu_{\mathrm{Haar}}(E) \log^+ \Big(
\frac{1}{\mu_{\mathrm{Haar}}(E)} \int_{E} \Big| \frac{g'(z)}{g(z)} \Big| \, \mu_{\mathrm{Haar}}(z) \Big) \\
\leq  \mu_{\mathrm{Haar}}(E) \log^+ \Big(
\frac{1}{\mu_{\mathrm{Haar}}(E)} \int_{|z|=r} \Big| \frac{g'(z)}{g(z)} \Big| \, \mu_{\mathrm{Haar}}(z) \Big) \\
\leq  \mu_{\mathrm{Haar}}(E) \log^+ \Big(
 \int_{|z|=r} \Big| \frac{g'(z)}{g(z)} \Big| \, \mu_{\mathrm{Haar}}(z) \Big) +  \mu_{\mathrm{Haar}}(E)\log(1/\mu_{\mathrm{Haar}}(E))\\
\leq \log^+ \int_{|z| = r}  \Big| \frac{g'(z)}{g(z)} \Big| \, \mu_{\mathrm{Haar}}(z)   
+ \sup_{t \in (0,1]} \big\{ t\log{(1/t)} \big\} \\
\leq  \log^+{ \Big\{ \frac{4R \, m(R,g)}{R^2  -r^2} \Big\} } + \frac{1}{e} 
\leq     \log^+{\Big\{ \frac{m(R,g)}{r} \frac{R}{R-r} \Big\}} +  \log{2} + \frac{1}{e}, 
\end{split}
\end{equation*}
using $R^2 - r^2 = (R+r)(R-r) > 2r(R-r)$ on the final line.
\end{proof}

\begin{remark}\label{log-deri-history}
The case of arbitrary meromorphic functions $g : \overline{D(0,R)} \to \mathbf{P}^1$ is handled 
similarly by a differentiation in the  general Poisson--Jensen formula, but with rather more work to estimate
the finite sum over the zeros and poles of $g$. See for instance~\cite[\SSS~IX.3.1, page~244, (3.2)]{Nevanlinna} or~\cite[Lemma~2.3 on page~36]{Hayman} for similar bounds.
By using a technique due to Kolokolnikov for
handling the sum over the zeros and poles, Goldberg and Grinshtein~\cite{GoldbergGrinshtein} 
obtained the general bound
\numequation \label{log derivative lemma}
m\Big(r , \frac{g'}{g}\Big) <  \log^+{\Big\{ \frac{T(R,g)}{r} \frac{R}{R-r} \Big\}}
+ 5.8501,  \quad \textrm{for } g(0) = 1, 
\end{equation}
and proved that it is essentially best-possible in form apart for the value of the free numerical constant $5.8501$ (that has since been somewhat
further reduced in the literature, see Benbourenane--Korhonen~\cite{BenbourenaneKorhonen}).  The paper of  Hinkkanen~\cite{Hinkkanen} and the books of Cherry--Ye~\cite{CherryYe} and Ru~\cite{MinRu}
 discuss the implications to the structure of the error term in Nevanlinna second main theorem,
mirroring Osgood and Vojta's dictionary to Diophantine approximation and comparing to Lang's conjecture modeled on Khinchin's theorem.
\end{remark}

\subsection{Proof of Theorem~\ref{growth term}}   \label{proof resumed}
For $f : D(0,1) \to \C$ holomorphic, the polar divisor is empty, and so $N(r,f) = 0$ and $m(r,f) = T(r,f)$. Since by definition
$F_N^N - 1$ is a unit in the ring of holomorphic functions on $D(0,1)$,  
 our requisite bound (\ref{mean proximity bound}) rewrites in Nevanlinna  notation into
\numequation  \label{Nevanlinna rewriting}
T(r, p \circ F_N) \ll \log{\frac{N}{1-r}}, \quad \textrm{for each of } p(x) \in \big\{ x^N / (x^N - 1), \, 1/(x^N-1), \, x^N \big\}.
\end{equation}
\subsubsection{Equivalence of bounds for different $p(x)$}
By Lemma~\ref{counting is positive}, the fact $x^N / (x^N - 1)=1+1/(x^N-1)$, and~\eqref{trivial}, the three cases for $p(x)$ are equivalent to one another. Here we give the explicit estimate in one direction, which will be used later: 
\numequation
\begin{split}
T(r, p \circ F_N)  =  m\Big(r, 1 + \frac{1}{F_N^N-1}\Big)  
\geq m\Big(r, \frac{1}{F_N^N-1}\Big) -\log{2} \\ 
 =  T\Big(r, \frac{1}{F_N^N-1}\Big) -\log{2} = T(r, F_N^N-1) -\log{2} \\
\label{bootstrap}
\geq T(r, F_N^N) - 2\log{2} = N \, T(r, F_N) - \log{4},
\end{split}
\end{equation}
where we use $F_N(0) = 0$ and $F_N^N-1$ is a unit in the ring of holomorphic functions on $D(0,1)$.
In the rest of this subsection, we will prove Theorem~\ref{growth term} in the form $T(r, F_N^N) \ll \log{\frac{N}{1-r}}$ 
but pivoting around the choice
\numequation \label{choice of p}
p(x) := \frac{x^N}{x^N-1} = \frac{x}{N} \sum_{\zeta \in \mu_N} \frac{1}{x-\zeta}.
\end{equation}

\subsubsection{Reduction to a logarithmic derivative}

By either the chain rule or the partial fractions decomposition, we see that the logarithmic derivative $f'/f$ of the nowhere vanishing holomorphic function 
\numequation \label{intermediary} 
f := 1-F_N^N \quad  : \quad D(0,1) \to \C^{\times}
\end{equation}
 is related to $p \circ F_N =  F_N^N / (F_N^N-1)$ by
\numequation \label{log derivative happy accident}
 p \circ F_N =   \frac{F_N}{NF_N'} \frac{f'}{f}.
\end{equation}

The idea then is that the piece $f'/f$ in the decomposition~\eqref{log derivative happy accident} is small on average over circles by the lemma on the logarithmic derivative (Corollary~\ref{cor_f} below), while---again by the lemma on the logarithmic derivative, in Corollary~\ref{cor_F} below---the characteristic
functions of $p \circ F_N$ and $F_N/F_N'$ are equal respectively to $N T(r,F_N)$ and $T(r,F_N)$ up to a small error. 

\subsubsection{Two corollaries of the lemma on the logarithmic derivative}
\begin{cor}\label{cor_f}
For $f= 1-F_N^N$, we have 
\numequation\label{log derivative is the key}
m\Big( r, \frac{f'}{f} \Big) \ll  \sup_{|z| = (1+r)/2} \log^+{\log{|F_N|}} + \log{\frac{N}{1-r} }.
\end{equation}
\end{cor}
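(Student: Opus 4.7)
The plan is a direct and fairly mechanical application of Lemma \ref{log derivative self-contained} (the lemma on the logarithmic derivative) to the holomorphic functional unit $f = 1 - F_N^N$ on the unit disc. The key preliminary observations are that $f$ is nowhere vanishing on $D(0,1)$ because $F_N : D(0,1) \to \C \setminus \mu_N$ avoids the $N$-th roots of unity, and that $f(0) = 1$ because $F_N(0) = 0$; both hypotheses $g(0)=1$ and $g \in \mathcal{O}^{\times}$ of Lemma \ref{log derivative self-contained} are thus in place.

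With the choice $R := (1+r)/2$, we have $R - r = (1-r)/2$ and $R/(R-r) = (1+r)/(1-r)$, so Lemma \ref{log derivative self-contained} yields
\[
m(r, f'/f) \ll \log^+\!\bigl(m(R, f)/(1-r)\bigr) + O(1),
\]
after absorbing the harmless factor $R/r$ (bounded for $r$ bounded below; for small $r$ it is in any case swallowed by the eventual $\log^+(N/(1-r)) \geq \log^+ N$ term on the right).

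Next I would bound $m(R, f)$ in terms of $F_N$: the elementary pointwise inequalities $\log^+|1-z| \leq \log^+|z| + \log 2$ and $\log^+|z^N| = N\log^+|z|$ give $m(R, f) \leq N\, m(R, F_N) + \log 2 \leq N \sup_{|z|=R}\log^+|F_N| + \log 2$. Substituting this into the previous display and using the submultiplicativity $\log^+(ab) \leq \log^+ a + \log^+ b$ to separate the factor $N/(1-r)$ from the supremum, I obtain
\[
m(r, f'/f) \ll \log^+\!\bigl(\sup_{|z|=R}\log^+|F_N|\bigr) + \log^+\!\bigl(N/(1-r)\bigr) + O(1).
\]
The elementary identity $\log^+\log^+ x = \log^+\log x$ on $(0,\infty)$ (case analysis on $x \leq 1$, $1 < x < e$, $x \geq e$), combined with the monotonicity of $\log^+$, which allows pulling it past the supremum, converts the first term into $\sup_{|z|=R}\log^+\log|F_N|$ and delivers the bound.

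No substantive obstacle is anticipated: the corollary is in essence a bookkeeping consequence of Lemma \ref{log derivative self-contained} and of standard manipulations with the $\log^+$ function. The only mildly delicate points are confirming that $f$ is genuinely a functional unit on the full open unit disc (immediate from the covering property of $F_N$) and the aforementioned identity $\log^+\log^+ = \log^+\log$.
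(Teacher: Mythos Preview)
Your approach is essentially identical to the paper's: apply Lemma~\ref{log derivative self-contained} with $R = (1+r)/2$, unwind $m(R, 1-F_N^N)$ via $\log^+|1-z| \leq \log^+|z| + \log 2$ and $m(R, F_N^N) = N\, m(R, F_N)$, and finally pass to the supremum using $\log^+\log^+ = \log^+\log$.

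One small correction: your claim that for small $r$ the factor $R/r$ is ``swallowed by $\log^+ N$'' is not right as stated (take $r = e^{-N^2}$). The clean fix is to notice that the proof of Lemma~\ref{log derivative self-contained} actually yields the sharper bound $\log^+\{4\,m(R,g)/(R-r)\}$ if one uses $R^2 - r^2 > R(R-r)$ in place of $R^2 - r^2 > 2r(R-r)$ in its final line, which removes the $1/r$ factor entirely; alternatively one can invoke the monotonicity of $r \mapsto m(r, f'/f) = T(r, f'/f)$, as the paper does, to reduce to $r \geq 1/2$.
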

\begin{proof}
By applying Lemma~\ref{log derivative self-contained} to $f$ and the outer radius choice
$R := 1 - (1-r)/2 = (1+r)/2$,
and using (cf.~\cite[Corollary 13.2.14]{BombieriGubler}) that $m(r,f'/f) = T(r,f'/f)$ is a monotone increasing function of $r$, we find the  mean growth bound
\begin{equation*}
\begin{split}
m\Big( r, \frac{f'}{f} \Big) \ll \log^+{T\Big(\frac{1+r}{2}, f\Big)} + \log{\frac{e}{1-r} } \\
=  \log^+{m\Big(\frac{1+r}{2}, 1 - F_N^N \Big)} + \log{\frac{e}{1-r} } \\
\ll \log^+{m\Big(\frac{1+r}{2}, F_N^N \Big)} +  \log{\frac{e}{1-r} } \\
\ll   \log^+{m\Big(\frac{1+r}{2}, F_N\Big)} + \log{\frac{N}{1-r} }  \\
\ll  \sup_{|z| = (1+r)/2} \log^+{\log{|F_N|}} + \log{\frac{N}{1-r} },
\end{split}
\end{equation*}
where in the last step we have estimated a mean proximity function trivially 
by a supremum function. 
\end{proof}

\begin{cor}   \label{cor_F}
We have 
\numequation \label{second piece}
m\Big( r, \frac{F_N}{F_N'} \Big) \leq  T(r,F_N) + O \Big(   \log^+{\frac{N}{1-r} }  + \sup_{|z| = (1+r)/2}
\log^+{\log{|F_N|}} \Big).
\end{equation}
\end{cor}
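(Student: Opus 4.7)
The plan is to convert $m(r, F_N/F_N')$ into a Nevanlinna characteristic, flip to $F_N'/F_N$ via the First Main Theorem, and handle the resulting counting and proximity pieces separately. Since $F_N : D(0,1) \to \C \setminus \mu_N$ is a universal cover, it is a local biholomorphism; in particular $F_N'$ vanishes nowhere in $D(0,1)$, so $F_N/F_N'$ is holomorphic on $D(0,1)$ and therefore $m(r, F_N/F_N') = T(r, F_N/F_N')$. The expansion $F_N(z) = F_N'(0) z + O(z^2)$ gives $F_N/F_N' = z + O(z^2)$, so $c(F_N/F_N',0) = 1$, and the First Main Theorem~\eqref{first main theorem} with $a = 0$ yields $T(r, F_N/F_N') = T(r, F_N'/F_N) + O(1)$.

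Next I would decompose $T(r, F_N'/F_N) = m(r, F_N'/F_N) + N(r, F_N'/F_N)$. The poles of $F_N'/F_N$ are simple, located precisely at the zeros of $F_N$ (which are themselves all simple, since $F_N'$ never vanishes), so $N(r, F_N'/F_N) = N(r, 1/F_N) \le T(r, 1/F_N)$. Jensen's identity~\eqref{Jensen} applied with $c(F_N, 0) = F_N'(0)$ gives $T(r, 1/F_N) = T(r, F_N) - \log|F_N'(0)| \le T(r, F_N)$, the last inequality because $|F_N'(0)| = \gamma_N > 1$ by Theorem~\ref{exactconformal}.

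For the proximity piece, I would normalize by $\hat g(z) := F_N(z)/(F_N'(0)\, z)$, which is holomorphic on $D(0,1)$ with $\hat g(0) = 1$, and satisfies $F_N'/F_N = \hat g'/\hat g + 1/z$; hence $m(r, F_N'/F_N) \le m(r, \hat g'/\hat g) + \log^+{(1/r)} + \log 2$. The Goldberg--Grinshtein form of the lemma on the logarithmic derivative~\eqref{log derivative lemma}, applied to $\hat g$ with outer radius $R := (1+r)/2$, gives
$$m(r, \hat g'/\hat g) \le \log^+\Bigl\{ \frac{T(R, \hat g)}{r} \cdot \frac{R}{R - r} \Bigr\} + O(1).$$
Since $\hat g$ is holomorphic, $T(R, \hat g) = m(R, \hat g) \le \sup_{|z| = R} \log^+{|F_N|} + O(1)$ (the factor $1/(F_N'(0) z)$ is absorbed because $|F_N'(0)| > 1$ and we may assume $r \ge 1/2$). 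Substituting $R - r = (1-r)/2$ and using the elementary $\log^+{(AB)} \le \log^+{A} + \log^+{B}$ yields
$$m(r, \hat g'/\hat g) \ll \sup_{|z| = (1+r)/2} \log^+{\log{|F_N|}} + \log^+{\frac{1}{1-r}} + O(1).$$

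Chaining these three ingredients produces the required bound, since $\log^+{(1/(1-r))} \le \log^+{(N/(1-r))}$ absorbs the residual $O(1)$ for $N \ge 1$. The only real obstacle is the normalization $\hat g(0) = 1$, which is what makes~\eqref{log derivative lemma} available without picking up an extra $\log^+{(1/|\hat g(0)|)}$ correction; everything else is formal bookkeeping with the First Main Theorem, Jensen's identity, and the sub-additivity of $\log^+$ on products.
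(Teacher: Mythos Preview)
Your proof is correct, but it takes a different route from the paper at the key step of bounding $m(r,F_N'/F_N)$. You write $F_N'/F_N = \hat g'/\hat g + 1/z$ with $\hat g(z) = F_N(z)/(F_N'(0)z)$ and invoke the general Goldberg--Grinshtein form~\eqref{log derivative lemma}. Note that $\hat g$ is \emph{not} a functional unit: since $0 \in \C\setminus\mu_N$ is in the image of the universal cover $F_N$, the fiber $F_N^{-1}(0)$ is infinite and $\hat g$ vanishes at every nonzero point of it. So you genuinely need the full meromorphic version of the logarithmic derivative lemma, which the paper only cites in Remark~\ref{log-deri-history} without proof.

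The paper instead writes $F_N'/F_N = \dfrac{(1-F_N)'}{1-F_N}\cdot\dfrac{1-F_N}{F_N}$ and applies the self-contained Lemma~\ref{log derivative self-contained} to $g = 1-F_N$, which \emph{is} a functional unit on $D(0,1)$ precisely because $1 \in \mu_N$ is an omitted value of $F_N$. The factor $(1-F_N)/F_N$ contributes $m(r,1/F_N) + O(1)$, which combines with your $N(r,1/F_N)$ to give exactly $T(r,1/F_N) \le T(r,F_N)$. So the paper's argument stays entirely within the elementary functional-unit case, at the cost of one extra triangle-inequality splitting; your argument is structurally cleaner but leans on the deeper cited estimate. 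Both yield the same final bound.
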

The idea of the proof is to combine Lemma~\ref{log derivative self-contained} applied to the functional unit $1-F_N$ and the standard chain of implications based on Jensen's formula in the reduction of the second main theorem to the lemma on the logarithmic derivative (see, for example, \cite[pages 33--34]{Hayman}).
\begin{proof}
By~\eqref{Jensen} for the function $F_N'/F_N$, and the fact that $F_N$
is holomorphic on the disc $D(0,1)$ with $F_N(0)=0$ and $F_N'(0) \neq 0$, we have:
\numequation
\begin{split}
m\Big( r, \frac{F_N}{F_N'} \Big) = m\Big( r, \frac{F_N'}{F_N} \Big) + N\Big( r, \frac{F_N'}{F_N} \Big)
- N\Big( r, \frac{F_N}{F_N'} \Big) - \log{c(F_N'/F_N,0)} \\
= m\Big( r, \frac{F_N'}{F_N} \Big) + N\Big( r, 1/F_N \Big)
- N\Big( r, F_N \Big) -  N\Big( r, 1/F_N' \Big) + N\Big( r, F_N' \Big) \\
\label{Nram}
= m\Big( r, \frac{F_N'}{F_N} \Big) + N\Big( r, 1/F_N \Big) 
 -  N\Big( r, 1/F_N' \Big) 
= m\Big( r, \frac{F_N'}{F_N} \Big) + N\Big( r, 1/F_N \Big).
\end{split}
\end{equation}
Here for the last equality we recall that $F_N : D(0,1) \to \C \setminus \mu_N$ is an \'etale analytic mapping, 
hence the derivative $F_N'$ is nowhere vanishing. 

 We continue to estimate with the triangle inequality (for the second and third lines) and then \eqref{Jensen}, noting that $|F'_N(0)|>1$ (for the inequality in the fourth line):
\begin{equation*}
\begin{split}
 m\Big( r, \frac{F_N}{F_N'} \Big)  = m\Big( r, \frac{F_N'}{F_N} \Big) + N\Big( r, 1/F_N \Big) \\ \nonumber \leq 
m\Big( r, \frac{F_N'}{1-F_N} \Big) + m\Big( r, \frac{1-F_N}{F_N} \Big) + N\Big( r, 1/F_N \Big)\\
\leq m\Big( r, \frac{F_N'}{1-F_N} \Big) + \log 2 + m\Big( r, \frac{1}{F_N} \Big) + N\Big( r, 1/F_N \Big)\\
=m\Big( r, \frac{(1-F_N)'}{1-F_N} \Big) + T(r, 1/F_N) + \log 2
\leq m\Big( r, \frac{(1-F_N)'}{1-F_N} \Big) + T(r, F_N) + \log 2 \\
\leq  T(r,F_N) + O \Big(   \log^+{\frac{N}{1-r} }  + \sup_{|z| = (1+r)/2}
\log^+{\log{|F_N|}} \Big),
\end{split}
\end{equation*}
upon again using Lemma~\ref{log derivative self-contained} with $R := (1-r)/2$ but now for the  functional unit
$g = 1-F_N$, and a similar argument as in the proof of Corollary~\ref{cor_f}.
\end{proof}

\subsubsection{Completing the proof from the crude supremum bound  in Lemma~\ref{trivialsupFN}}
At this point the key identity~(\ref{log derivative happy accident}) allows us to combine the estimates~(\ref{log derivative is the key})
and~(\ref{second piece}), arriving  at the uniform bound
\numequation
\begin{split}
T(r, p \circ F_N) = m(r, p \circ F_N) \leq m\Big(r, \frac{f'}{f} \Big) 
+ m\Big(r, \frac{F_N}{F_N'}\Big) \\
\label{second main bound}
\leq T(r,F_N) + O \Big(   \log^+{\frac{N}{1-r} }  + \sup_{|z| = (1+r)/2}
\log^+{\log{|F_N|}}  \Big).
\end{split}
\end{equation}

We leverage the upper bound (\ref{second main bound}) on $T(r, p \circ F_N) = N \, T(r, F_N) + O(1)$
against the lower bound~(\ref{bootstrap}) and get a uniform upper bound on $T(r,F_N)$: 
\numequation \label{our second main}
(N-1) T(r, F_N)  \ll  \log^+{\frac{N}{1-r} }  + \sup_{|z| = (1+r)/2}
\log^+{\log{|F_N|}}.
\end{equation} 
Upon doubling the absolute implicit constant, plainly for $N \geq 2$ this is equivalent to
\begin{equation*}
T(r, F_N^N) =  N T(r, F_N)  \ll \log^+{\frac{N}{1-r} }  + \sup_{|z| = (1+r)/2}  
\log^+{\log{|F_N|}}, 
\end{equation*}
uniformly in all $N \geq 2$ and $r \in (0,1)$.

\medskip

Hence Theorem~\ref{growth term} follows from Lemma~\ref{trivialsupFN} upon replacing $r$ there with $(1+r)/2$.

\subsubsection{A historical note}   \label{some Nevanlinna background}
The bound~(\ref{our second main}) can be compared to the well-known
particular case for entire holomorphic functions of the classical Nevanlinna second main theorem (whose method of proof
we emulate here), stating that for any entire function $g : \C \to \C$, and any $N$-tuple of pairwise distinct points 
$a_1, \ldots, a_N \in \C$, the Nevanlinna characteristic $T(r,g) = m(r,g) = \int_{|z| = r} \log^+{|g|} \, \mu_{\mathrm{Haar}}$  satisfies the upper bound
\numequation \label{Nevanlinna's Picard}
(N-1) T(r, g)  + N_{\mathrm{ram}}(r, g)   \leq \sum_{i=1}^N N(r, a_i) + O(\log{T(r,g)}) + O( \log{r} )
\end{equation}
outside of an exceptional set of radii $r \in E \subset [0,\infty)$ of finite Lebesgue measure: $m(E) < \infty$. 
Here $N_{\mathrm{ram}}(r, g) = N(r, 1/g')$ is a ramification term, which is always nonnegative and vanishes if the map $g$ is \'etale. This is Nevanlinna's quantitative strengthening of Picard's theorem on at most one omitted value for a nonconstant entire function, for if each of $a_1, \ldots, a_N$ is omitted then all counting terms $N(r, a_i) = 0$ vanish on the right-hand side of~(\ref{Nevanlinna's Picard}),
leading if $N \geq 2$ to an $O(\log{r})$ upper bound on the growth $T(r,g)$ of $g$. 
The idea is that we similarly have a holomorphic map $F_N$ omitting the $N$ values $a_h = \exp(2\pi i h/N)$, except $F_N$ is on a disc rather than the entire plane, and that (\ref{Nevanlinna's Picard}) largely extends as a growth bound for holomorphic maps on a disc. For such completely quantitative results we refer the reader to Hinkkanen~\cite[Theorem 3]{Hinkkanen} or Cherry--Ye~\cite[Theorem~4.2.1 or Theorem~2.8.6]{CherryYe}. We cannot directly apply these general theorems in their verbatim forms as they only lead to a bound of the form $m(r, F_N) \ll  \frac{1}{N}\log{\frac{1}{1-r}} + \log{N}$ in place of the required $m(r,F_N) \ll \frac{1}{N} \log{\frac{N}{1-r}}$; cf. the term $(q+1)\log(q/\delta)$ in~\cite[line~(1.24)]{Hinkkanen}, where $q = N$ signifies the number of targets $a_i$. But fortuitously we were able to modify their proofs by making an additional use of the key pivot relation~(\ref{choice of p}) particular to our situation of $\{a_1, \ldots, a_q\} = \mu_N$. 

For our case of functions on the disc, we compare to ~\cite[Theorem~2.1]{Hayman}. For holomorphic $f$, we again have the \emph{ramification term} $N_{\mathrm{ram}}(r, f) = 
N(r, 1/f')$ (this term is denoted by $N_1(r)$ in~\cite{Hayman}), which is always nonnegative. In~\eqref{Nram}, even without using the \'etaleness of $F_N$, one would drop the ramification term by positivity and still obtain the requisite bound $\displaystyle m\Big( r, \frac{\varphi}{\varphi'} \Big) \leq m\Big( r, \frac{\varphi'}{\varphi} \Big) + N\Big( r, 1/\varphi \Big)$. In this way, our treatment also recovers the bound $\int_{|z| = r} \log^+{|\varphi|} \, \mu_{\mathrm{Haar}} \leq  \frac{1}{N-1}\log{\frac{1}{1-r}} + O_{\varphi}(1)$ for every holomorphic map $\varphi : D(0,1) \to \C \setminus \mu_N$  avoiding the $N$-th roots of unity (which is not necessarily the universal covering map).

\subsection{Proof of Theorem~\ref{theorem:main}}  \label{putting together}
At this point we have established all the pieces for the proof of our main result. By Theorem~\ref{exactconformal}, 
assumption~(\ref{radius}) in Proposition~\ref{strategy} is indeed satisfied, with the sharp constant $A := \zeta(3)/2 > 0$. By Theorem~\ref{growth term} with the choices $p(x) := x^N$ and $r := 1 - BN^{-3}$, assumption~(\ref{mean}) in Proposition~\ref{strategy} is also satisfied. In terms of the algebras of modular forms $M_{2N}$ and $R_{2N}$ at an even Wohlfahrt level~$2N$ introduced in~\ref{rings}, the conclusion of Proposition~\ref{strategy} is thus an inequality $[R_{2N}:M_2] \leq C N^3 \log{N}$, for some absolute implicit constant $C \in \R$ independent of $N$. At this point Proposition~\ref{selfcontained} proves the equality $R_{2N} = M_{2N}$ for all $N \in \NwithoutzeroA$, which is the unbounded denominators conjecture. 

\medskip

The proof of Theorem~\ref{theorem:main} is thus completed.     \qed

\begin{remark}\label{remark:numberfield}
Our proof for Theorem~\ref{theorem:main}  generalizes in the obvious way to establish that a modular form $f(\tau)$ having a Fourier expansion in $\overline{\Z} \llbracket q^{1/N}\rrbracket$ (algebraic integer Fourier coefficients) at one cusp, and meromorphic at all cusps, is a modular form for a congruence subgroup of $\SL_2(\Z)$. We include an indication of the details.

Since $f(\tau)$ is a modular form, we are reduced to the situation of a number field $K$ such that $f(\tau) \in O_K\llbracket q^{1/N}\rrbracket$. We use $R_{2N}$ to denote the~$K(\lambda)$-algebra generated by modular functions with coefficients in $K$, bounded denominators at~$\zeta = i \infty$, and cusp widths dividing $2N$ at all cusps $\zeta \in \mathbf{P}^1(\Q)$ (similar to Definition~\ref{rings}). 
We follow the proof of Proposition~\ref{strategy} now on the case of the $K(\lambda)$-vector space
 $\mathcal{V}(U,x(t),O_K)$ from Definition~\ref{holonomy ring}. 
 Then $R_{2N}  \subset \mathcal{V}(U,x(t),O_K)$.
Note that $U$ is stable under the action of $\Gal(\overline{\Q}/\Q)$, and thus $\mathcal{V}(U,x(t),O_K)= \mathcal{V}(U,x(t),\Z)\otimes_{\Q} K$ and $\dim_{K(\lambda)}\mathcal{V}(U,x(t),O_K)=\dim_{\Q(\lambda)}\mathcal{V}(U,x(t),\Z)$. Thus by Corollary~\ref{holonomy}, Theorem~\ref{exactconformal}, and Theorem~\ref{growth term}, we still have that $R_{2N}$ has dimension at most $CN^3\log N$ over $K(\lambda)$.
The claimed extension to $\overline{\Z}\llbracket q^{1/N} \rrbracket$ Fourier expansions now follows upon remarking that the proof of Proposition~\ref{selfcontained} still persists when $\Q$  is replaced by $K$.
\end{remark}

\begin{remark} \label{Zbar reduction}
It is also possible to derive the $\bar{\Z}\llbracket q^{1/N} \rrbracket$ generalization directly from Theorem~\ref{theorem:main}, by the following argument pointed out to us
by John Voight. The absolute Galois group $\mathrm{Gal}(\bar{\Q}/\Q)$ acts on the $q$-expansions of modular forms. If $f(\tau) \in O_K\llbracket q^{1/N} \rrbracket$ is a modular form on a finite index subgroup of $\SL_2(\Z)$, and $\alpha_1, \ldots, \alpha_d$ is a $\Z$-basis of $O_K$, then $f_i(\tau) := \mathrm{Tr}_{K/\Q}( \alpha_i  f(\tau) ) \in \Z\llbracket q^{1/N} \rrbracket$ for each $i = 1,\ldots, d$. Theorem~\ref{theorem:main} gives that each $f_i(\tau)$ is modular for some congruence subgroup, say $\Gamma_i$. At this point $f(\tau)$, being a $K$-linear combination of $f_1,\ldots,f_d$, is modular for the congruence subgroup $\Gamma_1 \cap \cdots \cap \Gamma_d$.

\end{remark}

\section{Generalization to vector-valued modular forms} \label{vvmf}

\subsection{Generalized McKay--Thompson series with roots from monstrous moonshine}  \label{cft}
Our argument also proves a vector generalization of the unbounded denominators conjecture, which has been conjectured by Mason~\cite{MasonConjectured} (see also the earlier work of Kohnen and Mason~\cite{KohnenMason} for a special case) to the setting of vector-valued modular forms of $\SL_2(\Z)$, with motivation from the theory of vertex operator algebras and the monstrous moonshine conjectures. The weaker statement  of algebraicity over the ring of modular forms was conjectured earlier by Anderson and Moore~\cite{AndersonMoore}, within the context of the partition functions or McKay--Thompson series attached to rational conformal field theories. We refer also to Andr\'e~\cite[Appendix]{Andre} for a discussion from the arithmetic algebraization point of view --- the method that we build upon in our present paper --- on the Grothendieck--Katz $p$-curvature conjecture. Eventually the more precise
expectation crystallized (see Eholzer~\cite[\emph{Conjecture} on page~628]{Eholzer})  that all rational
conformal field theory graded twisted characters are in fact classical modular forms for a  \emph{congruence} subgroup of $\SL_2(\Z)$, which is more precise than Anderson and Moore's conjectured algebraicity over the modular ring $\Z[E_4, E_6]$.

 This conjecture became known as the \emph{congruence property in conformal field theory}, and was proved  in the eponymous paper of Dong, Lin and Ng~\cite{DongLinNg}, after  landmark progresses  from many authors (for some history, including notably Bantay's solution~\cite{Bantay} under a certain heuristic assumption, the \emph{orbifold covariance principle}~\cite{BantayCov, BantayPerm, Xu},  we refer the reader to the introduction of~\cite{DongLinNg}). Finally, the congruence property for the McKay--Thompson series in the full equivariant setting (orbifold theory) $V^G$ of a finite group $G$ of automorphisms of a rational, $C_2$-cofinite vertex operator algebra $V$ (the prime example being  the Fischer--Griess Monster group operating on the moonshine module of Frenkel--Lepowski--Meurman~\cite{FLM}) was
 proved  by Dong and Ren~\cite{DongRen} by a reduction to the special case $G = \{1\}$ that is~\cite{DongLinNg}. 

Our paper, via Theorem~\ref{ubd for vvmf} below for the vector valued extension of the congruence property, inherits a new proof of these modularity theorems. The connection was engineered by Knopp and Mason~\cite{KnoppMason}, with their formalization  of generalized modular forms for $\SL_2(\Z)$, and fine tuned by Kohnen and Mason~\cite[\SSS~4]{KohnenMason}, who
brought forward the idea of a purely arithmetic approach --- based on the integrality properties of the Fourier coefficients, that record a graded dimension  and are hence integers --- for a part of Borcherds' theorem~\cite{Borcherds} (the Conway--Norton ``monstrous moonshine" conjecture).  Namely, suppressing the Hauptmodul property, for the  classical modularity  --- under a congruence subgroup of $\SL_2(\Z)$ --- of all the various McKay--Thompson series for the Monster group over the moonshine module $V^{\sharp}$.  Whereas Borcherds' proof, based on his own generalized Kac--Moody algebras that go outside of the general framework of vertex operator algebras, is rather particular to the Monster vertex algebra and genus $0$ arithmetic groups, Kohnen and Mason proposed that an arithmetic abstraction from the integrality of Fourier coefficients might open up a window on the modularity and congruence properties to apply just as well in the equivariant setting to any rational $C_2$-cofinite vertex operator algebra --- this theorem, eventually proved in~\cite{DongLinNg, DongRen} by other means, was an  open problem at the time of~\cite{KohnenMason}. 

It is precisely this arithmetic scheme that we are able to complete with our paper.

\subsection{Unbounded denominators for the solutions of certain ODEs}

In the language of Anderson--Moore~\cite[page~445]{AndersonMoore}, the functions   occurring below are said to be \emph{quasi-automorphic} for the modular group $\PSL_2(\Z)$, while in Knopp--Mason~\cite{KnoppMasonFourier} or Gannon~\cite{GannonVVMF}, they arise  as component functions   of vector-valued modular forms for $\SL_2(\Z)$. 
We firstly take up the holonomic viewpoint and give a yet another formulation, in the equivalent
language of linear ODEs on the triply punctured projective line, where we think of $x$ as the modular function $\lambda(\tau) / 16 \in q + q^2\Z\llbracket q\rrbracket$, where $q = \exp(\pi i \tau)$,  and of $\mathbb{P}^1 \setminus \{0,1/16,\infty\}$ as the 
modular curve $Y(2) = \H / \Gamma(2)$.  This answers the question raised in~\cite[Appendix, A.5]{Andre}. For simplicity of exposition, we only consider the case of a power series expansion $f(x) \in \Z\llbracket x \rrbracket$ here, as opposed to a general Puiseux expansion (see Remark~\ref{Puiseux extension}).

\begin{theorem} \label{ODE form}
Let $f(x) \in \Z\llbracket  x \rrbracket$ be an integer coefficients formal power series solution of $L(f) = 0$, where $L$ is a linear differential operator 
without singularities\footnote{Similarly to~\ref{holonomy ring}, the proof allows for singularities  on $\mathbb{P}^1 \setminus \{0, 1/16,\infty\}$ provided their local monodromy is trivial.}  on $\mathbb{P}^1 \setminus \{0,1/16,\infty\}$. If the $x = 0$ local monodromy of $L$ is finite, then $f(x)$ 
is algebraic, and more precisely, the function $f(\lambda(\tau)/16)$ on $\H$ is automorphic for some congruence subgroup $\Gamma(N)$ of $\SL_2(\Z)$.
\end{theorem}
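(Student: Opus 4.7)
The plan is to prove the algebraicity assertion as an immediate consequence of Corollary~\ref{holonomy}, and then to upgrade this algebraicity to congruence modularity of $f(\lambda(\tau)/16)$ by invoking the main theorem (Theorem~\ref{theorem:main}) itself.

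For the algebraicity, let $N$ denote the order of the finite local monodromy of $L$ at $x=0$, and consider the variable change $x \mapsto x^N$. This yields the formal power series $\tilde{f}(x) := f(x^N) \in \Z\llbracket x^N\rrbracket \subset \Z\llbracket x\rrbracket$ together with a pulled-back operator $\tilde L$. Every local solution of $\tilde L$ near $x=0$ is a pullback of a local solution of $L$, and the finite monodromy assumption forces all such pullbacks to become single-valued (meromorphic) in a neighborhood of $0$; hence $\tilde L$ has trivial local monodromy at $0$ in the sense of Definition~\ref{holonomy ring}. The remaining singularities of $\tilde L$ lie in $16^{-1/N}\mu_N \cup \{\infty\}$, and setting $U := \mathbb{C} \setminus 16^{-1/N}\mu_N$ places $\tilde f \in \mathcal{H}(U, \Z)$. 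By Theorem~\ref{exactconformal}, the uniformization radius of $(U, 0)$ equals $\gamma_N > 1$ for $N \ge 2$, while the case $N=1$ is handled by an explicit map such as $\varphi(z) = \tfrac{1}{16}(1-e^{-Mz})$ with $M$ large. In either case Corollary~\ref{holonomy} applies and yields the algebraicity of $\tilde f$ over $\Q(x)$, and in turn of $f$.

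For the modularity upgrade, algebraicity of $f$ over $\Q(x) = \Q(\lambda)$ lets one extend $f$ to a rational function on a smooth projective curve $X$ equipped with a finite morphism $X \to X(2)$ that is \'etale over $Y(2) = \mathbb{P}^1 \setminus \{0, 1/16, \infty\}$. Pulled back along $\H \to Y(2)$, this cover corresponds to some finite index subgroup $\Gamma \subset \Gamma(2) \subset \SL_2(\Z)$, and the composite $F(\tau) := f(\lambda(\tau)/16)$ is a $\Gamma$-invariant holomorphic function on $\H$ which is meromorphic at every cusp (being the pullback of a rational function on $X$). By formal substitution of $\lambda(\tau)/16 \in q + q^2\Z\llbracket q\rrbracket$ from~\eqref{Legendre} into $f \in \Z\llbracket x\rrbracket$, the Fourier expansion of $F(\tau)$ at $i\infty$ lies in $\Z\llbracket q\rrbracket$. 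All hypotheses of Theorem~\ref{theorem:main} (with weight $k=0$) are thereby satisfied, and $F(\tau)$ must be modular for a congruence subgroup of $\SL_2(\Z)$.

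The only substantive step is the algebraicity, which rests on Corollary~\ref{holonomy} combined with the sharp lower bound $\gamma_N > 1$ from Theorem~\ref{exactconformal}; the rest is a formal unwinding bridging the ODE language (finite local monodromy at $x=0$) and the modular function language (finite \'etale covers of $Y(2)$). Since the modularity upgrade reduces to Theorem~\ref{theorem:main} — the main result of the paper — the real work is already contained in the preceding sections and no new obstacle is encountered here.
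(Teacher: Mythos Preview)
Your proof is correct and follows essentially the same route as the paper: pull back by $x \mapsto x^N$ to land in $\mathcal{H}(\C \setminus 16^{-1/N}\mu_N, \Z)$, invoke Corollary~\ref{holonomy} with the conformal radius bound from Theorem~\ref{exactconformal} to get algebraicity, and then apply Theorem~\ref{theorem:main} to upgrade to congruence modularity. One small slip: the uniformization radius of $U = \C \setminus 16^{-1/N}\mu_N$ at $0$ is $16^{-1/N}\gamma_N$, not $\gamma_N$ (the latter is the radius for $\C \setminus \mu_N$); but since $\gamma_N = 16^{1/N}\cdot(\text{a factor} > 1)$ by~\eqref{explicit-gammaN}, the needed inequality $16^{-1/N}\gamma_N > 1$ still holds and your argument goes through unchanged.
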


\begin{proof}
Our condition is that the $x=0$ local monodromy group is $\Z / N$ for some $N \in \NwithoutzeroA$.
Then the formal function $g(x) := f(x^N)$ is in $\Z\llbracket x \rrbracket$ and fulfills a linear ODE on $\mathbb{P}^1 \setminus \{16^{-1/N}\mu_N, \infty\}$. In our notation of Corollary~\ref{holonomy}, that means $g \in \mathcal{H}(\C \setminus 16^{-1/N}\mu_N , \Z)$. Hence, denoting again by $F_N : D(0,1) \to \C \setminus \mu_N$ the universal covering map taking $F_N(0) = 0$, recalling  our exact uniformization radius formula in Theorem~\ref{exactconformal} giving in particular the strict lower bound
$$
|F_N'(0)| = \sqrt[N]{16} \, \Big(  1 + \frac{\zeta(3)}{2N^3}  + \frac{3\zeta(5)}{8N^5} + \cdots  \Big) > \sqrt[N]{16},
$$
 and letting then
$$
\varphi(z) := 16^{-1/N} F_N(rz)
$$
 for some parameter $r$ with $ \sqrt[N]{16} \big/ |F_N'(0)|  < r < 1$, Corollary~\ref{holonomy} implies that $g(x) \in \Z\llbracket x \rrbracket$ is an algebraic power series. Hence $f(x) = g(\sqrt[N]{x})$ is algebraic. 

 At this point we know that $f(\lambda(\tau)/16)$ is automorphic for some finite index subgroup $\Gamma \subset \Gamma(2)$. Theorem~\ref{theorem:main} then upgrades this to automorphy under some congruence modular group $\Gamma(M)$, for some $M \equiv 0 \mod{N}$, and the result follows upon replacing $N$ with $M$.
\end{proof}

\begin{remark} \label{Puiseux extension}
To include Puiseux series $f(x) \in \C\llbracket x^{1/m} \rrbracket$, the statement and proof apply verbatim on replacing the integrality condition $f(x) \in \Z\llbracket x \rrbracket$ by $f(\lambda(\tau)/16) \in \Z\llbracket \lambda(\tau/m) / 16 \rrbracket \otimes \C$.
\end{remark}

\begin{remark} \label{log remark}
The condition in Theorem~\ref{ODE form} that the linear differential operator $L$ has a finite local monodromy at $x = 0$ is essential for algebraicity. The canonical and explicit transcendental example, which is given in~\cite[Appendix, A.5]{Andre} and we have already mentioned in our introduction~\SSSS~\ref{intro sketch}, is the Gauss hypergeometric series or complete elliptic integral of the first kind
$$
\frac{2}{\pi} K(x) := \pFq{2}{1}{1/2,1/2}{1}{16x} = \sum_{n=0}^{\infty} \binom{2n}{n}^2 x^n, 
$$
that is the Hadamard square of $(1-4x)^{-1/2}$ and has the Jacobi theta function parametrization making
\numequation \label{theta series}
 \pFq{2}{1}{1/2,1/2}{1}{\lambda(q)} = \Big( \sum_{n \in \Z} q^{n^2} \Big)^2
\end{equation}
a weight one modular form for the congruence group $\Gamma(2)$. The modularity streak is not an accident: 
more generally, to get $\Z\llbracket x \rrbracket$ holonomic functions on $\mathbb{P}^1 \setminus \{0, 1/16, \infty\}$ with infinite $x = 0$ local monodromy, we may reversely start with any congruence modular form of a weight $k > 0$, such
as  for instance Ramanujan's (discriminant) weight~$12$ modular form $\Delta (\tau) = q\prod_{n=1}^{\infty}(1-q^n)^{24} \in q \Z\llbracket q\rrbracket$, and express it formally into a power series in $x:= \lambda(\tau) /16$, using $\Z\llbracket q\rrbracket = \Z\llbracket x \rrbracket$ as in~\SSSS~\ref{intro sketch}. It is then a classical fact, cf. Stiller~\cite{Stiller} or Zagier~\cite[\SSS~5.4]{Zagier123}, that  the resulting formal power series fulfills a linear ODE on a finite  \'etale cover of $\mathbb{P}^1 \setminus \{0,1/16,\infty\} \cong Y(2)$,  of order $k+1$ and monodromy group commensurable with $\mathrm{Sym}^k \, \SL_2(\Z) \hookrightarrow \SL_{k+1}(\Z)$.

It remains to us an open question whether a complete description of all integral solutions $f \in \Z\llbracket x \rrbracket$ on dropping the $x = 0$ finite local monodromy condition in Theorem~\ref{ODE form} should arise in this way from a classical congruence modular form expressed into a holonomic function in $x = \lambda/16$. We formulate the precise statement in Question~\ref{logarithmic UDC} below.
\end{remark}

\subsection{Vector-valued modular forms}  \label{Mason vvmf}

We close our paper by  another formulation of Theorem~\ref{ODE form}, translated now over to the language of vector-valued modular forms.  The following definition is a special case of the vector-valued modular forms studied in~\cite[\SSS~2]{FrancMasonVvmf}.

\begin{df} \label{df vvmf}
A \emph{vector-valued modular form} of weight $k \in \Z$ and dimension $n$ for $\PSL_2(\Z)$ is  a pair $(F,\rho)$ made of a holomorphic mapping $F = (F_1, \ldots, F_n) : \H \to \C^n$ and an $n$-dimensional
complex representation
$$
\rho : \PSL_2(\Z) \to \GL_n(\C)
$$ 
obeying the following properties:
\begin{itemize}
\item  For all $\gamma \in \PSL_2(\Z)$, we have 
$
F^{\mathrm{t}} \,|_k \gamma = \rho(\gamma) F^\mathrm{t}
$.
\item The matrix
$\rho \left( \begin{pmatrix}  1 & 1 \\ 0 & 1 \end{pmatrix}  \right)  \in \GL_n(\C)$
is semisimple.
\item All components $F_j : \H \to \C$ have \emph{moderate growth in vertical strips}: for all $a < b$ and $C > 0$, 
there exist $A,B > 0$ such that
$$
 \tau \in \H, \quad a \leq \mathrm{Re} \, \tau \leq b, \quad  \mathrm{Im}\, \tau \geq C
\quad \Longrightarrow \quad |F_j(\tau)| \leq A e^{B \, \mathrm{Im} \, \tau}. 
$$
\end{itemize}
Here, as usual, $|_k$ is used to denote the componentwise right action of $\gamma = \begin{pmatrix} a & b \\ c & d \end{pmatrix}$ 
via the usual automorphy factor $j_k(\gamma, \tau) = (c\tau + d)^{-k}$:
$$
f(\tau) \, |_k \gamma := j_k(\gamma, \tau)  f(\gamma\tau) = (c\tau+d)^{-k} f(\gamma \tau).
$$
\end{df}

\begin{remark} \label{expansion form of the regular singularities condition}
Taken together (see, for example,~\cite[\SSS~2.A]{AndersonMoore}) the semisimplicity and moderate growth  conditions are equivalent to the existence of generalized Puiseux formal expansions (except in general with irrational exponents: but without $\log{q}$ terms, due to semisimplicity) of each component function $F_j(\tau)$ at the cusp $q = 0$.  More precisely, via a change of basis (see the equivalent notion in~\cite{FrancMasonVvmf}), we may assume that $\rho  \left( \begin{pmatrix}  1 & 1 \\ 0 & 1 \end{pmatrix} \right) $ is a diagonal matrix. If $F_j$ is a $\lambda$-eigenvector of $\rho \left( \begin{pmatrix}  1 & 1 \\ 0 & 1 \end{pmatrix} \right)$, then $F_j=\sum_{n\in \Z_{\geq n_0}} a_{n,j}q^{n+\mu}$ for some $n_0\in \Z$, where $q=e^{2 \pi i \tau}$ and we choose a $\mu\in \C$ such that $\lambda=e^{2 \pi i \mu}$.
\end{remark}

Thus, the classical (scalar-valued) modular forms  $M_k(\Gamma(1), \chi)$ attached to a finite-order character $\chi : \Gamma(1) \to U(1)$ are precisely the special case $n = 1$ of one-dimensional vector-valued modular forms and a unitary character $\rho$.  In a reverse direction, any classical (scalar-valued) modular form for a finite index subgroup $\Gamma \subseteq \PSL_2(\Z)$ can be considered as the first component of a vector-valued modular form for $\PSL_2(\Z)$ of dimension $[\Gamma(1):\Gamma]$. From that point of view, there is no loss of generality in Definition~\ref{df vvmf} to limit to the representations of the ambient group $\PSL_2(\Z)$. Here and in the following, we continue to denote by~$\rho$ the extended homomorphism~$\rho : \Gamma(1) = \SL_2(\Z) \to \PSL_2(\Z) \to \GL_n(\C)$, which (by convention) contains~$Z(\SL_2(\Z)) = \{\pm I\}$ in its kernel. 

Knopp and Mason's \emph{generalized modular forms}~\cite{KnoppMason} are the case, intermediate in generality, where the representation $\rho$ is \emph{monomial}: that is, induced from a linear character $\chi : \Gamma \to \C^{\times}$ on a finite index subgroup  $\Gamma \subset \PSL_2(\Z)$. 
If that character $\chi$ is unitary, then in fact it has finite image and all components of $F$ are classical modular forms of weight $k$ for a finite index subgroup~\cite{KnoppMason}. The general (non-unitary) case does come up for the partition function and correlation functions of a rational conformal field theory~\cite{KnoppMason}, to which the point of contact is supplied by  Zhu's modularity theorem~\cite{Zhu}, 
 and its extension to the equivariant setting by Dong, Li and Mason~\cite{DongLiMason}.

To make the connection to Theorem~\ref{ODE form}, note upon restricting the representation $\rho$ to the free subgroup 
$$\Z \ast \Z 
= \left< 
  \begin{pmatrix}  1 & 2 \\ 0 & 1 \end{pmatrix},  \begin{pmatrix}  1 & 0 \\ 2 & 1 \end{pmatrix}  \right>
\subset \Gamma(2) \subset \Gamma(1) =\SL_2(\Z)$$
 that
the case of weight $k = 0$ and finite-order element $\rho \left( \begin{pmatrix}  1 & 1 \\ 0 & 1 \end{pmatrix} \right)$  is equivalent to exactly the situation of~\ref{ODE form}: a local system on the triply-punctured projective line $Y(2) \cong \mathbb{P}^1 \setminus \{0, 1/16, \infty\}$ \emph{that has a finite local monodromy  around the puncture $x = 0$}.
(Note that the map from~$\Z \ast \Z$ to~$\Gamma(2)/\pm I \subset \PSL_2(\Z)$ is an isomorphism.) Concretely, the local system with integrable connection~$(\mathcal{E}, \nabla)$ over~$Y(2) \cong \mathbb{P}^1 \setminus \{0,1/16, \infty\}$ is defined by taking for~$\nabla$ the derivation~$d/dx$ in the coordinate~$x := \lambda/16$ of the base curve~$Y(2)$, and the vector bundle~$\mathcal{E} \to Y(2)$  over the base algebraic curve~$Y(2) = \spec \Z[ x, 16/x, 1/(1-16x) ]$ to be defined by the  rank-$6n$ free $\Z[x,1/x,1/(1-16x)]$-module spanned by the functions~$F_j | \gamma$, where~$F_1, \ldots,F_n$ range over the components of the vector-valued modular form~$F$ on the modular group~$\Gamma(1) = \SL_2(\Z)$, and~$\gamma$ runs through all six cosets for~$\Gamma(2)$ in~$\Gamma(1)$ (with the stroke action here being in weight~$0$). The multiplier system~$\rho$ features as the monodromy representation~$\rho|_{\Gamma(2)} : \Gamma(2) \to \GL_n(\C)$. The condition in Remark~\ref{expansion form of the regular singularities condition} on the existence of a Fourier expansion at the cusp of~$\PSL_2(\Z)$ means that the local system with integrable connection~$(\mathcal{E},\nabla)$ has regular singularities with semisimple local monodromies around the three cusps of~$Y(2)$. 
 We refer the reader to~\cite{BantayGannon} and~\cite{GannonVVMF} for the bridge between these two equivalent points of view. 

Our general result on unbounded denominators for components of vector-valued modular forms is the following. 

\begin{thm}  \label{ubd for vvmf}
Let $(F,\rho)$ be a vector-valued modular form for $\PSL_2(\Z)$ of dimension~$n$ and weight~$k$. Suppose that some component function $F_j(\tau) : \H \to \C$ of $F = (F_1, \ldots, F_n) : \H \to \C^n$ has at $\tau = i \infty$
a formal Fourier expansion lying in $\Z\llbracket q \rrbracket = \Z\llbracket e^{\pi i \tau} \rrbracket$. Then that component $F_j(\tau)$ is a classical modular form of weight $k$ on a congruence subgroup of $\SL_2(\Z)$.
\end{thm}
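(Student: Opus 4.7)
My plan is to reduce Theorem~\ref{ubd for vvmf} to the Puiseux extension of Theorem~\ref{ODE form} in Remark~\ref{Puiseux extension}, via a local monodromy analysis at $x = 0$. First I would perform the scalar weight reduction, replacing $F = (F_1, \ldots, F_n)$ componentwise by $F \cdot (\lambda(\tau)/(16\eta(\tau/2)^2))^k$: this weight-$(-k)$ scalar factor is nowhere vanishing on $\H$, twists $\rho$ by a one-dimensional character of $\SL_2(\Z)$, which necessarily has finite order (since the abelianization of $\SL_2(\Z)$ is cyclic of order $12$), so the hypotheses of Definition~\ref{df vvmf} carry over; and it converts the original integrality into $F_j \in \Z\llbracket q^{1/M}\rrbracket = \Z\llbracket \lambda(\tau/M)/16\rrbracket$ with $M$ divisible by $12$. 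Without loss of generality, $k = 0$ henceforth.

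Next, since the components $F_1, \ldots, F_n$ span a finite-dimensional $\C$-subspace of $\mathcal{O}(\H)$ stable under the substitution $\tau \mapsto \gamma\tau$ for $\gamma \in \SL_2(\Z)$, each $F_i$ is holonomic in $x = \lambda(\tau)/16$ on $\mathbf{P}^1 \setminus \{0, 1/16, \infty\}$. Let $L$ be the minimal annihilating differential operator for $F_j$ and $W$ its solution space near $x = 0$. The semisimplicity hypothesis on $\rho(T)$ in Definition~\ref{df vvmf} forces the local monodromy $M_0$ at $x = 0$ to be semisimple, giving an eigenspace decomposition $W = \bigoplus_\beta W_\beta$ in which every germ in $W_\beta$ takes the form $x^{s_\beta}h_\beta(x)$ with $h_\beta$ holomorphic at $x = 0$ and $\beta = e^{2\pi i s_\beta}$. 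Decomposing $F_j = \sum_\beta F_j^{(\beta)}$ accordingly, the Puiseux exponents of $F_j^{(\beta)}$ are congruent to $s_\beta \bmod \Z$, while the reduced integrality constrains every exponent of $F_j$ to lie in $\frac{1}{M}\Z_{\geq 0}$; hence $F_j^{(\beta)} = 0$ unless $\beta^M = 1$, and $F_j \in W' := \bigoplus_{\beta^M = 1} W_\beta$. Because $\tfrac{d}{dx}$ commutes with monodromy, $W'$ is $\tfrac{d}{dx}$-stable, and is the solution space of a differential operator $L'$ that left-divides $L$, with local monodromy at $x = 0$ of finite order dividing $M$.

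With $L'$ in hand, the Puiseux version of Theorem~\ref{ODE form} applied to $F_j$ as a solution of $L'$ yields that $F_j$ is a classical modular function for a congruence subgroup of $\SL_2(\Z)$; reversing the weight reduction restores the weight-$k$ conclusion. The main technical obstacle is the monodromy step underpinning the passage from $L$ to $L'$: the semisimplicity hypothesis on $\rho(T)$ is indispensable to rule out logarithmic contributions and to make the exponent-mod-$\Z$ analysis tight, and one must carefully verify that this semisimplicity descends to the minimal ODE so that the monodromy eigenvalues of $L'$ are all roots of unity, permitting the trivialization of local monodromy by a finite pullback that underlies the proof of Theorem~\ref{ODE form}.
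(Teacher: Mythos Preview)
Your reduction to Theorem~\ref{ODE form} via a weight-zero normalization and a local monodromy analysis is the right overall shape, and your observation that the solution space $W$ of the minimal operator $L$ for $F_j$ coincides with the global monodromy span of $F_j$ inside $V = \langle F_1,\ldots,F_n\rangle$ (hence inherits the semisimplicity of $\rho(T^2)$ at $x=0$) is correct and important. However, the passage from $W$ to $W' = \bigoplus_{\beta^M=1} W_\beta$ and thence to an operator $L'$ has a genuine gap.

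The subspace $W'$ is defined purely in terms of the \emph{local} monodromy operator $M_0$ at $x=0$. For $W'$ to be the solution space of a linear ODE with coefficients in $\C(x)$ (which is what you need to invoke Theorem~\ref{ODE form}), $W'$ must be stable under the \emph{global} monodromy representation of $\pi_1(Y(2))$, in particular under the loops around $x=1/16$ and $x=\infty$. Nothing in your argument establishes this, and in general it fails: the monodromy around $x=1/16$ can mix the $M_0$-eigenspaces freely. Your sentence ``because $d/dx$ commutes with monodromy, $W'$ is $d/dx$-stable'' conflates the local and global pictures; $d/dx$ does not act on $W$ at all (derivatives of solutions of $L$ are not solutions of $L$), and what you actually need is global monodromy stability, which $M_0$-eigenspace decompositions do not provide. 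Since $W$ is already the \emph{minimal} globally monodromy-stable subspace containing $F_j$, there is no smaller candidate for $W'$, and the question reduces to showing that \emph{all} eigenvalues of $M_0$ on $W$ itself are roots of unity---not merely those appearing in the Puiseux expansion of $F_j$.

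This is precisely the point where the paper brings in the arithmetic theory of $G$-functions. The integrality $F_j \in \Z\llbracket \lambda/16\rrbracket$ forces, via Chudnovsky's theorem, the operator $L$ to satisfy the Galo\v{c}kin condition; the Bombieri--Andr\'e theorem then yields global nilpotence; and Katz's local monodromy theorem gives that every local monodromy of $L$ is quasi-unipotent. Only now does the semisimplicity of $M_0$ (which you correctly established) combine to give that $M_0$ has finite order on all of $W$, after which Theorem~\ref{ODE form} applies directly to $L$ without any need to pass to a factor $L'$. Your exponent argument shows only that $F_j$ lives in the ``good'' part of $W$ for $M_0$; the $G$-function machinery is what shows the ``bad'' part is empty.
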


\begin{proof}
After some standard theorems from the theory of $G$-functions to reduce to the case that
the semisimple matrix $\rho \left( \begin{pmatrix} 1 & 1 \\ 0 & 1 \end{pmatrix}  \right) \in \GL_n(\C)$ is in fact
of finite order, this is an equivalent expression of Theorem~\ref{ODE form}. 
The transition is as follows. First, by taking the componentwise product 
$$F(\tau)g(\tau) \left( \frac{\lambda(\tau)} {16\Delta(\tau/2)} \right)^{\frac{k+k'}{12}},$$
 where we choose a non-zero scalar-valued modular form $g(\tau)\in \Z \llbracket q \rrbracket$ of weight $k'$ to have $12 \mid k+k'$,
 we reduce to the case of a vector-valued modular form of weight~$k = 0$ on~$\PSL_2(\Z)$. 
 
We restrict that form from~$\PSL_2(\Z)$ to its index-$6$ torsion-free subgroup~$\Gamma(2) / \langle \pm I \rangle$. The remarks immediately preceding the statement of the theorem-under-proof construct a rank~$6n$ local system with integrable connection~$(\mathcal{E},\nabla)$ over the modular curve~$Y(2) = \spec{\Z[x,16/x,1/(1-16x)}]$. The monodromy representation of that local system is the homomorphism~$\rho|_{\Gamma(2)} : \Gamma(2) \to \GL_n(\C)$. As we study the specific component function~$F_j$ and its monodromy unfoldings in this local system, upon replacing the range~$\GL_n(\C)$ by a lower-dimensional general linear group, we lose no generality in assuming that the local system is irreducible. 
 
 With these reductions, we have realized our original~$\PSL_2(\Z)$ vector-valued modular form component~$F_j \in \Z\llbracket q \rrbracket = \Z \llbracket \lambda/16 \rrbracket$ of interest as one of the~$6n$ component power series in a complex Puiseux series vector solution to an irreducible rank-$6n$ system of first-order linear homogeneous ODEs over $\Q[\lambda, 1/\lambda, 1/(1-\lambda)]$. One of the components --- namely, $F_j$ --- in the solution vector to this irreducible linear differential system is a~$G$-function~\cite[page~xiii]{DworkGerottoSullivan}. 
David and Gregory Chudnovsky's fundamental~$G$-functions theorem~\cite[Theorem~VIII.1.5]{DworkGerottoSullivan} implies that this linear differential system~$(\mathcal{E},\nabla)$ satisfies the Galo\v{c}kin (finite  global operator height $\sigma(\nabla) < \infty$) condition~\cite[VII.2.(2.3) on page~227]{DworkGerottoSullivan}, hence by the Bombieri--Andr\'e theorem~\cite[Theorem~VII.2.1]{DworkGerottoSullivan}, it satisfies the
Bombieri (finite generic global inverse radius $\rho(\nabla) < \infty$) condition~\cite[VII.2.(2.1) on page~226]{DworkGerottoSullivan}, and is therefore globally nilpotent.  At this point Katz's local monodromy
theorem~\cite{Katz} (see also~\cite[Theorem~III 2.3 (ii)]{DworkGerottoSullivan}) proves that $(\mathcal{E},\nabla)$ has 
quasi-unipotent local monodromies. Since, as we already observed from Remark~\ref{expansion form of the regular singularities condition}, our local system~$(\mathcal{E},\nabla)$ also has semisimple local monodromies, 
it follows that these local monodromies have finite order. 

Thus we find that~$f := F_j \in \Z\llbracket q \rrbracket = \Z \llbracket x \rrbracket$ (in the coordinate~$x := \lambda/16$) satisfies a linear ODE~$L(f) = 0$ over~$Y(2) = \mathbb{P}^1 \setminus \{0,1/16,\infty\}$ with a finite local monodromy at $x = 0$. The result now follows on 
applying Theorem~\ref{ODE form} to $f(x) = F_j(\tau)$. 
\end{proof}

\begin{cor}[Mason's conjecture] \label{Mason conjecture}
If all components of a vector-valued modular form $(F,\rho)$ for $\PSL_2(\Z)$ have  Fourier expansions with bounded denominators, then the representation $\rho$ has a finite image, and more precisely $\ker(\rho) \supseteq \Gamma(N)$ for some $N \in \NwithoutzeroA$.
\end{cor}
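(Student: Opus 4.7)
The plan is to deduce Corollary~\ref{Mason conjecture} from Theorem~\ref{ubd for vvmf} by a componentwise application, after a weight normalization and a crucial reduction to the case where $\rho(T)$, with $T=\begin{pmatrix}1&1\\0&1\end{pmatrix}$, has finite order. The weight normalization to $k=0$ follows exactly as in the opening of the proof of Theorem~\ref{ubd for vvmf}, by componentwise multiplication against a suitable nonvanishing scalar modular form lying in $\Z\llbracket q\rrbracket$, which preserves the bounded denominators property. The reduction to $\rho(T)$ of finite order is the heart of the matter and uses the same $G$-function-theoretic machinery as inside the proof of Theorem~\ref{ubd for vvmf}: one picks any single nonzero component $F_{j_0}$ and clears its bounded denominators by a scaling, obtaining a $q$-expansion with algebraic integer coefficients; Chudnovsky's theorem then applies to the holonomic system on $Y(2) \cong \mathbb{P}^1\setminus\{0,1/16,\infty\}$ whose horizontal sections are the components of $F$, Bombieri--Andr\'e yields global nilpotence, Katz's local monodromy theorem yields quasi-unipotent monodromies, and the semisimplicity clause from Definition~\ref{df vvmf} promotes the monodromy at $\tau = i\infty$ to finite order. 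Thus $\rho(T^M)=I$ for some positive integer $M$.

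With $\rho(T^M)=I$, each component $F_j$ acquires a Puiseux expansion at $i\infty$ in powers of $q^{1/M}$ (in the paper's convention $q=e^{\pi i\tau}$): diagonalizing the semisimple matrix $\rho(T)$, the $T$-eigenvectors among the components are pure shapes $q^{2\mu}\overline{\Q}\llbracket q^2 \rrbracket$ with $\mu \in \tfrac{1}{M}\Z$, and the $F_j$ are $\C$-linear combinations of these. Clearing bounded denominators by a scaling $D_j$, each $F_j$ therefore lies in $\overline{\mathcal{O}}_K\llbracket q^{1/M}\rrbracket$ for some common number field $K$. I next apply to each nonzero component the Puiseux and algebraic-integer extension of Theorem~\ref{ubd for vvmf}, namely the Puiseux extension via Remark~\ref{Puiseux extension} in the context of Theorem~\ref{ODE form} together with the number-field extension via Remark~\ref{rmk:numberfield}, and conclude that each $F_j$ is classically modular of weight $k$ on some congruence subgroup $\Gamma(N_j) \subset \SL_2(\Z)$.

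Setting $N := \mathrm{lcm}_j N_j$, every $F_j$ is $\Gamma(N)$-modular in weight $k$, so the vector identity $F^t|_k \gamma = \rho(\gamma)F^t$ from Definition~\ref{df vvmf} becomes $\rho(\gamma)F^t = F^t$ for every $\gamma \in \Gamma(N)$. Passing without loss of generality to the quotient of $(F,\rho)$ whose components are linearly independent over $\C$ (equivalently, replacing $\rho$ by its natural quotient acting faithfully on the $\C$-span of the $F_j$ inside $\mathcal{O}(\H)$, on which the corollary's hypothesis is automatically preserved), this forces $\rho(\gamma)=I$ for every $\gamma \in \Gamma(N)$, yielding $\Gamma(N) \subseteq \ker\rho$ and hence the factorization of $\rho$ through the finite group $\SL_2(\Z/N\Z)$ as required. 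The main obstacle in this route is the $G$-function-theoretic reduction to $\rho(T)$ of finite order in the first paragraph; every subsequent step is a routine componentwise invocation of (a Puiseux and algebraic-integer extension of) the vector-valued unbounded denominators Theorem~\ref{ubd for vvmf}.
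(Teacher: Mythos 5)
Your overall route---normalize the weight, use the Chudnovsky/Bombieri--Andr\'e/Katz machinery from the proof of Theorem~\ref{ubd for vvmf} to force finite order at the cusp, then apply the Puiseux and algebraic-integer extensions (Remark~\ref{Puiseux extension}, Remark~\ref{rmk:numberfield}, \S\ref{the easy ones}) component by component---is exactly the deduction the paper intends; the corollary is stated without proof because it is meant to be the componentwise application of Theorem~\ref{ubd for vvmf}. However, your concluding step is a non sequitur. Passing ``without loss of generality'' to the quotient representation on the $\C$-span of $F_1,\dots,F_n$ and showing $\Gamma(N)$ acts trivially there proves only that $F_j|_k\gamma=F_j$ for all $j$ and $\gamma\in\Gamma(N)$, which you already had; it says nothing about $\ker\rho$, since a congruence kernel for a quotient of $\rho$ does not give one for $\rho$ itself. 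The statement as literally written carries the standard nondegeneracy convention of the vector-valued literature that the components are linearly independent (equivalently, the vectors $F^{\mathrm{t}}(\tau)$ span $\C^n$): with $F=0$ every component trivially has bounded denominators while $\rho$ is arbitrary. Under that convention your quotient is the original representation and no maneuver is needed, as $\rho(\gamma)F^{\mathrm{t}}=F^{\mathrm{t}}|_k\gamma=F^{\mathrm{t}}$ forces $\rho(\gamma)=I$ directly; without it, no maneuver can rescue the literal claim.

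A second, smaller overreach: running the $G$-function argument on a single component $F_{j_0}$ controls the cusp monodromy only of the operator attached to the subquotient generated by that component; it does not give $\rho(T^M)=I$ for the full matrix, which you then use to endow \emph{every} component with a $q^{1/M}$-Puiseux expansion (a block of $\rho$ invisible to the $F_j$ is unconstrained). The repair is immediate: run the reduction on each component (all have bounded denominators), or observe that once the components are linearly independent with rational-exponent, bounded-denominator expansions, semisimplicity of $\rho\left(\begin{smallmatrix}1&1\\0&1\end{smallmatrix}\right)$ already yields its finite order on $\C^n$ without any $G$-function input, so one may simply quote Theorem~\ref{ubd for vvmf} with the stated relaxations for each $F_j$ and conclude.
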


\subsection{Some questions and concluding remarks}  \label{questions and remarks}

\subsubsection{A brief survey of the literature on Mason's conjecture}
 Mason's conjecture, as discussed in~\cite{MasonConjectured, KohnenMason, KohnenMason2},  concerned the stronger condition in Corollary~\ref{Mason conjecture}, namely that \emph{all} components $F_1, \ldots, F_n$ have bounded denominators.
These are the cases emerging in conformal field theories, and apart from Gottesman's result~\cite[Theorem~1.7]{Gottesman} resolving a strong form of the conjecture for a  class of two-dimensional vector-valued modular forms on $\Gamma_0(2)$,
  the literature  on the vector-valued case has focused on the stronger assumption for the full vector of components $F$. We review some of this work here. 

Originally Kohnen and Mason~\cite{KohnenMason,KohnenMason2} focused on the particular case (\emph{generalized modular forms})  that the representation $\rho$ is monomial. They used the Rankin--Selberg method to prove the conjecture in the case of a generalized modular function (weight~$0$) without any zeros or poles on the extended upper-half plane~\cite[Theorem~1]{KohnenMason}. In fact Selberg's paper~\cite{Selberg} that they used here had already considered vector-valued modular forms for the purpose of extending the Rankin--Selberg estimate into the noncongruence case (see also~\S~\ref{Ramanujan query} below). Kohnen and Mason~\cite[Theorem~2]{KohnenMason}, again based on the Rankin--Selberg $L$-function method but now with a finer input from the Eichler--Shimura--Weil bound on Fourier coefficients of congruence cusp forms in weight~$2$, also proved that 
when $\rho$ is induced from a linear character of a \emph{congruence} subgroup of $\PSL_2(\Z)$, the same result on generalized modular function units also holds if the condition on integer coefficients is relaxed to $S$-integer coefficients: a case that goes beneath the scope of our results here.  

In a sequel work~\cite{KohnenMason2}, Kohnen and Mason  used the Knopp--Mason canonical factorization~\cite{KnoppMasonFactorization} $f = f_0f_1$ (over $\C$) of a parabolic generalized modular function $f$ on a congruence subgroup of $\PSL_2(\Z)$, where $f_0$ is a parabolic generalized modular function of a \emph{unitary} character $\chi$, while $f_1$ is a parabolic generalized modular function without zeros or poles  on the extended upper-half plane~\cite{KnoppMason}. Combining to  their earlier method from~\cite{KohnenMason}, they thus proved that the unbounded denominators conjecture for the case of \emph{parabolic} GMF is equivalent to the algebraicity of the first ``few'' Fourier coefficients of the component $f_1$ in the canonical factorization of $f$. As an application they proved Mason's unbounded denominators conjecture for the case of a cuspidal parabolic GMF of weight $0$  on a congruence group.  

In the case
of two-dimensional representations of~$\Gamma(2)$, Mason's conjecture was settled by Franc and Mason~\cite{MasonConjectured,FrancMasonDim2}, and extended further by Franc, Gannon and Mason~\cite{FrancGannonMason} to the stronger sense of only requiring the $p$-adic boundedness of the coefficients for a density one set of primes~$p$. Their proof relies on the special property
 that the power series in~$\Q \llbracket x \rrbracket$ which arise in their context are hypergeometric functions.
 It is conceivable that the algebraicity part (over $\Q(x)$, respectively over the ring of classical modular forms) in Theorems~\ref{ODE form} and~\ref{ubd for vvmf} could likewise hold under a similar loosening of the integrality condition; but our proof does not imply this.  On the other hand, for representations of dimension $n \geq 3$, it is plain that the congruence property ceases to hold as in~\cite{FrancMasonDim2} if we relax $\Z\llbracket q \rrbracket$ to $\Z[1/S]\llbracket q \rrbracket$. 
 Another example where hypergeometric functions arise (this time for  three-dimensional representations of~$\PSL_2(\Z)$) 
 appears in the work of
 Franc--Mason~\cite{FrancMasonVvmf} and Marks~\cite{Marks}, and was employed by~\cite{FrancMason} to derive certain cases of the original unbounded denominators conjecture.

\subsubsection{Logarithmic vector-valued modular forms}
If one drops the semisimplicity stipulation on $\rho  \left(\begin{pmatrix} 1 & 1 \\ 0 & 1 \end{pmatrix}\right)$ in the 
definition of a vector-valued modular form,  the resulting structure has been named a \emph{logarithmic vector-valued modular form} by Knopp and Mason~\cite{KnoppMasonLog}. They also do arise in conformal field theories, termed \emph{logarithmic} (in place of rational). See, for example, Fuchs--Schweigert~\cite{FuchsSchweigert}.  
The components of  a weight zero logarithmic vector-valued modular form with bounded denominators can now be classical
(congruence) modular forms of higher weight, and so certainly transcendental over~$\C(\lambda)$
(see Remark~\ref{log remark}).
In Question~\ref{logarithmic UDC} below, we give
an extension of the unbounded denominators problem over to the logarithmic setting. It remains outside the scope of our method as far as we could see. 
Before stating this question, we recall some basic facts concerning quasi-modular forms.
Recall that the ring of  \emph{quasi-modular forms}~$\WM(\Gamma)$ for~$\Gamma \subset \PSL_2(\Z)$ may be identified with the ring generated by~$E_2$
over the ring of classical holomorphic modular forms~$M(\Gamma)$ of integral weight for~$\Gamma$~\cite[Prop~20(ii)]{Zagier123}, and 
by~\cite[Prop~20(i)]{Zagier123} it is stable
under the operator
$$\theta = q \cdot \frac{d}{dq} =  \frac{1}{\pi i } \cdot \frac{d}{d \tau}.$$
Let~$M^{!}(\Gamma)$ denote the ring of weakly holomorphic modular forms for~$\Gamma$; that is,
the meromorphic modular forms which are holomorphic away from the cusps.

\begin{df} The ring of \emph{weakly holomorphic quasi-modular forms}~$\WM^{!}(\Gamma)$ for~$\Gamma \subset \PSL_2(\Z)$ is  the 
ring~$\WM(\Gamma)[1/\Delta]$.
\end{df}

\begin{lemma} The ring~$\WM^{!}(\Gamma)$ is the smallest ring which contains~$M^{!}(\Gamma)$
and which is closed under~$\theta$.
\end{lemma}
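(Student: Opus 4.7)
The plan is to prove both the containment $\WM^{!}(\Gamma) \supseteq M^{!}(\Gamma)$ and the closure of $\WM^{!}(\Gamma)$ under $\theta$ (this shows $\WM^{!}(\Gamma)$ is \emph{one} such ring), and then to show that any ring with these two properties already contains $\WM^{!}(\Gamma)$ (this shows it is the \emph{smallest} such ring). The linchpin of the whole argument is the classical identity $\theta(\Delta) = E_2 \Delta$, which follows from the logarithmic derivative of the product formula for $\Delta$.

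For the first direction, the containment $M^{!}(\Gamma) = M(\Gamma)[1/\Delta] \subset \WM(\Gamma)[1/\Delta] = \WM^{!}(\Gamma)$ is tautological from the definitions. To see that $\WM^{!}(\Gamma)$ is closed under $\theta$, note that $\WM(\Gamma)$ is stable under $\theta$ by~\cite[Prop~20(i)]{Zagier123}. For an arbitrary element $f/\Delta^n \in \WM^{!}(\Gamma)$ with $f \in \WM(\Gamma)$, the Leibniz rule gives
\begin{equation*}
\theta\!\left( \frac{f}{\Delta^n} \right) = \frac{\theta(f)}{\Delta^n} - n \, \frac{f \, \theta(\Delta)}{\Delta^{n+1}} = \frac{\theta(f)}{\Delta^n} - n \, \frac{f \, E_2}{\Delta^n},
\end{equation*}
and since $\theta(f), f E_2 \in \WM(\Gamma)$, the right-hand side lies in $\WM(\Gamma)[1/\Delta] = \WM^{!}(\Gamma)$.

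For the reverse direction, let $R$ be any ring containing $M^{!}(\Gamma)$ and closed under $\theta$. Because $\Delta, 1/\Delta \in M^{!}(\Gamma) \subseteq R$ and $R$ is $\theta$-stable, we have $\theta(\Delta) \in R$, and hence
\begin{equation*}
E_2 = \frac{\theta(\Delta)}{\Delta} \in R.
\end{equation*}
Consequently $R$ contains $M^{!}(\Gamma)[E_2]$. But $\WM(\Gamma) = M(\Gamma)[E_2]$ by~\cite[Prop~20(ii)]{Zagier123}, so
\begin{equation*}
M^{!}(\Gamma)[E_2] = M(\Gamma)[1/\Delta][E_2] = M(\Gamma)[E_2][1/\Delta] = \WM(\Gamma)[1/\Delta] = \WM^{!}(\Gamma),
\end{equation*}
giving $\WM^{!}(\Gamma) \subseteq R$. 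The only real input beyond bookkeeping is the identity $\theta(\Delta) = E_2 \Delta$; everything else is formal manipulation.
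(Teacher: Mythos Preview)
Your proof is correct and follows essentially the same approach as the paper: both use the identity $\theta(\Delta) = E_2\Delta$ to show closure of $\WM^{!}(\Gamma)$ under $\theta$ via the Leibniz rule, and then exhibit $E_2 = \theta(\Delta)/\Delta$ inside any $\theta$-stable ring containing $M^{!}(\Gamma)$ to deduce minimality. The only cosmetic difference is that you invoke the identifications $M^{!}(\Gamma) = M(\Gamma)[1/\Delta]$ and $\WM(\Gamma) = M(\Gamma)[E_2]$ explicitly, whereas the paper uses them implicitly.
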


\begin{proof}  If~$f \in M^{!}(\Gamma)$ then~$f \Delta^m \in M(\Gamma) \subset \WM(\Gamma)$
for some~$m$ and thus~$M^{!}(\Gamma) \subset \WM^{!}(\Gamma)$.
Recall that~$\theta \Delta = E_2 \Delta$.
If~$g \in \WM^{!}(\Gamma)$, then~$h = \Delta^m g \in \WM(\Gamma)$ for some~$m$,
and thus
$$ \theta g =  \theta \frac{h}{\Delta^m} =
 \frac{\theta h}{\Delta^m} - \frac{m E_2 h}{\Delta^m} \in \WM^{!}(\Gamma),$$
 and hence~$\WM^{!}(\Gamma)$ is closed under~$\theta$. 
 Finally, any ring containing~$M^{!}(\Gamma)$ and closed under~$\theta$ contains both~$\Delta^{-1}$
 and~$E_2 = \theta \Delta/\Delta$ and thus contains~$\WM^{!}(\Gamma)$.
 \end{proof}

\begin{question}  \label{logarithmic UDC}
If a component $F_j(\tau)$ of a logarithmic vector-valued modular form for $\PSL_2(\Z)$ has a $\Z\llbracket q\rrbracket$ Fourier expansion, does $F_j(\tau)$ belong to the ring of 
weakly holomorphic quasi-modular forms~$\WM^{!}(\Gamma)$ 
for some \emph{congruence} subgroup~$\Gamma \subset \PSL_2(\Z)$?
\end{question}

Recall the classical Jacobi theta functions:
$$\vartheta_2 =  \sum_{n \in \Z} q^{(n+1/2)^2}, \quad
\vartheta_3 =  \sum_{n \in \Z} q^{n^2}, 
\quad
\vartheta_4 =   \sum_{n \in \Z}(-1)^n q^{n^2}.$$
By Jacobi's triple product identity, these functions~$\vartheta_i$ have explicit representations in terms of the Dedekind~$\eta$ function:
$$\vartheta_2 =   \frac{2 \eta^2(2 \tau)}{\eta(\tau)},
\qquad
\vartheta_3 =  \frac{\eta^5(\tau)}{\eta^2(\tau/2) \eta^2(2 \tau)}, 
\qquad
\vartheta_4 =    \frac{\eta^2(\tau/2)}{\eta(\tau)},$$
and hence they are holomorphic modular forms of weight~$1/2$ without any zeros on $\H$. 
Consequently, all the Laurent monomials $\vartheta_2^a\vartheta_3^b\vartheta_4^c$ (with $a,b,c \in \Z$) of an even degree~$a+b+c$ 
belong to~$ \WM(\Gamma)$
for some fixed congruence subgroup~$\Gamma$ (one can take~$\Gamma = \Gamma(12)$,
although some monomials are invariant under smaller groups, for example: $(\vartheta_2/\vartheta_3)^4 = \lambda$ by Equation~(\ref{Legendre})).
Finally, we also have~$2 (\theta \vartheta_i)/\vartheta_i =  (\theta \vartheta^2_i)/\vartheta^2_i \in \WM^{!}(\Gamma)$.

\medskip

We now turn to some basic examples hinting towards  a positive answer to question~\ref{logarithmic UDC}.
Complementing  Example~\ref{log remark} is the $\lambda$-pullback of the complete elliptic integral of the second kind: 
$$ 
\frac{2}{\pi}E(\lambda(q)) := \pFq{2}{1}{{1/2}, \, {-1/2}}{1}{\lambda(q)} = 1 - 4q + 20q^2 -64q^3 + 164q^4 - 392q^5 + \cdots \in \Z\llbracket q\rrbracket,
$$
 clearly a component of a logarithmic vector-valued modular form on $\PSL_2(\Z)$,  whose $q$-expansion is in $\Z\llbracket q \rrbracket$.
 But one can indeed verify that
$$
\frac{2}{\pi}E(\lambda(q)) =
\frac{\vartheta_3 \vartheta^4_4 + 4 \theta \vartheta_3}{\vartheta^3_3}
$$
is also an element of~$\WM^{!}(\Gamma)$ for the congruence subgroup~$\Gamma 
= \Gamma(12)$.
One can express $E$ in terms of $K$ and its integral: 
$$
\frac{2}{\pi} E(16x)    =   (16x-1) \, \frac{2}{\pi}  K(16x) - 8 \int_0^{x} \frac{2}{\pi}K(16t) \, dt, 
$$
where  one finds that 
$$
\int_0^x \frac{2}{\pi}K(16t) \, dt = \sum_{n=0}^{\infty} \frac{1}{n+1}\binom{2n}{n}^2 x^{n+1} \in \Z\llbracket x \rrbracket,
$$
the integrality of the coefficients now manifested by the Catalan numbers $C_n = \frac{1}{n+1} \binom{2n}{n} \in \Z$. 
One further integration still has integer coefficients:
$$
\int_0^{x} \frac{dy}{y} \int_0^{y} \frac{2}{\pi}K(16t) \, dt = \sum_{n=0}^{\infty} 
\frac{1}{(n+1)^2} \binom{2n}{n}^2x^{n+1}  = \sum_{n =0}^{\infty} C_n^2 \, x^{n+1} \in \Z\llbracket x \rrbracket,
$$
and $\sum \frac{1}{(n+1)^2} \binom{2n}{n}^2 (\lambda/16)^n \in \Z\llbracket q \rrbracket$ is a component of a logarithmic vector-valued modular form with a $\Z\llbracket q \rrbracket$ expansion. Zudilin has pointed out to us the formula
\begin{equation*}
\sum_{n=0}^{\infty} \frac{1}{(n+1)^2} \binom{2n}{n}^2 \big(\lambda(q) \big/ 16  \big)^n 
= \frac{4}{\vartheta^4_2} 
\left( 4 \vartheta^2_3  \cdot \frac{\theta \vartheta_2}{\vartheta_2}
+ 4 \vartheta_3 \cdot \theta \vartheta_3
-  \vartheta^4_3 \right)
\end{equation*}
exhibiting this $\Z\llbracket q \rrbracket$ power series as an element 
of~$\WM^{!}(\Gamma)$ for the congruence subgroup~$\Gamma = \Gamma(12)$, in accordance with Question~\ref{logarithmic UDC}.

\subsubsection{Some variations}  \label{the easy ones}
Our proof of Theorems~\ref{theorem:main} and~\ref{ubd for vvmf} is readily refined to yield a further precision in two regards: 

Firstly, the condition on $\Z\llbracket q^{1/N} \rrbracket$ Fourier coefficients can be relaxed to $\Z\llbracket q^{1/N} \rrbracket \otimes \C$ Fourier coefficients.

Secondly, the condition that the modular form $f(\tau)$, respectively the vector-valued modular form $F(\tau)$ are holomorphic on $\H$ can be relaxed to the condition of meromorphy on $\H$. 

We leave it to the interested reader to  fill in the details of these further extensions of our results.

\subsubsection{Beyond~$\SL_2(\Z)$} Much less obvious is how to extend our results to arithmetic groups other than $\SL_2(\Z)$. Here 
are two possible settings one could consider.

 Firstly, the group $\SL_2(\F_q[t])$ in function field arithmetic  and its attendant theory of Drinfeld--Goss modular forms. 
See Pellarin~\cite{Pellarin} for a recent  survey of this area. 
Here, in the analogy with $\SL_2(\Z)$ where the congruence kernels of these two arithmetic groups are similarly large, it would be interesting to decide whether the modular forms on a finite index subgroup of $\SL_2(\F_q[t])$ that have (up to a $\F_q(t)^{\times}$ scalar multiple) a $u$-expansion~\cite[\SSS~4.7.1]{Pellarin} with coefficients in $A = \F_q[t]$ are likewise the congruence modular forms. 

 Secondly, the mapping class groups $\Gamma_{g,n} = \mathrm{Mod}(S_{g,n})$ in  signatures $(g,n)$ 
other than $(1,1), (1,0)$ or $(0,4)$ that we have implicitly been limiting to.  Recall that $\Gamma_{1,1}\cong \Gamma_{1,0}  = \mathrm{Mod}(\mathbb{T}^2) = \SL_2(\Z)$ and $\Gamma_{0,4} \cong \PSL_2(\Z) \ltimes (\Z/2 \times \Z/2)$, and correspondingly the discussion in the rational conformal field theory under~\SSSS~\ref{cft} has been for the $1$-loop partition function with  a complex torus ($g = 1$) as the worldsheet~\cite{Gannon}. 
In a more recent
research stream in two-dimensional conformal field theory, a  higher genus extension of Zhu's modularity theorem 
would  associate to any holomorphic vertex operator  algebra a Teichm\"uller modular form (as defined in~\cite{Ichikawa})
in every signature $(g,n)$; this is a section of a tensor power
  $\lambda^{\otimes k}$ of the Hodge bundle over $\overline{\mathcal{M}_{g,n}}$.
  One could ask about extending the cruder algebraicity proviso of our Theorem~\ref{ubd for vvmf} over to
the more general setting of a component of a vector-valued Teichm\"uller modular form that has an appropriate integrality property.

\subsubsection{The Ramanujan--Petersson question for cuspidal vector-valued modular forms} \label{Ramanujan query} In the bulk of our paper, our theorems were stated for modular forms of an arbitrary integral\footnote{They even hold for the modular forms of half-integral weight, upon multiplying by a weight-$1/2$ theta function.} weight, but the path to arithmetic algebraization methods was always through a straightforward reduction to weight~$0$.  This is because it did not make a difference in our integrality questions as to whether or not the forms in question were cuspidal. At the same time, all our theorems can be equivalently (without loss of generality) stated for the cuspidal forms. This offers a common global framework for unifying our Theorem~\ref{ubd for vvmf} with the classical Ramanujan--Petersson conjectures. 

 Consider a representation~$\rho : \PSL_2(\Z) \to \GL_n(\C)$. 
For each integer~$k \in \Z$,  denote by~$S_k(\rho)$ the~$\Q$-vector space of the weight~$k$ vector-valued modular form whose multiplier system is~$\rho$ and whose~$n$ components all belong to~$q^{1/N} \Q\llbracket q^{1/N} \rrbracket$ for some (unspecified)~$N \in \NwithoutzeroA$. We can focus our question on the~$q$-expansion coefficients of an arbitrary~$q \, \Q\llbracket q \rrbracket$ component~$f(q) = \sum_{l =1}^{\infty} a_l q^l$ of an element of~$S_k(\rho)$. 
The growth of those coefficients acquires a global, multicolored meaning upon completing~$\Q \hookrightarrow \Q_v$ at the different places~$v \in M_{\Q}$ of~$\Q$. Denote by~$|\cdot|_v$ the usual absolute value normalized by~$|e| = e$, if~$v = \infty$, and by~$|p|_p = 1/p$, if~$v$ is the~$p$-adic place.  We can define a set~$\Sigma(\rho) \subset M_{\Q}$ of the \emph{deficient places} for the multiplier system~$\rho$ by declaring~$v \in \Sigma(\rho)$ if and only if there exists such a Fourier series component~$f(q) = \sum_{l=1}^{\infty} a_l q^l \in q \, \Q\llbracket q \rrbracket$, for some weight~$k \in \Z$ and some cuspidal vector-valued modular form~$F \in S_k(\rho)$ to the multiplier system~$\rho$ that has~$f$ as one of its~$n$ component functions, such that
$$
\left\{ \begin{array}{ll}  |a_l|_v\big/l^{\frac{k-1+\varepsilon}{2}} \textrm{ is unbounded for some~$\varepsilon > 0$},  & \textrm{ if } v = \infty; \\ |a_l|_v \textrm{ is unbounded}, & \textrm{ if } v \in M_{\Q}^{\mathrm{fin}}.  \end{array} \right.
$$
The conjunction of Shimura's integrality theorem~\cite[\S~8]{ShimuraEichler} and Deligne's resolution~\cite[{\it Th\'eor\`eme~8.2}]{DeligneWeilI} of the Ramanujan--Petersson conjecture are exactly packaged together into the statement that~$\Sigma(\rho) = \emptyset$ (no deficiency places) for the case that the representation~$\rho : \PSL_2(\Z) \to \GL_n(\C)$ has kernel a congruence subgroup of~$\PSL_2(\Z)$. On the other hand, our proof of Theorem~\ref{ubd for vvmf} established that~$\Sigma(\rho) \neq \emptyset$ (and, more precisely, contains at least one non-archimedean place) in every other case, to wit: whenever the kernel~$\ker(\rho)$ of the multiplier representation~$\rho : \PSL_2(\Z) \to \GL_n(\C)$ is not a congruence subgroup of~$\PSL_2(\Z)$. As far as we are aware, it is an open question whether there exists any~$\rho$ with~$\infty \in \Sigma(\rho)$, but also
whether there exists any~$\rho$ with~$\infty \notin \Sigma(\rho)$ but yet with~$\ker(\rho)$ \emph{not} a congruence subgroup of~$\PSL_2(\Z)$. We note however Selberg's result~\cite[\S~2]{Selberg} that at least~$|a_l|_{\infty}  = O\left( l^{\frac{k}{2}-\frac{1}{5}} \right)$ (an improvement over Hecke's trivial~$\ll l^{k/2}$ bound, based on the Rankin--Selberg method) is in place for a completely arbitrary~$\rho : \PSL_2(\Z) \to \GL_n(\C)$. Another question is whether the deficient places set~$\Sigma(\rho)$ is always finite for a given multiplier representation~$\rho : \PSL_2(\Z) \to \GL_n(\C)$.  Eisenstein's theorem~\cite[\S11.4]{BombieriGubler} guarantees that this is so in the case that the representation~$\rho$ has a finite image. 

\subsubsection{Algebraic fundamental groups} 
Finally we return to our introductory outline~\SSSS~\ref{intro sketch} where we acknowledged that our approach to the unbounded denominators conjecture has been  particularly inspired by the papers of Ihara~\cite{Ihara} and Bost~\cite{Bost} on arithmetic algebraization and Lefschetz theorems
in Arakelov geometry. Our central overconvergence boost emerged from the isogeny $[N]$ of $\mathbb{G}_m$ to trade a Bely\u{\i} map, or more
generally a local system on $\mathbb{P}^1 \setminus \{0, 1, \infty\}$ that has a $\Z/N$ local monodromy around $x = 0$, for a local system on $\mathbb{P}^1 \setminus \{\mu_N \cup \infty\}$:  the step of
extending through the falsely apparent singularity at $ x = 0$.   This is directly inspired by Ihara's employment of an arithmetic rationality theorem 
of Harbater~\cite[\SSS~1~Lemma]{Ihara} to derive $\pi_1$ results on certain arithmetic schemes, including for instance a Diophantine analysis proof of Saito's example of
$\pi_1 \big( \spec \, \Z[x, 1/x, 1/(x-1)] \big) = \{1\}$. In a similar fashion,
our Theorem~\ref{theorem:main} can be used to establish a $\pi_1$ result
in the style of Bost~\cite{Bost}.

\begin{theorem}  \label{pi1}
Let $N \in \NwithoutzeroA$, let $K /\Q(\mu_N)$ be a finite extension, and let  $\pi: \mathcal{X}(N) \to \spec{O_K}$ (``connected N\'eron model'') be the connected component containing the cusp $\infty$ in the smooth part of the minimal regular model of $X(N)$ over $\spec{O_K}$.  Thus the cusp $\infty$ extends to a morphism  $\varepsilon : \spec{O_K} \to \mathcal{X}(N)$. 

  Then, for every geometric point $\eta$ of $\spec{O_K}$, the maps of algebraic fundamental groups
$$
\pi_* : \pi_1(\mathcal{X}(N), \varepsilon(\eta))  \to \pi_1(\spec{O_K}, \eta)
$$
and
$$
\varepsilon_* :    \pi_1(\spec{O_K}, \eta)   \to  \pi_1(\mathcal{X}(N), \varepsilon(\eta))
$$
are mutually inverse isomorphisms. 
\end{theorem}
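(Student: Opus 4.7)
Because $\pi \circ \varepsilon = \mathrm{id}_{\spec O_K}$, we have $\pi_* \circ \varepsilon_* = \mathrm{id}$ for free. The content of the theorem lies in the opposite relation $\varepsilon_* \circ \pi_* = \mathrm{id}$, which by Grothendieck's Galois theory translates into the assertion that every connected finite étale cover $\pi' : \mathcal{Y} \to \mathcal{X}(N)$ is of the form $\mathcal{X}(N) \times_{\spec O_K} \spec O_{K'}$ for some finite étale extension $\spec O_{K'} \to \spec O_K$. Pulling back $\mathcal{Y}$ along $\varepsilon$ produces such an $O_{K'}$; base-changing to $O_{K'}$ and passing to the connected component containing the distinguished section reduces us to the case of a cover $\pi' : \mathcal{Y} \to \mathcal{X}(N)$ equipped with a section $\widetilde{\varepsilon}$ lifting $\varepsilon$, and the goal is then to show $\mathcal{Y} = \mathcal{X}(N)$.

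On the generic fiber, $\mathcal{Y}_K \to \mathcal{X}(N)_K$ is a finite étale cover of the geometrically connected smooth projective curve $\mathcal{X}(N)_K$. Fixing a complex embedding $K \hookrightarrow \C$ identifies this cover analytically with the quotient of $\H^*$ by a finite-index subgroup $\Gamma$ of $\pi_1^{\mathrm{top}}(\mathcal{X}(N)_{\C})$, itself embedded in $\Gamma(N)$ as the subgroup preserving the Weil-pairing component containing the chosen cusp $\infty$. Because $\mathcal{X}(N)$ is the smooth proper model with all cusps retained, étaleness of $\mathcal{Y} \to \mathcal{X}(N)$ holds at every cusp, so the cusp widths of $\Gamma$ match those of $\Gamma(N)$ at each cusp of $\mathcal{X}(N)_{\C}$.

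The unbounded denominators theorem enters now. Take any rational function $f$ on $\mathcal{Y}_K$ that is regular away from the cusp $\widetilde{\varepsilon}(\infty)$ and whose polar part is inherited from a rational function on $\mathcal{X}(N)$; then, through the section $\widetilde{\varepsilon}$ and the flatness of $\mathcal{Y}/\spec O_K$, $f$ carries a Puiseux $q$-expansion in $O_K\llbracket q^{1/N} \rrbracket [ q^{-1/N}]$. By Theorem~\ref{theorem:main} together with its number-field extension in Remark~\ref{rmk:numberfield}, $f$ is automorphic for a congruence subgroup of $\SL_2(\Z)$. Applying this to a family of $f$'s that generate the function field of $\mathcal{Y}_K$ forces $\Gamma$ itself to be a congruence subgroup of $\Gamma(N)$.

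The remaining task is to conclude $\Gamma = \pi_1^{\mathrm{top}}(\mathcal{X}(N)_{\C})$, that is, the generic-fiber cover is trivial and $\mathcal{Y} = \mathcal{X}(N)$. This should follow from the interplay of three ingredients: firstly, $\Gamma$ matches cusp widths at every cusp of $\mathcal{X}(N)_{\C}$; secondly, $\Gamma$ is a congruence subgroup; and thirdly, $\mathcal{Y} \to \mathcal{X}(N)$ is étale \emph{even at the primes of bad reduction} of the integral model $\mathcal{X}(N) \to \spec O_K$, in particular at primes dividing $N$. This last point is moduli-theoretic: a proper congruence enhancement $\Gamma(M) \subseteq \Gamma \subsetneq \pi_1^{\mathrm{top}}(\mathcal{X}(N)_{\C})$ with $N \mid M$ adds extra level structure that acquires ramification at every prime dividing $M/N$, so demanding everywhere-étaleness over $\spec O_K$ forces $M = N$, and then the matching cusp widths together with the Weil-pairing bookkeeping force $\Gamma$ to exhaust $\pi_1^{\mathrm{top}}(\mathcal{X}(N)_{\C})$. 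I expect this integral rigidity step to be the principal technical hurdle: one must carefully convert the merely \emph{generic-fiber} congruence information supplied by Theorem~\ref{theorem:main} into the \emph{everywhere-unramified} arithmetic triviality that the conclusion demands, in the same way that Harbater's arithmetic rationality theorem plays a rigidifying role in Ihara's arithmetic connectedness work~\cite{Ihara}.
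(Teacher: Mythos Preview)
Your outline follows the same architecture as the paper's sketch (which simply defers to Ihara's framework with Theorem~\ref{theorem:main} substituted for Harbater's rationality input), and the first three paragraphs are fine. But the ``principal technical hurdle'' you flag in the last paragraph is illusory, and your proposed resolution via ramification at primes dividing $M/N$ is unnecessary.

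Once you know that every function on $\mathcal{Y}_K$ (with integral $q$-expansion at the lifted cusp, which you have from the section and \'etaleness) is modular for a \emph{congruence} subgroup, combine this with your own observation that \'etaleness over the compactified $X(N)_K$ forces all cusp widths of $\Gamma$ to equal those of $\Gamma(N)$, hence to divide $N$. In the paper's notation of~\S\ref{serresection} this places every such function in $R_N$; but the main theorem is precisely $R_N = M_N$, so each function already lies in the function field of $X(N)_K$. Equivalently, by Wohlfahrt's theorem~\cite[Theorem~2]{Wohlfahrt} (used in the paper at the end of the proof of Proposition~\ref{firstreduced}), a congruence subgroup of Wohlfahrt level $N$ contains $\Gamma(N)$, so $\Gamma \supseteq \Gamma(N)$; together with $\Gamma \subseteq \Gamma(N)$ this gives $\Gamma = \Gamma(N)$, i.e.\ the generic-fiber cover is trivial. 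No appeal to the reduction type at primes dividing $M/N$ is needed.

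Finally, passing from generic-fiber triviality to $\mathcal{Y} = \mathcal{X}(N)$ is immediate: a finite \'etale morphism between connected normal schemes that is an isomorphism on a dense open is an isomorphism. So the argument closes exactly where you have it, without the moduli-theoretic rigidity step.
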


\begin{proof}[Proof (a sketch)]
This follows rather formally by the argument of~\cite[\SSS~4 on page 252]{Ihara} and also  \cite[proof of Theorem~1 \emph{loc.cit.} on pages~248--249]{Ihara}, upon replacing Ihara's  function field $k(t)$  by the modular function field~$K(X(N))$ and Ihara's formal power series ring $\mathfrak{O}\llbracket t \rrbracket$ by $O_K\llbracket \lambda(\tau/N)/16 \rrbracket$, taking account of Remark~\ref{remark:numberfield},
  and on using our Theorem~\ref{theorem:main} in place of Harbater's arithmetic rationality input~\cite[Claim~1A on page 248]{Ihara}.  
  \end{proof}

\begin{remark}
Very recently, Bost and Charles~\cite{BostCharles} have obtained new relative $\pi_1$ finiteness theorems for certain quasi-projective
arithmetic surfaces  $\mathcal{X} \to \spec{O_K}$, including for the case~\cite[\SSS~9.3.4]{BostCharles} of the affine modular scheme $\mathcal{Y}(N)^{\mathrm{arith}} \to  \spec \Z$ that represents the functor ``full level $N$ structure'' ($N \geq 3$) in the sense of isomorphisms $\iota : ( \mu_N \times \Z / N\Z )_S \stackrel{\simeq}{\longrightarrow} \mathcal{E}[N]$ of finite flat group schemes over a test scheme $S$. 
\end{remark}

\begin{remark}
Another $\pi_1$ interpretation of the unbounded denominators conjecture, in terms of the Galois theory of the Tate curve and the congruence kernel of
$\SL_2(\Z)$, was given by Chen~\cite[Conjecture~5.5.10]{Chen}.
\end{remark}

Similarly to our choice of the isogeny $[N] : \mathbb{G}_m \to \mathbb{G}_m$, one could perhaps more directly consider the modular covering $X(2N) \to X(2)$ and use that it is totally ramified of index~$N$ over the three cusps of $X(2)$. Thus a local system $(\mathcal{E},\nabla)$ on the modular curve $Y(2) \cong \mathbb{P}^1 \setminus \{0,1,\infty\}$ that has  $\Z/N$ local monodromies around the three singularities  has its pullback $g^*\mathcal{E}$ under the modular covering $g: Y(2N) \to Y(2)$ extend through the cusps of $Y(2N)$ to a local system on the projective curve $X(2N)$.  See also Andr\'e~\cite[II \SSS~8.3]{Andre}, for a more general setting. Another natural approach to the unbounded denominators conjecture 
would then be to aim directly for rationality on the curve $X(2N)$, instead of for a tight algebraicity or holonomicity rank bound over $X(2)$. Certainly at least the algebraicity clause of Theorems~\ref{ODE form} and~\ref{ubd for vvmf} is also possible by this alternative higher genus route to an arithmetic
algebraization. 

It is tempting to approach Theorem~\ref{pi1} or the congruence property directly using the arithmetic rationality theorem of 
Bost and Chambert-Loir~\cite{BCL}, although we were unable to do so.
In these optics, it may be of some interest to remark that the case of Theorem~\ref{pi1} with $N = 6$ and $K$ a sufficiently large number field to attain semistable reduction is  contained in~\cite[Corollary~1.3 with Example~7.2.2 (i)]{Bost}. Indeed, the modular curve $X(6)$ has genus $1$ and turned into an elliptic curve using the cusp $\infty$ for the origin. Since this elliptic curve contains the automorphism $\begin{pmatrix} 1 & 1 \\ 0 & 1 \end{pmatrix}$ of order $6$, it has $j$-invariant $0$ and is analytically isomorphic with the complex torus $\C / \Z[\omega]$, $\omega = e^{\pi i/3} = \frac{1 + \sqrt{-3}}{2}$, with complex multiplication by the Eisenstein integers $\Z[\omega]$, 
and in particular extending to a (smooth, proper) abelian scheme over $\spec{O_K}$. Its Faltings height is 
$$
-\frac{1}{2} \log   \Big\{  \frac{1}{\sqrt{3}} \Big( \frac{\Gamma(1/3)}{\Gamma(2/3)} \Big)^3 \Big\} = -0.749\ldots < -0.05\ldots = \frac{1}{2}\log\frac{\pi}{
4 \, \mathrm{Im} \, \omega},
$$
by the Lerch--Chowla--Selberg formula making Bost's capacitary condition~\cite[Corollary~1.3]{Bost} apply, and this is the isolated minimum value of the Faltings height across all elliptic curves.  In practice this means that this complex torus has a ``large'' \emph{univalent} complex-analytic  uniformization (in the sense of conformal size from the origin $[\infty]$ and potential theory), sufficient to place this particular case of Theorem~\ref{pi1}  within the framework of arithmetic \emph{rationality} --- as opposed to algebraicity
or holonomicity --- theorems~\cite{Bost, BCL} on the algebraic curve $X(N)$.  Can such an approach be continued to all $N$?

\section{Acknowledgments}We would like
to thank Yves Andr\'{e} for a number of insightful remarks
on the first version of this manuscript, leading in particular to~\SSSS~\ref{lem_lexi} as an alternative and simplified approach to Theorem~\ref{abstract form}.
We would also like to thank 
 Michael Barz, Jean-Beno\^{i}t Bost, Fran\c{c}ois Charles, Pierre Deligne, Cameron Franc, Igor Frenkel, Javier Fres\'an, Jayce Getz, Kenz Kallal, Mark Kisin, Geoffrey Mason, Peter Sarnak, Alex Smith,  Richard Taylor, John Voight, 
and Wadim Zudilin
for useful remarks, suggestions, and corrections.

\bibliographystyle{amsalpha}
\bibliography{UDC}

\end{document}